\numberwithin{equation}{section}
\numberwithin{figure}{section}
\patchcmd{\thebibliography}{\chapter*}{\section*}{}{}
\newtheorem{definition}{Definition}[section]
\newtheorem{thm}{Theorem}[section]
\newtheorem{lemma}{Lemma}[section]
\newtheorem{rem}{Remark}[section]
\newcommand{\commentout}[1]{{}} 
\newcommand{\abs}[1]{\left|#1\right|}
\newcommand{\norm}[1]{\left\|#1\right\|}
\newcommand{\bfu}{{\bf u}}
\newcommand{\bfv}{{\bf v}}
\newcommand{\bfalpha}{{\boldsymbol \alpha}}
\newcommand{\vertiii}[1]{{\left\vert\kern-0.25ex\left\vert\kern-0.25ex\left\vert #1
    \right\vert\kern-0.25ex\right\vert\kern-0.25ex\right\vert}}
\begin{document}
\title{An Enriched Immersed Finite Element Method\\
for Interface Problems with Nonhomogeneous Jump Conditions}
\author{
Slimane Adjerid \thanks{Department of Mathematics, Virginia Tech, Blacksburg, VA 24061 (adjerids@math.vt.edu). This author was supported by J.T. Oden Faculty grant, October 2019 }\\
\and Ivo Babuska \thanks{ICES, University of Texas at Austin, TX, United States (babuska@ices.utexas.edu)} \\
\and Ruchi Guo \thanks{Department of Mathematics, The Ohio State University, Columbus, OH 43201 (guo.1778@osu.edu) } \\
\and Tao Lin \thanks{Department of Mathematics, Virginia Tech, Blacksburg, VA 24061 (tlin@vt.edu) }
  }
\date{}
\maketitle
\begin{abstract}
This article presents and analyzes a $p^{th}$-degree immersed finite element (IFE) method for elliptic interface problems with nonhomogeneous jump conditions. In this method, jump conditions are approximated optimally by basic IFE and enrichment IFE piecewise polynomial functions constructed by solving local Cauchy problems on interface elements. The proposed IFE method is based on a discontinuous Galerkin formulation on interface elements and a continuous Galerkin formulation on non-interface elements. This $p^{th}$-degree IFE method is proved to converge optimally under mesh refinement. In addition, this article addresses the stability of this IFE method and has established upper bounds for its condition numbers which are optimal with respect to the mesh size but suboptimal with respect to the contrast of the discontinuous coefficient.
\end{abstract}

\section{Introduction}
\label{sec:introduction}

Let $\Omega$ be an open bounded domain in $\mathbb{R}^2$ with a smooth boundary $\partial \Omega$ and let $\Gamma \subset \Omega$ be a $C^{p+1}$ simple closed curve that splits $\Omega$ into two sub-domains $\Omega^-$ and $\Omega^+$ such that $\overline{\Omega} = \overline{ \Omega^- \cup \Omega^+}$. Further we let $H^{k}({\Omega}^s)$ be the standard Sobolev spaces on the sub-domains $\Omega^s$ equipped with  the norm $\|\cdot\|_{H^{k}({\Omega}^s)}$ and semi-norm $|\cdot |_{H^{k}({\Omega^s})}$, $k\geqslant0$, $s=\pm$. We also need the following broken
Sobolev spaces $PH^{k}({\Omega}) = H^{k}({\Omega}^+) \bigoplus  H^{k}({\Omega}^-)$
which can be written as
\begin{equation}
PH^{k}({\Omega}) = \Big\{  v=(v^+,v^-)   ~:~  v^+\in H^{k}({\Omega}^{+})~ and ~ v^-\in H^{k}({\Omega}^{-})\Big\},  \label{Hil_sp_int_0}
\end{equation}
equipped with the norm and semi-norm
\begin{equation}
 \|  v \|_{ PH^{k}({\Omega})} = \|  v^+ \|_{ H^{k}({\Omega}^+)} + \|  v^- \|_{ H^{k}({\Omega}^-)}, ~~ \text{and} ~~ |  v |_{ PH^{k}({\Omega})} = |  v^- |_{ H^{k}({\Omega}^-)} + |  v^+ |_{ H^{k}({\Omega}^+)}.  \label{sp_int_norm}
\end{equation}
In particular, we consider the subspaces $PH^{k}_0({\Omega})$ consisting of all functions in $PH^{k}(\Omega)$ with zero trace
on $\partial{\Omega}$ in the sense of fractional Sobolev spaces $H^{\frac{1}{2}}$.

In this manuscript, we assume the data functions are such that $f \in PH^{p-1}(\Omega)$, $J_D \in H^{p+\frac{1}{2}}(\Gamma)$ and
$J_N \in PH^{p-\frac{1}{2}} (\Gamma)$ for some integer $p\geqslant1$ and a piecewise constant function
\begin{equation*}
\beta(X)=
\left\{\begin{array}{cc}
\beta^+ & \text{for} \; X\in \Omega^+, \\
\beta^- & \text{for} \; X\in \Omega^-,
\end{array}\right.
\end{equation*}
or $\beta=(\beta^+,\beta^-)$, with $\beta^-\geqslant\beta^+ >0$ and denote the contrast by $\rho =\beta^+/\beta^-$.
From now on we let $u \in PH^{p+1}_0(\Omega)$ be the solution of the following elliptic interface problem
\begin{subequations}
\label{model}
\begin{align}
 -\nabla\cdot(\beta\nabla u)=f, \;\;\;\; & \text{in} \; \Omega = \Omega^-  \cup \Omega^+,  \label{inter_PDE} \\
 u=0, \;\;\;\; &\text{on} \; \partial\Omega.   \label{b_condition}
\end{align}
In addition, we close the interface problem by enforcing the following jump conditions:
\begin{align}
&  [u]_{\Gamma} :=u^-|_{\Gamma} - u^+|_{\Gamma}=  J_D ~~~ \text{in} ~~~ H^{\frac{1}{2}}(\Gamma),  \label{nonhomo_jump_cond_1} \\
&  \big[\beta \nabla u\cdot \mathbf{n}\big]_{\Gamma} :=\beta^- \nabla u^-\cdot \mathbf{n}|_{\Gamma} - \beta^+ \nabla u^+\cdot \mathbf{n}|_{\Gamma} = J_N ~~~ \text{in} ~~~ H^{-\frac{1}{2}}(\Gamma), \label{nonhomo_jump_cond_2}
\end{align}
\end{subequations}
where $\mathbf{n}$ is a unit vector normal to the interface $\Gamma$.

Interface problems with homogeneous interface jump conditions such that $J_D=0$, $J_N=0$ appear in many problems in science and engineering, such as in electrical impedance tomography \cite{2013BelhachmiMeftahi,2005HolderDavid}, electroencephalography \cite{2010VallaghePapadopoulo}, plasma simulations \cite{1991BirdsallLangdon,1988HockneyEastwood} and Poisson-Boltzmann equations \cite{2007ChenHolstXu,2015YingXie}. Nonhomogeneous interface conditions are also used in many models.
For example, interface jump conditions such that $J_D=0$, $J_N\neq0$ are seen in \emph{(i)} potential problems where the surface charge density for electric potential is not zero on an interface separating two isotropic media \cite{1975Cook}, \emph{(ii)} flow in a domain consisting of two stratified porous media with a source at the interface \cite{2006Miyazaki}, and \emph{(iii)} Burton-Cabrera-Frank-type models for epitaxial growth of thin films \cite{2004BanschHauberLakkisLiVoigt,2003CaflischLi}.
By contrast, the interface conditions with $J_D\neq0$, $J_N=0$, respectively, are used  in Hele-Shaw flow \cite{1997HouLiOsherZhao} to model the Laplace-Young and the kinematic jump conditions. Moreover, the nonhomogeneous interface conditions with $J_D\neq0$, $J_N\neq0$
are used in: \emph{(i)} Navier-Stokes equations \cite{2003LeeLeVeque,1997LevequeLi,2001LiLai}
to model a discontinuous pressure due to surface tension and singular force at the interface, and \emph{(ii)} shape optimization methods \cite{2007AfraitesDambrine}.


Standard finite element methods on fitted meshes \cite{1970Babuska,1996BrambleKing,1982Xu} have been applied to solve interface problems, however, they may be inefficient for problems involving moving interfaces since the mesh has to be updated to resolve the evolving interface. In order to circumvent this difficulty scientists developed several numerical methods on unfitted meshes
that may be more efficient in solving interface problems
with moving interfaces, see the discussions of the advantages in \cite{2007AfraitesDambrine,2003CaflischLi,GuoLinLin2017,1997HouLiOsherZhao,2003LeeLeVeque,1997LevequeLi,2001LiLai}. The idea of numerical methods on unfitted meshes has attracted the interest of both the
finite difference \cite{1994LevequeLi,2006LiIto} and finite element communities including \cite{2015BurmanClaus,2002HansboHansbo,Lehrenfeld2017, 2012MASSJUNG,2016WangXiaoXu} for the cut finite element method (Cut-FEM), \cite{2010ChuGrahamHou,2009EfendievHou} for the multi-scale FEM method, and \cite{1996BabuskaMelenk,2004SukumarHuang} for the partition of unity method (PU-FEM), as well as the immersed finite element (IFE) method discussed in this article.

The basic idea of the IFE approach is, in spirit, similar to Hsieh-Clough-Tocher type macro polynomials \cite{2001Braess,1966CloughTocher} where we construct piecewise polynomial functions on interface elements to capture the interface jump behavior of the exact solution. Early  construction approaches of IFE functions were based on the piecewise linear approximation of the interface curve and jump conditions, see \cite{2007GongLiLi,2016GuoLin,2016GuoLinZhang, 2008HeLinLin,2011HeLinLin,2004LiLinLinRogers,2003LiLinWu,2001LinLinRogersRyan}. Recently, higher-degree IFE methods were proposed in \cite{2014AdjeridBenromdhaneLin,2018AdjeridRomdhaneLin,2016AdjeridGuoLin,2016GuzmanSanchezSarkis,2018ZhuangGuo} for homogeneous interface conditions combined with the following extended homogeneous jump conditions
\begin{equation}
\label{normal_jump_cond}
 \left[ \beta\frac{\partial^{j-2} \triangle u}{\partial\mathbf{ n}^{j-2}} \right]_{\Gamma} =0, ~~~ j=2,3,\ldots ,p, ~~ \text{in} ~~ H^{-j-\frac{1}{2}}(\Gamma),
\end{equation}
for $p\geqslant2$ which hold when both the interface and the source term $f$ are sufficiently smooth.
However, this is not the case in many problems such as the Poisson-Boltzmann equations \cite{2007ChenHolstXu,2015YingXie}.

Inspired by the generalized finite element method (GFEM) methodology of Babu\v{s}ka and collaborators \cite{1994BabuskaCalozOsborn,1983BabuskaOsborn}, we propose to construct
IFE functions on interface elements by solving local problems.
In the GFEM methodology the standard finite element spaces are enriched
to reflect the local solution behavior emanating from material
interfaces and other singularities. Recently,
Babu\v{s}ka and his collaborators discuss the stability of
GFEM method applied to interface problems with smooth and non-smooth interfaces in a series of papers
\cite{2012Babuska-Banarjee, 2017Babuska-Barnerjee-Kergrene,2016Kergrene-Babuska-Banerjee,2019Zhang-Banerjee-Babuska}

The local problems, as suggested by the regularity analysis for interface problems \cite{1998ChenZou,2010ChuGrahamHou}, result from the decomposition of the solution into a homogeneous component and nonhomogeneous components associated with nonhomogeneous interface jumps and/or a discontinuous source term. Specifically, these components can be interpreted as certain solutions to local Cauchy problems subjected to boundary conditions on the interface induced from the jumps at the interface. In particular,
on each interface element, cut by the interface into two subelements, an IFE function is defined as a discrete biharmonic extension of a $p^{th}$-degree polynomial from one subelement to the whole interface element by
solving the local Cauchy problems. This extension establishes a mapping on polynomial spaces, denoted as the \textit{Cauchy mapping} due to the origin from Cauchy problems. Two sets of IFE functions are introduced, the first set forms the IFE space and is used to approximate the homogeneous component of the solution to the interface problem while the second set of special IFE functions is used to approximate the nonhomogeneous components of the solution. These special IFE functions will be referred to as enrichment IFE functions.

The proposed IFE functions are then employed in a discontinuous Galerkin IFE scheme to solve the elliptic interface problems on unfitted meshes \cite{2007GongTHESIS,2010GongLi,2007GongLiLi,2009HeTHESIS,2011HeLinLin}.
In this scheme, those enrichment IFE functions will help homogenize the nonhomogeneous interface jump conditions such that they are transformed into problems with homogenous jump conditions in a weak sense.
A similar idea is also used with extended finite element methods (X-FEM) \cite{2000DolbowMoesBelytschko,2001DolbowMoesBelytschko,1999MoesDolbowBelytschko,2006VaughanSmithChopp}. Furthermore, a major advantage of the proposed IFE method is that
the enrichment IFE functions are determined \emph{a priori} by solving local Cauchy problems directly from
the known jump data, and they are moved to the right hand side
of the proposed IFE method. Consequently, using enrichment IFE functions does not result in additional degrees of freedom so that the degrees of freedom for the proposed IFE method
is the same as that for the interface problem with homogeneous jump conditions.
We note that the proposed enrichment IFE functions induced by nonhomogeneous jump conditions are piecewise polynomials on each interface element while those used with the XFEM \cite{2006VaughanSmithChopp} may be non polynomials. However, although this feature makes the proposed method closer to the standard finite element method which may both speed-up computations and make error analysis possible, there may be difficulties for the proposed IFE method to resolve solution singularities caused by, for instance, corners in the interface
\cite{2019Zhang-Banerjee-Babuska}.


A major difficulty in the analysis of IFE methods comes from the insufficient regularity in both the involved macro polynomials and the exact solution. Due to the lack of smoothness across the interface, neither the
scaling argument used to establish \emph{a priori} error estimates for the standard finite
element method nor other known techniques in the literature of unfitted mesh methods such as adding penalty terms are directly applicable.
The \textit{Cauchy mapping} on polynomial spaces newly developed for constructing IFE functions by solving local Cauchy problems turns out also to be a critical tool for our theoretical analysis of the proposed IFE spaces. All key intermediate results, such as the existence of
IFE functions, their approximation capabilities, and the trace/inverse inequalities, follow from properties of the \textit{Cauchy mapping}. These results help us establish
optimal convergence and stability results with respect to the mesh size and
polynomial degree. The upper bounds for the condition numbers
of both the local problem for computing the IFE functions and the global problem for computing
IFE solutions are independent of the interface
location relative to the mesh. Namely, the proposed method does not suffer from the presence
of small-cut interface elements. Nevertheless, numerical experiments indicate that the stability
estimates are suboptimal with respect to the contrast of the values of coefficient function $\beta$. To our best knowledge, this is the first arbitrary $p^{th}$-degree IFE method for elliptic
interface problems with nonhomogeneous interface jump conditions for which a solid mathematical foundation
has been established. Moreover, all methods based on pointwise enforcement \cite{1998Li,2004LiLinLinRogers,2016GuoLin},
weak jump conditions \cite{2014AdjeridBenromdhaneLin,2014AdjeridBenromdhaneLin2}, least-squares formulation \cite{2016AdjeridGuoLin,2018ZhuangGuo} and Cauchy problem \cite{2019GuoThesis,2019GuoLin} for interface problems with homogeneous jump conditions and linear interface yield the same IFE spaces. Thus,
the error analysis based on \textit{Cauchy mapping} extends to all those methods in the literature cited above
for constructing IFE spaces.

\commentout{
    For one-dimensional interface problems the IFE spaces existing in the litterature and constructed
    by directly enforcing the extended jump conditions are identical to the IFE spaces obtained by solving local Cauchy problems. Hence, the error and stability analyses given here hold for both IFE spaces. However, for two-dimensional
    problems the IFE spaces constructed by enforcing the extended interface conditions
    are, in general, different than those
    derived by solving Cauchy problems but both lead to optimal convergence rates.
    The relationship between the two IFE spaces needs further investigation.
}

This manuscript is organized as follows: in Section \ref{notations}, we recall a few notations and assumptions. In Section \ref{norm_equi}, we describe the construction procedure of the proposed IFE functions. In Section \ref{sec:enriched_IFE}, we present the proposed enriched IFE scheme and
derive \emph{a priori} error estimates in both energy and $L^2$ norms. In Section \ref{sec:conditioning},
we perform a stability analysis for both the local and global problems and derive upper bounds for
condition numbers. We perform several computational experiments and present numerical results in Section \ref{sec:num_example} to corroborate our theoretical results and to further explore stability numerically.

\section{Notations and Assumptions} \label{notations}

For a given bounded domain $\Omega \subset \mathbb{R}^2$, we consider an interface-independent, shape-regular, and quasi-uniform family of triangular meshes $\mathcal{T}_h,~h\geq 0$ with mesh size $h=\max_{T\in\mathcal{T}_h}\{h_T\}$ where $h_T$ is the diameter of an element $T\in\mathcal{T}_h$.
Let $\mathcal{N}_h$ and $\mathcal{E}_h$ be the sets of nodes and edges of the mesh $\mathcal{T}_h$, respectively.
For each mesh $\mathcal{T}_h$, the interface $\Gamma$ cuts some of its elements called interface elements while the remaining elements are called non-interface elements. Let $\mathcal{T}^i_h$ and $\mathcal{T}^n_h$ denote the sets of interface and non-interface elements, respectively. Finally, let $\mathcal{E}^i_h$ denote the collection of all the edges of elements in $\mathcal{T}^i_h$ and $\mathcal{E}^n_h=\mathcal{E}_h\backslash \mathcal{E}^i_h$.

For each interface element $T$, we define a fictitious element $T_{\lambda}$ as the homothetic image of $T$ where the homothetic center is the incenter $G$ of $T$ and the scaling factor $\lambda\geqslant1$ is independent of mesh size $h$, i.e.,
$$
T_{\lambda} = \{ X\in \mathbb{R}^2 : \exists Y\in T ~ \text{such that}~ \overrightarrow{GX}=\lambda \overrightarrow{GY} \}.
$$
See Figure \ref{fig:homothetic_1} for an illustration where $T=A_1A_2A_3$ is an interface element and
$T^\lambda = A^{\lambda}_1A^{\lambda}_2A^{\lambda}_3$ is an associated fictitious element. For simplicity's sake, we assume $T_{\lambda}\subset \Omega$ for every interface element $T$. In addition, for each fictitious element $T_{\lambda}$, we let $\Gamma^{\lambda}_T=T_{\lambda}\cap \Gamma$ which implies that $\Gamma_T:=\Gamma^{1}_T=T\cap \Gamma$. The fictitious element idea improves the conditioning of computing
higher-degree IFE functions on small-cut elements and helps establish \emph{a priori} error estimates \cite{2019GuoLin,2018ZhuangGuo}.
\begin{figure}[h]
  \centering
  \includegraphics[width=0.5\textwidth]{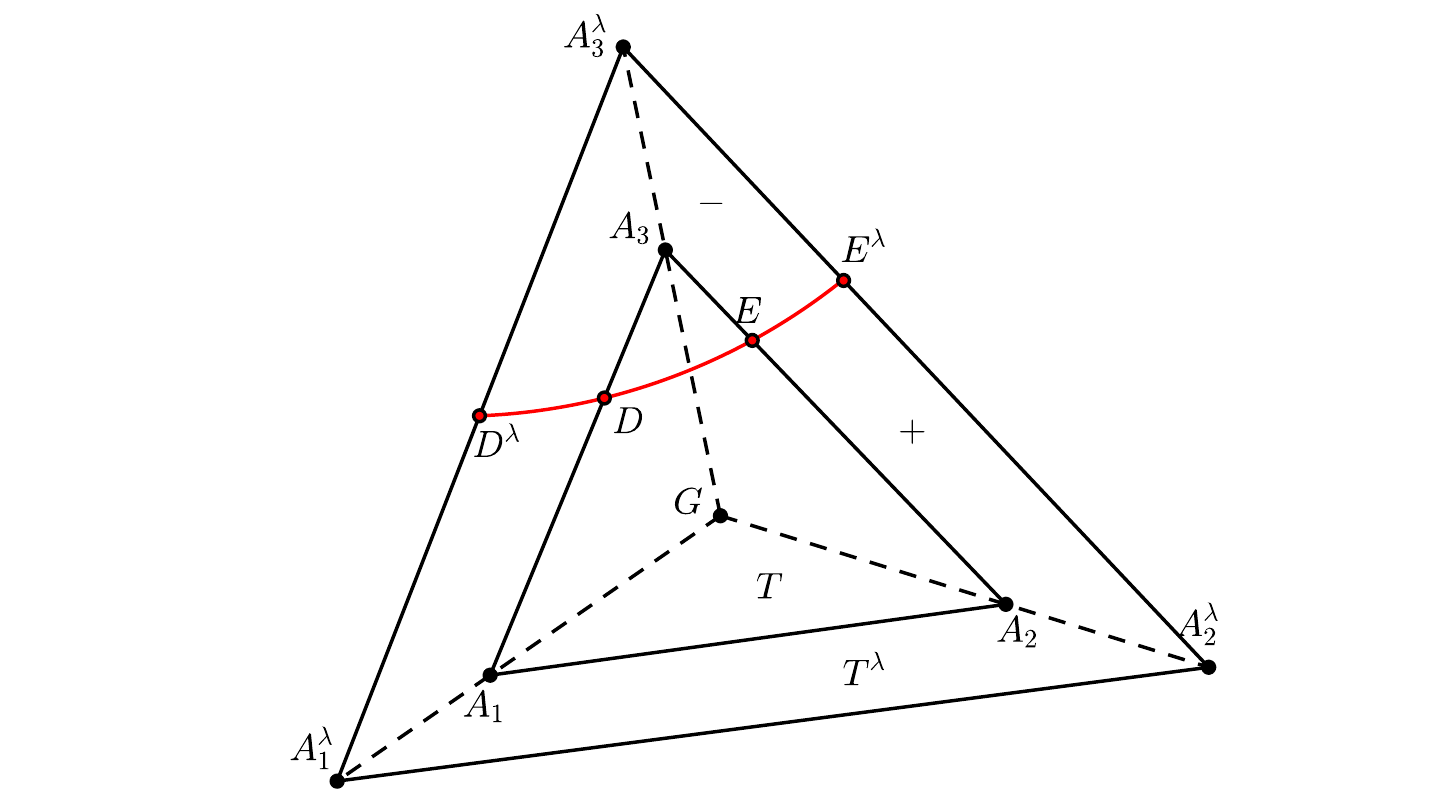}
  \caption{$T = \triangle A_1A_2A_3$ and its fictitious element
  $T_\lambda= \triangle A_1^\lambda A_2^\lambda A_3^\lambda$.}
  \label{fig:homothetic_1}
\end{figure}

To facilitate the error and stability analyses, we consider meshes $\mathcal{T}_h$
satisfying the following two assumptions:
\begin{itemize}
  \item[(\textbf{A1})] The interface intersects an element $T\in\mathcal{T}^i_h$ and its fictitious element $T_{\lambda}$ at two distinct points each located  on a different edge of $T$ and $T_{\lambda}$.
   \item[(\textbf{A2})] There exists a fixed integer $M$ such that for each element $K$ in $\mathcal{T}_h$, the cardinality of the set $\{ T\in\mathcal{T}^i_h:K\cap T_{\lambda}\neq\emptyset\}$ is less or equal to $M$.
\end{itemize}

Throughout this article, we use $\mathbb{P}_k(\omega)$ to denote the space of all
polynomials of degree not exceeding $k$, $k\geqslant 0$ on a sub-domain $\omega \subset \Omega$. Furthermore, we define $\Pi^{k}_{{\omega}}$ as the $L^2$ projection
operator: $H^{k}(\omega) \rightarrow \mathbb{P}_k(\omega)$. Next we consider two
Sobolev extension operators
$\mathfrak{E}^s$ ~:~ $PH^{k+2}(\Omega) \rightarrow H^{k+2}(\Omega)$ for $s=-,+$
that extends $u^s$ from $\Omega^s$ to $\Omega$. For instance, for $u=(u^+,u^-) \in PH^{k+2}(\Omega)$,
$\mathfrak{E}^+ (u)|_{\Omega^+} = u^+ $ and $\mathfrak{E}^+ (u)|_{\Omega^-} = v$ is
the extension of $u^+$ on $\Omega^-$. Similarly,  $\mathfrak{E}^- (u)|_{\Omega^-} = u^- $ and
$\mathfrak{E}^-(u)|_{\Omega^+} = w$ is the extension of $u^-$ on $\Omega^+$.
We further assume the extension operators $\mathfrak{E}^\pm$ \cite{2001GilbargTrudinger} satisfy
\begin{align}
       \sum_{m=1}^{k+2} | \mathfrak{E}^s (u) |_{H^m (\Omega)} \leqslant C_E \sum_{m=1}^{k+2} | u^{s} |_{H^{m}(\Omega^{s})}, ~~s=+,- \label{soblev_bound}
\end{align}
where the constant $C_E$ depends on $\Omega^{\pm}$, $\Omega$ and $k$. The estimate \eqref{soblev_bound} follows from the boundedness of the Sobolev extension (Theorem 7.25 in \cite{2001GilbargTrudinger}) and the Poincar\'e inequality.

Next, if the true solution $u$ of (\ref{model}) is in $PH^{k+2}(\Omega)$ and has Sobolev extensions
$\mathfrak{E}^s(u), ~s=+,-$ in $H^{k+2}(\Omega)$  we define the following operators
from $\mathfrak{F}_\Omega^s ~:~PH^k(\Omega) \rightarrow H^{k}(\Omega)$ such that
 $\mathfrak{F}_\Omega^s (f):=-\beta \triangle \mathfrak{E}^s (u), \ s=+,-$.
Let us note that
 $\mathfrak{F}_\Omega^+ (f)|_{\Omega^+}=-\beta^+ \triangle u^+ = f^+$
 while $\mathfrak{F}_{\Omega}^+ (f)|_{\Omega^-}=-\beta^- \triangle \mathfrak{E}^+ (u)| _{\Omega^-} \neq f^-$
in general, but it can be considered as an extension of $f^+$ on $\Omega^-$. Similarly,
  $\mathfrak{F}_\Omega^- (f)|_{\Omega^-}=-\beta^+ \triangle u^- = f^-$
 while $\mathfrak{F}_{\Omega}^- (f)|_{\Omega^+}=-\beta^+ \triangle \mathfrak{E}^+ (u)| _{\Omega^+} \neq f^+$ is considered as an extension of $f^-$ on $\Omega^+$. Hence,
 $\mathfrak{F}_{\Omega}^s$ is a special Sobolev extension operator
 for the right-hand side $f$ in (\ref{model}).

In the remainder of this manuscript we adopt the notation $A \ \lesssim \ B $ for the relation $A\ \leqslant\ C B$ with a generic constant $C>0$ independent of the mesh size, the coefficients $\beta^{\pm}$, and the interface location on all interface elements $T$ and $T_{\lambda}$. Similarly, we use the standard notation $\simeq$ for the equivalence of norms.

We end this section with a discussion of the solution regularity for interface problems.
Let us first note that the regularity of the solution of interface problem with homogeneous jump conditions is well established \cite{1970Babuska}. Improvements of these results appeared in \cite{2010ChuGrahamHou,2015GuzmanSanchezSarkisP1,2002HuangZou} which include details on
how the regularity constant depends on $\beta^{+}$ and $\beta^-$. The nonhomogeneous problem with
$J_D=0$ and $J_N\neq0$ was studied in \cite{1998ChenZou} for second-order problems and in \cite{1994NicaiseSandig1,1994NicaiseSandig2} for
more general interface problems. Here in order to show the effect of the coefficients $\beta^{+}$ and $\beta^-$, we
follow the ideas in \cite{1998ChenZou} to prove the following theorem.

\begin{thm}
\label{thm_regularity}
Let $\Omega$ be a convex domain and $\Gamma$ smooth enough. Given an integer $m\geqslant 0$, let $J_D\in H^{m+3/2}(\Gamma)$, $J_N\in H^{m+1/2}(\Gamma)$ and $f\in PH^{m}(\Omega)$, then the interface problem \eqref{model} admits a unique solution $ u\in PH^{m+2}(\Omega)$ such that
\begin{equation}
\label{u_regularity}
\sum_{k=1}^{m+2} | \beta u |_{PH^k(\Omega)} \leqslant C_r \left( \min\{\beta^+,\beta^-\} \|J_D\|_{H^{m+3/2}(\Gamma)} +  \|J_N\|_{H^{m+1/2}(\Gamma)} + \| f \|_{PH^{m}(\Omega)} \right),
\end{equation}
where $C_r$ is independent of $f$, $J_D$, $J_N$, $\beta^{+}$ and $\beta^-$.
\end{thm}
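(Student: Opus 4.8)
The plan is to reduce the problem with a nonhomogeneous Dirichlet jump to one with $J_D=0$, for which I can follow the regularity theory of Chen and Zou \cite{1998ChenZou}, and then to track how the coefficients $\beta^\pm$ propagate through every step so that the final constant $C_r$ is contrast-independent and the factor $\min\{\beta^+,\beta^-\}$ appears in front of $\|J_D\|_{H^{m+3/2}(\Gamma)}$. Existence and uniqueness come first and cheaply: once $[u]_\Gamma=J_D$ has been homogenized (below), the reduced problem has the weak form governed by the $\beta$-weighted coercive bilinear form $a(v,w)=\int_\Omega \beta\nabla v\cdot\nabla w\,dX$ on $H_0^1(\Omega)$, so Lax--Milgram yields a unique weak solution, and the homogeneous problem admits only the trivial solution by coercivity $a(v,v)\geqslant \beta^+\|\nabla v\|_{L^2(\Omega)}^2$.

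First I would homogenize the Dirichlet jump. Since $\Gamma$ and $\partial\Omega$ are disjoint smooth curves and $\Omega^+$ is the subdomain adjacent to $\partial\Omega$, the inverse trace theorem produces a lifting $w=(w^+,0)\in PH^{m+2}(\Omega)$ with $w^+|_\Gamma=-J_D$, $w^+|_{\partial\Omega}=0$, $w^-\equiv 0$, and $\|w^+\|_{H^{m+2}(\Omega^+)}\leqslant C\|J_D\|_{H^{m+3/2}(\Gamma)}$. The decisive point is to place the lifting in $\Omega^+$, the region carrying the \emph{smaller} coefficient: setting $\tilde u=u-w$ gives $[\tilde u]_\Gamma=0$ together with a modified source $\tilde f=(f^++\beta^+\triangle w^+,\,f^-)\in PH^{m}(\Omega)$ and a modified flux jump $\tilde J_N=J_N+\beta^+\,\partial_{\mathbf n}w^+|_\Gamma\in H^{m+1/2}(\Gamma)$, so that the data perturbations are controlled by $C\beta^+\|J_D\|_{H^{m+3/2}(\Gamma)}=C\min\{\beta^+,\beta^-\}\|J_D\|_{H^{m+3/2}(\Gamma)}$, exactly the weight demanded on the right-hand side of \eqref{u_regularity}.

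Next I would establish the regularity shift $f\in PH^{m}\Rightarrow u\in PH^{m+2}$ for the reduced problem with $J_D=0$, re-deriving the Chen--Zou estimates with explicit $\beta$ bookkeeping. The argument localizes by a partition of unity into three regimes: interior estimates on each side (standard, with the $\beta$-weighting reducing to the constant-coefficient Poisson estimate on each subdomain); boundary estimates near $\partial\Omega$, where convexity of $\Omega$ supplies the $H^2$-up-to-boundary bound; and interface estimates near $\Gamma$. For the last I would flatten $\Gamma$ locally, obtain tangential derivatives from difference quotients applied to the flattened transmission problem, recover the normal second derivative from the equation via $\partial_{\mathbf n\mathbf n}\tilde u=-\tilde f/\beta-\partial_{tt}\tilde u$ (modulo lower-order curvature terms absorbed in the induction), and close the coupled system using $[\tilde u]_\Gamma=0$ and $[\beta\nabla\tilde u\cdot\mathbf n]_\Gamma=\tilde J_N$. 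Differentiating the equation and the two jump conditions and invoking the higher regularity of the data then bootstraps the $H^2$ estimate up to $H^{m+2}$.

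The main obstacle is precisely this interface step carried out with a constant independent of the contrast $\rho=\beta^+/\beta^-$. The mechanism that makes it work is that the quantity naturally controlled by the data is the $\beta$-weighted solution $\beta u$ rather than $u$ itself: the flux matching balances $\beta^-\partial_{\mathbf n}\tilde u^-$ against $\beta^+\partial_{\mathbf n}\tilde u^+$, and one must verify that the second normal derivatives on each side are each dominated by $\beta^\pm|\tilde u^\pm|_{H^2(\Omega^\pm)}$ with no residual power of $\rho$ surviving. Once the core $H^2$ estimate $\sum_{k=1}^{2}|\beta\tilde u|_{PH^k(\Omega)}\leqslant C_r\big(\|\tilde J_N\|_{H^{1/2}(\Gamma)}+\|\tilde f\|_{PH^{0}(\Omega)}\big)$ is obtained with $C_r$ independent of $\beta^\pm$ and bootstrapped to order $m+2$, assembling $u=\tilde u+w$ and using $\sum_{k=1}^{m+2}|\beta w|_{PH^k(\Omega)}=\beta^+\sum_{k=1}^{m+2}|w^+|_{H^k(\Omega^+)}\leqslant C\beta^+\|J_D\|_{H^{m+3/2}(\Gamma)}$ delivers \eqref{u_regularity}.
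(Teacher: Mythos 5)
Your first step, the homogenization of $J_D$, is correct and achieves the weight $\min\{\beta^+,\beta^-\}$ by the same mechanism as the paper: the paper lifts $J_D$ by a biharmonic function $u_2$ with Cauchy data $u_2=J_D$, $\partial_{\mathbf n}u_2=0$ on $\Gamma$ (regularity from \cite{2011GiraultRaviart}), observing that the lifting may be posed on whichever subdomain carries the smaller coefficient; because it prescribes both traces on $\Gamma$, it leaves the flux jump untouched and only perturbs the source by $\beta\triangle u_2$, whereas your plain trace lifting perturbs $J_N$ by $\beta^+\partial_{\mathbf n}w^+|_{\Gamma}$, which you account for correctly. The real divergence is in the treatment of the flux jump. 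The paper homogenizes $J_N$ as well, with a second biharmonic lifting $u_1$ satisfying $u_1^-=0$ and $\beta^-\partial_{\mathbf n}u_1^-=J_N$ on $\Gamma$, so that $\|u_1^-\|_{H^{m+2}(\Omega^-)}\lesssim \|J_N\|_{H^{m+1/2}(\Gamma)}/\beta^-$ and hence $\|\beta\triangle u_1\|_{PH^m(\Omega)}\lesssim \|J_N\|_{H^{m+1/2}(\Gamma)}$; after both liftings the residual problem has fully homogeneous jumps and the entire contrast-explicit regularity burden is discharged by citing \cite{1970Babuska,2010ChuGrahamHou,2015GuzmanSanchezSarkisP1,2002HuangZou}. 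You instead keep $\tilde J_N\neq 0$ and undertake to re-derive the whole regularity shift, with $\beta$-bookkeeping, from scratch.

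That re-derivation is where your proposal has a genuine gap. The flattening and tangential difference-quotient scheme you sketch, applied to the coupled transmission problem, naturally yields energy-weighted bounds on $\|\sqrt{\beta}\,\nabla\partial_{\tau}\tilde u\|_{L^2}$, which on the stiff side is weaker than the required $\beta^-|\tilde u^-|_{H^2(\Omega^-)}$ by a factor $\sqrt{\beta^-/\beta^+}$; upgrading to the asymmetric $\beta$-weighted estimate in \eqref{u_regularity} with no surviving power of the contrast is exactly the nontrivial content of \cite{2002HuangZou,2010ChuGrahamHou,2015GuzmanSanchezSarkisP1}, and at the decisive moment your text only states the goal (``one must verify that \ldots with no residual power of $\rho$ surviving'') without supplying an argument. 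To close the hole you would need either the paper's device --- any $H^{m+2}$ lifting realizing the interface Cauchy data $(0,\,J_N/\beta^-)$, followed by quotation of the homogeneous-jump results --- or a genuinely decoupled estimate, e.g.\ alternating between the Neumann problem for $\tilde u^-$ on $\Omega^-$ with data $(J_N+\beta^+\partial_{\mathbf n}\tilde u^+)/\beta^-$ and the Dirichlet problem for $\tilde u^+$ on $\Omega^+$ with data $\tilde u^-|_{\Gamma}$, where the cross terms carry the factor $\rho=\beta^+/\beta^-\leqslant 1$ and are absorbed for small $\rho$, the regime $\rho\simeq 1$ being uniformly elliptic. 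As written, the central estimate of the theorem is asserted rather than proved, while the paper's two-lifting reduction avoids having to prove it at all.
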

\begin{proof}
See Appendix \ref{append_regularity}.
\end{proof}

Inspired by the proof of this theorem, we derive in the next section
an algorithm for constructing IFE shape functions by solving local problems
that model discontinuous solution, flux and right-hand side accross the interface.



\section{IFE Functions and an Associated IFE Method}
\label{norm_equi}

In this section we present a procedure to construct IFE functions on an interface element $T$ and derive the enriched IFE method.

\subsection{IFE Functions}

The main goal is to find two polynomials
$w_p^+ \in \mathbb{P}_p(T_\lambda^+)$, $w_p^- ~\in~\mathbb{P}_p(T_\lambda^-)$ that satisfy the homogeneous interface conditions \ref{nonhomo_jump_cond_1}, \ref{nonhomo_jump_cond_2}, ($J_D=0,~ J_N=0$) in certain weak sense.
By Appendix \ref{append_cauchy}, given $z_p \in \mathbb{P}_p(T_\lambda)$ the extended interface conditions
across a linear interface $\Gamma$ yield a unique polynomial $v_p \in \mathbb{P}_k(T_\lambda)$ as a solution to the Cauchy problem in $T_\lambda^-$:
\begin{equation}\label{eq:cauchy}
  \triangle v_p = \frac{\beta^+}{\beta^-} \triangle z_p, ~~\text{in} ~T_\lambda^- \text{~~and~~} v_p = z_p, ~~
 \partial_{\mathbf{n}} v_p = \frac{\beta^+}{\beta^-}\partial_{\mathbf{n}} {z_p} ~~ \text{on}~ \Gamma_T^\lambda
\end{equation}
where $\partial_{\mathbf{n}} {v_p} = \nabla v_p \cdot \mathbf{n}$ and $\partial_{\mathbf{n}} {z_p} = \nabla z_p \cdot \mathbf{n}$ are the normal derivatives.
However, for general curved interfaces, given $z_p \in \mathbb{P}_p(T_\lambda)$, there is no $v_p \in \mathbb{P}_p(T_\lambda)$ to satisfy the above Cauchy problem. Instead we consider a least-squares approximation $v_p \in \mathbb{P}_p(T_\lambda)$ that minimizes
\begin{equation} \label{eq:min1}
||\triangle v_p - \frac{\beta^+}{\beta^-} \triangle {z_p}||_{T_\lambda^-}^2 +
\frac{1}{h_T^3} || v_p - z_p||_{\Gamma_T^\lambda}^2 + \frac{1}{h_T}
|| \partial_{\mathbf{n}} v_p - \frac{\beta^+}{\beta^-}\partial_{\mathbf{n}} z_p||_{\Gamma_T^\lambda}^2.
\end{equation}
To approximate particular solutions corresponding to the case $J_D \neq 0$, $J_N=0$ and $f$ smooth we determine $v_p\in\mathbb{P}_p(T_\lambda)$ that minimizes
\begin{equation} \label{eq:min2}
||\triangle v_p - \frac{\beta^+}{\beta^-} \triangle z_p||_{T_\lambda^-}^2 +
\frac{1}{h_T^3} || v_p - z_p-J_D||_{\Gamma_T^\lambda}^2 + \frac{1}{h_T}
|| \partial_{\mathbf{n}} v_p - \frac{\beta^+}{\beta^-}\partial_{\mathbf{n}} z_p||_{\Gamma_T^\lambda}^2.
\end{equation}
Similarly, for $J_D=0$, $J_N \neq 0$ and $f$ smooth  we compute $v_p \in \mathbb{P}_p(T_\lambda)$ that minimizes
\begin{equation}\label{eq:min3}
||\triangle v_p - \frac{\beta^+}{\beta^-} \triangle z_p||_{T_\lambda^-}^2 +
\frac{1}{h_T^3} || v_p - z_p||_{\Gamma_T^\lambda}^2 + \frac{1}{h_T}
|| \partial_{\mathbf{n}} v_p - \frac{\beta^+}{\beta^-}\partial_{\mathbf{n}} z_p-\frac{J_N}{\beta^-}||_{\Gamma_T^\lambda}^2.
\end{equation}
Finally, for $J_D=0$, $J_N=0$ and $f$ discontinuous we determine $v_p\in \mathbb{P}_p(T_\lambda^-)$ that minimizes
\begin{equation}\label{eq:min4}
||\triangle v_p - \frac{\beta^+}{\beta^-} \triangle z_p + \frac{\mathfrak{F}_{T_\lambda}^-(f) - \mathfrak{F}_{T_\lambda}^+(f)}{\beta^-} ||_{T_\lambda^-}^2 +
\frac{1}{h_T^3} || v_p - z_p||_{\Gamma_T^\lambda}^2 + \frac{1}{h_T}
|| \partial_{\mathbf{n}} v_p - \frac{\beta^+}{\beta^-}\partial_{\mathbf{n}} z_p||_{\Gamma_T^\lambda}^2.
\end{equation}
For each of the above cases, the corresponding macro $p^{th}$-degree (piecewise) polynomial IFE function $w_p=(w_p^+,w_p^-)$ on $T_\lambda$ is such
that $w_p^+= z_p|_{T_\lambda^+}$ and $w_p^- = v_p|_{T_\lambda^-}$. We further note that the procedure of solving least-squares problem described above is essentially equivalent to ``solve" Cauchy problems on the associated fictitious sub-element $T_{\lambda}^-$ with a least-squares finite element method \cite{2009BochevGunzburger,2019GuoLin,2017HUMUYE}. More specifically, we consider the following bilinear forms :
\begin{subequations}
\label{cauchy_local}
\begin{align}
    &  a_{\lambda}(r_p,q_p) =  \int_{T_{\lambda}^{-}} \triangle r_p \triangle q_p  dX + h_T^{-3} \int_{\Gamma^{\lambda}_T} r_p q_p ds + h_T^{-1} \int_{\Gamma^{\lambda}_T} \partial_{\mathbf{ n}}r_p ~ \partial_{\mathbf{ n}}q_p ds, ~~~ r_p,q_p\in \mathbb{P}_p(T_{\lambda}),  \label{cauchy_bilinear_form_l} \\
    & b_{\lambda}(r_p,q_p) =  \int_{T_{\lambda}^{-}} \frac{\beta^+}{\beta^-} \triangle r_p \triangle q_p  dX + h_T^{-3} \int_{\Gamma^{\lambda}_T} r_p q_p ds + h_T^{-1} \int_{\Gamma^{\lambda}_T} \frac{\beta^+}{\beta^-} \partial_{\mathbf{ n}}r_p ~ \partial_{\mathbf{ n}}q_p ds, ~~~ r_p,q_p\in \mathbb{P}_p(T_{\lambda}),  \label{cauchy_bilinear_form_r}
\end{align}
\end{subequations}
that induce the following energy type semi norms
\begin{equation}
\label{cauchy_norm}
\vertiii{v_p}^2_{a_{\lambda}} = a_{\lambda}(v_p,v_p), ~~~~~ \vertiii{v_p}^2_{b_{\lambda}} = b_{\lambda}(v_p,v_p), ~~~ \forall v_p\in \mathbb{P}_p(T_{\lambda}).
\end{equation}
By the assumption $\beta^-\geqslant\beta^+$ we have
\begin{equation}
\label{cauchy_norm_relation}
\frac{\beta^+}{\beta^-}  \vertiii{v_p}^2_{a_{\lambda}} \leqslant \vertiii{v_p}^2_{b_{\lambda}} \leqslant  \vertiii{v_p}^2_{a_{\lambda}}, ~~~ \forall v_p\in\mathbb{P}_p(T_\lambda).
\end{equation}
In fact, by Theorem 4.1 in \cite{2019GuoLin}, both $\vertiii{\cdot}_{a_{\lambda}}$ and $\vertiii{\cdot}_{b_{\lambda}}$ are norms on $\mathbb{P}_p(T_\lambda)$ with arbitrary degree $p$.
Furthermore, by Lemma 4.2 in \cite{2017HUMUYE}
, the bilinear form $a_{\lambda}(\cdot,\cdot)$ is coercive with respect to the norm $\vertiii{\cdot}_{a_{\lambda}}$ on the space $\mathbb{P}_p(T_\lambda)$. Then, for a given $z_p \in \mathbb{P}_p(T_\lambda)$ one can check that the least-squares solution $v_p \in \mathbb{P}_p (T_\lambda)$ of (\ref{eq:cauchy})
satisfies the discrete weak problem
\begin{equation}
a_{\lambda}(v_p,q_p) = b_{\lambda}(z_p,q_p), ~~ \forall ~ q_p \in \mathbb{P}_p(T_\lambda),
\end{equation}
from which we introduce the following mapping that maps $z_p$ to $v_p$.
\begin{definition}
\label{def:cauchy_ext}
The {\em Cauchy Mapping} $\mathfrak{C}_T: \mathbb{P}_p(T_\lambda) \rightarrow \mathbb{P}_p(T_\lambda)$ is such that, given $z_p \in \mathbb{P}_p(T_\lambda)$, its image $\mathfrak{C}_T(z_p) \in \mathbb{P}_p(T_\lambda) $ is the solution of the discrete local variational problem
\begin{equation}
\label{cauchy_exten}
a_{\lambda}(\mathfrak{C}_T(z_p),q_p) = b_{\lambda}(z_p,q_p), ~~~ \forall q_p\in\mathbb{P}_p(T_\lambda).
\end{equation}
\end{definition}
By the coercivity of $a_{\lambda}(\cdot,\cdot)$ and $b_{\lambda}(\cdot,\cdot)$, $\mathfrak{C}_T$ is bijective.
The Cauchy mapping $\mathfrak{C}_T$ plays a important role in both the theoretical analysis and the construction of the IFE spaces where on each interface element $T\in\mathcal{T}^i_h$, the local $p^{th}$-degree macro polynomial IFE space is given by
\begin{equation}
\label{loc_IFE_space_inter}
S^p_h(T) = \{ w_p=(w_p^+,w_p^-) ~:~ w_p^+ = z_p|_{T^+} ~~and~~ w_p^- =\mathfrak{C}_T(z_p)|_{T^-}, ~ \forall~ z_p\in \mathbb{P}_p(T_\lambda) \}, ~~ \forall~ T\in\mathcal{T}^i_h.
\end{equation}
These local IFE spaces on interface elements will be used to approximate the homogeneous
component of the exact solution.

Now we proceed to define the enrichment IFE functions for approximating the nonhomogeneous component
of the solution of  the interface problem \eqref{model}. The enrichment functions associated with the data on the interface are derived using the least-squares problems \eqref{eq:min2} and \eqref{eq:min3} for which we introduce the following two mappings.
\begin{definition}
\label{def:cauchy_ext_DN}
Let $\mathfrak{C}_{T,D}:\mathbb{P}_p(T_\lambda) \rightarrow \mathbb{P}_p(T_\lambda)$ and $\mathfrak{C}_{T,N}:\mathbb{P}_p(T_\lambda) \rightarrow \mathbb{P}_p(T_\lambda)$ be such that given $z_p \in \mathbb{P}_p(T_\lambda)$, its images $\mathfrak{C}_{T,D}(z_p)$ and $\mathfrak{C}_{T,N}(z_p) \in \mathbb{P}_p(T_\lambda)$ are solutions of the following local variational problems, respectively:
\begin{subequations}
\label{cauchy_ext_DN}
\begin{align}
  & a_{\lambda}(\mathfrak{C}_{T,D}(z_p),q_p) = b_{\lambda}(z_p,q_p) + h^{-3}_T \int_{\Gamma^{\lambda}_T} J_D q_p ds, ~~~ \forall~ q_p\in\mathbb{P}_p(T_\lambda), \label{cauchy_exten_D}   \\
    &   a_{\lambda}(\mathfrak{C}_{T,N}(z_p),q_p) = b_{\lambda}(z_p,q_p) + h^{-1}_T \int_{\Gamma^{\lambda}_T} \frac{ J_N \partial_{\mathbf{ n}}q_p }{\beta^-} ds, ~~~ \forall~ q_p\in\mathbb{P}_p(T_\lambda). \label{cauchy_exten_N}
\end{align}
\end{subequations}
\end{definition}
Since $\mathfrak{C}_{T,D}$ and $\mathfrak{C}_{T,N}$ are determined by modifying the right hand side of \eqref{cauchy_exten}, they are still bijective. By these two mappings, we introduce
two macro $p^{th}$-degree polynomials on every interface element $T$ as follows:
\begin{equation}
\label{enrich_IFE_fun_DN}
\phi_{T,D}=
\begin{cases}
      & 0 ~~~~~~~~~~~\, \text{on} ~ T^+, \\
      & \mathfrak{C}_{T,D}(0) ~~~ \text{on}~ T^-,
\end{cases}
~~\text{and}~~
\phi_{T,N}=
\begin{cases}
      & 0 ~~~~~~~~~~~\, \text{on} ~ T^+, \\
      & \mathfrak{C}_{T,N}(0) ~~~ \text{on}~ T^-,
\end{cases}~~\forall ~T \in \mathcal{T}_h^i.
\end{equation}
In fact, $\mathfrak{C}_{T,D}(0)$ and $\mathfrak{C}_{T,N}(0)$, respectively, are minimizers of \eqref{eq:min2} and \eqref{eq:min3} with $z_p=0$.
These two functions will be used to homogenize the interface jump conditions \eqref{nonhomo_jump_cond_1} and \eqref{nonhomo_jump_cond_2} in a weak sense.

For $p\geqslant2$, we resolve the singularity induced by a rough right-hand side function $f$ by
another mapping defined by the least-square problem \eqref{eq:min4}.

\commentout{
we obtain optimal convergence rates by resolving the singularity
induced by a rough right-hand side function $f$ for which we derive the following mapping from the
least-square problem \eqref{eq:min4}.
}
\begin{subequations} \label{eq:CT_phi_Tf}
\begin{definition}
\label{def:cauchy_ext_Lap}
For every function $\varphi\in L^2(T_{\lambda})$, we let $\mathfrak{C}_{T,\varphi}:\mathbb{P}_p(T_\lambda) \rightarrow \mathbb{P}_p(T_\lambda)$ be such that given $z_p \in \mathbb{P}_p(T_\lambda)$, its image $\mathfrak{C}_{T,\varphi}(z_p)\in \mathbb{P}_p(T_\lambda)$
is the solution to the local variational problem
\begin{equation}
\label{cauchy_ext_Lap}
  a_{\lambda}(\mathfrak{C}_{T,\varphi}(z_p),q_p) = b_{\lambda}(z_p,q_p) +  \int_{T^-_{\lambda}} \varphi \triangle q_p  dX, ~~~ \forall ~q_p\in\mathbb{P}_p(T_\lambda).
\end{equation}
\end{definition}
Again, $\mathfrak{C}_{T,\varphi}$ is determined by modifying the right hand side of
\eqref{cauchy_exten}
and is a bijective mapping.
In order to handle the singularity in $f$ across the interface we introduce the following function
\begin{equation}
\label{ctf}
 \varphi_f =\frac{ \Pi^{p-2}_{T^+_{\lambda}} f^+  - \Pi^{p-2}_{T^-_{\lambda}} f^- }{\beta^-} \in \mathbb{P}_{p-2}(T_\lambda),
\end{equation}
where $\Pi^{k}_{T^s_{\lambda}}f^s \in \mathbb{P}_k(T_\lambda)$ is the $L^2$ projection  satisfying
$(\Pi^{k}_{T^s_{\lambda}}f^s - f^s, q_k)_{T_\lambda^s}=0,~\forall q_k \in \mathbb{P}_k(T_\lambda)$ for $s=+,-$. Applying \eqref{cauchy_ext_Lap} with $\varphi=\varphi_f$ and $z_p=0$ we define
the following macro $p^{th}$-degree polynomial:
\begin{equation}
\label{enrich_IFE_fun_Lap}
\phi_{T,f}=
\begin{cases}
      & 0 ~~~~~~~~~~~~~ \text{on} ~ T^+, \\
      & \mathfrak{C}_{T,\varphi_f}(0) ~~~~ \text{on}~ T^-.
\end{cases}
\end{equation}
\end{subequations}
Let us note that $\mathfrak{C}_{T,\varphi_f}(0)$ is the solution of \eqref{eq:min4} with $\mathfrak{F}_{T_\lambda}^s(f)$ being replaced by $ \Pi_{T_{\lambda}^s}^{p-2} f^s $. Furthermore, the mappings defined by \eqref{cauchy_exten}, \eqref{cauchy_ext_DN} and \eqref{cauchy_ext_Lap} can be interpreted as discrete operators that extend polynomials from one side of an interface element to the other side according to the interface jump data. Following \cite{2019GuoLin},
we say that the macro polynomials described by \eqref{loc_IFE_space_inter}, \eqref{enrich_IFE_fun_DN} and \eqref{enrich_IFE_fun_Lap} are constructed by \textit{Cauchy extensions}.

We refer to the macro polynomials defined by \eqref{enrich_IFE_fun_DN} and \eqref{enrich_IFE_fun_Lap}
as enrichment IFE functions because we will use them to enrich the local IFE space $S_h^p(T)$ on every interface element $T \in \mathcal{T}^i_h$. Specifically, the local enrichment functions \eqref{enrich_IFE_fun_DN} and \eqref{enrich_IFE_fun_Lap}
are used to define the following  global enrichment IFE function:
\begin{equation}
\label{enrich_IFE_fun_glob}
\Phi_h =
\begin{cases}
\phi_{T,D} + \phi_{T,N} + \phi_{T,f},  & \text{on}  ~ T\in\mathcal{T}^i_h, \\
 0, & \text{on}  ~ T\in\mathcal{T}^n_h,
\end{cases}
\end{equation}
where the enrichment function $\phi_{T,f}$ is used only for $p\geqslant2$.

\begin{rem}
\label{rem:compare_homogenization}
The enrichment functions in \cite{2007GongTHESIS,2007GongLiLi,2010GongLi} are constructed to exactly satisfy the nonhomogeneous jump conditions through a level set method which are allowed to be
non polynomials. However, our enrichment IFE functions in \eqref{enrich_IFE_fun_DN} and \eqref{enrich_IFE_fun_Lap} are piecewise polynomials which satisfy the nonhomogeneous jump conditions in a weak but approximate sense. Moreover, as shown later, these enrichment IFE functions yield optimally converging IFE solutions of interface problems.
\end{rem}

Finally, following the general framework of the IFE methodology, on each non-interface element $T\in\mathcal{T}^n_h$, the local IFE space is simply the standard $p^{th}$-degree polynomial space
\begin{equation}
\label{loc_IFE_space_non_inter}
S^p_h(T) = \mathbb{P}_p(T),~~\forall T \in \mathcal{T}_h^n,
\end{equation}
i.e., the standard $p^{th}$-degree polynomials will be used on all the non-interface elements in the IFE method to be described later.


\subsection{A $p^{th}$-Degree Enriched IFE Method}
In this section we describe an enriched IFE method for solving the elliptic interface problem \eqref{model} by first introducing the following underlying function spaces
\begin{equation}
\begin{split}
\label{space_Vh}
V_h = \Big\{ v\in L^2(\Omega)~:~& v|_T\in H^1(T) \text{~if~} T\in\mathcal{T}^n_h, ~ v|_{T^{\pm}}\in H^1(T^{\pm})
\text{~if~} T\in \mathcal{T}^i_h, \\
& \text{and} ~ v~ \text{is continuous on each} ~ e\in\mathcal{E}^n_h, ~ v|_{\partial\Omega}=0 \Big\},
\end{split}
\end{equation}
\begin{equation}\label{eq:space_homogen}
V_{h,0}= \big \{ v ~\in ~V_h ~ : ~~ [v]|_\Gamma =0 ~~ \text{and} ~~ [\beta \partial_{\bf n} v ]|_{\Gamma}= 0 \big \}.
\end{equation}
Using the local IFE spaces \eqref{loc_IFE_space_non_inter}, \eqref{loc_IFE_space_inter}, and Appendix \ref{append_decomposition}, we introduce a global IFE subspace as follows:
\begin{equation}
\label{glob_IFE_space}
S^p_h(\Omega) = \Big\{ v\in L^2(\Omega)~:~ v|_{T}\in S^p_h(T), ~ \forall T\in\mathcal{T}_h ~ \text{and} ~ v ~ \text{is continuous on each} ~ e\in\mathcal{E}^n_h, ~ v|_{\partial\Omega}=0 \Big\}.
\end{equation}
We observe that $S^p_h(\Omega)\subseteq V_h$ and the functions in these two spaces may be discontinuous across the edges of interface elements as well as the interface itself. However, $S^p_h(\Omega)$ is not a subspace of $V_{h,0}$ for general curved interfaces since the homogeneous interface conditions are not exactly satisfied by polynomials.

Now, if the exact solution $u$ of \eqref{model} has the regularity shown in Theorem \ref{thm_regularity}, thus $u \in V_h$ and satisfies the following weak problem:
\begin{subequations}
\label{weak_form}
\begin{align}
     a_h(u,v) & =L_f(v),  ~~ \forall v \in V_{h,0}, \label{weak_form_1}
\end{align}
where
 \begin{align}
 a_h(u,v) & = \sum_{T\in\mathcal{T}_h} \int_T \beta \nabla u\cdot \nabla v dX \nonumber  \\
& - \sum_{e\in\mathcal{E}^i_h} \int_e \{ \beta \nabla u\cdot \mathbf{ n} \}_e [v]_e ds
   + \epsilon_0 \sum_{e\in\mathcal{E}^i_h} \int_e \{ \beta \nabla v \cdot \mathbf{ n} \}_e [u]_e ds + \sum_{e\in\mathcal{E}^i_h} \frac{\sigma^0 \gamma }{|e|^{\theta}} \int_e  [u]_e \, [v]_e ds  \label{weak_form_2} \\
& - \sum_{T\in\mathcal{T}^i_h} \int_{\Gamma_T} \{ \beta \nabla u\cdot \mathbf{ n} \}_{\Gamma} [v]_{\Gamma} ds
   + \epsilon_1  \sum_{T\in\mathcal{T}^i_h} \int_{\Gamma_T} \{ \beta \nabla v \cdot \mathbf{ n} \}_{\Gamma} [u]_{\Gamma} ds +  \sum_{T\in\mathcal{T}^i_h} \frac{\sigma^1 \gamma }{h^{\theta}_T} \int_{\Gamma_T}  [u]_{\Gamma} \, [v]_{\Gamma} ds, \nonumber
\end{align}
and
\begin{align}
   L_f(v)&=\int_{\Omega}fv dX + \int_{\Gamma}J_N\{v\}_{\Gamma} ds + \epsilon_1 \int_{\Gamma} J_D\{\beta\nabla v\cdot\mathbf{ n}\}_{\Gamma}ds + \sum_{T\in\mathcal{T}^i_h} \frac{\sigma^1 \gamma }{h^{\theta}_T} \int_{\Gamma_T}  J_D \, [v]_{\Gamma} ds,  \label{new_Lf}
\end{align}
\end{subequations}
with some positive constants $\sigma^0$ and $\sigma^1$ independent of the coefficients $\beta^{\pm}$. The remaining parameters are $\gamma=(\beta^-)^2/\beta^+$ and $\theta=1$. Similar penalty terms are also employed in other finite element methods based on unfitted meshes \cite{2015BurmanClaus,2012MASSJUNG,2016WangXiaoXu,2010WuXiao}.
Furthermore, we equip $V_h$ with the following broken norm
\begin{equation}
\begin{split}
\label{energy_norm_aux}
      \vertiii{ v }^2_h = & \sum_{T\in\mathcal{T}_h} \int_T \| \sqrt{\beta} \nabla v \|^2 dX + \sum_{e\in\mathcal{E}^i_h} \frac{\sigma^0 \gamma }{|e|} \int_e [v]^2_e ds  + \sum_{T\in\mathcal{T}^i_h} \frac{\sigma^1 \gamma}{h_T} \int_{\Gamma_T} [v]^2_{\Gamma} ds\\
      & + \frac{|e|}{\sigma^0 \gamma } \sum_{e\in\mathcal{E}^i_h} \int_e ( \{ \beta \nabla v \cdot \mathbf{ n} \}_e )^2 ds + \frac{h_T}{\sigma^1 \gamma }\sum_{T\in\mathcal{T}^i_h} \int_{\Gamma_T} ( \{ \beta \nabla v \cdot \mathbf{ n} \}_{\Gamma})^2 ds.
\end{split}
\end{equation}
We recall a few results about the equivalence of norms
from Lemmas 4.2, 3.5 and 3.7 in \cite{2019GuoLin} useful for the relevant analysis to be presented.
\begin{lemma}
\label{lem_norm_equiv}
The following hold on a  fictitious element $T_{\lambda}$, $\lambda>1$,  for every element
$T\in\mathcal{T}^i_h$ :
\begin{subequations}
\begin{equation}\label{lem_norm_equiv_eq1}
h^2_T \vertiii{ \cdot }_{{a_{\lambda}}} \simeq \| \cdot \|_{L^2(T^-_{\lambda})} ~~ \text{on } \mathbb{P}_p(T_\lambda),
\end{equation}
and
\begin{equation}
\label{lem_norm_equiv_eq2}
\| \cdot \|_{L^2(T^-_{\lambda})}  \simeq \| \cdot \|_{L^2(T^+_{\lambda})}  \simeq \| \cdot \|_{L^2(T_{\lambda})} \simeq \|\cdot\|_{L^2(T)} ~  \text{on} ~ \mathbb{P}_p(T_\lambda).
\end{equation}
\end{subequations}
\end{lemma}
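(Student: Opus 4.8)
The plan is to reduce both equivalences to a single reference configuration by an affine scaling, after which each becomes an equivalence of two norms on the finite-dimensional space $\mathbb{P}_p$; the only real content is that the equivalence constants can be taken independent of $h_T$ and, crucially, of the location of $\Gamma$ relative to the mesh. First I would introduce the scaling $x = G + h_T\hat x$ carrying $T_\lambda$ onto a reference fictitious element $\hat T_\lambda$ of unit size, under which $\triangle$ picks up $h_T^{-2}$, the area element $h_T^2$, arclength $h_T$, and the normal derivative $h_T^{-1}$. A direct bookkeeping then shows that each of the three terms defining $\vertiii{\cdot}_{a_{\lambda}}^2$ scales by exactly $h_T^{-2}$, so that $h_T^2\vertiii{v}_{a_{\lambda}} = h_T\,\vertiii{\hat v}_{\hat a}$, while $\|v\|_{L^2(T^-_{\lambda})} = h_T\|\hat v\|_{L^2(\hat T_\lambda^-)}$; the matching powers of $h_T$ reduce \eqref{lem_norm_equiv_eq1} to the scale-free claim $\vertiii{\hat v}_{\hat a}\simeq\|\hat v\|_{L^2(\hat T_\lambda^-)}$ on $\mathbb{P}_p(\hat T_\lambda)$.

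For that reduced claim I would argue that both sides are genuine norms on $\mathbb{P}_p(\hat T_\lambda)$ and then invoke finite-dimensionality. That $\|\cdot\|_{L^2(\hat T_\lambda^-)}$ is a norm needs only $|\hat T_\lambda^-|>0$. That $\vertiii{\cdot}_{\hat a}$ is a norm is the key algebraic fact (the role of Theorem/Lemma~4.1--4.2 of \cite{2019GuoLin}): if $\vertiii{\hat v}_{\hat a}=0$ then $\triangle\hat v\equiv 0$ on the open set $\hat T_\lambda^-$, which forces the polynomial $\triangle\hat v$ to vanish identically, so $\hat v$ is a harmonic polynomial with vanishing Cauchy data $\hat v=\partial_{\mathbf{n}}\hat v=0$ along the arc $\hat\Gamma$; by Cauchy--Kovalevskaya uniqueness for the Laplacian (equivalently, a holomorphic primitive must vanish), $\hat v\equiv 0$. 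Equivalence of the two norms is then automatic for each fixed configuration, and the uniform constant comes from a compactness argument over the admissible cuts, to which I return below.

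For \eqref{lem_norm_equiv_eq2} the inclusions $T_\lambda^-\subset T_\lambda$, and so on, give the easy directions such as $\|\cdot\|_{L^2(T^-_{\lambda})}\le\|\cdot\|_{L^2(T_{\lambda})}$ for free, so everything rests on the reverse bounds $\|v\|_{L^2(T_{\lambda})}\lesssim\|v\|_{L^2(T_\lambda^{\pm})}$, for which the essential geometric input is $|T_\lambda^{\pm}|\gtrsim|T_{\lambda}|$ uniformly. Here the fictitious element pays off: since $\Gamma$ is $C^{p+1}$ and $h_T$ is small, $\Gamma$ is essentially a straight chord on $T_\lambda$, and by (A1) it must cross the interior of the central core $T$, the $\lambda^{-1}$-contraction of $T_\lambda$ about the incenter $G$. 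A chord meeting this core cannot shave off an arbitrarily thin corner of the enlarged triangle: measuring the height $\eta(x)=\mathbf{n}\cdot(x-G)$ along the chord normal, the cut level $t$ lies in $[-c_1h_T, c_2 h_T]$ (shape regularity) while over $T_\lambda$ the height ranges over $[-\lambda c_1 h_T, \lambda c_2 h_T]$, leaving a slab of width $\gtrsim(\lambda-1)h_T$ on each side and hence $|T_\lambda^{\pm}|\gtrsim(\lambda-1)h_T^2\simeq|T_{\lambda}|$. Given this measure lower bound, the reverse bound is the standard fact that on $\mathbb{P}_p(\hat T_\lambda)$ the $L^2$ norm over a subregion of measure bounded below by a fixed fraction controls the full $L^2$ norm, again by finite-dimensionality (a nonzero degree-$p$ polynomial cannot be small in $L^2$ on a set of measure $\gtrsim 1$).

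The hard part throughout is the uniformity of every equivalence constant with respect to interface location, i.e.\ immunity to small-cut configurations. I would make this rigorous by viewing an admissible cut as (approximately) a chord, parametrizing the family of admissible chords of $\hat T_\lambda$ that cross the core $\hat T$, noting that this parameter set is precompact, and observing that the two degenerations one fears --- a vanishing $|\hat T_\lambda^-|$ and a vanishing-length arc $\hat\Gamma$ --- are both excluded on its closure precisely by the core-crossing constraint from (A1) together with the $\lambda>1$ enlargement. Consequently the ratio of the two norms in each equivalence is a continuous, strictly positive function on a compact configuration space, hence bounded above and below, yielding constants depending only on $p$, $\lambda$, and the shape-regularity parameters. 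The only remaining technical care is to control the $O(h_T^{2})$ deviation of the true curved $\Gamma_T^{\lambda}$ from its chord, which is absorbed by quasi-uniformity and the smoothness of $\Gamma$ for $h$ small enough.
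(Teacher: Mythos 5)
Your proposal is sound, but note that the paper itself does not prove this lemma at all: it is imported verbatim, with the statement explicitly ``recalled'' from Lemmas 4.2, 3.5 and 3.7 of \cite{2019GuoLin}. So there is no in-paper proof to match, and what you have written is a self-contained reconstruction. Your route --- (i) the affine scaling $x=G+h_T\hat x$, under which all three terms of $a_\lambda$ scale by exactly $h_T^{-2}$ (this is precisely why the weights $h_T^{-3}$ and $h_T^{-1}$ were chosen), reducing \eqref{lem_norm_equiv_eq1} to a scale-free statement; (ii) the norm property of $\vertiii{\cdot}_{\hat a}$ via vanishing of the polynomial $\triangle\hat v$ on an open set plus Cauchy uniqueness for a harmonic polynomial with zero Cauchy data on an analytic arc; (iii) the measure lower bound $|T_\lambda^{\pm}|\gtrsim h_T^2$ from (\textbf{A1}) and the $\lambda>1$ margin; and (iv) uniformity of constants by compactness of the family of admissible core-crossing cuts --- is a legitimately different argument from the one in the cited reference, which obtains the same equivalences by explicit geometric estimates (balls of radius $\simeq h_T$ contained in each subelement, concrete trace and inverse inequalities on cut configurations), yielding constructive constants depending on $p$ and $\lambda$ rather than constants produced by a compactness/contradiction argument. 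Your approach is shorter and more conceptual; the reference's approach gives quantitative constants and avoids the continuity-in-the-cut bookkeeping that your compactness step silently requires (joint continuity of the cut-dependent functionals $\hat v\mapsto\vertiii{\hat v}_{\hat a}$ and $\hat v\mapsto\|\hat v\|_{L^2(\hat T_\lambda^-)}$ in the chord parameters, and persistence of the norm property at limit configurations, both of which do hold here since the limiting chord still crosses the closed core).

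Two points deserve more care if you were to write this out in full. First, the cut is not a chord: in the reference configuration the scaled arc deviates from its chord by $O(h_T)$, so your compact ``configuration space of chords'' does not literally contain the actual cuts; you must either enlarge the parameter space to near-chords and prove equicontinuity of the norm functionals in the perturbation, or prove the chord case and then absorb the $O(h_T)$ discrepancies in the domain and boundary integrals (symmetric-difference area $O(h_T)$ after scaling, arc-versus-chord trace comparison) for $h$ small --- your one-sentence dismissal is the genuinely unfinished step, though it is fixable and not a wrong idea. Second, for \eqref{lem_norm_equiv_eq2} your statement that a degree-$p$ polynomial cannot be small in $L^2$ on an arbitrary set of measure $\gtrsim 1$ is a Remez-type inequality; in your setting you do not need it in that generality, since the subregions $\hat T^{\pm}_\lambda$, $\hat T$ each contain a half-disk or triangle of unit-comparable size, for which the finite-dimensional equivalence over a compact family of such reference subregions suffices.
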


Following discussions presented in Appendix \ref{append_decomposition},
we write the true solution as $u = {u}_0 + u_P$ where ${u}_0$ is the solution of a related interface problem with homogeneous interface conditions $J_D=0$ and $J_N=0$ and smooth right-hand side, while $u_P$ is the solution of
another related interface problem with nonhomogeneous interface conditions and a nonsmooth right-hand side.
If $u_P$ is known, then ${u}_0\in V_{h,0}$ can be determined by the following weak problem
\begin{equation} \label{eq:weak_homogen}
a_h({u}_0,v) = L_f(v) - a_h(u_P,v), ~~~ \forall ~ v \in V_{h,0}.
\end{equation}
Employing the global IFE space \eqref{glob_IFE_space} and the weak formulation \eqref{eq:weak_homogen}, we propose
an enriched IFE method for finding the enriched IFE solution $u_h = \tilde{u}_h + \Phi_h$ with $\tilde{u}_h\in S^p_h(\Omega)$ and $\Phi_h$ given by \eqref{enrich_IFE_fun_glob} such that
\begin{equation}
\label{enr_IFE_method_1}
a_h(u_h , v_h) = L_f(v_h), ~~~ \forall v_h\in S^p_h(\Omega).
\end{equation}
Since, according to \eqref{enrich_IFE_fun_glob}, $\Phi_h$ is completely determined by the data $J_D$, $J_N$ and $f$, the IFE method \eqref{enr_IFE_method_1} is further reduced to finding the homogeneous IFE solution $\tilde{u}_h \in S^p_h(\Omega)$ such that
\begin{equation}
\label{enr_IFE_method_2}
a_h(\tilde{u}_h,v_h) = L_f(v_h) - a_h(\Phi_h,v_h), ~~~ \forall ~v_h \in S^p_h(\Omega).
\end{equation}
We note that the IFE methods for homogeneous and nonhomogeneous interface jumps yield the same stiffness matrix. Therefore, when the stabilization constants $\sigma^0$ and $\sigma^1$ are large enough, the bilinear
form $a_h(\cdot,\cdot)$ is coercive as shown in \cite{2019GuoLin} Theorem 6.1, which guarantees the existence and uniqueness of both the homogeneous and enriched IFE solutions $\tilde{u}_h$ and $u_h = \tilde{u}_h + \Phi_h$.

%

\section{Error Analysis of the Enriched IFE Method}
\label{sec:enriched_IFE}
We present an error estimation for the enriched IFE method \eqref{enr_IFE_method_1} and
\eqref{enr_IFE_method_2} by first studying the approximation capabilities of the proposed IFE spaces and
enrichment IFE functions. Without loss of generality, we only discuss the case $p\geqslant2$, and the linear case $p=1$ (the enrichment $\phi_{T,f}$ is not needed) can readily handled by similar arguments.
Since the approximation properties of the IFE spaces on non-interface elements are well established,
our focus is on analyzing the approximation capabilities of both the IFE spaces \eqref{loc_IFE_space_inter}
and the enrichment IFE functions \eqref{enrich_IFE_fun_DN} and \eqref{enrich_IFE_fun_Lap} on interface
elements. For this purpose, we consider the following operator
\begin{equation}
\label{cauchy_exten_sum}
\tilde{\mathfrak{C}}_T(v) = \mathfrak{C}_T(v) + \mathfrak{C}_{T,D}(0) + \mathfrak{C}_{T,N}(0) + \mathfrak{C}_{T,\varphi_f}(0), ~~~ \forall v\in\mathbb{P}_p(T_\lambda^+).
\end{equation}
Next we apply the $L^2$ projection operator $\Pi^p_{T_{\lambda}}$ onto $\mathbb{P}_p(T_{\lambda})$ to define an auxiliary operator $Q_T$ on a fictitious element $T_{\lambda}$ associated with an interface element $T \in \mathcal{T}_h^i$ as
\begin{equation}
\label{J_interpolation_2}
Q_T u =
\begin{cases}
 Q^+_Tu :=   \Pi^p_{T_{\lambda}} \big ( \mathfrak{E}^+ (u) \big )  & \text{on} ~ T^+, \\
 Q^-_Tu :=  \widetilde{\mathfrak{C}}_T( \Pi^p_{T_{\lambda}} (\mathfrak{E}^+ (u)))  & \text{on} ~ T^-,
\end{cases}~~~~\forall u \in PH^{p+1}(T_{\lambda}),
\end{equation}
where $\mathfrak{E}^+ (u)$ is the Sobolev extension of $u^+$ to $\Omega$.
The diagram in Figure \ref{fig:diagram_all} shows the
relationship between associated approximations considered in this manuscript. We first note that
$Q_Tu|_{T^+} = {\Pi^p_{T_{\lambda}}(\mathfrak{E}^+ (u))}|_{T^+}$ and $Q_Tu|_{T^-} =\widetilde{\mathfrak{C}}_T( {\Pi^p_{T_{\lambda}}( \mathfrak{E}^+ (u)}))|_{T^-}$, hence $u^+ - Q_T^+u$ is a projection error. The error
$u^- - Q_T^-u = \mathfrak{E}^- (u) - \widetilde{\mathfrak{C}}_T( {\Pi^p_{T_{\lambda}} (\mathfrak{E}^+ (u))}) =
\mathfrak{E}^- (u) - \Pi^p_{T_{\lambda}} (\mathfrak{E}^- (u)) + \Pi^p_{T_{\lambda}} (\mathfrak{E}^- (u)) - \widetilde{\mathfrak{C}}_T( {\Pi^p_{T_{\lambda}} (\mathfrak{E}^+ (u))})$ which is the sum of a projection error and
the difference between the projection
$\Pi^p_{T_{\lambda}}(\mathfrak{E}^- (u))$ and
the extension $\widetilde{\mathfrak{C}}_T( \Pi^p_{T_{\lambda}}( \mathfrak{E}^+ (u)))$ on $T^-$, and
this difference is indicated by a dashed line in Figure \ref{fig:diagram_all}.
Then, we follow the analysis framework presented in \cite{2019GuoLin} to establish estimates of
$\Pi^p_{T_{\lambda}} (\mathfrak{E}^- (u)) - \widetilde{\mathfrak{C}}_T( \Pi^p_{T_{\lambda}} (\mathfrak{E}^+ (u)))$.
We begin with the following lemma about the polynomial projection operator on $T^s_{\lambda},~s = \pm$ for $T \in \mathcal{T}_h^i$.

\begin{figure}[H]
\centering
    \includegraphics[width=2.7in]{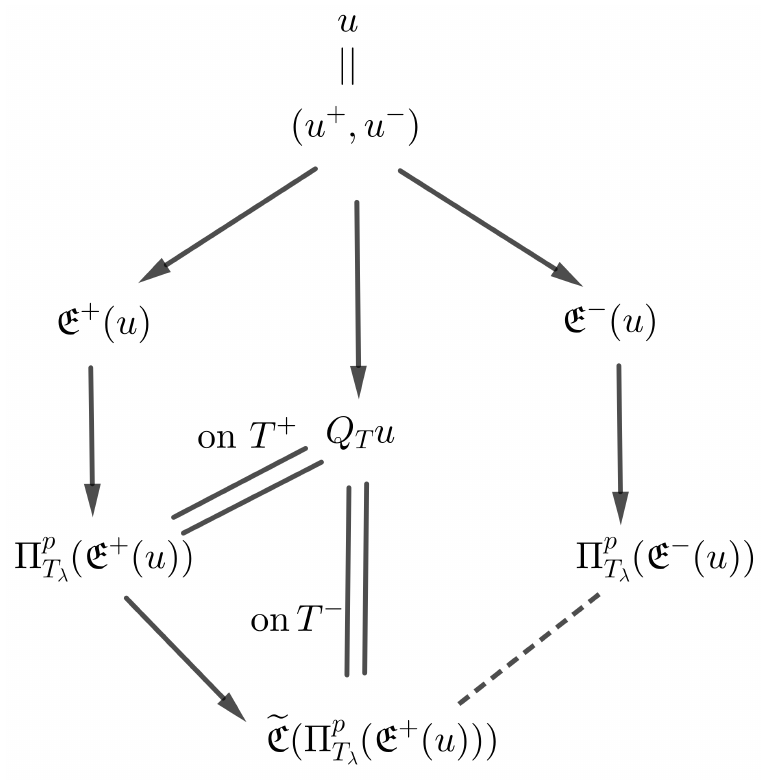}
  \caption{Diagram for analyzing approximation capabilities.}
  \label{fig:diagram_all} 
\end{figure}


\begin{lemma}
\label{lem_opti_projec_ptslambda}
The following estimate holds for $\lambda > 1$:
\begin{equation}
\label{lem_opti_projec_ptslambda_eq0}
\| \Pi^{p}_{T^s_{\lambda}} (w) - w \|_{L^2(T_{\lambda})} \lesssim h^{p+1}_T |w|_{H^{p+1}(T_{\lambda})}, ~~ s = \pm, ~~
\forall T \in \mathcal{T}_h^i, ~~ \forall w\in H^{p+1}(T_{\lambda}), ~~ p\geqslant 0.
\end{equation}
\end{lemma}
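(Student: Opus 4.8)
The plan is to prove the estimate by routing through the \emph{full-domain} $L^2$ projection $\Pi^p_{T_\lambda}$ onto $\mathbb{P}_p(T_\lambda)$, rather than attempting to apply approximation theory directly on the sub-element $T^s_\lambda$. The reason is that $T^s_\lambda$ may be an arbitrarily thin sliver when the interface cuts $T_\lambda$ near one of its vertices, so a Bramble--Hilbert argument posed on $T^s_\lambda$ would produce a constant that degenerates with the cut location. By contrast, $T_\lambda$ is the homothetic image of the shape-regular element $T$ with a fixed factor $\lambda$, hence shape regular with diameter $\lambda h_T$ proportional to $h_T$, and the standard projection estimate $\|\Pi^p_{T_\lambda} w - w\|_{L^2(T_\lambda)} \lesssim h_T^{p+1}|w|_{H^{p+1}(T_\lambda)}$ holds with a constant depending only on the shape-regularity of the mesh. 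The degenerate geometry of $T^s_\lambda$ will be absorbed entirely into the polynomial norm equivalence \eqref{lem_norm_equiv_eq2}, whose constants are uniform in the interface location.

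First I would split, by the triangle inequality, $\|\Pi^p_{T^s_\lambda} w - w\|_{L^2(T_\lambda)} \le \|\Pi^p_{T^s_\lambda} w - \Pi^p_{T_\lambda} w\|_{L^2(T_\lambda)} + \|\Pi^p_{T_\lambda} w - w\|_{L^2(T_\lambda)}$, where the last term is already optimal by the full-domain estimate above. For the first term, I observe that $\Pi^p_{T^s_\lambda} w - \Pi^p_{T_\lambda} w \in \mathbb{P}_p(T_\lambda)$, so \eqref{lem_norm_equiv_eq2} gives $\|\Pi^p_{T^s_\lambda} w - \Pi^p_{T_\lambda} w\|_{L^2(T_\lambda)} \lesssim \|\Pi^p_{T^s_\lambda} w - \Pi^p_{T_\lambda} w\|_{L^2(T^s_\lambda)}$, transferring the error measurement onto the sub-element $T^s_\lambda$, where I can exploit the variational characterization of $\Pi^p_{T^s_\lambda}$.

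On $T^s_\lambda$ I would again use the triangle inequality, $\|\Pi^p_{T^s_\lambda} w - \Pi^p_{T_\lambda} w\|_{L^2(T^s_\lambda)} \le \|\Pi^p_{T^s_\lambda} w - w\|_{L^2(T^s_\lambda)} + \|w - \Pi^p_{T_\lambda} w\|_{L^2(T^s_\lambda)}$. Since $\Pi^p_{T^s_\lambda} w$ is by definition the best $L^2(T^s_\lambda)$ approximation of $w$ in $\mathbb{P}_p$, the first term is bounded by $\|w - \Pi^p_{T_\lambda} w\|_{L^2(T^s_\lambda)}$ (taking the competitor $\Pi^p_{T_\lambda} w \in \mathbb{P}_p$); and since $T^s_\lambda \subseteq T_\lambda$, both resulting terms are dominated by $\|w - \Pi^p_{T_\lambda} w\|_{L^2(T_\lambda)} \lesssim h_T^{p+1}|w|_{H^{p+1}(T_\lambda)}$. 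Chaining these inequalities yields the claimed bound. The only genuine obstacle is conceptual, namely recognizing that the subdomain projection must be compared against the full-domain projection so that all cut-dependent geometry is confined to the finite-dimensional estimate \eqref{lem_norm_equiv_eq2}; the remaining steps are elementary best-approximation and domain-monotonicity arguments. Because \eqref{lem_norm_equiv_eq2} holds with constants independent of where $\Gamma$ meets $T_\lambda$, the final estimate is likewise robust to the interface location, as required.
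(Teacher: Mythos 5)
Your proof is correct and takes essentially the same route as the paper's: both compare $\Pi^{p}_{T^s_{\lambda}}(w)$ to the full-domain projection $\Pi^{p}_{T_{\lambda}}(w)$ via a triangle inequality, confine all cut-dependent geometry to the polynomial norm equivalence \eqref{lem_norm_equiv_eq2}, and finish with the optimal estimate for $\Pi^{p}_{T_{\lambda}}$ on the shape-regular fictitious element. The only cosmetic difference is that the paper bounds the polynomial difference in one stroke using the identity $\Pi^{p}_{T_{\lambda}}(w) - \Pi^{p}_{T^s_{\lambda}}(w) = \Pi^{p}_{T^s_{\lambda}}\big(\Pi^{p}_{T_{\lambda}}(w) - w\big)$ together with the $L^2(T_{\lambda})$-boundedness of $\Pi^{p}_{T^s_{\lambda}}$, whereas you expand it with a second triangle inequality and the best-approximation property of $\Pi^{p}_{T^s_{\lambda}}$ on $T^s_{\lambda}$ --- the same ingredients in a slightly different order.
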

\begin{proof}
By the equivalence of norms $\|\cdot\|_{T^s_{\lambda}} \simeq \|\cdot\|_{T_{\lambda}}$ given by Lemma \ref{lem_norm_equiv}, we have the boundedness of $\Pi^{p}_{T^s_{\lambda}}$ on $T_{\lambda}$: for every $w\in L^2(T_{\lambda})$, there holds
\begin{equation}
\label{lem_opti_projec_ptslambda_eq1}
\| \Pi^{p}_{T^s_{\lambda}} (w) \|_{L^2(T_{\lambda})} \lesssim \| \Pi^{p}_{T^s_{\lambda}} (w) \|_{L^2(T^s_{\lambda})} \leqslant \| w \|_{L^2(T^s_{\lambda})} \leqslant \| w \|_{L^2(T_{\lambda})}, ~~s = \pm.
\end{equation}
Next, we note that $\Pi^{p}_{T_{\lambda}}$ has the optimal error bound on the whole $T_{\lambda}$, so we can use \eqref{lem_opti_projec_ptslambda_eq1} to obtain
\begin{equation}
\label{lem_opti_projec_ptslambda_eq2}
\| \Pi^{p}_{T_{\lambda}}w - \Pi^{p}_{T^s_{\lambda}} (w)  \|_{L^2(T_{\lambda})} = \|  \Pi^{p}_{T^s_{\lambda}}(\Pi^{p}_{T_{\lambda}} (w) - w) \big ) \|_{L^2(T_{\lambda})} \lesssim \| \Pi^{p}_{T_{\lambda}} (w) - w  \|_{L^2(T_{\lambda})} \lesssim h^{p+1}_T |w|_{H^{p+1}(T_{\lambda})}.
\end{equation}
Finally, by the triangular inequality, we have
\begin{equation}
\label{lem_opti_projec_ptslambda_eq3}
\| \Pi^{p}_{T^s_{\lambda}} (w) - w \|_{L^2(T_{\lambda})} \leqslant \| \Pi^{p}_{T^s_{\lambda}} (w) - \Pi^{p}_{T_{\lambda}}(w) \|_{L^2(T_{\lambda})} + \| \Pi^{p}_{T_{\lambda}}w - w  \|_{L^2(T_{\lambda})} \lesssim h^{p+1}_T |w|_{H^{p+1}(T_{\lambda})}.
\end{equation}
\end{proof}
In the next theorem we establish an estimate of the error between $\tilde{\mathfrak{C}}_T(\Pi^p_{T_{\lambda}} (\mathfrak{E}^+(u)))$ and $\Pi^p_{T_{\lambda}} (\mathfrak{E}^-(u))$.
\begin{thm}
\label{cauchy_mapping_prop}
Let $u\in PH^{p+1}(\Omega)$ satisfy \eqref{inter_PDE}, \eqref{nonhomo_jump_cond_1} and \eqref{nonhomo_jump_cond_2} with Sobolev extensions $\mathfrak{E}^s(u), ~s=+,-$. Then for $\lambda > 1$ and $p\geq 2$ we have
\begin{equation}
\begin{split}
\label{cauchy_mapping_prop0}
\vertiii{ \tilde{\mathfrak{C}}_T(\Pi^p_{T_{\lambda}} (\mathfrak{E}^+(u))) - \Pi^p_{T_{\lambda}} (\mathfrak{E}^-(u))}_{a_{\lambda}}   \lesssim h^{p-1}_T \left( |\mathfrak{E}^+(u)|_{H^{p+1}(T_{\lambda})} + |\mathfrak{E}^-(u)|_{H^{p+1}(T_{\lambda})}\right), ~~\forall T \in \mathcal{T}_h^i.
\end{split}
\end{equation}
\end{thm}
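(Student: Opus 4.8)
The plan is to test the error against itself in the energy product $a_{\lambda}(\cdot,\cdot)$ and to convert $a_{\lambda}(g,g)$ into boundary and volume residuals on which the exact interface data can be inserted. Write $g := \tilde{\mathfrak{C}}_T(\Pi^p_{T_{\lambda}}(\mathfrak{E}^+(u))) - \Pi^p_{T_{\lambda}}(\mathfrak{E}^-(u))$ and set $e^s := \Pi^p_{T_{\lambda}}(\mathfrak{E}^s(u))$ for $s=\pm$. First I would use Definitions \ref{def:cauchy_ext}, \ref{def:cauchy_ext_DN}, \ref{def:cauchy_ext_Lap}, the linearity of $a_{\lambda},b_{\lambda}$, and the fact that $\mathfrak{C}_{T,D}(0),\mathfrak{C}_{T,N}(0),\mathfrak{C}_{T,\varphi_f}(0)$ are each obtained by modifying the right-hand side of \eqref{cauchy_exten}, to obtain, for all $q_p\in\mathbb{P}_p(T_\lambda)$,
\begin{equation*}
a_{\lambda}(\tilde{\mathfrak{C}}_T(e^+),q_p) = b_{\lambda}(e^+,q_p) + h_T^{-3}\int_{\Gamma_T^{\lambda}} J_D\, q_p\,ds + h_T^{-1}\int_{\Gamma_T^{\lambda}}\frac{J_N\,\partial_{\mathbf{n}}q_p}{\beta^-}\,ds + \int_{T_{\lambda}^-}\varphi_f\,\triangle q_p\,dX .
\end{equation*}
Taking $q_p=g$ and subtracting $a_{\lambda}(e^-,g)$, the definitions \eqref{cauchy_bilinear_form_l}--\eqref{cauchy_bilinear_form_r} give
\begin{equation*}
\vertiii{g}^2_{a_{\lambda}} = \int_{T_{\lambda}^-}\Big(\tfrac{\beta^+}{\beta^-}\triangle e^+ - \triangle e^- + \varphi_f\Big)\triangle g\,dX + h_T^{-3}\int_{\Gamma_T^{\lambda}}(e^+-e^-+J_D)\,g\,ds + h_T^{-1}\int_{\Gamma_T^{\lambda}}\Big(\tfrac{\beta^+}{\beta^-}\partial_{\mathbf{n}}e^+ - \partial_{\mathbf{n}}e^- + \tfrac{J_N}{\beta^-}\Big)\partial_{\mathbf{n}}g\,ds ,
\end{equation*}
the three terms being the volume, Dirichlet-trace and Neumann-trace residuals to be estimated.

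Second, I would show that in each residual the consistency part cancels, leaving only projection errors. On $\Gamma_T^{\lambda}\subset\Gamma$ the exact condition \eqref{nonhomo_jump_cond_1} gives $\mathfrak{E}^+(u)-\mathfrak{E}^-(u)+J_D=0$, so $e^+-e^-+J_D=(e^+-\mathfrak{E}^+(u))-(e^--\mathfrak{E}^-(u))$; likewise \eqref{nonhomo_jump_cond_2} gives $\tfrac{\beta^+}{\beta^-}\partial_{\mathbf{n}}\mathfrak{E}^+(u)-\partial_{\mathbf{n}}\mathfrak{E}^-(u)+\tfrac{J_N}{\beta^-}=\tfrac{1}{\beta^-}(\beta^+\partial_{\mathbf{n}}u^+-\beta^-\partial_{\mathbf{n}}u^-+J_N)=0$, reducing the Neumann source to $\tfrac{\beta^+}{\beta^-}\partial_{\mathbf{n}}(e^+-\mathfrak{E}^+(u))-\partial_{\mathbf{n}}(e^--\mathfrak{E}^-(u))$. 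For the volume source I would split off the Laplacian projection errors $\tfrac{\beta^+}{\beta^-}\triangle(e^+-\mathfrak{E}^+(u))-\triangle(e^--\mathfrak{E}^-(u))$ and treat the remaining consistency bracket $\tfrac{\beta^+}{\beta^-}\triangle\mathfrak{E}^+(u)-\triangle\mathfrak{E}^-(u)+\varphi_f$ by the PDE \eqref{inter_PDE}: on $T_{\lambda}^-\subset\Omega^-$ one has $\triangle\mathfrak{E}^-(u)=-f^-/\beta^-$ exactly, while $\beta^+\triangle\mathfrak{E}^+(u)=-\hat f^+$ where $\hat f^+:=-\beta^+\triangle\mathfrak{E}^+(u)\in H^{p-1}(T_\lambda)$ is an extension of $f^+$ that coincides with $f^+$ on $T_{\lambda}^+$. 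Inserting these and the definition \eqref{ctf} of $\varphi_f$ (and using $\Pi^{p-2}_{T_{\lambda}^+}f^+=\Pi^{p-2}_{T_{\lambda}^+}\hat f^+$), the leading terms combine with $\varphi_f$ to leave only projection errors:
\begin{equation*}
\tfrac{\beta^+}{\beta^-}\triangle\mathfrak{E}^+(u)-\triangle\mathfrak{E}^-(u)+\varphi_f = \tfrac{1}{\beta^-}\Big[(f^--\Pi^{p-2}_{T_{\lambda}^-}f^-) - (\hat f^+-\Pi^{p-2}_{T_{\lambda}^+}\hat f^+)\Big] .
\end{equation*}

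Third, I would bound the residuals by Cauchy--Schwarz, absorbing the $g$-factors into $\vertiii{g}_{a_{\lambda}}$ through the definition \eqref{cauchy_norm}, namely $\|\triangle g\|_{L^2(T_{\lambda}^-)}\le\vertiii{g}_{a_{\lambda}}$, $\|g\|_{L^2(\Gamma_T^{\lambda})}\le h_T^{3/2}\vertiii{g}_{a_{\lambda}}$ and $\|\partial_{\mathbf{n}}g\|_{L^2(\Gamma_T^{\lambda})}\le h_T^{1/2}\vertiii{g}_{a_{\lambda}}$. For the projection-error factors I would use Lemma \ref{lem_opti_projec_ptslambda} together with the standard full-triangle estimate $\|\Pi^p_{T_{\lambda}}w-w\|_{H^m(T_{\lambda})}\lesssim h_T^{p+1-m}|w|_{H^{p+1}(T_{\lambda})}$ and a trace inequality on the sliver that controls $\|\cdot\|_{L^2(\Gamma_T^{\lambda})}$ by $h_T^{-1/2}\|\cdot\|_{L^2(T_{\lambda})}+h_T^{1/2}|\cdot|_{H^1(T_{\lambda})}$; this yields $\|e^s-\mathfrak{E}^s(u)\|_{L^2(\Gamma_T^{\lambda})}\lesssim h_T^{p+1/2}|\mathfrak{E}^s(u)|_{H^{p+1}}$ and $\|\partial_{\mathbf{n}}(e^s-\mathfrak{E}^s(u))\|_{L^2(\Gamma_T^{\lambda})}\lesssim h_T^{p-1/2}|\mathfrak{E}^s(u)|_{H^{p+1}}$, so that after the scalings $h_T^{-3},h_T^{-1}$ both trace residuals are $\lesssim h_T^{p-1}(|\mathfrak{E}^+(u)|_{H^{p+1}}+|\mathfrak{E}^-(u)|_{H^{p+1}})\vertiii{g}_{a_{\lambda}}$. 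For the volume residual the Laplacian projection errors give $\lesssim h_T^{p-1}|\mathfrak{E}^s(u)|_{H^{p+1}}$, and the $f$-projection errors give $\lesssim (\beta^-)^{-1}h_T^{p-1}(|f^-|_{H^{p-1}}+|\hat f^+|_{H^{p-1}})$; rewriting $|f^-|_{H^{p-1}}\lesssim\beta^-|\mathfrak{E}^-(u)|_{H^{p+1}}$ and $|\hat f^+|_{H^{p-1}}\lesssim\beta^+|\mathfrak{E}^+(u)|_{H^{p+1}}$ and invoking $\beta^+/\beta^-\le1$ renders the bound $\beta$-independent. Summing the three estimates and cancelling one factor $\vertiii{g}_{a_{\lambda}}$ yields \eqref{cauchy_mapping_prop0}.

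I expect the volume residual to be the main obstacle. One must verify that the enrichment $\varphi_f$ is \emph{exactly} what cancels the leading, $O(1)$ inconsistency between $\tfrac{\beta^+}{\beta^-}\triangle\mathfrak{E}^+(u)$ and $\triangle\mathfrak{E}^-(u)$ produced by the discontinuity of $f$, leaving only $L^2$-projection errors of the extended data $f^-$ and $\hat f^+$; this is the step where the piecewise-polynomial enrichment replaces the extended jump conditions used in the homogeneous-$f$ theory. Tracking the coefficients $\beta^{\pm}$ through this cancellation, so that the final constant is independent of the contrast (which relies on $\beta^+\le\beta^-$), is the other delicate point. A secondary technical ingredient is the trace/inverse machinery on the fictitious sliver $T_{\lambda}^-$ and the curved piece $\Gamma_T^{\lambda}$, for which I would appeal to the norm-equivalences of Lemma \ref{lem_norm_equiv} and the analysis framework of \cite{2019GuoLin}.
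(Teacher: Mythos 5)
Your proposal is correct and takes essentially the same route as the paper's proof: you test the error $g$ against itself in $a_{\lambda}(\cdot,\cdot)$, use the defining identities of $\mathfrak{C}_T$, $\mathfrak{C}_{T,D}$, $\mathfrak{C}_{T,N}$, $\mathfrak{C}_{T,\varphi_f}$ to reduce $\vertiii{g}^2_{a_{\lambda}}$ to volume, Dirichlet-trace and Neumann-trace residuals, cancel the exact data via \eqref{inter_PDE}, \eqref{nonhomo_jump_cond_1}, \eqref{nonhomo_jump_cond_2} and the definition \eqref{ctf} of $\varphi_f$, and bound the surviving projection errors with Lemma \ref{lem_opti_projec_ptslambda} (with $p$ replaced by $p-2$ for the $f$-terms) together with the trace inequality on $\Gamma_T^{\lambda}$, exactly as in the paper's estimates of the terms $I$, $II$, $III$. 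Your explicit introduction of $\hat f^+=-\beta^+\triangle\mathfrak{E}^+(u)$ and the identification $\Pi^{p-2}_{T^+_{\lambda}}f^+=\Pi^{p-2}_{T^+_{\lambda}}\hat f^+$ only makes transparent a cancellation the paper performs implicitly in \eqref{cauchy_mapping_prop4}.
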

\begin{proof}
We let $w=\tilde{\mathfrak{C}}_T(\Pi^p_{T_{\lambda}} (\mathfrak{E}^+ (u))) - \Pi^p_{T_{\lambda}} (\mathfrak{E}^- (u)) \in\mathbb{P}_p(T_\lambda)$ and use \eqref{cauchy_exten}, \eqref{cauchy_ext_DN} and \eqref{cauchy_ext_Lap} to write
\begin{equation}
\begin{split}
\label{cauchy_mapping_prop1}
& \vertiii{ w }^2_{a_{\lambda}} = a_{\lambda} ( \tilde{\mathfrak{C}}_T( \Pi^p_{T_{\lambda}} (\mathfrak{E}^+ (u)))- \Pi^p_{T_{\lambda}} (\mathfrak{E}^- (u)) , w ) \\
= &~b_{\lambda}( \Pi^p_{T_{\lambda}} (\mathfrak{E}^+ (u)),  w)  - a_{\lambda}( \Pi^p_{T_{\lambda}} (\mathfrak{E}^- (u)),  w ) + h^{-3}_T \int_{\Gamma^{\lambda}_T} J_D w ds + h^{-1}_T \int_{\Gamma^{\lambda}_T} \frac{J_N \partial_{\mathbf{ n}}w}{\beta^-} ds + \int_{T^-_{\lambda}}  \varphi_f \triangle w  dX.
\end{split}
\end{equation}
For the first two terms on the right hand side of \eqref{cauchy_mapping_prop1}, we note that
\begin{align}
  b_{\lambda}( \Pi^p_{T_{\lambda}} (\mathfrak{E}^+ (u)),  w)  - a_{\lambda}( \Pi^p_{T_{\lambda}} (\mathfrak{E}^- (u)),  w ) =   \int_{T_{\lambda}^{-}} \left( \frac{\beta^+}{\beta^-}  \triangle \Pi^p_{T_{\lambda}} (\mathfrak{E}^+ (u)) -  \triangle \Pi^p_{T_{\lambda}} (\mathfrak{E}^- (u)) \right) \triangle w  dX \nonumber \\
   + h_T^{-3} \int_{\Gamma^{\lambda}_T} \left(  \Pi^p_{T_{\lambda}} (\mathfrak{E}^+ (u)) -  \Pi^p_{T_{\lambda}} (\mathfrak{E}^- (u))\right) w ds 
  +  h_T^{-1} \int_{\Gamma^{\lambda}_T} \left( \frac{\beta^+}{\beta^-}  \partial_{\mathbf{ n}}   \Pi^p_{T_{\lambda}} (\mathfrak{E}^+ (u))- \partial_{\mathbf{ n}}  \Pi^p_{T_{\lambda}}  (\mathfrak{E}^- (u))\right) ~ \partial_{\mathbf{ n}}w ds.  \label{cauchy_mapping_prop2}
\end{align}
Combining \eqref{cauchy_mapping_prop1} and \eqref{cauchy_mapping_prop2}, and applying H\"older's inequality, we obtain
\begin{equation}
\begin{split}
\label{cauchy_mapping_prop3}
\vertiii{ w }_{a_{\lambda}} \leqslant & \norm{\frac{\beta^+}{\beta^-}  \triangle \Pi^p_{T_{\lambda}} (\mathfrak{E}^+ (u)) -  \triangle \Pi^p_{T_{\lambda}} (\mathfrak{E}^- (u)) + \varphi_f }_{L^2(T^-_{\lambda})}
+  h^{-3/2}_T \norm{\Pi^p_{T_{\lambda}} (\mathfrak{E}^+ (u)) -  \Pi^p_{T_{\lambda}} (\mathfrak{E}^- (u)) + J_D}_{L^2(\Gamma^{\lambda}_T)} \\
& + h^{-1/2}_T \norm{\frac{\beta^+}{\beta^-}  \partial_{\mathbf{ n}}   \Pi^p_{T_{\lambda}} (\mathfrak{E}^+ (u))- \partial_{\mathbf{ n}}  \Pi^p_{T_{\lambda}}  (\mathfrak{E}^- (u)) + \frac{J_N}{\beta^-}}_{L^2(\Gamma^{\lambda}_T)}  :=  I + II +III.
\end{split}
\end{equation}
For the term $I$, recalling that $-\beta^+\triangle \mathfrak{E}^+ (u)|_{T_\lambda^+} = f^+$
and $-\beta^-\triangle \mathfrak{E}^- (u)|_{T_\lambda^-} = f^-$, using the definition \eqref{ctf}, the triangular inequality, and the error bound \eqref{lem_opti_projec_ptslambda_eq0} with $p$ replaced by $p-2$, we obtain
\begin{align}
I  \leqslant & \norm{ \frac{\beta^+}{\beta^-} \left( \triangle \Pi^p_{T_{\lambda}} (\mathfrak{E}^+ (u))  -   \triangle \mathfrak{E}^+ (u)  \right)}_{L^2(T^-_{\lambda})} + \norm{ \triangle \mathfrak{E}^- (u) -  \triangle \Pi^p_{T_{\lambda}} (\mathfrak{E}^- (u))}_{L^2(T^-_{\lambda})} \nonumber \\
+ & \norm{\frac{\beta^+}{\beta^-} \left( \triangle \mathfrak{E}^+ (u)  - \Pi^{p-2}_{T^+_{\lambda}} (\triangle \mathfrak{E}^+ (u)) \right)}_{L^2(T^-_{\lambda})} + \norm{ \Pi^{p-2}_{T^-_{\lambda}} (\triangle \mathfrak{E}^- (u)) -  \triangle \mathfrak{E}^- (u)}_{L^2(T^-_{\lambda})} \nonumber \\
 \lesssim & h^{p-1}_T \left( \frac{\beta^+}{\beta^-} |\mathfrak{E}^+ (u)|_{H^{p+1}(T_{\lambda})} + |\mathfrak{E}^- (u)|_{H^{p+1}(T_{\lambda})} \right). \label{cauchy_mapping_prop4}
\end{align}
For the second term $II$, using the trace inequality on the interface given by Lemma 3.2 in \cite{2016WangXiaoXu}, the error bound \eqref{lem_opti_projec_ptslambda_eq0}, and the nonhomogeneous jump condition \eqref{nonhomo_jump_cond_1}, we have
\begin{equation}
\begin{split}
\label{cauchy_mapping_prop6}
II  \leqslant & h_T^{-3/2} \left( \|  \Pi^p_{T_{\lambda}}(\mathfrak{E}^+ (u)) -  \mathfrak{E}^+ (u) \|_{L^2(\Gamma^{\lambda}_T)} + \|  \Pi^p_{T_{\lambda}}(\mathfrak{E}^- (u)) -  \mathfrak{E}^- (u) \|_{L^2(\Gamma^{\lambda}_T)} + \| \mathfrak{E}^+ (u) - \mathfrak{E}^- (u) + J_D \|_{L^2(\Gamma^{\lambda}_T)} \right) \\
\lesssim & h^{p-1}_T \left( |\mathfrak{E}^+ (u)|_{H^{p+1}(T_{\lambda})} + |\mathfrak{E}^- (u)|_{H^{p+1}(T_{\lambda})} \right).
\end{split}
\end{equation}
An estimate of $III$ can be established in a similar manner as
\begin{align}
III \leqslant h^{p-1}_T \left( \frac{\beta^+}{\beta^-} |\mathfrak{E}^+ (u)|_{H^{p+1}(T_{\lambda})} +  |\mathfrak{E}^- (u)|_{H^{p+1}(T_{\lambda})} \right).
\label{cauchy_mapping_prop7}
 \end{align}
Finally, combining  \eqref{cauchy_mapping_prop4}, \eqref{cauchy_mapping_prop6}, \eqref{cauchy_mapping_prop7}
and \eqref{cauchy_mapping_prop3} completes the proof.
\end{proof}
Now we are ready to analyze the approximation capabilities of the local IFE spaces in \eqref{loc_IFE_space_inter} augmented by the enrichment IFE functions \eqref{enrich_IFE_fun_DN} and \eqref{enrich_IFE_fun_Lap} on interface elements.
We state the following theorem about the approximation properties of $Q_{T}$.

\begin{thm}
\label{thm_loc_approx}
Under the assumptions of Theorem \ref{cauchy_mapping_prop}, we have
\begin{subequations}
\label{thm_loc_approx_eq0}
\begin{align}
  &  \sum_{j=0}^2 h^j_T \abs{\mathfrak{E}^+ (u) - Q^+_T (u)}_{H^j(T)}  \lesssim h^{p+1}_T |\mathfrak{E}^+ (u)|_{H^{p+1}(T_{\lambda})},  ~~\forall T \in \mathcal{T}_h^i, \label{thm_loc_approx_eq01}  \\
  &  \sum_{j=0}^2 h^j_T \abs{\mathfrak{E}^- (u) - Q^-_T (u) }_{H^j(T)}  \lesssim h^{p +1}_T \left( |\mathfrak{E}^+ (u)|_{H^{p+1}(T_{\lambda})} + |\mathfrak{E}^- (u)|_{H^{p+1}(T_{\lambda})} \right),
  ~~\forall T \in \mathcal{T}_h^i.  \label{thm_loc_approx_eq02}
\end{align}
\end{subequations}
\end{thm}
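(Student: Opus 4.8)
The plan is to prove the two estimates separately, reducing each to a combination of standard polynomial projection error bounds and the energy estimate already furnished by Theorem \ref{cauchy_mapping_prop}. For the first estimate \eqref{thm_loc_approx_eq01}, recall that $Q^+_T(u) = \Pi^p_{T_{\lambda}}(\mathfrak{E}^+(u))$ on $T^+$, so $\mathfrak{E}^+(u) - Q^+_T(u)$ is exactly the $L^2(T_{\lambda})$-projection error of $\mathfrak{E}^+(u)$. First I would invoke the standard Bramble--Hilbert/scaling estimate for the $L^2$ projection $\Pi^p_{T_{\lambda}}$ on the shape-regular element $T_{\lambda}$, namely $\abs{\mathfrak{E}^+(u) - \Pi^p_{T_{\lambda}}(\mathfrak{E}^+(u))}_{H^j(T_{\lambda})} \lesssim h^{p+1-j}_T \abs{\mathfrak{E}^+(u)}_{H^{p+1}(T_{\lambda})}$ for $0 \le j \le p+1$ (valid since $p \ge 2$); for $j \ge 1$ this follows by subtracting a best polynomial approximant, using $L^2$-stability of $\Pi^p_{T_{\lambda}}$ and an inverse inequality on the polynomial remainder. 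Restricting to $T \subset T_{\lambda}$ and multiplying by $h^j_T$, every term of the sum is bounded by $h^{p+1}_T \abs{\mathfrak{E}^+(u)}_{H^{p+1}(T_{\lambda})}$, which yields \eqref{thm_loc_approx_eq01}.

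For the second estimate \eqref{thm_loc_approx_eq02}, following the decomposition already indicated in the text, I would split
\[
\mathfrak{E}^-(u) - Q^-_T(u) = \big(\mathfrak{E}^-(u) - \Pi^p_{T_{\lambda}}(\mathfrak{E}^-(u))\big) + \big(\Pi^p_{T_{\lambda}}(\mathfrak{E}^-(u)) - \tilde{\mathfrak{C}}_T(\Pi^p_{T_{\lambda}}(\mathfrak{E}^+(u)))\big) =: E_1 + E_2.
\]
The first piece $E_1$ is the same projection error treated above, so $\sum_{j=0}^2 h^j_T \abs{E_1}_{H^j(T)} \lesssim h^{p+1}_T \abs{\mathfrak{E}^-(u)}_{H^{p+1}(T_{\lambda})}$. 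The second piece equals $-w$, where $w$ is the polynomial difference of Theorem \ref{cauchy_mapping_prop}; thus $E_2 \in \mathbb{P}_p(T_{\lambda})$ and the remaining task is to convert the energy bound $\vertiii{w}_{a_{\lambda}} \lesssim h^{p-1}_T\big(\abs{\mathfrak{E}^+(u)}_{H^{p+1}(T_{\lambda})} + \abs{\mathfrak{E}^-(u)}_{H^{p+1}(T_{\lambda})}\big)$ into the $H^j(T)$ seminorms.

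This conversion is the crux of the argument, and I expect it to be the main obstacle, since Theorem \ref{cauchy_mapping_prop} only controls the second-derivative-type norm $\vertiii{\cdot}_{a_{\lambda}}$ with the \emph{suboptimal} power $h^{p-1}_T$. The key is that the equivalence $\norm{w}_{L^2(T^-_{\lambda})} \simeq h^2_T \vertiii{w}_{a_{\lambda}}$ from Lemma \ref{lem_norm_equiv}\,\eqref{lem_norm_equiv_eq1} supplies exactly two extra powers of $h_T$, upgrading the rate to the optimal $h^{p+1}_T$; applying then \eqref{lem_norm_equiv_eq2} to pass from $\norm{w}_{L^2(T^-_{\lambda})}$ to $\norm{w}_{L^2(T)}$ settles the $j=0$ term. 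For $j=1,2$, because $w$ is a polynomial on the shape-regular element $T$, the inverse inequality $\abs{w}_{H^j(T)} \lesssim h^{-j}_T \norm{w}_{L^2(T)}$ gives $h^j_T \abs{w}_{H^j(T)} \lesssim \norm{w}_{L^2(T)} \lesssim h^{p+1}_T(\cdots)$, so the $h^j_T$ weights precisely offset the $h^{-j}_T$ losses and no order is degraded. Care is needed because $w$ lives on the fictitious element $T_{\lambda}$ while the target seminorms are over $T$; here the quasi-uniform, shape-regular geometry together with the norm equivalences of Lemma \ref{lem_norm_equiv}, all stated on $\mathbb{P}_p(T_{\lambda})$ and hence uniform in the interface location, keep the constants independent of $h_T$, $\beta^\pm$, and the cut position. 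Summing the $E_1$ and $E_2$ contributions then establishes \eqref{thm_loc_approx_eq02}.
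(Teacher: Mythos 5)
Your proposal is correct and follows essentially the same route as the paper's proof: the same splitting of $\mathfrak{E}^-(u) - Q^-_T(u)$ via adding and subtracting $\Pi^p_{T_{\lambda}}(\mathfrak{E}^-(u))$, the same use of the norm equivalence $h^2_T \vertiii{\cdot}_{a_{\lambda}} \simeq \|\cdot\|_{L^2(T^-_{\lambda})} \simeq \|\cdot\|_{L^2(T)}$ from Lemma \ref{lem_norm_equiv} to upgrade the $h^{p-1}_T$ energy bound of Theorem \ref{cauchy_mapping_prop} to the optimal $h^{p+1}_T$ in $L^2(T)$, and the same inverse inequality to handle $j=1,2$. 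Your explicit justification of the $H^j$-seminorm bounds for the $L^2$ projection is slightly more detailed than the paper, which simply cites the standard estimates, but the substance is identical.
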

\begin{proof}
Since $Q^+_{T}u=\Pi^p_{T_{\lambda}} (\mathfrak{E}^+ (u))$, the estimate \eqref{thm_loc_approx_eq01} simply follows from the standard error bounds for the $L^2$ projection operator $\Pi^p_{T_{\lambda}}$.

In order to prove \eqref{thm_loc_approx_eq02}, we add and subtract $\Pi^p_{T_{\lambda}}(\mathfrak{E}^- (u))$ and apply the triangular inequality to write
\begin{equation}
\label{thm_loc_approx_eq1}
\abs{\mathfrak{E}^- (u) - Q^-_T u)}_{H^j(T)} \leqslant \abs{\Pi^p_{T_{\lambda}}(\mathfrak{E}^- (u)) - \widetilde{\mathfrak{C}}_T( \Pi^p_{T_{\lambda}} (\mathfrak{E}^+ (u)))}_{H^j(T)}  + \abs{ \mathfrak{E}^- (u) - \Pi^p_{T_{\lambda}}(\mathfrak{E}^- (u))}_{H^j(T)}.
\end{equation}
To estimate the first term in \eqref{thm_loc_approx_eq1}, we use the fact
$\widetilde{\mathfrak{C}}( \Pi^p_{T_{\lambda}} (\mathfrak{E}^+ (u)))  - \Pi^p_{T_{\lambda}} (\mathfrak{E}^- (u)) \in \mathbb{P}_p(T_\lambda)$ together with the equivalence of norms in Lemma \ref{lem_norm_equiv} and Theorem \ref{cauchy_mapping_prop}, to obtain
\begin{equation*}
\begin{split}
\label{thm_loc_approx_eq2}
\norm{ \widetilde{\mathfrak{C}}_T ( \Pi^p_{T_{\lambda}} (\mathfrak{E}^+ (u))) - \Pi^p_{T_{\lambda}} (\mathfrak{E}^- (u))}_{L^2(T)}
 \lesssim  h_T^2  \vertiii{ \widetilde{\mathfrak{C}}_T ( \Pi^p_{T_{\lambda}} (\mathfrak{E}^+ (u))) - \Pi^p_{T_{\lambda}} (\mathfrak{E}^- (u)) }_{a_{\lambda}}
 \end{split}
 \end{equation*}
 \begin{equation*}
  \lesssim h_T^{p+1} \left(  |\mathfrak{E}^+ (u)|_{H^{p+1}(T_{\lambda})} + |\mathfrak{E}^- (u)|_{H^{p+1}(T_{\lambda})} \right).
\end{equation*}
Then, the standard inverse inequality yields
\begin{align}
\label{thm_loc_approx_eq3}
h^j_T \abs{ \widetilde{\mathfrak{C}}_T ( \Pi^p_{T_{\lambda}} (\mathfrak{E}^+ (u))) - \Pi^p_{T_{\lambda}} (\mathfrak{E}^- (u))}_{H^j(T)} &\lesssim \norm{ \widetilde{\mathfrak{C}}_T ( \Pi^p_{T_{\lambda}}( \mathfrak{E}^+ (u))) - \Pi^p_{T_{\lambda}} (\mathfrak{E}^- (u))}_{L^2(T)} \nonumber \\
& \lesssim  h^{p+1}_T \left(  |\mathfrak{E}^+ (u)|_{H^{p+1}(T_{\lambda})} + |\mathfrak{E}^- (u)|_{H^{p+1}(T_{\lambda})} \right).
\end{align}
Applying the standard error estimates of the $L^2$ projection to the second term in \eqref{thm_loc_approx_eq1} and combining it with \eqref{thm_loc_approx_eq3}, we establish \eqref{thm_loc_approx_eq02}.
\end{proof}

To investigate the approximation capabilities of the global IFE space \eqref{glob_IFE_space},
we consider the approximation operator  $Q_h :  PH^{p+1}(\Omega) \to S^p_h(\Omega)$ such that
for each $u \in PH^{p+1}(\Omega)$, we let $Q_hu|_T \in S_h^p(T) = \mathbb{P}_p(T)$ be the standard Lagrange
interpolation of $u$ on every non-interface element $T \in \mathcal{T}_h^n$ and let
$Q_hu|_T = Q_Tu $ be the IFE function given by \eqref{J_interpolation_2} on every interface element $T \in \mathcal{T}_h^i$.
The next theorem provides \emph{a priori} bounds for the error $u - Q_hu$ in the energy norm.

\begin{thm}
\label{thm:energy_norm_approx}
Under the assumptions of Theorem \ref{cauchy_mapping_prop}, we have
\begin{equation}
\label{thm:energy_norm_approx_eq0}
\vertiii{ u - Q_h u }_h \lesssim h^{p}  \frac{\beta^-}{ \sqrt{\beta^+ } } \sum_{k=1}^{p+1} \left( | u^- |_{H^{k}(\Omega^-)} + | u^+ |_{H^{k}(\Omega^+)} \right) .
\end{equation}
\end{thm}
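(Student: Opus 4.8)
The plan is to expand $\vertiii{u-Q_hu}_h^2$ into the five constituents of the norm \eqref{energy_norm_aux} and bound each one using the local estimates of Theorem \ref{thm_loc_approx}. The organizing observation is that on an interface element $T$ the error splits as $v|_{T^+}=\mathfrak{E}^+(u)-Q_T^+u$ and $v|_{T^-}=\mathfrak{E}^-(u)-Q_T^-u$, where each piece is the restriction to $T$ of a function that is \emph{smooth on all of} $T$ (a Sobolev extension minus a polynomial). This is precisely what lets me apply trace inequalities on the whole element rather than on the small cut subelements, so that nothing depends on the interface location.

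For the bulk term $\sum_{T}\int_T\|\sqrt{\beta}\nabla v\|^2$, I split $\mathcal{T}_h$ into interface and non-interface elements. On non-interface elements the standard Lagrange interpolation estimate gives $h_T^{2p}|u|^2_{H^{p+1}(T)}$ weighted by $\beta^\pm\le\beta^-$; on interface elements I invoke \eqref{thm_loc_approx_eq01}--\eqref{thm_loc_approx_eq02} with $j=1$, weighting the $T^\pm$ contributions by $\beta^\pm$, so that the dominant coefficient is $\beta^-$. Since $\beta^-\le\gamma=(\beta^-)^2/\beta^+$, this term is already controlled by $\gamma h^{2p}$ times the squared seminorms of the extensions.

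For the two jump terms I bound $[v]_e^2$ and $[v]_\Gamma^2$ by twice the sum of the squared one-sided traces of $v^+$ and $v^-$ (discarding the cancellation does no harm here, because the rates close anyway). Applying the scaled trace inequality on $T$ to the smooth representatives $\mathfrak{E}^s(u)-Q_T^su$, and on $\Gamma_T$ the cut-independent interface trace inequality of Lemma 3.2 in \cite{2016WangXiaoXu}, converts these into $h_T^{-1}\|v^s\|^2_{L^2(T)}+h_T|v^s|^2_{H^1(T)}\lesssim h_T^{2p+1}(\cdots)$ via Theorem \ref{thm_loc_approx} with $j=0,1$; the prefactors $\sigma^0\gamma/|e|$ and $\sigma^1\gamma/h_T$ then yield exactly $\gamma h_T^{2p}(\cdots)$. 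For the two flux terms I likewise bound the averages by the one-sided fluxes $\beta^s\nabla v^s\cdot\mathbf{n}$, apply the same trace inequalities to $\nabla v^s$ to obtain $h_T^{-1}|v^s|^2_{H^1(T)}+h_T|v^s|^2_{H^2(T)}\lesssim h_T^{2p-1}(\cdots)$ now using $j=1,2$, and observe that the weights $|e|/(\sigma^0\gamma)$ and $h_T/(\sigma^1\gamma)$ combined with the coefficient square $(\beta^-)^2$ give $(\beta^-)^2/\gamma=\beta^+\le\gamma$; hence these contributions are again dominated by $\gamma h_T^{2p}(\cdots)$.

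Finally I sum the elementwise bounds over $\mathcal{T}_h^i$, using the finite-overlap property (\textbf{A2}) of the fictitious elements to replace $\sum_T|\mathfrak{E}^s(u)|^2_{H^{p+1}(T_\lambda)}$ by $|\mathfrak{E}^s(u)|^2_{H^{p+1}(\Omega)}$ up to the constant $M$, and then the extension bound \eqref{soblev_bound} to pass from $|\mathfrak{E}^s(u)|_{H^{p+1}(\Omega)}$ to $\sum_{k=1}^{p+1}|u^s|_{H^k(\Omega^s)}$. Taking the square root and recalling $\sqrt{\gamma}=\beta^-/\sqrt{\beta^+}$ produces \eqref{thm:energy_norm_approx_eq0}. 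I expect the flux terms to be the main obstacle: they require simultaneously keeping the trace inequalities robust against the cut geometry (resolved by always working with the globally smooth representatives $\mathfrak{E}^s(u)-Q_T^su$ on the full element and with the cut-independent interface trace inequality) and verifying that the contrast bookkeeping closes precisely at $\gamma$, rather than at a larger power of the contrast $\beta^-/\beta^+$.
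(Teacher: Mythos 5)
Your proposal is correct and follows essentially the same route as the paper's proof: term-by-term expansion of the norm \eqref{energy_norm_aux}, the local estimates of Theorem \ref{thm_loc_approx}, trace inequalities applied to the globally smooth representatives $\mathfrak{E}^s(u)-Q_T^su$ on whole elements, summation via Assumption (\textbf{A2}), and the extension bound \eqref{soblev_bound} with $\sqrt{\gamma}=\beta^-/\sqrt{\beta^+}$. Your only addition is to write out explicitly the interface-jump and flux-term estimates (including the contrast bookkeeping $(\beta^-)^2/\gamma=\beta^+\leqslant\gamma$), which the paper dispatches with ``similar estimates also hold''; your bookkeeping there is accurate.
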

\begin{proof}
The arguments essentially are the same as those for Theorem 5.2 in \cite{2019GuoLin}.
Over non-interface elements, the standard estimate for Lagrange interpolation yields
\begin{equation}
\label{thm:energy_norm_approx_eq1}
\sum_{T\in\mathcal{T}^n_h} \int_T \beta \norm{ \nabla (u - Q_T u) }^2 dX \lesssim h^{2p} \beta^- \sum_{T\in\mathcal{T}^n_h}  |u|^2_{H^{p+1}(T)},
\end{equation}
where we have assumed $\beta^-\geqslant\beta^+$.  Then, we apply Theorem \ref{thm_loc_approx} on interface elements to have
\begin{equation}
\label{thm:energy_norm_approx_eq2}
\sum_{T\in\mathcal{T}^i_h} \int_T \beta \norm{ \nabla (u - Q_T u) }^2 dX  \lesssim h^{2p} \beta^- \sum_{T\in\mathcal{T}^i_h} \left( |\mathfrak{E}^+ (u)|^2_{H^{p+1}(T_{\lambda})} + |\mathfrak{E}^- (u)|^2_{H^{p+1}(T_{\lambda})} \right).
\end{equation}
Next, we proceed to estimate the edge penalty terms. On each interface edge $e\in\mathcal{E}^i_h$, we define $e^{\pm}=e\cap\Omega^{\pm}$ and $\tilde{h}=\max\{h_{T^1}, h_{T^2}\}$ where $T^1$ and $T^2$ are the two elements sharing $e$. Then by Theorem \ref{thm_loc_approx} and the standard trace inequality, we have
\begin{equation}
\begin{split}
\label{thm:energy_norm_approx_eq3}
\sum_{s=\pm} \frac{\sigma^0 \gamma}{|e|} \int_{e^{s}} \left[ u^{s} -Q^{s}_hu \right]^2_e ds & \lesssim  \sum_{s=\pm}  \sum_{j=1,2} \tilde{h}^{-1} \gamma \int_{e}
\left( (\mathfrak{E}^s(u) -Q^{s}_h u)|_{T^j} \right)^2 ds  \\
&\lesssim \sum_{s=\pm} \sum_{j=1,2} \gamma  \left( \tilde{h}^{-2} \| \mathfrak{E}^s(u)_E -Q^{s}_{T^j} u \|^2_{L^2(T^j)} + | \mathfrak{E}^s(u) - Q^{s}_{T^j} u |^2_{H^1(T^j)} \right)\\
&\lesssim  \tilde{h}^{2p} \gamma \left( |\mathfrak{E}^+ (u)|^2_{H^{p+1}( T^1_{\lambda}\cup T^2_{\lambda} )} + |\mathfrak{E}^- (u)|^2_{H^{p+1}( T^1_{\lambda}\cup T^2_{\lambda} )}
\right).
\end{split}
\end{equation}
Similar estimates also hold for the penalty terms on the non-interface edges, the interface, and the flux terms. Summing these estimates and applying the finite overlapping Assumption (\textbf{A2}), we obtain
\begin{equation}
\label{ thm:energy_norm_approx_eq5}
\vertiii{ u - Q_h u }_h \lesssim h^{p} \sqrt{\gamma} \left( | \mathfrak{E}^- (u) |_{H^{p+1}(\Omega)} + | \mathfrak{E}^+ (u) |_{H^{p+1}(\Omega)}  \right),
\end{equation}
which leads to the desired estimate \eqref{thm:energy_norm_approx_eq0} after applying \eqref{soblev_bound}.
\end{proof}

\begin{rem}
\label{rem:less_regularity}
When the enrichment functions $\mathfrak{C}_{T,D}(0)$, $\mathfrak{C}_{T,N}(0)$ and $\mathfrak{C}_{T,\varphi_f}(0)$ are not used in \eqref{cauchy_exten_sum}, the proposed approximation operator \eqref{J_interpolation_2} reduces to the one in \cite{2019GuoLin} which can be used to estimate approximation capabilities of the IFE spaces in \eqref{loc_IFE_space_inter} for functions satisfying homogeneous jump conditions and the extended jump conditions \eqref{normal_jump_cond}. In particular, by directly estimating the term I in \eqref{cauchy_mapping_prop3},
we can see that if the enrichment function $\mathfrak{C}_{T,\varphi_f}(0)$ is omitted from the definition \eqref{cauchy_exten_sum}, then the resulting approximation operator
is suboptimal with an $\mathcal{O}(h^{2-j})$ accuracy in the norms $|\cdot|_{H^j}$, $j=0,1$ and $\mathcal{O}(h)$ in $\vertiii{\cdot}_h$, for $p\geqslant 2$.
\end{rem}

Now, we are ready to estimate the error of the enriched IFE solution of the proposed scheme \eqref{enr_IFE_method_1}.
\begin{thm}
\label{thm:energy_error}
Let $u\in PH^{p+1}(\Omega)$ be the exact solution of the interface problem \eqref{model}. Assume that the mesh $\mathcal{T}_h$ is fine enough and $\sigma^0$ and $\sigma^1$ are large enough such that the coercivity holds for the bilinear form \eqref{weak_form_2} (Theorem 6.1 in \cite{2019GuoLin}). Then the enriched IFE solution $u_h = \tilde{u}_h + \Phi_h$ has the following error bound:
\begin{equation}
\label{thm:energy_error_eq0}
\vertiii{ u - u_h }_h \lesssim \frac{\beta^-}{\sqrt{\beta^+}} h^p \sum_{k=1}^{p+1} \left( | u^- |_{H^{k}(\Omega^-)} + | u^+ |_{H^{k}(\Omega^+)} \right).
\end{equation}
\end{thm}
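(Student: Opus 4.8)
The plan is to prove Theorem~\ref{thm:energy_error} via the standard Strang-type argument for nonconforming/penalty methods, using Theorem~\ref{thm:energy_norm_approx} to supply the approximation error and the assumed coercivity (Theorem~6.1 in \cite{2019GuoLin}) together with boundedness of $a_h(\cdot,\cdot)$ to convert it into an error bound for the discrete solution. First I would invoke the triangle inequality to split $\vertiii{u-u_h}_h \leqslant \vertiii{u - Q_h u}_h + \vertiii{Q_h u - u_h}_h$, where $Q_h u \in S_h^p(\Omega)$ is the IFE interpolant from \eqref{J_interpolation_2}. The first term is already controlled optimally by Theorem~\ref{thm:energy_norm_approx}, so the whole burden falls on the discrete error $e_h := Q_h u - u_h \in S_h^p(\Omega)$.

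To bound $\vertiii{e_h}_h$, I would use coercivity: $\vertiii{e_h}_h^2 \lesssim a_h(e_h, e_h) = a_h(Q_h u, e_h) - a_h(u_h, e_h)$. Since $u_h$ solves \eqref{enr_IFE_method_1}, we have $a_h(u_h, e_h) = L_f(e_h)$. The key consistency observation is that the exact solution $u$, viewed in $V_h$, satisfies the weak problem \eqref{weak_form_1}, i.e.\ $a_h(u, v) = L_f(v)$ for all $v \in V_{h,0}$; one must check this identity extends to test functions $v_h \in S_h^p(\Omega)$ (which is \emph{not} contained in $V_{h,0}$ because the homogeneous jump conditions are only weakly satisfied). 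This is where the symmetric penalty formulation and the construction of the enrichment functions $\Phi_h$ pay off: the jump terms involving $[u]_\Gamma = J_D$ and the flux jump $J_N$ on the right-hand side of $a_h$ and $L_f$ are designed precisely so that Galerkin orthogonality $a_h(u - u_h, v_h) = 0$ holds for all $v_h \in S_h^p(\Omega)$. Granting this, $a_h(e_h, e_h) = a_h(Q_h u - u, e_h)$, and then boundedness of $a_h$ in the $\vertiii{\cdot}_h$ norm gives $\vertiii{e_h}_h^2 \lesssim \vertiii{Q_h u - u}_h \vertiii{e_h}_h$, whence $\vertiii{e_h}_h \lesssim \vertiii{u - Q_h u}_h$.

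The main obstacle I anticipate is establishing the Galerkin orthogonality (equivalently, the consistency of the scheme) rigorously: one must verify that the exact solution $u$ satisfies $a_h(u, v_h) = L_f(v_h)$ for test functions $v_h \in S_h^p(\Omega)$ that do \emph{not} lie in $V_{h,0}$. This requires integrating by parts element-by-element, using the PDE \eqref{inter_PDE}, the continuity of $u$ and its flux across non-interface edges, and carefully matching the nonhomogeneous jump data $J_D, J_N$ appearing in \eqref{nonhomo_jump_cond_1}--\eqref{nonhomo_jump_cond_2} against the corresponding terms in $L_f$ given by \eqref{new_Lf}. A secondary technical point is the boundedness of $a_h(\cdot,\cdot)$ with respect to $\vertiii{\cdot}_h$; the norm \eqref{energy_norm_aux} is specifically designed (with its flux-control terms $\frac{|e|}{\sigma^0\gamma}\int_e(\{\beta\nabla v\cdot\mathbf{n}\}_e)^2 ds$ and the analogous interface term) to make the penalty and consistency terms Cauchy--Schwarz bounded by the norm, so this should follow by termwise Cauchy--Schwarz.

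Finally, combining the two pieces via the triangle inequality and substituting the estimate of Theorem~\ref{thm:energy_norm_approx} yields
\begin{equation*}
\vertiii{u - u_h}_h \lesssim \vertiii{u - Q_h u}_h \lesssim \frac{\beta^-}{\sqrt{\beta^+}} h^p \sum_{k=1}^{p+1} \left( |u^-|_{H^k(\Omega^-)} + |u^+|_{H^k(\Omega^+)} \right),
\end{equation*}
which is exactly \eqref{thm:energy_error_eq0}. I expect the proof to be short once consistency and boundedness are in hand, mirroring the remark in the excerpt that ``the arguments essentially are the same as those for Theorem~5.2 in \cite{2019GuoLin}''; the novelty here is only that the enrichment $\Phi_h$ absorbs the nonhomogeneous data so that the same abstract Strang argument applies.
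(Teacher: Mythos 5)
Your proposal is correct and follows essentially the same route as the paper's proof: Galerkin orthogonality $a_h(u-u_h,v_h)=0$ on $S_h^p(\Omega)$ (the paper asserts this from the consistency of the weak form \eqref{weak_form_1}, which in fact holds for all test functions in $V_h \supseteq S_h^p(\Omega)$, exactly the point you flag for verification), then coercivity and continuity from Theorems 6.1 and 6.2 of \cite{2019GuoLin} to get $\vertiii{u_h - Q_h u}_h \lesssim \vertiii{u - Q_h u}_h$, and finally the triangle inequality with Theorem \ref{thm:energy_norm_approx}. Your extra care in spelling out the element-wise integration by parts behind the consistency identity is a sound elaboration of a step the paper takes for granted, not a deviation.
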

\begin{proof}
First, since the exact solution $u$ satisfies the weak formulation \eqref{enr_IFE_method_1}, we have
\begin{equation}
\label{thm:energy_error_eq1}
a_h(u_h - Q_h u ,v_h)  = a_h(u - Q_h u ,v_h) , ~~~ \forall v_h \in S^p_h(\Omega).
\end{equation}
Moreover, since $u_h - Q_h u\in S^p_h(\Omega)$, the coercivity and continuity established in Theorems 6.1, 6.2 in \cite{2019GuoLin} lead to
\begin{equation}
\begin{split}
\label{thm:energy_error_eq2}
\vertiii{ u_h - Q_h u}^2_h & \lesssim a_h( u_h - Q_h u , u_h - Q_h u)  = a_h ( u - Q_hu, u_h - Q_h u) \lesssim \vertiii{u - Q_hu}_h \vertiii{ {u}_h - Q_hu}_h,
\end{split}
\end{equation}
which yields $\vertiii{ u_h - Q_h u}_h \lesssim  \vertiii{u - Q_hu}_h$.
Then, using Theorem \ref{thm:energy_norm_approx}, we obtain \eqref{thm:energy_error_eq0} as follows
\begin{equation}
\label{thm:energy_error_eq4}
\vertiii{ u -  u_h}_h \lesssim  \vertiii{ u - Q_hu}_h + \vertiii{  Q_hu - u_h }_h \lesssim h^{p}  \frac{\beta^-}{ \sqrt{\beta^+ } }\sum_{k=1}^{p+1} \left( | u^- |_{H^{k}(\Omega^-)} + | u^+ |_{H^{k}(\Omega^+)} \right).
\end{equation}
\end{proof}

In the next theorem we state and establish an optimal $L^2$ error estimate for the IFE error.
\begin{thm}
\label{thm:L2_error}
Under the assumptions of Theorem \ref{thm:energy_error}, the enriched IFE solution $u_h = \tilde{u}_h + \Phi_h$ has the following error bound:
\begin{equation}
\label{thm:L2_error_eq0}
\| u - u_h \|_{L^2(\Omega)}  \lesssim \left( \frac{\beta^-}{\beta^+} \right)^2 h^{p+1} \sum_{k=1}^{p+1} \left( | u^- |_{H^{k}(\Omega^-)} + | u^+ |_{H^{k}(\Omega^+)} \right).
\end{equation}
\end{thm}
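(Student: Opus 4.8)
The plan is a duality (Aubin--Nitsche) argument. Write $e := u - u_h \in V_h$ and introduce the auxiliary \emph{dual} interface problem: find $w=(w^+,w^-)\in PH^2_0(\Omega)$ solving $-\nabla\cdot(\beta\nabla w)=e$ in $\Omega^\pm$ subject to the \emph{homogeneous} jump conditions $[w]_\Gamma=0$ and $[\beta\nabla w\cdot\mathbf{n}]_\Gamma=0$, together with $w|_{\partial\Omega}=0$. Since $\Omega$ is convex and $\Gamma$ smooth, Theorem \ref{thm_regularity} with $m=0$, $J_D=J_N=0$ and right-hand side $e$ gives $\sum_{k=1}^2|\beta w|_{PH^k(\Omega)}\lesssim\|e\|_{L^2(\Omega)}$. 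Because $\beta$ is piecewise constant this separates as $|w^-|_{H^k(\Omega^-)}\lesssim(\beta^-)^{-1}\|e\|_{L^2(\Omega)}$ and $|w^+|_{H^k(\Omega^+)}\lesssim(\beta^+)^{-1}\|e\|_{L^2(\Omega)}$ for $k=1,2$; using $\beta^-\geqslant\beta^+$ I would record $\sum_{k=1}^2\big(|w^-|_{H^k(\Omega^-)}+|w^+|_{H^k(\Omega^+)}\big)\lesssim(\beta^+)^{-1}\|e\|_{L^2(\Omega)}$.

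Next I would establish the representation $\|e\|_{L^2(\Omega)}^2=a_h(e,w)$. Multiplying the dual equation by $e$ and integrating by parts elementwise, the boundary contributions collapse on the edges because $e$ is continuous across every non-interface edge (so $[e]_{e}=0$ for $e\in\mathcal{E}^n_h$) while $w$ has continuous trace \emph{and} continuous normal flux across every edge and across $\Gamma$ (its jumps are homogeneous). Hence every term pairing a jump of $w$ with a quantity of $e$ vanishes, and for the adjoint-consistent (symmetric) choice of $\epsilon_0,\epsilon_1$ the surviving volume and interface terms reassemble exactly into $a_h(e,w)$. This reassembly, carried out in the presence of the interface and edge penalty terms and of the discontinuity of $e$ across $\Gamma$, is the step I expect to be the main obstacle.

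With the identity in hand I would invoke Galerkin orthogonality. Both the exact solution $u$ and the IFE solution $u_h$ satisfy \eqref{enr_IFE_method_1} tested against $S^p_h(\Omega)$, so $a_h(e,v_h)=0$ for all $v_h\in S^p_h(\Omega)$. Let $w_h\in S^p_h(\Omega)$ be the IFE interpolant of $w$ built from the plain Cauchy mapping $\mathfrak{C}_T$ without enrichment, which is the natural interpolant for a function with homogeneous jumps (cf. Remark \ref{rem:less_regularity}). Then $\|e\|_{L^2(\Omega)}^2=a_h(e,w-w_h)$, and the continuity of $a_h$ on $V_h$ (Theorem 6.2 in \cite{2019GuoLin}) gives $\|e\|_{L^2(\Omega)}^2\lesssim\vertiii{e}_h\,\vertiii{w-w_h}_h$.

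Finally I would estimate the two factors and collect the contrast powers. The first-order energy approximation obtained from Theorem \ref{thm:energy_norm_approx} applied to $w$ at its $H^2$ regularity (that is, with $p$ replaced by $1$ and no enrichment) yields $\vertiii{w-w_h}_h\lesssim h\,\frac{\beta^-}{\sqrt{\beta^+}}\sum_{k=1}^2\big(|w^-|_{H^k(\Omega^-)}+|w^+|_{H^k(\Omega^+)}\big)\lesssim h\,\frac{\beta^-}{(\beta^+)^{3/2}}\|e\|_{L^2(\Omega)}$ after inserting the regularity bound from the first paragraph, while the primal energy estimate of Theorem \ref{thm:energy_error} controls $\vertiii{e}_h$. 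Multiplying the two bounds and dividing by $\|e\|_{L^2(\Omega)}$ produces the combined contrast factor $\frac{\beta^-}{\sqrt{\beta^+}}\cdot\frac{\beta^-}{(\beta^+)^{3/2}}=\big(\tfrac{\beta^-}{\beta^+}\big)^2$ together with one extra power of $h$, i.e. $h^{p+1}$, which is exactly the claimed bound \eqref{thm:L2_error_eq0}.
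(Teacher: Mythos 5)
Your proposal is correct and follows essentially the same route as the paper's proof: a duality argument with the homogeneous-jump dual problem, the regularity bound of Theorem \ref{thm_regularity}, Galerkin orthogonality against the unenriched Cauchy-mapping interpolant (the paper's operator $I_h$ built from \eqref{J_interpolation_3}), the $O(h)$ energy approximation via Remark \ref{rem:less_regularity} and Theorem \ref{thm:energy_norm_approx}, and the identical contrast bookkeeping $\frac{\beta^-}{\sqrt{\beta^+}}\cdot\frac{\beta^-}{(\beta^+)^{3/2}}=\left(\frac{\beta^-}{\beta^+}\right)^2$. The ``reassembly'' step you flag as the main obstacle is treated just as lightly in the paper, which simply asserts that the dual solution satisfies the weak formulation for all test functions in $V_h$ and writes $\|u-u_h\|^2_{L^2(\Omega)}=a_h(z,u-u_h)$, implicitly relying on the adjoint consistency of the symmetric choice $\epsilon_0=\epsilon_1=-1$ exactly as you note.
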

\begin{proof}
We apply the standard duality argument in which we let $z\in PH^2(\Omega)$ be the solution to the interface problem \eqref{inter_PDE},\eqref{b_condition} with source term $u-{u}_h$ and homogeneous interface
 conditions \eqref{nonhomo_jump_cond_1} and \eqref{nonhomo_jump_cond_2}.
Then, for every $T \in \mathcal{T}_h^i$, let $I_T$ be the operator defined by
\begin{equation}
\label{J_interpolation_3}
I_T v =
\begin{cases}
 I^+_Tv :=   \Pi^p_{T_{\lambda}} \big ( \mathfrak{E}^+ (v) \big )  & \text{on} ~ T^+, \\
 I^-_Tv :=  {\mathfrak{C}}_T( \Pi^p_{T_{\lambda}} (\mathfrak{E}^+ (v)))  & \text{on} ~ T^-,
\end{cases}~~~~\forall v \in PH^{2}(T_{\lambda}), ~~T \in \mathcal{T}_h^i.
\end{equation}
We note that operator $I_T$ is similar to $Q_T$ but is defined only by the Cauchy mapping $\mathfrak{C}_T$. Then we define $I_hz \in S_h^p(\Omega)$ piecewise such that $I_hz|_T = I_Tz$ when $T \in \mathcal{T}_h^i$; otherwise, $I_hz|_T$ is the standard Lagrange interpolation when $T \in \mathcal{T}_h^n$. We note $z$ satisfies the weak formulation \eqref{enr_IFE_method_1} for all test functions in $V_h$. Therefore, combining the orthogonality condition $a_h(I_hz,u-u_h)=0$ and the continuity of the bilinear form $a_h(\cdot,\cdot)$ established in Theorem 6.2 in \cite{2019GuoLin}, we have
\begin{equation}
\begin{split}
\label{thm:L2_error_eq1}
\| u - u_h \|^2_{L^2(\Omega)}  = a_h(z, u - u_h) = a_h(z - I_hz, u - u_h) \lesssim \vertiii{z - I_hz }_h \vertiii{u - u_h}_h.
\end{split}
\end{equation}
By Theorem \ref{thm_regularity} we have following bound for $z$:
\begin{equation}
\label{thm:L2_error_eq20}
 \sum_{k=1}^{2} \left( \beta^-  |z^-|_{H^k(\Omega^-)} +  \beta^+ |z^+|_{H^k(\Omega^+)} \right) \lesssim \| u - u_h \|_{L^{2}(\Omega)}.
\end{equation}
Since $I_hz$ is derived from $Q_hz$ by dropping the terms
$\mathfrak{C}_{T,\varphi_f}(0)$, $\mathfrak{C}_{T,D}(0)$ and $\mathfrak{C}_{T,N}(0)$, by
Remark \ref{rem:less_regularity}, Theorem \ref{thm:energy_norm_approx}, and Theorem (\ref{thm:L2_error_eq20}), we have
\begin{equation}
\label{thm:L2_error_eq2}
\vertiii{z - I_hz }_h \lesssim h \frac{\beta^-}{\sqrt{\beta^+}} \sum_{k=1}^{2} \left( | z^- |_{H^{k}(\Omega^-)} + | z^+ |_{H^{k}(\Omega^+)} \right)\lesssim \frac{ \beta^-}{(\beta^+)^{3/2}} h \| u - u_h \|_{L^2(\Omega)}.
\end{equation}
Finally, combining \eqref{thm:L2_error_eq1}, \eqref{thm:L2_error_eq2} and Theorem \ref{thm:energy_error}, we have the estimate
\eqref{thm:L2_error_eq0}.
\end{proof}

\begin{rem}
\label{rem_est_data}
Under the conditions of Theorem \ref{thm_regularity}, applying the regularity result \eqref{u_regularity} to the estimate \eqref{thm:energy_error_eq0} and \eqref{thm:L2_error_eq0}, we have the following estimates in terms of data in the interface problem:
\begin{subequations}
\label{rem_est_data_eq0}
\begin{equation}
\label{rem_est_data_eq01}
\vertiii{ u - u_h }_h \lesssim \frac{\beta^-}{(\beta^+)^{3/2}} h^p ( \min\{\beta^+,\beta^-\} \|J_D\|_{H^{p+3/2}(\Gamma)} + \|J_N\|_{H^{p+1/2}(\Gamma)} + \| f \|_{PH^{p}(\Omega)} ),
\end{equation}
\begin{equation}
\label{rem_est_data_eq01}
\| u - u_h \|_{L^2(\Omega)} \lesssim \frac{(\beta^-)^2}{(\beta^+)^{3}} h^{p+1} ( \min\{\beta^+,\beta^-\} \|J_D\|_{H^{p+3/2}(\Gamma)} + \|J_N\|_{H^{p+1/2}(\Gamma)} + \| f \|_{PH^{p}(\Omega)} ).
\end{equation}
\end{subequations}
\end{rem}

\begin{rem}
Employing arguments similar to those for Remark 6.3 in \cite{2019GuoLin} and applying Remark \ref{rem:less_regularity}, we can obtain the following estimates for the enriched IFE solution provided that
the exact solution $u$ is in $PH^{m+1}(\Omega)$ with $m\geqslant 1$ being an integer and $m\leqslant p$:
\begin{subequations}
\label{est_low_regularity}
\begin{align}
& \vertiii{u-u_h}_h \lesssim \frac{\beta^-}{\sqrt{\beta^+}} h^m \sum_{k=1}^{m+1} \left( | u^- |_{H^{k}(\Omega^-)} + | u^+ |_{H^{k}(\Omega^+)} \right) , \label{est_low_regularity_eq1} \\
& \| u - u_h \|_{L^2(\Omega)}  \lesssim \left( \frac{ \beta^-}{\beta^+} \right)^2 h^{m+1} \sum_{k=1}^{m+1}
\left( | u^- |_{H^{k}(\Omega^-)} + | u^+ |_{H^{k}(\Omega^+)} \right). \label{est_low_regularity_eq2}
\end{align}
\end{subequations}
\end{rem}

\begin{rem}
\label{rem:extended_jumps_removed}
~\\
\begin{itemize}
\item We note that both the IFE functions and the method in \cite{2019GuoLin}
yield optimal IFE solutions for interface problems with homogeneous jump conditions and they can be viewed as a special cases of the IFE functions and the method proposed here.

\item We introduce enrichment IFE functions \eqref{enrich_IFE_fun_DN} and \eqref{enrich_IFE_fun_Lap} to handle the nonhomogeneous jump conditions and the singularity of $f$ across the interface to circumvent
the extended jump conditions \eqref{normal_jump_cond}. Hence, the proposed enriched IFE framework
enables us to solve a larger class of interface  elliptic problems than the existing IFE methods in the literature.
\commentout{
    On the other hand, the proposed method involves penalty terms on the interface which is not the case for existing IFE methods satisfying extended
    interface conditions \cite{2014AdjeridBenromdhaneLin,2018AdjeridRomdhaneLin,2016AdjeridGuoLin}.
    This later issue needs further investigation.
}

\item Another desirable feature of the proposed enriched IFE method is the fact that it allows us to
construct enrichment IFE functions for each nonhomogeneous interface conditions independently. For example,
when $J_D \neq 0$, while $J_N = 0$ and the source term $f$ is smooth across the interface, we only need to construct $\phi_{T,N}$ and set $\phi_{T,D}=\phi_{T,f} = 0$.
\end{itemize}
\end{rem}

\section{Stability of the Enriched IFE Method}
\label{sec:conditioning}

In this section, we address the stabilities in the computations of the proposed IFE method by estimating the spectral condition numbers of both the local linear systems for computing IFE functions and the global linear system for computing enriched IFE solutions. In this section $\bfv$ will denote a column vector with its transpose $\bfv'$. We recall the spectral condition number, the maximum, and the minimum eigenvalues of a symmetric positive definite matrix $\mathbf{ M}$ as
\begin{equation}
\label{spectral_cond}
\kappa(\mathbf{ M}) = \frac{\mu_{\max}(\mathbf{ M})}{\mu_{\min}(\mathbf{ M})}, ~~~ \text{and} ~~~ \mu_{\max}(\mathbf{ M})= \max_{\bfv\in\mathbb{R}^n, \bfv\neq\mathbf{ 0}}\frac{\bfv' \mathbf{ M} \bfv}{\bfv'\bfv}, ~~~ \mu_{\min}(\mathbf{ M})= \min_{\bfv\in\mathbb{R}^n, \bfv\neq\mathbf{ 0}}\frac{\bfv' \mathbf{ M} \bfv}{\bfv'\bfv}.
\end{equation}

\subsection{Local IFE Basis Functions and Condition Numbers}

Let us first describe the procedure for computing the local IFE basis functions and then investigate the conditioning of the resulting algebraic problem. For every interface element $T \in \mathcal{T}_h^i$, let $\{ \zeta_i, \ i=1,2,\ldots,n \}$ be a basis for the polynomial space $\mathbb{P}_p(T_\lambda)$ with $n=(p+1)(p+2)/2$. Thus, for each $v=\sum_{i=1}^n \alpha^+_i \zeta_i$
in $\mathbb{P}_p(T_\lambda)$ with $\bfalpha^+=[\alpha^+_{1},\alpha^+_{2},\ldots,\alpha^+_{n}]'$, there exist vectors $\bfalpha^-=[\alpha^-_{1},\alpha^-_{2},\ldots,\alpha^-_{n}]'$, $\bfalpha^-_r=[\alpha^-_{1,r},\alpha^-_{2,r},\ldots,\alpha^-_{n,r}]'$, $r=D,N,\phi_f$
such that
\begin{subequations} \label{vCT}
\begin{equation}
\label{vCTold}
\mathfrak{C}_T(v)=\sum_{i=1}^n \alpha^-_{i} \zeta_i ~~~~~ \text{and} ~~~~~
 \mathfrak{C}_{T,r}(0)=\sum_{i=1}^n \alpha^-_{i,r} \zeta_i, ~~~ r=D,N,\phi_f.
\end{equation}
By \eqref{cauchy_exten}, \eqref{cauchy_ext_DN} and \eqref{eq:CT_phi_Tf} 
the vectors $\bfalpha^-$ and $\bfalpha^-_r$ are determined by solving the following linear algebraic   systems
\begin{equation}
\label{loc_linear_sys}
\mathbf{ A}_T \bfalpha^-=\mathbf{ B}_T \bfalpha^+ ~~~~ \text{and} ~~~~ \mathbf{ A}_T \bfalpha^-_r=\mathbf{ b}_{T,r}, ~ r=D,N,\phi_f,
\end{equation}
where the matrices $\mathbf{A}_T$, $\mathbf{B}_T$ and the vectors $\mathbf{ b}_{T,r}$, $r=D,N,\phi_f,$ are given by
\begin{equation}
\label{AB_T}
\mathbf{ A}_{T}=\left[ a_{\lambda}(\zeta_i,\zeta_j) \right]^n_{i,j=1}, ~~ \mathbf{ B}_{T}=\left[ b_{\lambda}(\zeta_i,\zeta_j) \right]^n_{i,j=1},
\end{equation}
\begin{equation}
\label{bT_DNf}
\mathbf{ b}_{T,D} = h^{-3}_T \left[ \int_{\Gamma^{\lambda}_T} J_D\zeta_i ds \right]^n_{i=1}, ~~
\mathbf{ b}_{T,N} = \frac{h^{-1}_T}{\beta^-} \left[ \int_{\Gamma^{\lambda}_T} J_N \partial_{\mathbf{ n}}\zeta_i ds\right]^n_{i=1},~~
\mathbf{ b}_{T,\phi_f} =  \left[ \int_{\Gamma^{\lambda}_T} \varphi_f \triangle\zeta_i dX \right]^n_{i=1}.
\end{equation}
\end{subequations}
Since both bilinear forms $a_{\lambda}(\cdot,\cdot)$ and $b_{\lambda}(\cdot,\cdot)$ are coercive under the related energy norms $\vertiii{\cdot}_{a_{\lambda}}$ and $\vertiii{\cdot}_{b_{\lambda}}$, $\mathbf{A}_{T}$ and $\mathbf{B}_{T}$ are symmetric positive definite, and thus the linear systems in \eqref{loc_linear_sys} have unique solutions. In particular, we let $\bfalpha^+=\mathbf{ e}_i$ (the $i$-th unit vector), i.e., $v=\zeta_i$, $i=1,2,...,n$, compute the corresponding $\bfalpha^-$ according to \eqref{loc_linear_sys} and form the polynomial $\mathfrak{C}_{T}(\zeta_i)$ according to \eqref{vCTold}. Thus, a set of basis functions spanning the local IFE space $S_h^p(T),~T \in \mathcal{T}_h^i$ are given by
\begin{equation}
\label{local_IFE_bas_fun}
\phi_{T,i}=
\begin{cases}
      & \zeta_i ~~~~~~~~~ \text{on} ~ T^+, \\
      & \mathfrak{C}_{T}(\zeta_i) ~~~ \text{on}~ T^-,
\end{cases}
~~~~ i=1,2,\ldots,n.
\end{equation}
Now, we proceed to estimate the spectral condition numbers of $\mathbf{A}_T$ and begin by finding upper and lower bounds of the eigenvalues of $\mathbf{A}_T$ and $\mathbf{B}_T$.
\begin{lemma}
\label{lem_AT_eig}
There exist constants $0 < c_{\mu}\leqslant C_{\mu}$ independent of $h_T$, $\beta^{\pm}$ and interface location such that the following estimates hold for every $T \in \mathcal{T}_h^i$:
\begin{subequations}
\label{lem_AT_eig_eq1}
\begin{align}
&   c_{\mu}h^{-2}_T \leqslant  \mu_{\min}(\mathbf{A}_T) \leqslant \mu_{\max}(\mathbf{A}_T) \leqslant C_{\mu} h^{-2}_T,   \label{lem_AT_eig_eq1_A} \\
 &  \frac{\beta^+}{\beta^-}c_{\mu}h^{-2}_T \leqslant  \mu_{\min}(\mathbf{B}_T) \leqslant \mu_{\max}(\mathbf{B}_T) \leqslant C_{\mu} h^{-2}_T. \label{lem_AT_eig_eq1_B}
\end{align}
\end{subequations}
\end{lemma}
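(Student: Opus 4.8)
The plan is to translate the eigenvalue bounds into statements about the energy seminorms $\vertiii{\cdot}_{a_{\lambda}}$ and $\vertiii{\cdot}_{b_{\lambda}}$ on $\mathbb{P}_p(T_\lambda)$, and then to invoke the norm equivalences already recorded in Lemma \ref{lem_norm_equiv} together with the scaling relation \eqref{cauchy_norm_relation}. First, for any $\bfalpha=[\alpha_1,\ldots,\alpha_n]'\in\mathbb{R}^n$, associate the polynomial $v=\sum_{i=1}^n\alpha_i\zeta_i\in\mathbb{P}_p(T_\lambda)$. By bilinearity and the definitions \eqref{AB_T}, $\bfalpha'\mathbf{A}_T\bfalpha=a_{\lambda}(v,v)=\vertiii{v}^2_{a_{\lambda}}$ and $\bfalpha'\mathbf{B}_T\bfalpha=b_{\lambda}(v,v)=\vertiii{v}^2_{b_{\lambda}}$, so by \eqref{spectral_cond} it suffices to bound the Rayleigh quotients $\vertiii{v}^2_{a_{\lambda}}/(\bfalpha'\bfalpha)$ and $\vertiii{v}^2_{b_{\lambda}}/(\bfalpha'\bfalpha)$ from above and below over all nonzero $\bfalpha$.

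For $\mathbf{A}_T$, I would first chain the two equivalences in Lemma \ref{lem_norm_equiv}: \eqref{lem_norm_equiv_eq1} gives $h_T^2\vertiii{v}_{a_{\lambda}}\simeq\|v\|_{L^2(T_\lambda^-)}$ and \eqref{lem_norm_equiv_eq2} gives $\|v\|_{L^2(T_\lambda^-)}\simeq\|v\|_{L^2(T_\lambda)}$, whence $\vertiii{v}^2_{a_{\lambda}}\simeq h_T^{-4}\|v\|^2_{L^2(T_\lambda)}$ with constants independent of $h_T$, $\beta^{\pm}$, and the interface location. It then remains to compare $\|v\|^2_{L^2(T_\lambda)}$ with $\bfalpha'\bfalpha$. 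Choosing $\{\zeta_i\}$ to be scaled with the element (an affine image of a fixed reference basis on a reference triangle), a standard scaling argument shows that the mass matrix $[(\zeta_i,\zeta_j)_{L^2(T_\lambda)}]$ is spectrally equivalent to $h_T^2\mathbf{I}$ --- the reference mass matrix has eigenvalues bounded above and below in terms of $p$ and the shape-regularity constant only, and the affine Jacobian contributes the factor $h_T^2$ --- so that $\|v\|^2_{L^2(T_\lambda)}\simeq h_T^2\,\bfalpha'\bfalpha$ uniformly. Combining the two equivalences yields $\bfalpha'\mathbf{A}_T\bfalpha\simeq h_T^{-2}\,\bfalpha'\bfalpha$, which is precisely \eqref{lem_AT_eig_eq1_A} with uniform constants $c_{\mu}\le C_{\mu}$.

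Finally, for $\mathbf{B}_T$ I would simply transfer these bounds through the scaling relation \eqref{cauchy_norm_relation}. Since $\tfrac{\beta^+}{\beta^-}\vertiii{v}^2_{a_{\lambda}}\le\vertiii{v}^2_{b_{\lambda}}\le\vertiii{v}^2_{a_{\lambda}}$, dividing by $\bfalpha'\bfalpha$ and using the bounds just obtained for $\mathbf{A}_T$ gives $\tfrac{\beta^+}{\beta^-}c_{\mu}h_T^{-2}\le\vertiii{v}^2_{b_{\lambda}}/(\bfalpha'\bfalpha)\le C_{\mu}h_T^{-2}$, which is \eqref{lem_AT_eig_eq1_B} with the same constants. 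The one point requiring care is the uniformity of all equivalence constants with respect to the interface location; but this is exactly the property that the fictitious element $T_\lambda$ and Lemma \ref{lem_norm_equiv} are designed to provide, and the remaining mass-matrix comparison is a routine reference-element scaling. Consequently no genuinely new estimate is needed beyond a careful bookkeeping of the cited equivalences.
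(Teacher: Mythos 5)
Your proposal is correct and follows essentially the same route as the paper: reduce the eigenvalue bounds to Rayleigh quotients, use the norm equivalences of Lemma \ref{lem_norm_equiv} to relate $a_{\lambda}(v,v)$ to $h_T^{-4}\|v\|^2_{L^2}$, invoke standard mass-matrix eigenvalue bounds (the paper cites Lemma A.1 of Ern--Guermond) to compare the $L^2$ norm with $\bfalpha'\bfalpha$, and transfer the result to $\mathbf{B}_T$ via \eqref{cauchy_norm_relation}. The only cosmetic difference is that you scale the mass matrix on $T_\lambda$ while the paper works with the mass matrix on $T$ after using the equivalence $\|\cdot\|_{L^2(T^-_{\lambda})}\simeq\|\cdot\|_{L^2(T)}$, which is immaterial since $T_\lambda$ inherits shape regularity from $T$.
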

\begin{proof}
Let $T$ be an arbitrary interface element and let $T_{\lambda}$ be its associated fictitious element. We consider the estimate for
the matrix $\mathbf{A}_T$ first. For each $\bfv=[v_i]^n_{i=1}\neq0$, letting $v=\sum_{i=1}^nv_i\zeta_i$, we have
\begin{equation}
\label{lem_AT_eig_eq3}
 \frac{\bfv'\mathbf{A}_T\bfv}{\bfv'\bfv} = \frac{a_{\lambda}(v,v)}{ \| v \|^2_{L^2(T^-_{\lambda})}} \cdot \frac{ \| v \|^2_{L^2(T^-_{\lambda})}}{\bfv'\bfv}.
\end{equation}
Let $\mathbf{ M}_T=\left[ (\zeta_i,\zeta_j)_{L^2(T)} \right]^n_{i,j=1}$ be the symmetric mass matrix corresponding to the polynomial basis $\{\zeta_i\}^n_{i=1}$ on $T$. The equivalence of norms $d_1\|v\|^2_{L^2(T)}\leqslant \| v \|^2_{L^2(T^-_{\lambda})} \leqslant d_2\|v\|^2_{L^2(T)}$ from Lemma \ref{lem_norm_equiv} yields
\begin{equation}
\label{lem_AT_eig_eq5}
d_1 \mu_{\min}(\mathbf{ M}_T) \leqslant d_1 \frac{ \bfv' \mathbf{ M}_T \bfv }{\bfv'\bfv} \leqslant \frac{ \| v \|^2_{L^2(T^-_{\lambda})}}{\bfv'\bfv} \leqslant d_2 \frac{ \bfv' \mathbf{ M}_T \bfv }{\bfv'\bfv}  \leqslant d_2 \mu_{\max}(\mathbf{ M}_T).
\end{equation}
We substitute the equivalence of norms $c_1 h^{-4}_T \| v \|^2_{L^2(T^-_{\lambda})} \leqslant a_{\lambda}(v,v) \leqslant c_2 h^{-4}_T \| v \|^2_{L^2(T^-_{\lambda})}$ from Lemma \ref{lem_norm_equiv} and \eqref{lem_AT_eig_eq5} into \eqref{lem_AT_eig_eq3}. Then, we apply the bounds $\mu_{\min}(\mathbf{ M}_T)$ and $\mu_{\max}(\mathbf{ M}_T)$ from Lemma A.1 in \cite{2006ErnGuermond} with shape regularity of the mesh to have
\begin{equation}
\label{lem_AT_eig_eq6}
c_{\mu} h^{-2}_T \leqslant d_1c_1\mu_{\min}(\mathbf{ M}_T)h^{-4}_T \leqslant \frac{\bfv'\mathbf{A}_T\bfv}{\bfv'\bfv} \leqslant d_2c_2 \mu_{\max}(\mathbf{ M}_T)h^{-4}_T \leqslant  C_{\mu} h^{-2}_T,
\end{equation}
which yields \eqref{lem_AT_eig_eq1_A} according to \eqref{spectral_cond}. The constants $c_1$, $c_2$, $d_1$ and $d_2$ in the discussions above are independent of both the interface location and element size $h_T$ but they depend
on the scaling factor $\lambda$ and the degree $p$ \cite{2019GuoLin}. The estimate for the matrix $\mathbf{B}_T$ given in
\eqref{lem_AT_eig_eq1_B} follows from the same line of reasoning plus the usage of \eqref{cauchy_norm_relation}.
\end{proof}

In the next theorem we state a stability result for the local IFE problem.
\begin{thm}
\label{thm_condition_AT}
There exists a constant $C_{\kappa}$ independent of $h_T$, $\beta^{\pm}$ and the interface location such that
\begin{equation}
\label{thm_condition_AT_eq0}
\kappa(\mathbf{A}_T) \leqslant C_{\kappa}, ~\forall T \in \mathcal{T}_h^i.
\end{equation}
\end{thm}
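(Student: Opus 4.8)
The plan is to derive the bound as an immediate consequence of Lemma \ref{lem_AT_eig}, which already encapsulates all the essential spectral information; the theorem itself is then a one-line division. Recall first that $\mathbf{A}_T$ is symmetric positive definite, as noted just after \eqref{local_IFE_bas_fun} from the coercivity of $a_{\lambda}(\cdot,\cdot)$, so that its spectral condition number is given by the ratio of its extreme eigenvalues as in the definition \eqref{spectral_cond}, namely $\kappa(\mathbf{A}_T) = \mu_{\max}(\mathbf{A}_T)/\mu_{\min}(\mathbf{A}_T)$.

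Next I would invoke the two-sided eigenvalue bound \eqref{lem_AT_eig_eq1_A} from Lemma \ref{lem_AT_eig}, namely $c_{\mu} h_T^{-2} \leqslant \mu_{\min}(\mathbf{A}_T) \leqslant \mu_{\max}(\mathbf{A}_T) \leqslant C_{\mu} h_T^{-2}$. Dividing the upper bound for $\mu_{\max}(\mathbf{A}_T)$ by the lower bound for $\mu_{\min}(\mathbf{A}_T)$, the mesh-dependent factors $h_T^{-2}$ cancel exactly, yielding
\[
\kappa(\mathbf{A}_T) \leqslant \frac{C_{\mu} h_T^{-2}}{c_{\mu} h_T^{-2}} = \frac{C_{\mu}}{c_{\mu}}.
\]
Setting $C_{\kappa} := C_{\mu}/c_{\mu}$ completes the argument, and since Lemma \ref{lem_AT_eig} guarantees that $c_{\mu}$ and $C_{\mu}$ are independent of $h_T$, $\beta^{\pm}$ and the interface location, the constant $C_{\kappa}$ inherits this independence.

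There is essentially no obstacle at this stage: all the difficulty has been front-loaded into Lemma \ref{lem_AT_eig}, where the fictitious-element norm equivalence \eqref{lem_norm_equiv_eq1} (which removes any dependence on small-cut geometry) and the mass-matrix eigenvalue estimates are combined. The only point worth checking here is that the cancellation of $h_T^{-2}$ is legitimate, i.e., that $\mu_{\min}(\mathbf{A}_T)$ and $\mu_{\max}(\mathbf{A}_T)$ scale identically in $h_T$, which is precisely the content of \eqref{lem_AT_eig_eq1_A}.
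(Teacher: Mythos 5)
Your proof is correct and is exactly the paper's argument: the paper's proof of Theorem \ref{thm_condition_AT} consists of the single remark that it follows immediately from Lemma \ref{lem_AT_eig}, and your division of the two-sided bound \eqref{lem_AT_eig_eq1_A}, with the $h_T^{-2}$ factors cancelling to give $C_{\kappa}=C_{\mu}/c_{\mu}$, is precisely that one-line deduction made explicit. You also correctly identify that all the substantive work (the norm equivalence on the fictitious element and the mass-matrix eigenvalue bounds) resides in the lemma, not in the theorem itself.
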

\begin{proof}
The proof immediately follows from Lemma \ref{lem_AT_eig}.
\end{proof}

\begin{rem}
The scaling factor $\lambda$ is critical to improving the conditioning of $\mathbf{A}_T$ since the geometric analysis in \cite{2019GuoLin} shows as $\lambda\rightarrow1$, i.e., $T_{\lambda}$ reduces to the original element $T$ and the constants in the norm equivalence may blow up when the interface cuts some elements with extremely small-cut subelements, and consequently, the generic constants in the estimates given by Lemma \ref{lem_AT_eig} and Theorem
\ref{thm_condition_AT} can be extremely large. Therefore, $\lambda$ acts like a regularization parameter for solving local Cauchy problems with better conditioning.
\end{rem}

\begin{rem}
\label{rem_mass_cond}
By a similar argument we can further show that $\kappa(\mathbf{B}_T) \leqslant C_{\kappa}\frac{\beta^-}{\beta^+}$. In addition, Lemma \ref{lem_AT_eig} yields $\kappa(\mathbf{A}_T) \lesssim \kappa(\mathbf{ M}_T)$ and $\kappa(\mathbf{B}_T) \lesssim \frac{\beta^-}{\beta^+} \kappa(\mathbf{ M}_T)$.
\end{rem}


\subsection{Stability of the Proposed IFE Method}

Here, we establish upper bounds of the spectral condition number of the stiffness matrix corresponding to
the symmetric enriched IFE method, \emph{i.,e.}, $\epsilon_0=\epsilon_1=-1$ in the bilinear form \eqref{weak_form_2}. The IFE space $S^p_h(\Omega)$ is the set of piecewise polynomials spanned by the standard nodal Lagrange basis functions on
non-interface elements and the IFE shape functions $\phi_{T,i}$ defined by \eqref{local_IFE_bas_fun}
on interface elements where we assume the underling functions $\{\zeta_i\}^n_{i=1}$ are also the standard Lagrange nodal polynomials.
By the standard finite element procedure we use the local shape functions for $S_h^p(T)$ to construct $\{ \psi_1, \psi_2, \ldots, \psi_{N} \}$ as a global basis for $S^p_h(\Omega)$. Thus, the enriched IFE scheme \eqref{enr_IFE_method_2} yields
\begin{equation}
\label{Kh_mat}
\mathbf{K}_h=\left[a_h(\psi_i,\psi_j)\right]^N_{i,j=1} ~~~~~ \text{and} ~~~~~ F_h=\left[ L_f(\psi_i) - a_h(\psi_i,\Phi_h) \right]^N_{i=1},
\end{equation}
and the coefficient vector $\mathbf{ u}_h$  of the enriched IFE solution $u_h$  is determined by the linear system
\begin{equation}
\label{uh_solu}
\mathbf{K}_h \mathbf{ u}_h = F_h.
\end{equation}
We first state and prove a few preliminary results that help estimate $\kappa(\mathbf{K}_h)$.

\begin{lemma}
\label{alpha_bound}
Let $\bfalpha^-$ and $\bfalpha^+$ be defined by \eqref{vCT} with any polynomial basis $\{ \zeta_i \}_{i=1}^n$, then
\begin{equation}
\label{alpha_bound_eq0}
\frac{c_{\mu}}{C_{\mu}} \frac{\beta^+}{\beta^-} \| \bfalpha^+ \|_2   \leqslant \| \bfalpha^- \|_{2} \leqslant \frac{C_{\mu}}{c_{\mu}} \| \bfalpha^+ \|_2,
\end{equation}
where $c_{\mu}$ and $C_{\mu}$ inherit from Lemma \ref{lem_AT_eig}.
\end{lemma}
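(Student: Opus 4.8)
The plan is to read both inequalities off directly from the defining relation $\mathbf{A}_T \bfalpha^- = \mathbf{B}_T \bfalpha^+$ recorded in \eqref{loc_linear_sys}, combined with the eigenvalue bounds of Lemma \ref{lem_AT_eig}. The only structural fact I would invoke is that $\mathbf{A}_T$ and $\mathbf{B}_T$ are symmetric positive definite (already noted after \eqref{bT_DNf}), so that their spectral ($2$-)norms coincide with their largest eigenvalues and the norms of their inverses coincide with the reciprocals of their smallest eigenvalues. This reduces the entire lemma to two elementary operator-norm estimates in which the powers of $h_T$ cancel.

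For the upper bound I would invert the relation as $\bfalpha^- = \mathbf{A}_T^{-1}\mathbf{B}_T\bfalpha^+$ and estimate
\begin{equation*}
\|\bfalpha^-\|_2 \leqslant \|\mathbf{A}_T^{-1}\|_2\,\|\mathbf{B}_T\|_2\,\|\bfalpha^+\|_2 = \frac{\mu_{\max}(\mathbf{B}_T)}{\mu_{\min}(\mathbf{A}_T)}\,\|\bfalpha^+\|_2.
\end{equation*}
Substituting $\mu_{\min}(\mathbf{A}_T)\geqslant c_{\mu}h_T^{-2}$ from \eqref{lem_AT_eig_eq1_A} and $\mu_{\max}(\mathbf{B}_T)\leqslant C_{\mu}h_T^{-2}$ from \eqref{lem_AT_eig_eq1_B} makes the two factors $h_T^{-2}$ cancel and leaves exactly $\frac{C_{\mu}}{c_{\mu}}\|\bfalpha^+\|_2$, the claimed right-hand inequality.

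For the lower bound I would run the argument in the opposite direction, writing $\bfalpha^+ = \mathbf{B}_T^{-1}\mathbf{A}_T\bfalpha^-$ and estimating
\begin{equation*}
\|\bfalpha^+\|_2 \leqslant \frac{\mu_{\max}(\mathbf{A}_T)}{\mu_{\min}(\mathbf{B}_T)}\,\|\bfalpha^-\|_2 \leqslant \frac{C_{\mu}h_T^{-2}}{\tfrac{\beta^+}{\beta^-}c_{\mu}h_T^{-2}}\,\|\bfalpha^-\|_2 = \frac{\beta^-}{\beta^+}\,\frac{C_{\mu}}{c_{\mu}}\,\|\bfalpha^-\|_2.
\end{equation*}
Here the contrast factor appears because the lower eigenvalue bound for $\mathbf{B}_T$ in \eqref{lem_AT_eig_eq1_B} carries the weight $\beta^+/\beta^-$. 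Rearranging gives $\|\bfalpha^-\|_2 \geqslant \frac{c_{\mu}}{C_{\mu}}\frac{\beta^+}{\beta^-}\|\bfalpha^+\|_2$, the claimed left-hand inequality.

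There is essentially no obstacle here beyond careful bookkeeping of the constants. The two points worth flagging are that the symmetric positive definiteness of $\mathbf{A}_T$, $\mathbf{B}_T$ is what lets me identify operator norms with extreme eigenvalues, and that the cancellation of $h_T^{-2}$ is exactly what makes the bounds mesh-independent. The one-sided appearance of the contrast $\beta^+/\beta^-$ is genuine and not an artifact: it traces back through \eqref{lem_AT_eig_eq1_B} to the asymmetric norm comparison $\frac{\beta^+}{\beta^-}\vertiii{\cdot}^2_{a_{\lambda}} \leqslant \vertiii{\cdot}^2_{b_{\lambda}} \leqslant \vertiii{\cdot}^2_{a_{\lambda}}$ of \eqref{cauchy_norm_relation}, which only degrades the \emph{lower} spectral bound of $\mathbf{B}_T$.
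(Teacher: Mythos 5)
Your proof is correct and follows essentially the same route as the paper: both arguments invert the relation $\mathbf{A}_T\bfalpha^- = \mathbf{B}_T\bfalpha^+$ in each direction, identify the operator $2$-norms of the symmetric positive definite matrices and their inverses with extreme eigenvalues, and substitute the bounds of Lemma \ref{lem_AT_eig} so the $h_T^{-2}$ factors cancel. The paper merely writes out the direction $\bfalpha^+ = \mathbf{B}_T^{-1}\mathbf{A}_T\bfalpha^-$ and states the other is similar, whereas you spell out both; your tracing of the one-sided contrast factor back to \eqref{cauchy_norm_relation} is a correct (and welcome) elaboration of what the paper leaves implicit.
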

\begin{proof}
Using \eqref{loc_linear_sys} and Lemma \ref{lem_AT_eig}, we have
\begin{equation}
\label{alpha_bound_eq1}
\| \bfalpha^+ \|_2 = \| \mathbf{B}^{-1}_T \mathbf{A}_T \bfalpha^- \|_2  \leqslant \| \mathbf{B}^{-1}_T \|_2 \| \mathbf{A}_T \|_2 \| \bfalpha^- \|_2 \leqslant \frac{\mu_{\max}(\mathbf{A}_T)}{\mu_{\min}(\mathbf{B}_T)} \| \bfalpha^- \|_2 \leqslant \frac{C_{\mu}}{c_{\mu}} \frac{\beta^-}{\beta^+} \| \bfalpha^- \|_2
\end{equation}
which gives the left inequality of \eqref{alpha_bound_eq0}. The inequality on the right can be proved similarly.
\end{proof}

\begin{lemma}
\label{lem_C_stability}
There exist constants $c_b>0$ and $C_b>0$ such that the following estimates hold for all interface element $T \in \mathcal{T}_h^i$:
\begin{align}
   c_b \frac{\beta^+}{\beta^-} \| v \|_{L^2(T^s)}  \leqslant \| \mathfrak{C}_T(v) \|_{L^2(T^s)} \leqslant  C_b \| v \|_{L^2(T^s)}, ~~~~s=\pm, ~~\forall v\in \mathbb{P}_p(T_\lambda). \label{lem_C_stability_eq1}
\end{align}
\commentout{
Consequently, they imply that the Cauchy mapping $\mathfrak{C}_T$ and its inverse $\mathfrak{C}^{-1}_T$ on every interface element are bounded in terms of the $L^2$-norm
on $\mathbb{P}_p(T^s)$.
}
\end{lemma}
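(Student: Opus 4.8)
The plan is to reduce the whole statement to a two-sided bound in the energy norm $\vertiii{\cdot}_{a_{\lambda}}$ on the finite-dimensional space $\mathbb{P}_p(T_\lambda)$, and only then convert it into $L^2$ estimates through the norm equivalences of Lemma \ref{lem_norm_equiv}. Concretely, I would first prove that for every $v\in\mathbb{P}_p(T_\lambda)$,
\begin{equation*}
\frac{\beta^+}{\beta^-}\vertiii{v}_{a_{\lambda}} \leqslant \vertiii{\mathfrak{C}_T(v)}_{a_{\lambda}} \leqslant \vertiii{v}_{a_{\lambda}}.
\end{equation*}
The upper bound follows by testing the defining relation \eqref{cauchy_exten} with $q_p=\mathfrak{C}_T(v)$, which gives $\vertiii{\mathfrak{C}_T(v)}^2_{a_{\lambda}}=b_{\lambda}(v,\mathfrak{C}_T(v))$; Cauchy--Schwarz for the inner product $b_{\lambda}$ together with $\vertiii{\cdot}_{b_{\lambda}}\leqslant\vertiii{\cdot}_{a_{\lambda}}$ from \eqref{cauchy_norm_relation} then yields the claim after dividing by $\vertiii{\mathfrak{C}_T(v)}_{a_{\lambda}}$. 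The lower bound follows by instead testing \eqref{cauchy_exten} with $q_p=v$, which gives $a_{\lambda}(\mathfrak{C}_T(v),v)=\vertiii{v}^2_{b_{\lambda}}$; bounding the left side by Cauchy--Schwarz in $a_{\lambda}$ and the right side below by $\tfrac{\beta^+}{\beta^-}\vertiii{v}^2_{a_{\lambda}}$ via \eqref{cauchy_norm_relation}, then dividing by $\vertiii{v}_{a_{\lambda}}$, produces exactly the contrast factor $\beta^+/\beta^-$ that appears in the statement. Bijectivity of $\mathfrak{C}_T$ ensures these divisions are legitimate for $v\neq 0$.

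Next I would transfer to $L^2$. Since $\mathfrak{C}_T(v)$ and $v$ both belong to $\mathbb{P}_p(T_\lambda)$, Lemma \ref{lem_norm_equiv} provides $h^2_T\vertiii{\cdot}_{a_{\lambda}}\simeq\|\cdot\|_{L^2(T^-_{\lambda})}$ together with the chain $\|\cdot\|_{L^2(T^-_{\lambda})}\simeq\|\cdot\|_{L^2(T^+_{\lambda})}\simeq\|\cdot\|_{L^2(T_{\lambda})}\simeq\|\cdot\|_{L^2(T)}$. The factors $h^2_T$ cancel on both sides of the energy inequality, so this immediately converts the two-sided energy estimate into
\begin{equation*}
c_b\,\frac{\beta^+}{\beta^-}\,\|v\|_{L^2(T_{\lambda})} \leqslant \|\mathfrak{C}_T(v)\|_{L^2(T_{\lambda})} \leqslant C_b\,\|v\|_{L^2(T_{\lambda})},
\end{equation*}
where $c_b,C_b$ depend only on $\lambda$ and $p$ through the equivalence constants, hence are independent of $h_T$, $\beta^{\pm}$ and the interface location. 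The same argument applied with $T^s_{\lambda}$ in place of $T_{\lambda}$ gives the corresponding bound on the fictitious subelements.

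The remaining and genuinely delicate step is to replace the norm on $T_{\lambda}$ (or $T^s_{\lambda}$) by the original subelement norm $\|\cdot\|_{L^2(T^s)}$, $s=\pm$, while keeping the constants uniform in the interface location; I expect this to be the main obstacle. On a small-cut element one of $T^{\pm}$ may have arbitrarily small measure, so the naive equivalence $\|\cdot\|_{L^2(T^s)}\simeq\|\cdot\|_{L^2(T_{\lambda})}$ on $\mathbb{P}_p(T_\lambda)$ degenerates, and Lemma \ref{lem_norm_equiv} covers only the fictitious subelements $T^s_{\lambda}$, which are bounded fractions of $T_{\lambda}$. Rather than a blanket norm equivalence, I would exploit the \emph{specific} relation between $\mathfrak{C}_T(v)$ and $v$: on a thin sliver the $L^2(T^s)$ norm of a degree-$p$ polynomial is controlled, up to the sliver thickness, by its trace on $\Gamma_T\subseteq\Gamma^{\lambda}_T$, and on $\Gamma^{\lambda}_T$ the Cauchy mapping forces $\mathfrak{C}_T(v)$ to match $v$ in value (and its normal derivative to match $\tfrac{\beta^+}{\beta^-}\partial_{\mathbf n}v$) in the least-squares sense encoded in $a_{\lambda}$ and $b_{\lambda}$. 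Making this heuristic quantitative with location-independent constants is precisely where the geometric and trace estimates of \cite{2019GuoLin} are required, and it is also where the contrast factor $\beta^+/\beta^-$ in the lower bound is seen to be unavoidable.
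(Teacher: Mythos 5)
Your first two steps are correct: testing \eqref{cauchy_exten} with $q_p=\mathfrak{C}_T(v)$ and with $q_p=v$, combined with Cauchy--Schwarz and \eqref{cauchy_norm_relation}, does yield $\frac{\beta^+}{\beta^-}\vertiii{v}_{a_{\lambda}}\leqslant\vertiii{\mathfrak{C}_T(v)}_{a_{\lambda}}\leqslant\vertiii{v}_{a_{\lambda}}$, and Lemma \ref{lem_norm_equiv} converts this into two-sided $L^2$ stability on $T_{\lambda}$, on $T^s_{\lambda}$, and on the whole element $T$. But the lemma is stated on the physical subelements $T^s$, and that is exactly the step you defer --- and it is not a routine tail. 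On a small-cut element, $\|v\|_{L^2(T^s)}$ can be arbitrarily smaller than $\|v\|_{L^2(T_{\lambda})}$ for $v\in\mathbb{P}_p(T_\lambda)$ (only the trivial inclusion inequality holds uniformly), so neither inequality in \eqref{lem_C_stability_eq1} follows from your $T_\lambda$-level estimate. Your closing heuristic (control the sliver norm by the trace on $\Gamma^{\lambda}_T$, where the Cauchy mapping ``matches'' $v$) is left entirely qualitative, and it is doubtful it can be made to work as described: $\mathfrak{C}_T(v)$ matches $v$ on $\Gamma^{\lambda}_T$ only up to a least-squares residual that is generally of the same order as the quantities being estimated. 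As written, the proposal proves a genuinely weaker statement and stops short of the lemma.

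The paper closes precisely this gap by a purely algebraic device absent from your plan: it argues at the coordinate level. Lemma \ref{alpha_bound}, built on the eigenvalue bounds of Lemma \ref{lem_AT_eig}, gives $\frac{c_{\mu}}{C_{\mu}}\frac{\beta^+}{\beta^-}\|\bfalpha^+\|_2\leqslant\|\bfalpha^-\|_2\leqslant\frac{C_{\mu}}{c_{\mu}}\|\bfalpha^+\|_2$ for the coordinate vectors in \eqref{vCT} with respect to a polynomial basis of $\mathbb{P}_p(T_\lambda)$; the proof of Lemma \ref{lem_C_stability} then \emph{chooses} the basis $\{\zeta_i\}_{i=1}^n$ orthonormal in the $L^2(T^-)$ (respectively $L^2(T^+)$) inner product, so that $\|\bfalpha^+\|_2$ and $\|\bfalpha^-\|_2$ are literally $\|v\|_{L^2(T^s)}$ and $\|\mathfrak{C}_T(v)\|_{L^2(T^s)}$, and the coordinate inequality \emph{is} \eqref{lem_C_stability_eq1}, with no sliver geometry or trace estimates at all; the uniformity of the constants in the interface location is carried entirely by Lemma \ref{lem_AT_eig}. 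Note also that the contrast factor $\beta^+/\beta^-$ enters there through $\mu_{\min}(\mathbf{B}_T)$ rather than through your energy-norm lower bound. If you want to complete your proposal, this basis-adapted coordinate argument is the missing ingredient --- and once Lemma \ref{alpha_bound} is available, your (correct) energy-norm estimates become redundant for this lemma.
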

\begin{proof}
First, we prove \eqref{lem_C_stability_eq1} for $s = -$ by specifically selecting a basis $\{ \zeta_i \}_{i=1}^n$ of $\mathbb{P}_p(T_\lambda)$ that is orthogonal on $T^-$ with respect to the $L^2$ inner product.
Let $\bfalpha^{+}$ and $\bfalpha^-$ be the coordinates of $v$ and $\mathfrak{C}_T(v)$, respectively, obtained from  \eqref{vCT}. Then,
Lemma \ref{alpha_bound} yields
\begin{equation}
\label{lem_C_stability_eq2}
\| \mathfrak{C}_T(v) \|_{L^2(T^-)}  = \| \bfalpha^- \|_2 \leqslant \frac{C_{\mu}}{c_{\mu}} \| \bfalpha^+ \|_2 = \frac{C_{\mu}}{c_{\mu}} \| v \|_{L^2(T^-)},
\end{equation}
which establishes the second inequality in \eqref{lem_C_stability_eq1} for $s = -$.
The first inequality in \eqref{lem_C_stability_eq1} for $s = -$ can be proved similarly. Similar arguments can be applied to prove
\eqref{lem_C_stability_eq1} for $s = +$ by selecting a basis $\{ \zeta_i \}_{i=1}^n$ of $\mathbb{P}_p(T_\lambda)$ which is orthogonal on $T^+$ with respect to the $L^2$ inner product.
\end{proof}

\begin{figure}[H]
\centering
    \includegraphics[width=1.5in]{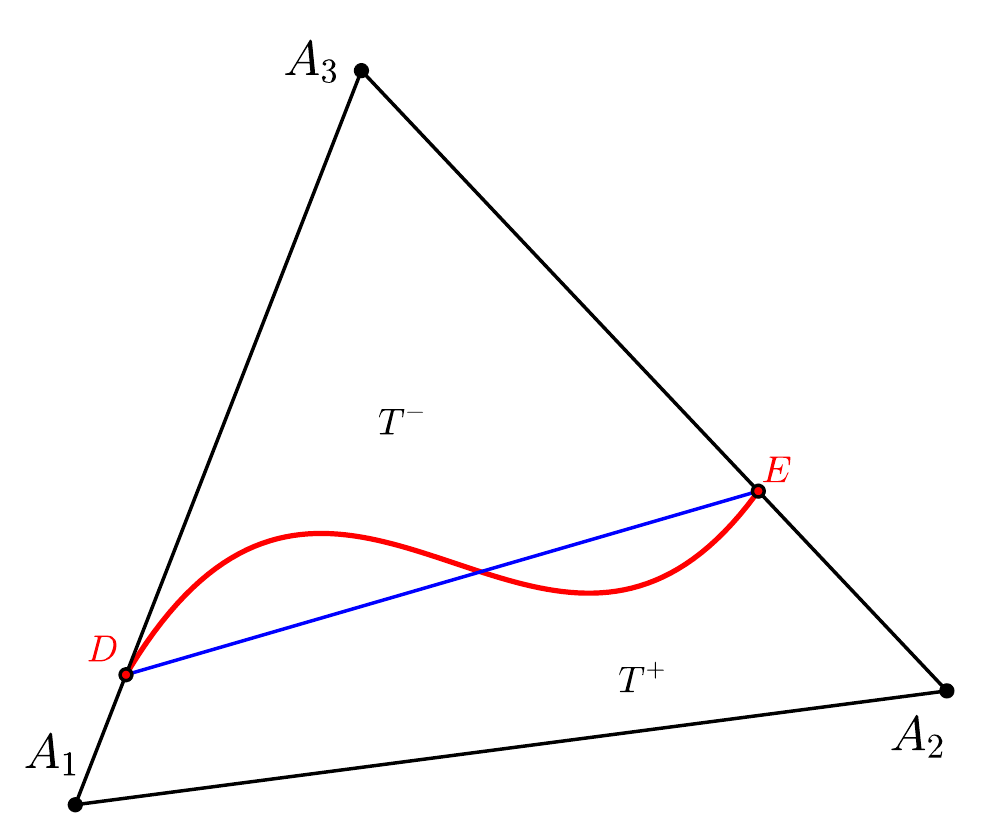}
  \caption{Illustration of an interface element.}
  \label{fig:trace_inequa} 
\end{figure}

As an important consequence, Lemma \ref{lem_C_stability} above actually establishes the stability of the Cauchy mapping $\mathfrak{C}_T$ and $\mathfrak{C}^{-1}_T$ in the $L^2$ norm on $\mathbb{P}_p(T^s)$ which is  a key ingredient for
establishing the following trace inequalities of IFE functions.

\begin{lemma}
\label{thm_L2_trace}
There exists a constant $C_t > 0$ such that, for every interface element $T \in \mathcal{T}_h^i$ and its edge $e$,
the following estimates hold:
\begin{equation}
\label{thm_L2_trace_eq0}
h_T^{1/2} \| \phi_T \|_{L^2(e)} \leqslant C_t \frac{\beta^-}{\beta^+} \| \phi_T \|_{L^2(T)},~~ h^{1/2}_T \|  \phi_T^s \|_{L^2(\Gamma_T)} \leqslant C_t \frac{\beta^-}{\beta^+} \| \phi_T \|_{L^2(T)}, ~s=+,-,~ \forall \phi_T \in S^p_h(T).
\end{equation}
\end{lemma}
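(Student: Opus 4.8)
The plan is to reduce the IFE function to the pair of polynomials that generate it and then lean entirely on the $L^2$-stability of the Cauchy mapping. Fix $\phi_T\in S^p_h(T)$. By the definition \eqref{loc_IFE_space_inter} of the local IFE space there is a single $z_p\in\mathbb{P}_p(T_\lambda)$ with $\phi_T^+=z_p|_{T^+}$ and $\phi_T^-=\mathfrak{C}_T(z_p)|_{T^-}$; writing $w_p:=\mathfrak{C}_T(z_p)\in\mathbb{P}_p(T_\lambda)$, we have $\|\phi_T\|^2_{L^2(T)}=\|z_p\|^2_{L^2(T^+)}+\|w_p\|^2_{L^2(T^-)}$, and on any sub-piece of an edge or of $\Gamma_T$ the function $\phi_T$ agrees with either $z_p$ (on the $\Omega^+$ side) or $w_p$ (on the $\Omega^-$ side). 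In particular $\phi_T^+=z_p$ and $\phi_T^-=w_p$ as polynomials, so each interface trace $\|\phi_T^s\|_{L^2(\Gamma_T)}$ is just a single polynomial trace.

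The decisive step will be to recover the full-element norms $\|z_p\|_{L^2(T)}$ and $\|w_p\|_{L^2(T)}$ from the available quantity $\|\phi_T\|_{L^2(T)}$, which only ``sees'' $z_p$ on $T^+$ and $w_p$ on $T^-$. I would split $\|z_p\|^2_{L^2(T)}=\|z_p\|^2_{L^2(T^+)}+\|z_p\|^2_{L^2(T^-)}$ and invoke Lemma \ref{lem_C_stability} with $s=-$ to trade the missing piece $\|z_p\|_{L^2(T^-)}$ for $\|w_p\|_{L^2(T^-)}$ at the cost of the factor $\beta^-/\beta^+$; symmetrically, Lemma \ref{lem_C_stability} with $s=+$ bounds $\|w_p\|_{L^2(T^+)}$ by $\|z_p\|_{L^2(T^+)}$. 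Since $\beta^-\geqslant\beta^+$ gives $\beta^-/\beta^+\geqslant1$, the $O(1)$ contributions are absorbed and this produces the two estimates $\|z_p\|_{L^2(T)}+\|w_p\|_{L^2(T)}\lesssim\frac{\beta^-}{\beta^+}\|\phi_T\|_{L^2(T)}$, with constants independent of the interface location.

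With these bounds in hand, both trace estimates follow from interface-location-independent polynomial trace inequalities. For an edge $e=e^+\cup e^-$ I would use $\|\phi_T\|^2_{L^2(e)}=\|z_p\|^2_{L^2(e^+)}+\|w_p\|^2_{L^2(e^-)}\leqslant\|z_p\|^2_{L^2(e)}+\|w_p\|^2_{L^2(e)}$ and apply the standard polynomial trace inequality on the shape-regular simplex $T$, namely $\|q\|^2_{L^2(e)}\lesssim h_T^{-1}\|q\|^2_{L^2(T)}$ for $q\in\mathbb{P}_p$. For the interface piece I would instead combine the interface trace inequality (Lemma 3.2 in \cite{2016WangXiaoXu}) with the inverse inequality $|q|_{H^1(T)}\lesssim h_T^{-1}\|q\|_{L^2(T)}$ to obtain $\|q\|^2_{L^2(\Gamma_T)}\lesssim h_T^{-1}\|q\|^2_{L^2(T)}$. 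Inserting the Step-1 bounds for $\|z_p\|_{L^2(T)}$ and $\|w_p\|_{L^2(T)}$ and taking square roots then yields both inequalities in \eqref{thm_L2_trace_eq0} with the claimed factor $\beta^-/\beta^+$.

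The main obstacle is this middle step, i.e. passing from the ``cut'' norm $\|\phi_T\|_{L^2(T)}$ to the full polynomial norms: because $T^+$ or $T^-$ may be an arbitrarily thin sliver, one cannot compare $\|z_p\|_{L^2(T^+)}$ with $\|z_p\|_{L^2(T)}$ using a cut-independent constant, so a naive subelement trace argument fails. The resolution is that the $L^2$-stability of the Cauchy mapping in Lemma \ref{lem_C_stability} supplies exactly the cross-control between $z_p$ and $w_p=\mathfrak{C}_T(z_p)$ on the \emph{same} sub-element; it is precisely this mechanism, rather than any direct comparison on small cuts, that simultaneously keeps the constants independent of the interface location and introduces the contrast factor $\beta^-/\beta^+$ appearing on the right-hand side.
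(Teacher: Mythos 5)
Your proof is correct, but it follows a genuinely different and somewhat more streamlined route than the paper's. The paper argues by a geometric case analysis on how the interface cuts $T$ (whether $|A_3D|\geqslant |A_3A_1|/2$ and $|A_3E|\geqslant |A_3A_2|/2$ or not): on whichever side the straight sub-triangle or sub-quadrilateral is ``large,'' it applies the Warburton--Hesthaven trace inequality \cite{2003WarburtonHesthaven} on that sub-region and then invokes the norm equivalences (3.11)--(3.12) of Lemma 3.6 in \cite{2019GuoLin} to pass from the straight sub-element back to the curved one, while the polynomial living on the small-cut side is routed through the Cauchy mapping stability of Lemma \ref{lem_C_stability}, exactly as in your estimates. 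You eliminate the case split altogether by observing that $z_p$ and $w_p=\mathfrak{C}_T(z_p)$ are polynomials on all of $T$ (indeed on $T_\lambda\supset T$, by \eqref{loc_IFE_space_inter}), so the trace inequalities can always be applied over the whole shape-regular element, and the only nontrivial task is your Step 1: the bounds $\|z_p\|_{L^2(T)}\lesssim \frac{\beta^-}{\beta^+}\|\phi_T\|_{L^2(T)}$ and $\|w_p\|_{L^2(T)}\lesssim \|\phi_T\|_{L^2(T)}$, which follow from the two inequalities of Lemma \ref{lem_C_stability} applied with $s=-$ and $s=+$ respectively; this is sound, and your use of Lemma 3.2 in \cite{2016WangXiaoXu} together with the polynomial inverse inequality to get $\|q\|_{L^2(\Gamma_T)}\lesssim h_T^{-1/2}\|q\|_{L^2(T)}$ is legitimate for the interface piece (the paper uses the equivalent trace inequality (3.14b) of \cite{2019GuoLin} on $\Gamma_T^\lambda$ instead). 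What each approach buys: your decoupling of the ``algebraic'' cross-control (Step 1) from the ``geometric'' trace step makes the proof shorter and independent of the sub-element norm equivalences of \cite{2019GuoLin}, whereas the paper's case analysis exploits the large sub-element whenever possible and thereby avoids the contrast factor $\beta^-/\beta^+$ on one of the two trace pieces in each case --- a sharper intermediate constant, though the final statement \eqref{thm_L2_trace_eq0} is identical, since $\beta^-/\beta^+\geqslant 1$ dominates in the end either way.
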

\begin{proof}
Without loss of generality, consider an interface element $T$ as shown in Figure \ref{fig:trace_inequa}.
In order to prove \eqref{thm_L2_trace_eq0} on the edge $e=A_1A_3$
we let $T^-$ and $T^+$, respectively, be the curved-edge triangle $A_3DE$ and quadrilateral $A_1A_2ED$, and let $\widetilde{T}^-$ and $\widetilde{T}^+$, respectively, be the straight-edge triangle $A_3DE$ and quadrilateral $A_1A_2ED$.

By (\ref{loc_IFE_space_inter}) every IFE function $\phi_T \in S_h^p(T)$ on an interface element
$T$ (with the fictitious element $T_\lambda$) can be written as
$\phi_T = (\phi_T^+,\phi_T^-)$ where $\phi_T^+ = z_p$ on $T^+$ and $\phi_T^-=\mathfrak{C}_T(z_p)$
on $T^-$ with $z_p, \mathfrak{C}_T(z_p) \in \mathbb{P}_p(T_\lambda)$.

If $E$ and $D$ are such that $|A_3D|\geqslant |A_3A_1|/2$ and $|A_3E|\geqslant |A_3A_2|/2$,
then the standard trace inequality for polynomials \cite{2003WarburtonHesthaven} and the equivalence of norms (3.11) in Lemma 3.6 \cite{2019GuoLin} lead to
\begin{equation}
\label{thm_L2_trace_eq1}
\| \phi^-_T \|_{L^2(A_3D)} \lesssim h^{-1/2}_T \| \mathfrak{C}_T(z_p) \|_{L^2(\widetilde{T}^-)} \lesssim h^{-1/2}_T \| \mathfrak{C}_T(z_p) \|_{L^2(T^-)}= h^{-1/2}_T \| \phi^-_T \|_{L^2(T^-)}.
\end{equation}
For the sub-edge $DA_1$, the standard trace inequality for polynomials \cite{2003WarburtonHesthaven} and the stability \eqref{lem_C_stability_eq1} yield
\begin{equation}
\begin{split}
\label{thm_L2_trace_eq2}
\| \phi^+_T \|_{L^2(DA_1)} & \lesssim h^{-1/2}_T \|  z_p \|_{L^2(T)} \lesssim h^{-1/2}_T \left( \| z_p \|_{L^2(T^+)} + \| \mathfrak{C}^{-1}_T(\mathfrak{C}_T(z_p)) \|_{L^2(T^-)} \right) \\
& \lesssim h^{-1/2}_T \left( \| z_p \|_{L^2(T^+)} + \frac{\beta^-}{\beta^+} \| \mathfrak{C}_T(z_p) \|_{L^2(T^-)} \right) \lesssim h^{-1/2}_T \frac{\beta^-}{\beta^+} \| \phi_T \|_{L^2(T)}.
\end{split}
\end{equation}
Combining \eqref{thm_L2_trace_eq1} and \eqref{thm_L2_trace_eq2} leads to the first estimate in \eqref{thm_L2_trace_eq0}. On the other hand, when $|A_3D|\leqslant |A_3A_1|/2$ or $|A_3E|\leqslant |A_3A_2|/2$, by similar argument to \eqref{thm_L2_trace_eq2}, we have
\begin{equation}
\begin{split}
\label{thm_L2_trace_eq3}
\| \phi^-_T \|_{L^2(A_3D)} & \lesssim h^{-1/2}_T \|  \mathfrak{C}_T(z_p) \|_{L^2(T)} \lesssim h^{-1/2}_T \left( \| \mathfrak{C}_T(z_p) \|_{L^2(T^-)} + \| \mathfrak{C}_T(z_p) \|_{L^2(T^+)} \right) \\
& \lesssim h^{-1/2}_T \left( \| \mathfrak{C}_T(z_p) \|_{L^2(T^-)} +  \| z_p \|_{L^2(T^+)} \right) \lesssim h^{-1/2}_T  \| \phi_T \|_{L^2(T)}.
\end{split}
\end{equation}
For the estimate on $DA_1$, we use an argument similar to \eqref{thm_L2_trace_eq1} to get
\begin{equation}
\label{thm_L2_trace_eq4}
\| \phi^+_T \|_{L^2(DA_1)} \lesssim h^{-1/2}_T \| z_p \|_{L^2(\triangle A_1A_2D)} \lesssim h^{-1/2}_T \|  z_p \|_{L^2(\widetilde{T}^+)} \lesssim h^{-1/2}_T \| \phi^+_T \|_{L^2(T^+)},
\end{equation}
where in the second inequality we used the fact that $\triangle A_1 A_2 D\subset \widetilde{T}^+$
and in the last inequality we used the equivalence of norms given by (3.12) in Lemma 3.6 \cite{2019GuoLin}.
Combining \eqref{thm_L2_trace_eq3} and \eqref{thm_L2_trace_eq4} leads to the first estimate in \eqref{thm_L2_trace_eq0}.

For the trace inequality on the interface $\Gamma_T$, we start with the case that $|A_3D|\geqslant |A_3A_1|/2$ and $|A_3E|\geqslant |A_3A_2|/2$. For $-$, using the trace inequality (3.14b) in \cite{2019GuoLin} and the equivalence of norms (3.11) in \cite{2019GuoLin}, we have
\begin{equation}
\label{thm_L2_trace_eq5}
\| \phi_T^- \|_{L^2(\Gamma_T)} \leqslant \| \phi_T^- \|_{L^2(\Gamma^{\lambda}_T)} \lesssim h^{-1/2}_T \| \phi_T^- \|_{L^2(T^-_{\lambda})} \lesssim h^{-1/2}_T \| \phi_T^- \|_{L^2(T^-)}.
\end{equation}
For $+$, using the trace inequality (3.14b) in \cite{2019GuoLin}, the equivalence of norms \eqref{lem_norm_equiv_eq2} and the stability \eqref{lem_C_stability_eq1}, we obtain
\begin{equation}
\begin{split}
\label{thm_L2_trace_eq6}
\| \phi_T^+ \|_{L^2(\Gamma_T)} & \leqslant \| \phi_T^+ \|_{L^2(\Gamma^{\lambda}_T)} \lesssim h^{-1/2}_T \| \phi_T^+ \|_{L^2(T^+_{\lambda})} \lesssim h^{-1/2}_T \| z_p \|_{L^2(T_{\lambda})} \lesssim h^{-1/2}_T \| z_p \|_{L^2(T)} \\
&  \lesssim h^{-1/2}_T \left( \| z_p \|_{L^2(T^+)} + \| \mathfrak{C}^{-1}_T(\mathfrak{C}_T(z_p)) \|_{L^2(T^-)} \right) \\
& \lesssim h^{-1/2}_T \left( \| z_p \|_{L^2(T^+)} + \frac{\beta^-}{\beta^+} \| \mathfrak{C}_T(z_p) \|_{L^2(T^-)} \right) \lesssim h^{-1/2}_T \frac{\beta^-}{\beta^+} \| \phi_T \|_{L^2(T)}.
\end{split}
\end{equation}
Combining \eqref{thm_L2_trace_eq5} and \eqref{thm_L2_trace_eq6}, we have the second inequality in \eqref{thm_L2_trace_eq0}. On the other hand, when $|A_3D|\leqslant |A_3A_1|/2$ or $|A_3E|\leqslant |A_3A_2|/2$, we follow the steps used to derive \eqref{thm_L2_trace_eq6}
to write
\begin{equation}
\begin{split}
\label{thm_L2_trace_eq7}
\| \phi_T^- \|_{L^2(\Gamma_T)} & \leqslant \| \phi_T^- \|_{L^2(\Gamma^{\lambda}_T)} \lesssim h^{-1/2}_T \| \phi_T^- \|_{L^2(T^-_{\lambda})} \lesssim h^{-1/2}_T \|  \mathfrak{C}_T(z_p) \|_{L^2(T_{\lambda})} \lesssim h^{-1/2}_T \| \mathfrak{C}_T(z_p) \|_{L^2(T)} \\
&  \lesssim h^{-1/2}_T \left( \| \mathfrak{C}_T(z_p) \|_{L^2(T^+)} + \| \mathfrak{C}_T(z_p) \|_{L^2(T^-)} \right) \\
& \lesssim h^{-1/2}_T \left( \| z_p \|_{L^2(T^+)} + \| \mathfrak{C}_T(z_p) \|_{L^2(T^-)} \right) \lesssim h^{-1/2}_T  \| \phi_T \|_{L^2(T)}.
\end{split}
\end{equation}
Also, following the same line of reasoning  used for \eqref{thm_L2_trace_eq5}, we apply
the equivalence of norms (3.12) in \cite{2019GuoLin} to have
\begin{equation}
\label{thm_L2_trace_eq8}
\| \phi_T^+ \|_{L^2(\Gamma_T)} \leqslant \| \phi_T^+ \|_{L^2(\Gamma^{\lambda}_T)} \lesssim h^{-1/2}_T \| \phi_T^+ \|_{L^2(T^+_{\lambda})} \lesssim h^{-1/2}_T \| \phi_T^+ \|_{L^2(T^+)}.
\end{equation}
Combining \eqref{thm_L2_trace_eq7} and \eqref{thm_L2_trace_eq8} completes the proof.
\end{proof}

By the stability result \eqref{lem_C_stability_eq1}, we can also establish the following inverse inequalities for IFE functions.

\begin{lemma}
\label{lemma_IFE_inverse}
There exists a constant $C_i >0$ such that the following estimate holds for every interface element
$T \in \mathcal{T}_h^i$:
\begin{equation}
\label{lemma_IFE_inverse_eq0}
\| \nabla \phi_T \|_{L^2(T)} \leqslant C_{i} \frac{\beta^-}{\beta^+} h^{-1}_T \| \phi_T \|_{L^2(T)},
~~~~~ \forall \phi_T \in S^p_h(T).
\end{equation}
\end{lemma}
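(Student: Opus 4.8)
The plan is to reduce the inverse inequality on the two possibly small-cut subelements $T^+$ and $T^-$ to the standard polynomial inverse inequality on the whole shape-regular triangle $T$, and then to control the resulting full-triangle $L^2$ norms by $\| \phi_T \|_{L^2(T)}$ using the Cauchy-mapping stability of Lemma \ref{lem_C_stability}. The key structural observation is that, although $\phi_T^+$ and $\phi_T^-$ live only on $T^+$ and $T^-$, each is the restriction of a genuine polynomial in $\mathbb{P}_p(T_\lambda)$, hence a polynomial on all of $T$.

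First I would write $\phi_T = (\phi_T^+, \phi_T^-)$ with $\phi_T^+ = z_p$ on $T^+$ and $\phi_T^- = \mathfrak{C}_T(z_p)$ on $T^-$, where $z_p, \mathfrak{C}_T(z_p) \in \mathbb{P}_p(T_\lambda)$, and split $\| \nabla \phi_T \|^2_{L^2(T)} = \| \nabla z_p \|^2_{L^2(T^+)} + \| \nabla \mathfrak{C}_T(z_p) \|^2_{L^2(T^-)}$. Since both pieces are polynomials on all of $T$, I would enlarge each integration domain from $T^{\pm}$ to $T$ and apply the standard inverse inequality for polynomials on the shape-regular triangle $T$ to obtain $\| \nabla z_p \|_{L^2(T^+)} \leqslant \| \nabla z_p \|_{L^2(T)} \lesssim h_T^{-1} \| z_p \|_{L^2(T)}$ and $\| \nabla \mathfrak{C}_T(z_p) \|_{L^2(T^-)} \lesssim h_T^{-1} \| \mathfrak{C}_T(z_p) \|_{L^2(T)}$. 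Crucially, the constant here is uniform precisely because the inverse inequality is evaluated on the full triangle rather than on a subelement that may degenerate.

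It then remains to bound $\| z_p \|_{L^2(T)}$ and $\| \mathfrak{C}_T(z_p) \|_{L^2(T)}$ in terms of $\| \phi_T \|_{L^2(T)} = \big( \| z_p \|^2_{L^2(T^+)} + \| \mathfrak{C}_T(z_p) \|^2_{L^2(T^-)} \big)^{1/2}$. For $\| \mathfrak{C}_T(z_p) \|_{L^2(T)}$ I would split over $T^{\pm}$: on $T^-$ it equals $\| \phi_T^- \|_{L^2(T^-)} \leqslant \| \phi_T \|_{L^2(T)}$, while on $T^+$ the upper bound in Lemma \ref{lem_C_stability} gives $\| \mathfrak{C}_T(z_p) \|_{L^2(T^+)} \leqslant C_b \| z_p \|_{L^2(T^+)} = C_b \| \phi_T^+ \|_{L^2(T^+)} \leqslant C_b \| \phi_T \|_{L^2(T)}$, so $\| \mathfrak{C}_T(z_p) \|_{L^2(T)} \lesssim \| \phi_T \|_{L^2(T)}$ with no contrast factor. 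For $\| z_p \|_{L^2(T)}$, the $T^+$ contribution is again $\| \phi_T^+ \|_{L^2(T^+)}$, but the $T^-$ contribution is controlled by \emph{inverting} the lower bound in Lemma \ref{lem_C_stability}: $\| z_p \|_{L^2(T^-)} \leqslant c_b^{-1} \tfrac{\beta^-}{\beta^+} \| \mathfrak{C}_T(z_p) \|_{L^2(T^-)} = c_b^{-1} \tfrac{\beta^-}{\beta^+} \| \phi_T^- \|_{L^2(T^-)}$, which is exactly where the contrast factor enters, giving $\| z_p \|_{L^2(T)} \lesssim \tfrac{\beta^-}{\beta^+} \| \phi_T \|_{L^2(T)}$ (using $\beta^-/\beta^+ \geqslant 1$).

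Assembling the two branches yields $\| \nabla \phi_T \|_{L^2(T)} \lesssim h_T^{-1} \tfrac{\beta^-}{\beta^+} \| \phi_T \|_{L^2(T)}$, the larger factor originating entirely from the $z_p$/$T^+$ branch. The only genuinely delicate point is to resist applying an inverse inequality directly on $T^+$ or $T^-$: on a small-cut sliver that constant degenerates with the interface location, and the whole mechanism of the argument is to replace it by the full-triangle inverse inequality and then absorb the mismatch between the subelement and full-triangle $L^2$ norms through the stability of $\mathfrak{C}_T$, which is what produces the (suboptimal-in-contrast) factor $\beta^-/\beta^+$.
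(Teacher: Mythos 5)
Your proof is correct, and its core mechanism---extending each polynomial piece from its subelement to the full shape-regular triangle $T$, applying the standard inverse inequality there, and absorbing the mismatch between subelement and full-element $L^2$ norms through the stability of $\mathfrak{C}_T$ and $\mathfrak{C}_T^{-1}$---is the same engine the paper uses. The organization, however, genuinely differs. The paper splits into two geometric cases according to which subelement is large (in the notation of Figure \ref{fig:trace_inequa}, whether $|A_3D|\geqslant|A_3A_1|/2$ and $|A_3E|\geqslant|A_3A_2|/2$, or not), and in each case invokes the polynomial norm equivalences (3.11)--(3.12) of Lemma 3.6 in \cite{2019GuoLin} to pass from $\|\cdot\|_{L^2(T)}$ back to the $L^2$ norm on the large subelement, applying the stability of Lemma \ref{lem_C_stability} only on the small side. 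You avoid the case analysis entirely: bounding $\|z_p\|_{L^2(T)}$ and $\|\mathfrak{C}_T(z_p)\|_{L^2(T)}$ by the triangle inequality over $T^{\pm}$ and using both directions of \eqref{lem_C_stability_eq1}---the upper bound with $s=+$ and the inverted lower bound with $s=-$---gives the estimate uniformly in the interface location with no appeal to those external norm equivalences, and both directions of the stability estimate are indeed stated for $s=\pm$, so the argument is legitimate. What you give up is the finer information the paper's case split reveals: when $T^+$ is the dominant subelement (its second case), the paper's bound carries no contrast factor at all, whereas your unified argument always pays $\beta^-/\beta^+$ through the $\|z_p\|_{L^2(T^-)}$ term. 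Since the lemma's statement includes that factor anyway, the loss is immaterial here, and your version is shorter and somewhat cleaner; your closing remark correctly identifies the one genuinely delicate point, namely that an inverse inequality applied directly on a small-cut subelement would have a constant degenerating with the interface location.
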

\begin{proof}
Without loss of generality, again, we consider the interface element $T$ as shown in Figure \ref{fig:trace_inequa}.
If $|A_3D|\geqslant |A_3A_1|/2$ and $|A_3E|\geqslant |A_3A_2|/2$ and $\phi_T=(z_p, \mathfrak{C}_T(z_p))$ for $z_p$, $\mathfrak{C}_T(z_p) \in \mathbb{P}_p(T_\lambda)$, then
the equivalence of norms (3.11) in Lemma 3.6 in \cite{2019GuoLin} gives
\begin{equation}
\label{lemma_IFE_inverse_eq1}
\| \nabla \phi^-_T \|_{L^2(T^-)} \leqslant \| \nabla \mathfrak{C}_T(z_p) \|_{L^2(T)} \lesssim h^{-1}_T \| \mathfrak{C}_T(z_p) \|_{L^2(T)} \lesssim h^{-1}_T \| \phi^-_T \|_{L^2(T^-)}.
 \end{equation}
 Applying the stability result \eqref{lem_C_stability_eq1} yields
 \begin{equation}
 \begin{split}
\label{lemma_IFE_inverse_eq2}
\| \nabla \phi^+_T \|_{L^2(T^+)} \leqslant \| \nabla z_p \|_{L^2(T)} &  \lesssim h^{-1}_T  \| z_p \|_{L^2(T)}
\lesssim h^{-1}_T \left(  \| z_p \|_{L^2(T^+)} +  \| \mathfrak{C}^{-1}_T(\mathfrak{C}_T(z_p)) \|_{L^2(T^-)}  \right) \\
& \lesssim h^{-1}_T \left(  \| \phi^+_T \|_{L^2(T^+)} + \frac{\beta^-}{\beta^+} \| \phi^-_T \|_{L^2(T^-)}  \right) \lesssim \frac{\beta^-}{\beta^+} h^{-1}_T \| \phi_T \|_{L^2(T)}.
\end{split}
\end{equation}
Combining (\ref{lemma_IFE_inverse_eq1}) and (\ref{lemma_IFE_inverse_eq2}) yields
(\ref{lemma_IFE_inverse_eq0}).

For the case $|A_3D|\leqslant |A_3A_1|/2$ or $|A_3E|\leqslant |A_3A_2|/2$, we follow the same reasoning used to establish \eqref{lemma_IFE_inverse_eq2} and apply \eqref{lem_C_stability_eq1} to write
 \begin{equation}
 \begin{split}
\label{lemma_IFE_inverse_eq3}
\| \nabla \phi^-_T \|_{L^2(T^-)} &\leqslant \| \nabla \mathfrak{C}_T(z_p) \|_{L^2(T)} \lesssim h^{-1}_T  \| \mathfrak{C}_T(z_p) \|_{L^2(T)} \\
&\lesssim h^{-1}_T \left(  \| \mathfrak{C}_T(z_p) \|_{L^2(T^+)} +  \| \mathfrak{C}_T(z_p) \|_{L^2(T^-)}  \right) \\
& \lesssim h^{-1}_T \left(  \| z_p \|_{L^2(T^+)} +\| \mathfrak{C}_T(z_p) \|_{L^2(T^-)}  \right) \lesssim  h^{-1}_T \| \phi_T \|_{L^2(T)}.
\end{split}
\end{equation}
On $T^+$, we use the equivalence of norms (3.12) in Lemma 3.6 \cite{2019GuoLin} to obtain
\begin{equation}
\label{lemma_IFE_inverse_eq4}
\| \nabla \phi^+_T \|_{L^2(T^+)} \leqslant \| \nabla z_p \|_{L^2(T)} \lesssim h^{-1}_T \| z_p \|_{L^2(T)} \lesssim h^{-1}_T \| \phi^+_T \|_{L^2(T^+)}.
\end{equation}
Combining (\ref{lemma_IFE_inverse_eq3}) and (\ref{lemma_IFE_inverse_eq4}) completes the proof.
\end{proof}

In the following lemma we prove a discrete Poincar\'e inequality.
\begin{lemma}
\label{lem_poc}
For all $v\in V_h$, there holds
\begin{equation}
\label{lem_poc_eq0}
\| v \|_{L^2(\Omega)} \lesssim \|  \nabla v \|_{L^2(\Omega)}  + \sum_{e\in\mathcal{E}^i_h} |e|^{-1/2} \|[v]_e\|_{L^2(e)} +  \sum_{T\in\mathcal{T}^i_h} h^{-1/2}_T \| [v]_{\Gamma} \|_{L^2(\Gamma_T)}  .
\end{equation}
\end{lemma}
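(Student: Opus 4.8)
The plan is to run a duality (Aubin--Nitsche-type) argument against the \emph{plain} Poisson problem. First I would let $\phi\in H^1_0(\Omega)$ solve $-\Delta\phi = v$ in $\Omega$; since $\partial\Omega$ is smooth, elliptic regularity gives $\phi\in H^2(\Omega)$ together with the bound $\|\phi\|_{H^2(\Omega)}\lesssim\|v\|_{L^2(\Omega)}$ (and in particular $\|\nabla\phi\|_{L^2(\Omega)}\lesssim\|v\|_{L^2(\Omega)}$). The crucial point in choosing the unweighted Laplacian is that $\phi$ is globally $H^2$ with no interface, so $\nabla\phi$ has single-valued $L^2$ traces across every interior edge and across $\Gamma$. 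I would then write $\|v\|_{L^2(\Omega)}^2=\int_\Omega v(-\Delta\phi)\,dX$ and integrate by parts over each region on which $v$ is genuinely $H^1$, namely over each non-interface element $T$ and over each subelement $T^\pm$ of an interface element.

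Summing the region contributions, the volume terms assemble into $\int_\Omega\nabla\phi\cdot\nabla v\,dX$ with the broken gradient, while the boundary terms assemble into integrals of $(\nabla\phi\cdot\mathbf{n})\,[v]$ over interior edges and interface segments, precisely because $\nabla\phi$ is single-valued. The contributions on $\partial\Omega$ vanish since $v|_{\partial\Omega}=0$, and those on non-interface edges $e\in\mathcal{E}^n_h$ vanish since $[v]_e=0$ there; hence only $e\in\mathcal{E}^i_h$ and the interface pieces $\Gamma_T$, $T\in\mathcal{T}^i_h$, survive, giving (up to the sign conventions fixing the jumps)
\[
\|v\|_{L^2(\Omega)}^2=\int_\Omega\nabla\phi\cdot\nabla v\,dX-\sum_{e\in\mathcal{E}^i_h}\int_e(\nabla\phi\cdot\mathbf{n}_e)[v]_e\,ds-\sum_{T\in\mathcal{T}^i_h}\int_{\Gamma_T}(\nabla\phi\cdot\mathbf{n})[v]_\Gamma\,ds.
\]
The volume term is bounded by Cauchy--Schwarz by $\|\nabla\phi\|_{L^2(\Omega)}\|\nabla v\|_{L^2(\Omega)}\lesssim\|v\|_{L^2(\Omega)}\|\nabla v\|_{L^2(\Omega)}$. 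For the jump terms I would invoke a scaled multiplicative trace inequality for $\nabla\phi$: on a straight edge $e$ of $T$, $\|\nabla\phi\|_{L^2(e)}^2\lesssim h_T^{-1}\|\nabla\phi\|_{L^2(T)}^2+h_T|\phi|_{H^2(T)}^2$, and the analogue on the curved piece $\Gamma_T$ of the type used in the proof of Theorem \ref{cauchy_mapping_prop} (e.g. Lemma 3.2 in \cite{2016WangXiaoXu}). Multiplying by $|e|\simeq h_T$, summing with the finite-overlap property, and using $\|\phi\|_{H^2(\Omega)}\lesssim\|v\|_{L^2(\Omega)}$ shows $\sum_e|e|\,\|\nabla\phi\|_{L^2(e)}^2\lesssim\|v\|_{L^2(\Omega)}^2$ and $\sum_T h_T\|\nabla\phi\|_{L^2(\Gamma_T)}^2\lesssim\|v\|_{L^2(\Omega)}^2$. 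A Cauchy--Schwarz over the edges followed by $\big(\sum a_i^2\big)^{1/2}\le\sum|a_i|$ then yields
\[
\Big|\sum_{e\in\mathcal{E}^i_h}\int_e(\nabla\phi\cdot\mathbf{n}_e)[v]_e\,ds\Big|\lesssim\|v\|_{L^2(\Omega)}\sum_{e\in\mathcal{E}^i_h}|e|^{-1/2}\|[v]_e\|_{L^2(e)},
\]
and likewise for the interface sum with weight $h_T^{-1/2}\|[v]_\Gamma\|_{L^2(\Gamma_T)}$. Collecting the three bounds and dividing by $\|v\|_{L^2(\Omega)}$ (the case $v=0$ being trivial) gives \eqref{lem_poc_eq0}.

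The step I expect to be the main obstacle is the curved-interface bookkeeping: I must secure a trace inequality for the \emph{genuinely} $H^2$ dual function $\phi$ (not a polynomial) on the curved segment $\Gamma_T$ with the correct $h_T$-scaling, and I must verify that the element-wise integration by parts on the two curved subelements $T^\pm$ is legitimate and that their shared interface contributions combine cleanly into the single jump $[v]_\Gamma$. Everything else — the cancellations on $\partial\Omega$ and on $\mathcal{E}^n_h$ arising from single-valued $\nabla\phi$, and the Cauchy--Schwarz/finite-overlap bookkeeping — is routine once these two ingredients are in place.
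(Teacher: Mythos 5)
Your proof is correct, but it takes a genuinely different route from the paper. The paper follows Arnold's classical argument (Lemma 2.1 in \cite{1982Arnold}): it takes as dual function the solution $z\in PH^2(\Omega)$ of the $\beta$-weighted \emph{interface} problem with homogeneous jump conditions and source $f=v$, so the weak identity carries the weighted fluxes $\{\beta\nabla z\cdot\mathbf{n}\}$, and the $\beta$-independence of the final constant is only recovered because Theorem \ref{thm_regularity} supplies the weighted bound $\sum_{k=1}^{2}\left(\beta^-|z^-|_{H^k(\Omega^-)}+\beta^+|z^+|_{H^k(\Omega^+)}\right)\lesssim\|v\|_{L^2(\Omega)}$, which exactly cancels the $\beta$ weights in the flux terms; moreover, since $z$ is only piecewise $H^2$, the paper must invoke the Sobolev extensions $\mathfrak{E}^{\pm}(z)$ to run the per-element trace inequality on interface edges, and it handles the interface flux term by a single global trace inequality on $\Gamma\subset\partial\Omega^-$ (using $h\leqslant C$) rather than a locally scaled one. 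Your choice of the \emph{unweighted} dual problem $-\triangle\phi=v$ sidesteps all of this: $\phi$ is globally $H^2$, so $\nabla\phi$ has single-valued edge and interface traces (no averages, no extensions, no interface regularity theorem), and the constant is independent of $\beta^{\pm}$ automatically because $\beta$ never enters the identity — this is legitimate precisely because the left-hand side and all three right-hand terms of \eqref{lem_poc_eq0} are themselves unweighted. The two ingredients you flagged as potential obstacles are indeed the only nontrivial points, and both are available: the subelements $T^{\pm}$ are Lipschitz under Assumption (\textbf{A1}), so Green's formula applies piecewise and the two-sided contributions on $\Gamma_T$ assemble into $[v]_\Gamma$ against the single-valued $\nabla\phi\cdot\mathbf{n}$; and the $h_T$-scaled trace inequality on the curved piece $\Gamma_T$ for genuinely $H^1$ functions is exactly Lemma 3.2 in \cite{2016WangXiaoXu}, which this paper already uses for non-polynomial functions in the proof of Theorem \ref{cauchy_mapping_prop}. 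Net assessment: your argument is, if anything, more self-contained than the paper's, since it does not rely on Theorem \ref{thm_regularity}; the paper's weighted-dual version has the advantage of reusing machinery (the regularity theorem and extensions) that it needs elsewhere anyway, e.g., in the $L^2$ duality argument of Theorem \ref{thm:L2_error}.
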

\begin{proof}
We apply a standard argument used for Lemma 2.1 in \cite{1982Arnold}. Given a $v\in V_h$,
we define an auxiliary function $z=(z^+,z^-) \in PH^2(\Omega)$ as the solution of the
interface problem \eqref{model} satisfying homogeneous jump conditions across the interface with $f=v$. By Theorem  \ref{thm_regularity} we have
\begin{equation}
\label{lem_poc_eq1}
 \sum_{k=1}^{2} \left( \beta^-  |z^-|_{H^k(\Omega^-)} +  \beta^+ |z^+|_{H^k(\Omega^+)} \right) \lesssim \| v \|_{L^{2}(\Omega)}.
\end{equation}
We let $\mathfrak{E}^s(z) \in H^2(\Omega)$ be the Sobolev extensions of $z$, $s=\pm$ and  use H\"older's inequality to write
\begin{equation}
\begin{split}
\label{lem_poc_eq3}
\| v \|^2_{L^2(\Omega)} &= \int_{\Omega} \beta \nabla z \cdot \nabla v dX - \sum_{e\in\mathcal{E}^i_h} \int_e \{ \beta \nabla z\cdot \mathbf{ n} \}_e [v]_e ds - \sum_{T\in\mathcal{T}^i_h} \int_{\Gamma_T} \{ \beta \nabla z\cdot \mathbf{ n} \}_{\Gamma} [v]_{\Gamma} ds \\
&\leqslant \left( \| \ \nabla v \|^2_{L^2(\Omega)} +  \sum_{e\in\mathcal{E}^i_h} |e|^{-1} \| [v]_e \|^2_{L^2(e)} +   \sum_{T\in\mathcal{T}^i_h} h^{-1}_T \| [v]_{\Gamma} \|^2_{L^2(\Gamma_T)} \right)^{1/2} \\
& \cdot  \left( \| \beta \nabla z \|^2_{L^2(\Omega)} + \sum_{e\in\mathcal{E}^i_h}  |e| \| \{ \beta \nabla z\cdot \mathbf{ n} \}_e \|^2_{L^2(e)} + \sum_{T\in\mathcal{T}^i_h} h_T \|  \{ \beta \nabla z\cdot \mathbf{ n} \}_{\Gamma} \|^2_{L^2(\Gamma_T)} \right)^{1/2}.
\end{split}
\end{equation}
Using the facts that $z^+=z^-$ and $\beta^+\nabla z^+\cdot\mathbf{ n} = \beta^-\nabla z^-\cdot\mathbf{ n}$ on $\Gamma$ and that $h < C$ depending only on $\Omega$, we can apply the trace inequality on $\Omega^-$ to obtain
\begin{equation}
\label{lem_poc_eq4}
\sum_{T\in\mathcal{T}^i_h} h^{1/2}_T  \|  \{ \beta \nabla z\cdot \mathbf{ n} \}_{\Gamma} \|_{L^2(\Gamma_T)} \leqslant h^{1/2}  \|   \beta^- \nabla z^-\cdot \mathbf{ n}  \|_{L^2(\Gamma)} \lesssim   \beta^{-}\left( | z^- |_{H^1(\Omega^-)} +  | z^- |_{H^2(\Omega^-)} \right).
\end{equation}
In addition, given a non-interface edge $e\in\mathcal{E}^i_h$, there exists a non-interface element $T$ in, say $\Omega^-$, containing $e$, then the trace inequality on $e$ and $T$ and the mesh regularity yield
\begin{align}
\| |e|^{1/2} \{ \beta \nabla z\cdot \mathbf{ n} \}_{e} \|_{L^2(e)} & =  |e|^{1/2} \|  \beta^{-} \nabla z^{-}\cdot \mathbf{ n} \|_{L^2(e)}   \lesssim  \beta^{-}\left( | z^{-} |_{H^1(T)} + h_T | z^{-} |_{H^2(T)} \right). \label{lem_poc_eq5}
\end{align}
If $e \in \mathcal{E}_h^i$ is an interface edge, then, by letting $T$ be an interface element containing $e$, using the Sobolev extensions $\mathfrak{E}^\pm (z)$ and the mesh regularity, we obtain
\begin{equation}
\begin{split}
\label{lem_poc_eq6}
|e|^{1/2}  \| \{ \beta \nabla z\cdot \mathbf{ n} \}_{e} \|_{L^2(e)} &  \lesssim \sum_{s=\pm} \beta^s  \left( | \mathfrak{E}^s(z) |_{H^1(T)} + h_T | \mathfrak{E}^s(z) |_{H^2(T)} \right).
\end{split}
\end{equation}
We first sum \eqref{lem_poc_eq5} and \eqref{lem_poc_eq6} over all the edges in $\mathcal{E}^i_h$ to obtain an upper bound for the jump terms on all edges in $\mathcal{E}^i_h$. Then, we combine the resulting bound with \eqref{lem_poc_eq4} and apply \eqref{lem_poc_eq1} to
obtain
\begin{equation}
\begin{split}
\label{lem_poc_eq7}
& \left( \| \beta \nabla z \|^2_{L^2(\Omega)} +  \sum_{e\in\mathcal{E}^i_h} |e| \| \{ \beta \nabla z\cdot \mathbf{ n} \}_e \|^2_{L^2(e)} +  \sum_{T\in\mathcal{T}^i_h} h_T \| \{ \beta \nabla z\cdot \mathbf{ n} \}_{\Gamma} \|^2_{L^2(\Gamma_T)} \right)^{1/2} \\
 \lesssim &  \sum_{s=\pm} \beta^s  (| z^s |_{H^1(\Omega^s)} + | z^s |_{H^2(\Omega^s)} ) \lesssim \| v \|_{L^2(\Omega)},
\end{split}
\end{equation}
where we also applied the boundedness of Soblev extensions \eqref{soblev_bound}. Finally, substituting \eqref{lem_poc_eq7} into \eqref{lem_poc_eq3} leads to the estimate in \eqref{lem_poc_eq0}.
\end{proof}

Now, we are ready to state and prove an equivalence between $\|\cdot\|_{L^2(\Omega)}$ and $\vertiii{\cdot}_h$ norms on $S^p_h(\Omega)$.

\begin{lemma}
\label{lem_glob_energy_norm_equiv}
For every  $v_h\in S^p_h(\Omega)$, there holds
\begin{equation}
\label{rem_L2_energy_equiv_eq}
(\beta^+)^{1/2} \| v_h \|_{L^2(\Omega)}\lesssim  \vertiii{v_h}_h \lesssim   \frac{(\beta^-)^{2}}{(\beta^+)^{3/2}} h^{-1} \| v_h \|_{L^2(\Omega)}.
\end{equation}
\end{lemma}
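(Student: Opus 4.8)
The plan is to prove the two inequalities separately. For the lower bound $(\beta^+)^{1/2}\norm{v_h}_{L^2(\Omega)} \lesssim \vertiii{v_h}_h$ I would invoke the discrete Poincar\'e inequality of Lemma \ref{lem_poc}, while for the upper bound I would bound each of the five terms of the energy norm \eqref{energy_norm_aux} separately, using the IFE trace inequalities (Lemma \ref{thm_L2_trace}), the IFE inverse inequality (Lemma \ref{lemma_IFE_inverse}), the Cauchy-mapping stability (Lemma \ref{lem_C_stability}), and the norm equivalences of Lemma \ref{lem_norm_equiv}.

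For the lower bound I would use the squared form of the Poincar\'e estimate that is actually produced inside the proof of Lemma \ref{lem_poc} (the first factor in \eqref{lem_poc_eq3}), namely $\norm{v_h}_{L^2(\Omega)}^2 \lesssim \norm{\nabla v_h}_{L^2(\Omega)}^2 + \sum_{e\in\mathcal{E}^i_h}|e|^{-1}\norm{[v_h]_e}_{L^2(e)}^2 + \sum_{T\in\mathcal{T}^i_h}h_T^{-1}\norm{[v_h]_\Gamma}_{L^2(\Gamma_T)}^2$, since the literal sum-of-norms statement \eqref{lem_poc_eq0} would introduce an unwanted factor counting the interface edges. Multiplying by $\beta^+$, the gradient term is absorbed into $\int_T\beta\abs{\nabla v_h}^2$ because $\beta\geqslant\beta^+$ pointwise, and for the two jump terms I would use $\beta^+\leqslant\gamma=(\beta^-)^2/\beta^+$ (valid since $\beta^-\geqslant\beta^+$) to absorb the factor into the penalty coefficients $\sigma^0\gamma/|e|$ and $\sigma^1\gamma/h_T$. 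This gives $\beta^+\norm{v_h}_{L^2(\Omega)}^2 \lesssim \vertiii{v_h}_h^2$ directly.

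For the upper bound I would estimate $\vertiii{v_h}_h^2$ term by term against $\frac{(\beta^-)^4}{(\beta^+)^3}h^{-2}\norm{v_h}_{L^2(\Omega)}^2$. The gradient term uses the standard inverse inequality on non-interface elements and Lemma \ref{lemma_IFE_inverse} on interface elements, producing $\frac{(\beta^-)^3}{(\beta^+)^2}h^{-2}\norm{v_h}^2$, which is below the target by a factor $\beta^-/\beta^+\geqslant1$. The two jump-penalty terms are the dominant ones: writing $\norm{[v_h]}^2 \lesssim \sum\norm{v_h|_T}^2$ and applying the IFE trace inequality \eqref{thm_L2_trace_eq0} to each element sharing the edge (resp. the interface piece $\Gamma_T$), each trace contributes a factor $\left(\frac{\beta^-}{\beta^+}\right)^2 h^{-1}$; after multiplying by $\sigma^0\gamma/|e|$ (resp. $\sigma^1\gamma/h_T$) and using $\gamma\left(\frac{\beta^-}{\beta^+}\right)^2=\frac{(\beta^-)^4}{(\beta^+)^3}$ these match the target exactly.

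The flux terms are where the real work lies and constitute the main obstacle. On an interface element $T$ with $v_h=(z_p,\mathfrak{C}_T(z_p))$, I would split the edge (or the curve $\Gamma_T$) into its $\pm$ parts, so that $\beta\nabla v_h\cdot\mathbf{n}$ equals $\beta^+\nabla z_p\cdot\mathbf{n}$ on the $+$ side and $\beta^-\nabla\mathfrak{C}_T(z_p)\cdot\mathbf{n}$ on the $-$ side. Using the standard polynomial trace and inverse inequalities on $T$ (for edges) and the interface trace inequality (3.14b) of \cite{2019GuoLin} together with Lemma \ref{lem_norm_equiv} (for $\Gamma_T$), I would reduce these to $h^{-3}\norm{z_p}_{L^2(T)}^2$ and $h^{-3}\norm{\mathfrak{C}_T(z_p)}_{L^2(T)}^2$. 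The crucial step is to convert these polynomial norms on the whole triangle into $\norm{v_h}_{L^2(T)}^2$: applying Lemma \ref{lem_C_stability} on the cut pieces $T^\pm$ (which is what keeps all constants independent of the interface location) I would establish $\norm{z_p}_{L^2(T)} \lesssim \frac{\beta^-}{\beta^+}\norm{v_h}_{L^2(T)}$ and $\norm{\mathfrak{C}_T(z_p)}_{L^2(T)} \lesssim \norm{v_h}_{L^2(T)}$. Substituting back, both $\pm$ contributions to the flux become $(\beta^-)^2 h^{-3}\norm{v_h}_{L^2(T)}^2$; multiplying by the small coefficient $|e|/(\sigma^0\gamma)\simeq h/\gamma$ and using $(\beta^-)^2/\gamma=\beta^+$ yields $\beta^+ h^{-2}\norm{v_h}^2$, well below the target. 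Finally, summing all five contributions over the mesh and invoking the finite-overlap Assumption (\textbf{A2}) gives $\vertiii{v_h}_h^2 \lesssim \frac{(\beta^-)^4}{(\beta^+)^3}h^{-2}\norm{v_h}_{L^2(\Omega)}^2$, i.e. the claimed upper bound after taking square roots.
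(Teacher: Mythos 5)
Your proposal is correct and follows essentially the same strategy as the paper: lower bound from the discrete Poincar\'e inequality of Lemma \ref{lem_poc}, upper bound by estimating the five terms of \eqref{energy_norm_aux} separately, with the jump-penalty terms supplying the dominant factor $\gamma\left(\beta^-/\beta^+\right)^2 h^{-2}=\frac{(\beta^-)^4}{(\beta^+)^3}h^{-2}$ via the IFE trace inequality \eqref{thm_L2_trace_eq0}, and the gradient term handled by the standard inverse inequality on non-interface elements and Lemma \ref{lemma_IFE_inverse} on interface elements. The one place you genuinely diverge is the flux-average terms: the paper simply cites the ready-made bounds (6.2)--(6.3) of \cite{2019GuoLin} (consequences of the gradient trace inequalities (5.38) there), which pass through $\beta^-\norm{\nabla v_h}^2_{L^2(\Omega)}$ and then reuse the inverse inequality, whereas you re-derive the flux trace bounds from scratch on each cut piece using polynomial trace/inverse inequalities plus the Cauchy-mapping stability of Lemma \ref{lem_C_stability}; your computation is sound (it mirrors the arguments in \eqref{thm_L2_trace_eq2} and \eqref{thm_PKh_cond_eq4_1}) and in fact yields the sharper intermediate constant $\beta^+h^{-2}$ for the flux terms instead of the paper's $\frac{(\beta^-)^3}{(\beta^+)^2}h^{-2}$, though this makes no difference to the final bound since the jump penalties dominate either way. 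Your remark on the lower bound is also a small but legitimate refinement: the paper asserts the first inequality ``readily follows'' from \eqref{lem_poc_eq0}, whose sum-of-norms form would, taken literally, cost a cardinality factor of order $h^{-1/2}$ upon squaring; using the squared estimate that actually appears in \eqref{lem_poc_eq3} inside the proof of Lemma \ref{lem_poc}, as you do, closes that cosmetic gap.
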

\begin{proof}
We note that the first inequality in \eqref{rem_L2_energy_equiv_eq} readily follows from \eqref{lem_poc_eq0};
hence, we proceed to prove the second inequality in \eqref{rem_L2_energy_equiv_eq} which is, in fact, a discrete inverse inequality. For each $e\in\mathcal{E}^i_h$, we let $T^1_e$ and $T^2_e$ be the two elements sharing $e$. First, we recall the inequalities (6.2) and (6.3) in \cite{2019GuoLin} for every $v_h \in S_h^p(\Omega)$
 \begin{align}
  & \sum_{e\in\mathcal{E}^i_h} \frac{|e|}{\sigma^0 \gamma } \int_e ( \{ \beta \nabla v_h \cdot \mathbf{ n}_e \}_e)^2 ds  \lesssim \sum_{e\in\mathcal{E}^i_h}  ( \| \sqrt{\beta} \nabla v_h \|^2_{L^2(T^1_e)} +  \| \sqrt{\beta} \nabla v_h \|^2_{L^2(T^2_e)} ) \lesssim \beta^- \| \nabla v_h \|^2_{L^2(\Omega)} , \label{lem_glob_energy_norm_equiv_eq1} \\
 & \sum_{T\in\mathcal{T}^i_h} \frac{h_T}{\sigma^1 \gamma } \int_{\Gamma_T} ( \{ \beta \nabla v_h \cdot \mathbf{ n}_{\Gamma} \}_{\Gamma})^2 ds  \lesssim \sum_{T\in\mathcal{T}^i_h}  \| \sqrt{\beta} \nabla v_h \|^2_{L^2(T)} \lesssim \beta^- \| \nabla v_h \|^2_{L^2(\Omega)}, \label{lem_glob_energy_norm_equiv_eq2}
\end{align}
which are actually the consequence of the trace inequalities in (5.38) of \cite{2019GuoLin}. In addition, applying \eqref{thm_L2_trace_eq0} and the mesh quasi-uniformity  and regularity assumptions
with $\gamma=(\beta^-)^2/\beta^+$, we have
\begin{align}
    & \sum_{e\in\mathcal{E}^i_h} \frac{\sigma^0 \gamma }{|e|} \int_e [v_h]^2_e ds \lesssim \sum_{e\in\mathcal{E}^i_h} |e|^{-2}  \frac{(\beta^-)^4}{(\beta^+)^3} \left( \|v_h\|^2_{L^2(T^1_e)}  +  \|v_h\|^2_{L^2(T^2_e)} \right) \lesssim h^{-2} \frac{(\beta^-)^4}{(\beta^+)^3} \| v_h \|^2_{L^2(\Omega)},  \label{lem_glob_energy_norm_equiv_eq3} \\
    &  \sum_{T\in\mathcal{T}^i_h} \frac{\sigma^1 \gamma}{h_T} \int_{\Gamma_T} [v_h]^2_{\Gamma} ds \lesssim \sum_{T\in\mathcal{T}^i_h} h^{-2}_T \frac{(\beta^-)^4}{(\beta^+)^3}  \|v_h\|^2_{L^2(T)} \lesssim h^{-2} \frac{(\beta^-)^4}{(\beta^+)^3}  \|v_h\|^2_{L^2(\Omega)}. \label{lem_glob_energy_norm_equiv_eq4}
\end{align}
Then, substituting \eqref{lem_glob_energy_norm_equiv_eq1}-\eqref{lem_glob_energy_norm_equiv_eq4} into \eqref{energy_norm_aux} and applying the standard inverse inequalities on non-interface elements and the inverse inequalities in Lemma \ref{lemma_IFE_inverse} on interface elements, we obtain the
second inequality in \eqref{rem_L2_energy_equiv_eq}.
\end{proof}
%
In the following lemma, we estimate the $L^2$ norm of functions in $S_h^p(\Omega)$ in terms of their coordinates.
\begin{lemma}
\label{IFE_mass}
For every $v_h\in S^p_h(\Omega)$ having coordinates $\bfv$ with respect to the global basis functions $\{ \psi_i \}_{i=1}^N$, there holds
\begin{equation}
\label{IFE_mass_eq0}
\frac{(\beta^+)^2}{(\beta^-)^2} h^2 \bfv'\bfv \lesssim \|v_h\|^2_{L^2(\Omega)}  \lesssim   h^2 \bfv'\bfv.
\end{equation}
\end{lemma}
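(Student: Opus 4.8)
The plan is to prove the equivalence elementwise and then assemble over the mesh. First I would split $\|v_h\|^2_{L^2(\Omega)} = \sum_{T\in\mathcal{T}_h}\|v_h\|^2_{L^2(T)}$ and, writing $\bfv_T$ for the subvector of $\bfv$ carrying the local degrees of freedom of $v_h$ on $T$, reduce the claim to the two elementwise estimates
\[
\frac{(\beta^+)^2}{(\beta^-)^2}\, h^2\, \bfv_T'\bfv_T \ \lesssim\ \|v_h\|^2_{L^2(T)}\ \lesssim\ h^2\, \bfv_T'\bfv_T .
\]
On a non-interface element this is the standard mass-matrix scaling: with $\{\zeta_i\}$ the Lagrange nodal basis, $\|v_h\|^2_{L^2(T)} = \bfv_T'\mathbf{M}_T\bfv_T$ and Lemma A.1 in \cite{2006ErnGuermond} together with shape-regularity give $\mu_{\min}(\mathbf{M}_T),\mu_{\max}(\mathbf{M}_T)\simeq h_T^2$, so both bounds hold, the factor $(\beta^+)^2/(\beta^-)^2\le 1$ in front of the lower bound being harmless.

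On an interface element $T$ the IFE function is $v_h=(z_p,\mathfrak{C}_T(z_p))$ with $z_p=\sum_i v_i\zeta_i$ and coordinate vector $\bfv_T$, so that $\|v_h\|^2_{L^2(T)} = \|z_p\|^2_{L^2(T^+)} + \|\mathfrak{C}_T(z_p)\|^2_{L^2(T^-)}$. Here the key tool is the $L^2$-stability of the Cauchy mapping in Lemma \ref{lem_C_stability}. For the upper bound I would use $\|\mathfrak{C}_T(z_p)\|_{L^2(T^-)}\le C_b\|z_p\|_{L^2(T^-)}$ to obtain $\|v_h\|^2_{L^2(T)}\lesssim \|z_p\|^2_{L^2(T)}\simeq h^2\bfv_T'\bfv_T$, again by the mass-matrix bound. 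For the lower bound I would invoke $\|\mathfrak{C}_T(z_p)\|_{L^2(T^-)}\ge c_b\tfrac{\beta^+}{\beta^-}\|z_p\|_{L^2(T^-)}$; since $(\beta^+)^2/(\beta^-)^2\le 1$ I may also multiply the $T^+$ contribution by this factor, which lets me recombine $\|z_p\|^2_{L^2(T^+)}+\|z_p\|^2_{L^2(T^-)}=\|z_p\|^2_{L^2(T)}$ and conclude $\|v_h\|^2_{L^2(T)}\gtrsim \tfrac{(\beta^+)^2}{(\beta^-)^2}\|z_p\|^2_{L^2(T)}\simeq \tfrac{(\beta^+)^2}{(\beta^-)^2}h^2\bfv_T'\bfv_T$.

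Finally I would assemble the local estimates. Because each global degree of freedom is shared by at most a fixed number of elements (shape-regularity and quasi-uniformity), the local-to-global counting gives $\bfv'\bfv\le\sum_T\bfv_T'\bfv_T\lesssim\bfv'\bfv$, i.e. $\sum_T\bfv_T'\bfv_T\simeq\bfv'\bfv$; summing the elementwise bounds over $\mathcal{T}_h$ and using $h_T\simeq h$ then yields \eqref{IFE_mass_eq0}. The contrast enters only through the interface elements via Lemma \ref{lem_C_stability}, and the main point to watch is precisely the bookkeeping there: one must keep the factor $(\beta^+)^2/(\beta^-)^2$ attached to the $T^-$ contribution (and, harmlessly, to the $T^+$ contribution) so that it survives as a \emph{common} factor after recombining the two sub-element norms into $\|z_p\|^2_{L^2(T)}$. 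Everything else reduces to standard polynomial mass-matrix scaling.
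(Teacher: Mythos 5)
Your proof is correct and takes essentially the same route as the paper's: an elementwise reduction, the mass-matrix eigenvalue bounds of Lemma A.1 in \cite{2006ErnGuermond} on non-interface elements, and the $L^2$-stability of the Cauchy mapping from Lemma \ref{lem_C_stability} on interface elements, with the contrast factor attached to both sub-element contributions exactly as in the paper's \eqref{IFE_mass_eq3}. The only difference is that you make explicit the finite-overlap counting $\bfv'\bfv \lesssim \sum_{T}\bfv_T'\bfv_T \lesssim \bfv'\bfv$, which the paper leaves implicit in its final summation step.
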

\begin{proof}
Let $T$ be an element and its local IFE space $S_h^p(T)$ be equipped with the basis functions $\{\phi_{T,i}\}_{i=1}^n$, and let $\bfv_T$
be the coordinates of $\phi_T \in S^p_h(T)$. We only need to prove the local version of \eqref{IFE_mass_eq0}:
\begin{equation}
\label{IFE_mass_eq1}
\frac{(\beta^+)^2}{(\beta^-)^2} h^2 \bfv'_T\bfv_T  \lesssim \| \phi_T \|^2_{L^2(T)} \lesssim  h^2_{T} \bfv'_T \bfv_T.
\end{equation}
When $T$ is a non-interface element, then $\{\phi_{T,i}\}_{i=1}^n$ are Lagrange polynomials; hence, by Lemma A.1 in \cite{2006ErnGuermond}, the following standard result holds
\begin{equation}
\label{IFE_mass_eq1_1}
h^2_{T} \bfv'_T \bfv_T  \lesssim  \| \phi_T \|^2_{L^2(T)} \lesssim  h^2_{T} \bfv'_T \bfv_T,
\end{equation}
which leads to \eqref{IFE_mass_eq1} since $\beta^+/\beta^- \leq 1$.

When $T$ is an interface element, by the construction approach of local IFE basis functions on interface elements \eqref{local_IFE_bas_fun}, each function in $S^p(T)$ can be written as
$\phi_T = (\phi_T^+,\phi_T^-)$ where $\phi_T^+ = z_p$ on $T^+$, $\phi_T^-=\mathfrak{C}_T(z_p)$
on $T^-$ with  $z_p, \mathfrak{C}_T(z_p) \in \mathbb{P}_p(T_\lambda)$. Furthermore, we have $\phi^+_{T,i}=\phi_{T,i}|_{T^+}=\zeta_i$ on $T^+$ and $\phi^-_{T,i}=\phi_{T,i}|_{T^-}=\mathfrak{C}_T(\zeta_i)$ on $T^-$, $i=1,2,\ldots,n$; and thus for each $z_p = \sum_{i=1}^n\alpha^{+}_i\zeta_{i}$, we have $\bfv_T=\bfalpha^+$. Note that $\{ \zeta_i \}_{i=1}^n$ is a basis of $\mathbb{P}_p(T_\lambda)$ such that its restriction to
$T$ is a Lagrange polynomial basis of $\mathbb{P}_p(T)$.
Then, \eqref{lem_C_stability_eq1} and \eqref{IFE_mass_eq1_1} yield
\begin{equation}
\begin{split}
\label{IFE_mass_eq2}
\| \phi_T \|^2_{L^2(T)}  = \| \mathfrak{C}_T(z_p) \|^2_{L^2(T^-)} + \| z_p \|^2_{L^2(T^+)}  \lesssim \| z_p \|^2_{L^2(T^-)} + \| z_p \|^2_{L^2(T^+)} \lesssim  \| z_p \|^2_{L^2(T)}  \lesssim  h^2_T \bfv'_T \bfv_T,
\end{split}
\end{equation}
which implies that the second inequality in \eqref{IFE_mass_eq1} also holds for an interface element $T$. For the first inequality in \eqref{IFE_mass_eq1}, we also use \eqref{lem_C_stability_eq1} and \eqref{IFE_mass_eq1_1} to obtain
\begin{equation}
\begin{split}
\label{IFE_mass_eq3}
\| \phi_T \|^2_{L^2(T)}  = \| \mathfrak{C}_T(z_p) \|^2_{L^2(T^-)} + \| z_p \|^2_{L^2(T^+)}
\gtrsim \frac{(\beta^+)^2}{(\beta^-)^2} \| z_p \|^2_{L^2(T^-)} + \| z_p \|^2_{L^2(T^+)} \\ \gtrsim  \frac{(\beta^+)^2}{(\beta^-)^2}  \| z_p \|^2_{L^2(T)}  \gtrsim \frac{(\beta^+)^2}{(\beta^-)^2}   h^2_T \bfv'_T \bfv_T,
\end{split}
\end{equation}
Finally, summing \eqref{IFE_mass_eq1} over all elements leads to \eqref{IFE_mass_eq0}.
\end{proof}

We are now ready to state and prove the main theorem of this section about the spectral condition number of the stiffness matrix $\mathbf{K}_h$.
\begin{thm}
\label{thm_Kh_cond}
Suppose $\sigma^0$ and $\sigma^1$ are large enough, then
\begin{equation}
\label{thm_Kh_cond_eq0}
\kappa(\mathbf{K}_h) \lesssim \left( \frac{ \beta^- }{ \beta^+ } \right)^6 h^{-2}.
\end{equation}
\end{thm}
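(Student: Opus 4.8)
The plan is to reduce the statement to the Rayleigh-quotient characterization in \eqref{spectral_cond} and then simply chain the two norm equivalences already in hand. First I would note that, since we work with the symmetric method ($\epsilon_0=\epsilon_1=-1$), $\mathbf{K}_h$ is symmetric, and by the definition of $\mathbf{K}_h$ in \eqref{Kh_mat} together with the bilinearity of $a_h(\cdot,\cdot)$, any $v_h\in S^p_h(\Omega)$ with coordinate vector $\bfv$ satisfies $\bfv'\mathbf{K}_h\bfv = a_h(v_h,v_h)$. Because $\sigma^0$ and $\sigma^1$ are large enough, the coercivity (Theorem 6.1 in \cite{2019GuoLin}) and the continuity (Theorem 6.2 in \cite{2019GuoLin}) of $a_h(\cdot,\cdot)$ yield $\bfv'\mathbf{K}_h\bfv \simeq \vertiii{v_h}_h^2$, with constants independent of $h$, $\beta^\pm$, and the interface location; in particular $\mathbf{K}_h$ is symmetric positive definite so that \eqref{spectral_cond} applies, and $\mu_{\max}(\mathbf{K}_h)\simeq\max_{\bfv\neq\mathbf{0}}\vertiii{v_h}_h^2/(\bfv'\bfv)$ and $\mu_{\min}(\mathbf{K}_h)\simeq\min_{\bfv\neq\mathbf{0}}\vertiii{v_h}_h^2/(\bfv'\bfv)$.

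Next I would insert Lemma \ref{lem_glob_energy_norm_equiv} to trade the energy norm for the $L^2$-norm and Lemma \ref{IFE_mass} to trade the $L^2$-norm for $\bfv'\bfv$. For the top eigenvalue, the upper inequalities of the two lemmas give $\vertiii{v_h}_h^2 \lesssim \frac{(\beta^-)^4}{(\beta^+)^3}h^{-2}\|v_h\|_{L^2(\Omega)}^2 \lesssim \frac{(\beta^-)^4}{(\beta^+)^3}\bfv'\bfv$, whence $\mu_{\max}(\mathbf{K}_h)\lesssim (\beta^-)^4/(\beta^+)^3$. For the bottom eigenvalue, the lower inequalities give $\vertiii{v_h}_h^2 \gtrsim \beta^+\|v_h\|_{L^2(\Omega)}^2 \gtrsim \frac{(\beta^+)^3}{(\beta^-)^2}h^2\,\bfv'\bfv$, whence $\mu_{\min}(\mathbf{K}_h)\gtrsim \frac{(\beta^+)^3}{(\beta^-)^2}h^2$. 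Dividing the two according to \eqref{spectral_cond} then gives $\kappa(\mathbf{K}_h)=\mu_{\max}/\mu_{\min} \lesssim \big((\beta^-)^4/(\beta^+)^3\big)\big/\big((\beta^+)^3 h^2/(\beta^-)^2\big) = \big(\beta^-/\beta^+\big)^6 h^{-2}$, which is exactly \eqref{thm_Kh_cond_eq0}.

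The genuine analytic content has already been absorbed into Lemma \ref{lem_glob_energy_norm_equiv} and Lemma \ref{IFE_mass}, so the present argument is essentially bookkeeping. The only point I would watch carefully is the accounting of the powers of $\beta^\pm$ when multiplying the four one-sided estimates; the factor $\big(\beta^-/\beta^+\big)^6$ arises precisely because the contrast powers from the discrete inverse inequality hidden in the second inequality of \eqref{rem_L2_energy_equiv_eq} (which rests on the IFE inverse inequality Lemma \ref{lemma_IFE_inverse} and the trace bounds of Lemma \ref{thm_L2_trace}) stack on top of those in Lemma \ref{IFE_mass}. The mesh-size scaling $h^{-2}$ and the independence of every constant from the interface location are then immediate. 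Thus I do not expect a substantive obstacle here: the suboptimality in the contrast is inherited from the upstream lemmas rather than introduced in this final combination.
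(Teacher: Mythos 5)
Your proposal is correct and follows essentially the same route as the paper: both factor the Rayleigh quotient $\bfv'\mathbf{K}_h\bfv/\bfv'\bfv$ through the energy norm via the coercivity and continuity of $a_h(\cdot,\cdot)$ (Theorems 6.1 and 6.2 in \cite{2019GuoLin}), then chain Lemma \ref{lem_glob_energy_norm_equiv} with Lemma \ref{IFE_mass} to bound $\mu_{\max}(\mathbf{K}_h)\lesssim (\beta^-)^4/(\beta^+)^3$ and $\mu_{\min}(\mathbf{K}_h)\gtrsim (\beta^+)^3 h^2/(\beta^-)^2$. Your accounting of the contrast powers, yielding $(\beta^-/\beta^+)^6 h^{-2}$, matches the paper's computation exactly.
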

\begin{proof}
Since $\sigma^0$ and $\sigma^1$ are large enough, according to Lemma 6.1 and Theorem 6.1 in \cite{2019GuoLin}, the coercivity of $a_h(\cdot,\cdot)$ holds for the norm $\vertiii{\cdot}_h$, i.e., $a_h(v_h,v_h)\geqslant\frac{1}{4}\vertiii{v_h}^2_h$, $\forall v_h\in S^p_h(\Omega)$.
Let us recall that $S_h^p(\Omega)$ is equipped with the global IFE basis functions  $\psi_i$, $i=1,2,\ldots,N$. Hence, for every $v_h \in S_h^p(\Omega)$, written as $v_h=\sum^N_{i=1} v_i \psi_i$ with
coordinates $\bfv=[v_i]^N_{i=1}$,  we have
\begin{equation}
\label{thm_Kh_cond_eq1}
\frac{\bfv' \mathbf{K}_h \bfv}{\bfv'\bfv} = \frac{a_h(v_h,v_h)}{ \| v_h \|^2_{L^2(\Omega)} } \cdot \frac{ \|v_h \|^2_{L^2(\Omega)} }{\bfv'\bfv}.
\end{equation}
Applying the coercivity and continuity of $a_h(\cdot,\cdot)$ with \eqref{rem_L2_energy_equiv_eq}, we obtain
\begin{equation}
\label{thm_Kh_cond_eq2}
\frac{(\beta^-)^{4}}{(\beta^+)^{3}} h^{-2} \gtrsim \frac{7\vertiii{v_h}^2_h}{\|v_h\|^2_{L^2(\Omega)}} \geqslant \frac{a_h(v_h,v_h)}{ \| v_h \|^2_{L^2(\Omega)} } \geqslant \frac{\vertiii{v_h}^2_h}{4\|v_h\|^2_{L^2(\Omega)}} \gtrsim \beta^+.
\end{equation}
Combining \eqref{IFE_mass_eq0}, \eqref{thm_Kh_cond_eq1} and \eqref{thm_Kh_cond_eq2}  completes the proof.
\end{proof}

Finally we estimate the scaled condition number $\kappa_S(\mathbf{K}_h) = \kappa(\mathbf{ D}^{-1/2}_K\mathbf{ K}_h\mathbf{ D}^{-1/2}_K)$ where $\mathbf{ D}_K$ is the diagonal matrix of $\mathbf{ K}_h$. We know that $\mathbf{ D}_K$ is considered as a kind of preconditioner for some finite element methods for interface problems on unfitted meshes \cite{2012BabuskaBanerjee,2015BurmanClaus,2017LehrenfeldReusken}.
\begin{thm}
\label{thm_PKh_cond}
Suppose $\sigma^0$ and $\sigma^1$ are large enough, then
\begin{equation}
\label{thm_PKh_cond_eq0}
\kappa(\mathbf{ D}^{-1/2}_K\mathbf{K}_h\mathbf{ D}^{-1/2}_K) \lesssim  \left( \frac{\beta^-}{\beta^+} \right)^6 h^{-2}.
\end{equation}
\end{thm}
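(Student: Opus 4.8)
The plan is to bound the extreme eigenvalues of $\tilde{\mathbf{K}}_h := \mathbf{D}^{-1/2}_K\mathbf{K}_h\mathbf{D}^{-1/2}_K$ separately, reusing the two-sided eigenvalue bounds on $\mathbf{K}_h$ already produced inside the proof of Theorem \ref{thm_Kh_cond}. Combining \eqref{IFE_mass_eq0} with \eqref{thm_Kh_cond_eq2} gives, for every $v_h\in S^p_h(\Omega)$ with coordinate vector $\bfv$,
\[
\frac{(\beta^+)^3}{(\beta^-)^2}\,h^2\,\bfv'\bfv \lesssim \bfv'\mathbf{K}_h\bfv \lesssim \frac{(\beta^-)^4}{(\beta^+)^3}\,\bfv'\bfv,
\]
so that $\mu_{\min}(\mathbf{K}_h)\gtrsim \frac{(\beta^+)^3}{(\beta^-)^2}h^2$ and $\mu_{\max}(\mathbf{K}_h)\lesssim \frac{(\beta^-)^4}{(\beta^+)^3}$. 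In particular, since each diagonal entry satisfies $(\mathbf{D}_K)_{ii}=a_h(\psi_i,\psi_i)=\mathbf{e}_i'\mathbf{K}_h\mathbf{e}_i\leqslant\mu_{\max}(\mathbf{K}_h)$, I obtain $\max_i(\mathbf{D}_K)_{ii}\lesssim \frac{(\beta^-)^4}{(\beta^+)^3}$.

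For the largest eigenvalue I would exploit that $\tilde{\mathbf{K}}_h$ has unit diagonal. Because $\mathbf{K}_h$ is symmetric positive definite, the Cauchy--Schwarz inequality $|(\mathbf{K}_h)_{ij}|\leqslant\sqrt{(\mathbf{K}_h)_{ii}(\mathbf{K}_h)_{jj}}$ shows that every off-diagonal entry of $\tilde{\mathbf{K}}_h$ is at most $1$ in modulus. By mesh shape-regularity together with the locality of the Galerkin/DG coupling (each basis function interacts only with the $O(1)$ basis functions supported on its own element and the edge-adjacent elements, a number $m$ depending on $p$ and the regularity constant but not on $h$ or $\beta^\pm$), each row of $\tilde{\mathbf{K}}_h$ has at most $m$ nonzero entries. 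Gershgorin's theorem then yields $\mu_{\max}(\tilde{\mathbf{K}}_h)\leqslant 1+(m-1)=m\lesssim 1$.

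For the smallest eigenvalue I would use the variational characterization $\mu_{\min}(\tilde{\mathbf{K}}_h)=\min_{\bfv\neq\mathbf{0}}\frac{\bfv'\mathbf{K}_h\bfv}{\bfv'\mathbf{D}_K\bfv}$, obtained from \eqref{spectral_cond} via the invertible change of variable $\bfv\mapsto\mathbf{D}^{1/2}_K\bfv$. Bounding $\bfv'\mathbf{D}_K\bfv\leqslant\max_i(\mathbf{D}_K)_{ii}\,\bfv'\bfv$ from above and $\bfv'\mathbf{K}_h\bfv\geqslant\mu_{\min}(\mathbf{K}_h)\,\bfv'\bfv$ from below gives $\mu_{\min}(\tilde{\mathbf{K}}_h)\geqslant \mu_{\min}(\mathbf{K}_h)/\max_i(\mathbf{D}_K)_{ii}\gtrsim \frac{(\beta^+)^6}{(\beta^-)^6}h^2$. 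Dividing the two bounds produces the claimed estimate $\kappa(\tilde{\mathbf{K}}_h)\lesssim(\beta^-/\beta^+)^6 h^{-2}$.

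The delicate point is the treatment of $\mu_{\max}(\tilde{\mathbf{K}}_h)$: the crude estimate $\mu_{\max}(\tilde{\mathbf{K}}_h)\leqslant\mu_{\max}(\mathbf{K}_h)/\min_i(\mathbf{D}_K)_{ii}$ would overcount both the contrast and the factor $h^{-2}$ and fail to reproduce the stated bound. It is precisely the diagonal normalization to a unit diagonal, combined with Gershgorin and the uniform sparsity of $\mathbf{K}_h$, that forces $\mu_{\max}(\tilde{\mathbf{K}}_h)=O(1)$, which is what makes the scaled bound coincide with that of Theorem \ref{thm_Kh_cond} rather than being worse. Equivalently, one could invoke van der Sluis's theorem, $\kappa(\tilde{\mathbf{K}}_h)\leqslant m\min_{\mathbf{D}}\kappa(\mathbf{D}\mathbf{K}_h\mathbf{D})\leqslant m\,\kappa(\mathbf{K}_h)$, and conclude directly from Theorem \ref{thm_Kh_cond}; verifying that $m$ is independent of $h$ and $\beta^\pm$ is then the only nontrivial ingredient.
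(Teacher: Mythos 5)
Your proof is correct, but it takes a genuinely different route from the paper's. The paper works directly with the generalized Rayleigh quotient $\tilde{\bfv}'\mathbf{K}_h\tilde{\bfv}/\tilde{\bfv}'\mathbf{D}_K\tilde{\bfv}$: the upper bound comes from the basis-expansion inequality \eqref{thm_PKh_cond_eq1_1}, and the lower bound comes from the energy--$L^2$ equivalence of Lemma \ref{lem_glob_energy_norm_equiv} (contributing $(\beta^+/\beta^-)^4 h^2$) combined with a new basis-level stability estimate $\| \phi_T \|_{h,T} \lesssim (\beta^-/\beta^+)\| \phi_T \|_{L^2(T)}$ in \eqref{thm_PKh_cond_eq3_1}, proved via the Cauchy-mapping stability of Lemma \ref{lem_C_stability} (contributing the remaining $(\beta^+/\beta^-)^2$). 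You instead layer pure linear algebra on top of the two-sided eigenvalue bounds for $\mathbf{K}_h$ already implicit in Theorem \ref{thm_Kh_cond} (i.e., \eqref{thm_Kh_cond_eq2} combined with Lemma \ref{IFE_mass}): unit diagonal after scaling, Cauchy--Schwarz for the entries of an SPD Gram matrix, bounded row sparsity $m$ from shape regularity, and Gershgorin to force $\mu_{\max}(\mathbf{D}_K^{-1/2}\mathbf{K}_h\mathbf{D}_K^{-1/2}) \leqslant m = O(1)$ — your observation that the crude bound $\mu_{\max}(\mathbf{K}_h)/\min_i(\mathbf{D}_K)_{ii}$ would square the contrast and the $h$-power is exactly right and shows you identified the essential step. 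Each approach buys something: yours is more elementary and generic (it applies verbatim to any sparse SPD Galerkin matrix, and the van der Sluis formulation $\kappa_S(\mathbf{K}_h)\leqslant m\,\kappa(\mathbf{K}_h)$ makes transparent that diagonal scaling can never be worse than the unscaled system, up to $m$), and your Gershgorin step cleanly supplies the finite-overlap constant that the paper's inequality \eqref{thm_PKh_cond_eq1_1}, as stated with constant $1$, tacitly requires (cross terms in $\vertiii{\sum_i \tilde{v}_i\psi_i}_h^2$ can be positive, so it only holds up to an overlap-dependent constant); on the other hand, your route is structurally incapable of showing that scaling \emph{improves} the condition number, whereas the paper's direct Rayleigh-quotient treatment could in principle expose such a gain — though here both arguments land on the identical bound $(\beta^-/\beta^+)^6 h^{-2}$. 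The one ingredient you must (and do) flag is that the sparsity constant $m$ depends only on $p$ and the shape-regularity of $\mathcal{T}_h$, not on $h$ or $\beta^{\pm}$; this is immediate since the local dimension is $n=(p+1)(p+2)/2$ and basis functions couple only within nodal patches and across edges in $\mathcal{E}_h^i$, whose cardinalities are bounded by mesh regularity.
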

\begin{proof}
Following the notations and reasoning used in the proof of Theorem \ref{thm_Kh_cond}, for every $v_h\in S^p_h(\Omega)$ written as $v_h=\sum_{i=1}^Nv_i\psi_i$, we let $\bfv=[v_i]_{i=1}^N\in\mathbb{R}^N$; in addition we define $\tilde{\bfv}=\mathbf{ D}^{-1/2}_K\bfv=[\tilde{v}_i]_{i=1}^N$ and $\tilde{v}_h=\sum_{i=1}^N\tilde{v}_i\psi_i$. Then we can write
\begin{equation}
\label{thm_PKh_cond_eq1}
\frac{\bfv'\mathbf{ D}^{-1/2}_K\mathbf{K}_h \mathbf{ D}^{-1/2}_K\bfv}{\bfv'\bfv} = \frac{\tilde{\bfv}'\mathbf{ K}_h\tilde{\bfv}}{\tilde{\bfv}'\mathbf{ D}_K\tilde{\bfv}} = \frac{ \vertiii{\sum_{i=1}^N\tilde{v}_i\psi_i}^2_h }{ \sum_{i=1}^N\tilde{v}_i^2\vertiii{\psi_i}_h^2 } .
\end{equation}
We first note that
\begin{equation}
\label{thm_PKh_cond_eq1_1}
\vertiii{\sum_{i=1}^N\tilde{v}_i\psi_i}^2_h \leqslant \sum_{i=1}^N\tilde{v}_i^2\vertiii{\psi_i}_h^2.
\end{equation}
Then it only remains to estimate the lower bound for \eqref{thm_PKh_cond_eq1}. Lemma \ref{lem_glob_energy_norm_equiv} yields
\begin{equation}
\label{thm_PKh_cond_eq2}
\frac{ \vertiii{\sum_{i=1}^N\tilde{v}_i\psi_i}^2_h }{\sum_{i=1}^N\tilde{v}_i^2\vertiii{\psi_i}_h^2} \gtrsim \left(\frac{\beta^+}{\beta^-}\right)^4 h^2 \frac{ \|\sum_{i=1}^N\tilde{v}_i\psi_i\|^2_{L^2(\Omega)} }{\sum_{i=1}^N\tilde{v}_i^2 \|\psi_i \|^2_{L^2(\Omega)}}.
\end{equation}
Now on an element $T$, given each IFE function $\phi_T\in S^p_h(T)$, we write $\phi_T=\sum_{i=1}^n v_{T,i}\phi_{T,i}$ where $n=(p+1)(p+2)/2$ and $\{\phi_{T,i}\}_{i=1}^n$ are local IFE basis functions on $T$. Since they are linearly independent, we can define a special norm
\begin{equation}
\label{thm_PKh_cond_eq3}
\| \phi_T \|_{h,T} = \sum_{i=1}^n | v_{T,i} | \| \phi_{T,i} \|_{L^2(T)}.
\end{equation}
In order to estimate the lower estimate of the right hand side of \eqref{thm_PKh_cond_eq2}, we only need to prove the following inequality on each element
\begin{equation}
\label{thm_PKh_cond_eq3_1}
\| \phi_T \|_{h,T}  \lesssim \frac{\beta^-}{\beta^+} \| \phi_T \|_{L^2(T)} ,  ~~~~~ \forall \phi_T \in S^p_h(T).
\end{equation}
If $T$ is a non-interface element, by the standard scaling argument, we have $\|\cdot\|_{h,T}\simeq \| \cdot \|_{L^2(T)}$ on $S^p_h(T)=\mathbb{P}_p$ which yields \eqref{thm_PKh_cond_eq3_1} since $\beta^-\geqslant\beta^+$. However, if $T$ is an interface element, following the similar argument to Lemma \ref{IFE_mass}, we have $\phi^+_{T,i}=\phi_{T,i}|_{T^+}=\zeta_i$ on $T^+$ and $\phi^-_{T,i}=\phi_{T,i}|_{T^-}=\mathfrak{C}_T(\zeta_i)$ on $T^-$, $i=1,2,\ldots,n$. Thus there holds $z_p = \sum_{i=1}^n\alpha^{+}_i\zeta_{i} = \sum_{i=1}^n v_{T,i}\zeta_{i}$. Then the second inequality in Lemma \ref{lem_C_stability} yields
\begin{equation}
\begin{split}
\label{thm_PKh_cond_eq4}
\| \phi_T \|_{h,T} &=\sum_{i=1}^n | v_{T,i} | \| \phi_{T,i} \|_{L^2(T)} \lesssim  \sum_{i=1}^n | v_{T,i} | ( \| \zeta_{i} \|_{L^2(T^+)} + \| \mathfrak{C}_T(\zeta_{i}) \|_{L^2(T^-)} ) \\
&\lesssim  \sum_{i=1}^n | v_{T,i} | ( \| \zeta_{i} \|_{L^2(T^+)} + \| \zeta_{i} \|_{L^2(T^-)} ) \\
&\lesssim \sum_{i=1}^n | v_{T,i} |  \| \zeta_{i} \|_{L^2(T)} = \| z_p \|_{h,T} \lesssim \| z_p \|_{L^2(T)}, 
\end{split}
\end{equation}
where in the last two inequalities we have also used the equivalence $\|\cdot\|_{h,T}\simeq \| \cdot \|_{L^2(T)}$ on $\mathbb{P}_p(T)$. Then we note that $\phi^+_T=\mathfrak{C}^{-1}_T(\phi^-_T)$ and further use the first inequality in Lemma \ref{lem_C_stability} to obtain
\begin{equation}
\label{thm_PKh_cond_eq4_1}
\| z_p \|_{L^2(T)} \lesssim \| z_p \|_{L^2(T^+)} + \| \mathfrak{C}^{-1}_T (\mathfrak{C}_T (z_p)) \|_{L^2(T^-)} \lesssim \| \phi^+_T \|_{L^2(T^+)} + \frac{\beta^-}{\beta^+}\| \phi^-_T \|_{L^2(T^-)} \lesssim \frac{\beta^-}{\beta^+}\| \phi_T \|_{L^2(T)}
\end{equation}
Combining \eqref{thm_PKh_cond_eq4} and \eqref{thm_PKh_cond_eq4_1}, we obtain \eqref{thm_PKh_cond_eq3_1}. Squaring \eqref{thm_PKh_cond_eq3_1} and summing over all the elements yield
\begin{equation}
\label{thm_PKh_cond_eq5}
\frac{ \|\sum_{i=1}^N\tilde{v}_i\psi_i\|^2_{L^2(\Omega)} }{\sum_{i=1}^N\tilde{v}_i^2 \|\psi_i\|^2_{L^2(\Omega)}} \gtrsim \left(\frac{\beta^+}{\beta^-}\right)^2.
\end{equation}
Combining \eqref{thm_PKh_cond_eq5} and \eqref{thm_PKh_cond_eq2} yields \eqref{thm_PKh_cond_eq1} which, in turns, gives \eqref{thm_PKh_cond_eq0}.
\end{proof}
Note that $\mathbf{ D}^{-1}_K\mathbf{K}_h = \mathbf{ D}^{-1/2}_K( \mathbf{ D}^{-1/2}_K\mathbf{K}_h \mathbf{ D}^{-1/2}_K ) \mathbf{ D}^{1/2}_K$; so the result \eqref{thm_PKh_cond_eq0} is also true for $\mathbf{ D}^{-1}_K\mathbf{K}_h $.

\begin{rem}
\label{rem:cond_beta}
~\\
\vspace{-0.2in}
\begin{itemize}
\item We note that Theorems \ref{thm_Kh_cond} and \ref{thm_PKh_cond} indicate the dependence of the upper bound of the condition number on the mesh size $h$ and the contrast $\beta^-/\beta^+$. Here the growth with respect to $h$, i.e., $h^{-2}$ is optimal and sharp in the sense that it is comparable with the standard fitted mesh finite element methods and it can be also verified by numerical experiments, see the results in Section \ref{sec:num_example} below.

\item On the other hand, we have observed from numerical experiments (some of them are reported in the next section) that the dependence of the condition number on the contrast is not worse than quadratic which is much smaller than the bound $(\beta^-/\beta^+)^6$ derived in Theorems \ref{thm_Kh_cond} and \ref{thm_PKh_cond}. However its rigorous proof is still open and will be our research topic in the future. Moreover designing an IFE method with a condition number that depends weakly
on the contrast $\rho = \beta^+/\beta^-$ is also worth investigating.

\item
Unlike some methods on unfitted meshes \cite{2017LehrenfeldReusken} which require a preconditioner (such as the diagonal preconditioner described above) to obtain uniform $O(h^{-2})$ bounds for condition numbers, the estimate \eqref{thm_Kh_cond_eq0} implies that condition number of the proposed IFE method itself has the $O(h^{-2})$ bounds, and so does the preconditioned one \eqref{thm_PKh_cond_eq0}. Furthermore the numerical results in the next section suggest that diagonal scaling can sometimes slightly reduce the condition number, especially for linear elements. However, suitable preconditioners for IFE methods that deal with large contrast and high polynomial degree are still needed.
\end{itemize}
\end{rem}

\section{Numerical Examples}
\label{sec:num_example}

In this section, we perform several numerical experiments to corroborate the theoretical results
of the previous sections and to investigate the numerical performance of the proposed IFE method
\eqref{enr_IFE_method_1}. We note that the original interface problem \eqref{model} can be normalized by dividing
the elliptic equation on bother sides of the interface with the larger coefficient, i.e., by $\beta^-$ if $\beta^-\geqslant\beta^+$.
Then the normalized interface problem has the new coefficients $1$ and $\beta^+/\beta^-\leqslant 1$, and the stability parameter $\gamma$ becomes $1^2/(\beta^+/\beta^-)=\beta^-/\beta^+$ which is exactly the contrast $\rho$. Since the original interface problem and the resulted IFE scheme are mathematically equivalent to their normalized counterparts, this suggests that choosing $\gamma=\beta^-/\beta^+$ is sufficient to guarantee the optimal convergence and stability of the proposed IFE method here and also the one in \cite{2019GuoLin} for interface problems with homogeneous jump conditions. Actually this choice has also been widely used in the IFE literature \cite{2016GuoLin,2015LinLinZhang,2015LinSheenZhang}.

We perform all the computations on the domain $\Omega=(-1,1)\times(-1,1)$ for the symmetric IFE formulation. The meshes for the IFE method are generated by partitioning $\Omega$ into $N\times N$ uniform squares in which each square is cut into two triangles by joining the upper-left and lower-right vertices with a mesh size $h=2/N$. In all computations the IFE functions are constructed on fictitious elements $T_{1.5}$ unless it is specified otherwise.
\vskip 0.2in
\noindent\textbf{Example 1}.

The first set of numerical experiments are for investigating the stability of the local problem
for constructing the IFE functions stated in Theorem \ref{thm_condition_AT}.
More specifically, we want to comfirm that the upper bounds of $\kappa(A_T)$ are independent of both
interface location and element size $h$.
For this purpose, we consider circular interfaces centered at the origin with radius $r$,
meshes sizes $h=2/10,2/40,2/80$ and degree $p=3$.
Each circular interface splits $\Omega$ into
the interior $\Omega^-$ 
and the exterior $\Omega^+$ subdomains. 
On a mesh of size $h=2/10$, we select the
interface element $T=\triangle_1$ having vertices $A_1=(0.6,0)$, $A_2=(0.8,0)$, $A_3=(0.6,0.2)$,
as illustrated in Figure \ref{fig:loc_cond_num} and let $d_r$
denote the distance between the vertex $A_2$ and the intersection
point $\Gamma \cap A_1A_2$, i.e., $d_r = 0.8-r$.
Thus, as $d_r$ decreases to $0$, the intersection point $\Gamma\cap A_1A_2$
approaches the vertex $A_2$ creating small-cut elements which might cause numerical difficulties and instabilities.
Similarly, on a mesh of size $h=2/40$,
we select the interface element $T=\triangle_2$ having vertices $A_1=(0.75,0)$, $A_2=(0.8,0)$, $A_3=(0.75,0.05)$ while
on a mesh of size $h=2/80$ we select
the interface element $T=\triangle_3$ having
vertices $A_1=(0.775,0)$, $A_2=(0.8,0)$, $A_3=(0.775,0.025)$.

For $\triangle_1$ we select $r$ such that
$d_r=10^{-1},5\,10^{-2},10^{-2}$, $10^{-3},10^{-4},10^{-5},10^{-6},10^{-7}$ and compute $\kappa(\mathbf{A}_T)$
on a fictitious element $T_{1.5}$ and repeat these computations for $\triangle_2$ with
$d_r= 5 \,10^{-2},10^{-2}$, $10^{-3},10^{-4}$, $10^{-5},10^{-6},10^{-7}$
 and for $\triangle_3$ with $d_r= 10^{-2}$, $10^{-3},10^{-4}$, $10^{-5},10^{-6},10^{-7}$.
In the middle of Figure \ref{fig:loc_cond_num} we plot $\kappa(\mathbf{A}_T)$ versus $d_r$, and in the right of Figure \ref{fig:loc_cond_num} we further vary $\lambda$ and plot $\kappa(\mathbf{A}_T)$ versus $\lambda=1,1.1,\ldots,2$ for $d_r=10^{-7}$.
Clearly, these results show that $\kappa(\mathbf{A}_T)$ approaches a constant as $d_r$ approches $0$ with no blow-up in full agreement with Theorem \ref{thm_condition_AT}.
Moreover, $\kappa(\mathbf{A}_T)$ is extremely ill-conditioned on the original element $T_1$ and
becomes significantly better conditioned with increasing scaling parameter $\lambda$.

\begin{figure}[htbp]
\centering
    \includegraphics[width=1.9in]{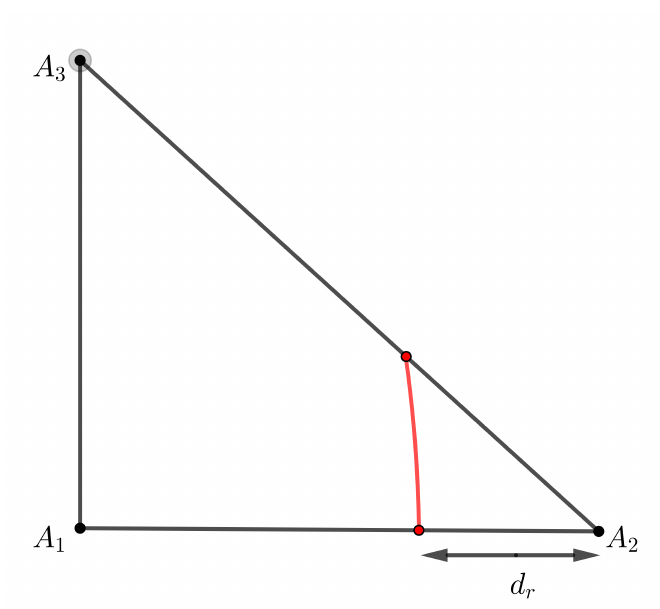}
    \hspace{-0.2in}
    \includegraphics[width=2.7in]{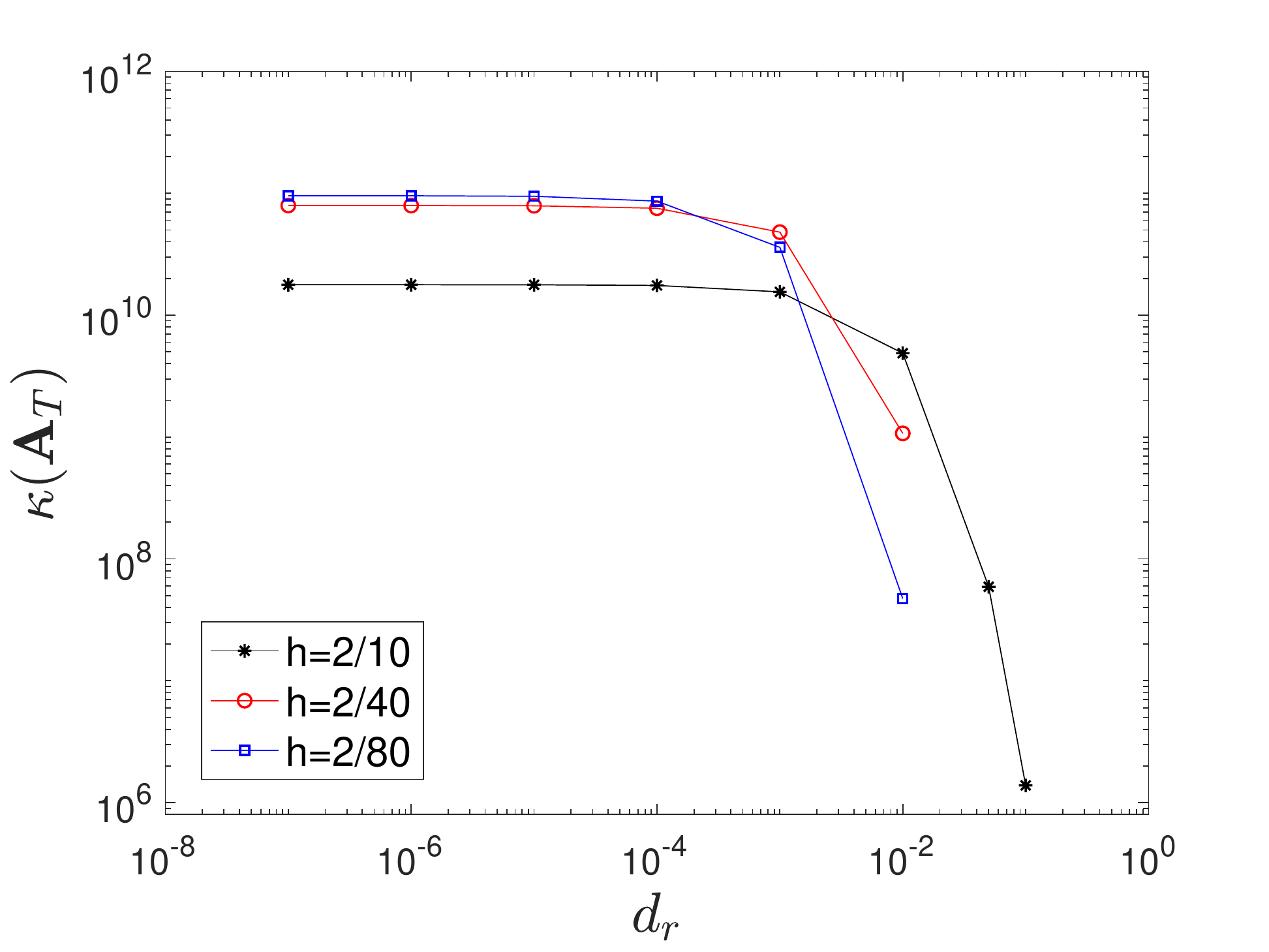}
     \hspace{-0.3in}
    \includegraphics[width=2.7in]{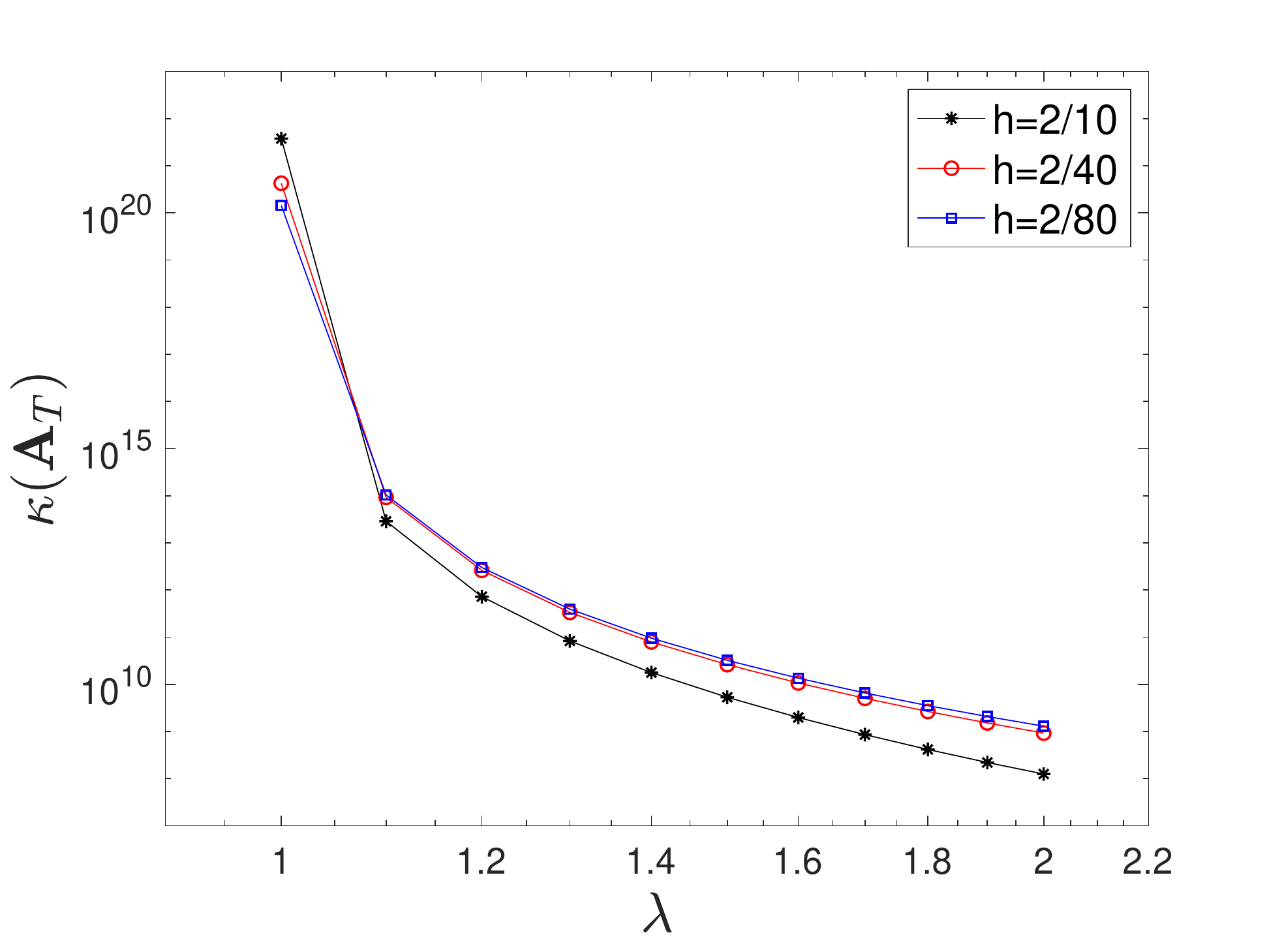}
    \caption{Interface element (left), $\kappa(\mathbf{A}_T)$ versus $d_r$ (center) and $\kappa(\mathbf{A}_T)$ versus $\lambda$ (right).}
    \label{fig:loc_cond_num}
\end{figure}

~\\
\noindent\textbf{Example 2}.

Inspired by \cite{2016WangXiaoXu,2010WuXiao} we perform a second set of numerical experiments
for the model problem \eqref{model} with a circular interface spliting  $\Omega=(-1,1)^2$ into
$\Omega^+=\{(x,y)\in\Omega, x^2+y^2<(\pi/4)^2\}$ and $\Omega^-=\{(x,y)\in \Omega, x^2+y^2>(\pi/4)^2\}$,
with the exact solution
\begin{equation}
\label{exact_solu}
u(x,y)=
\begin{cases}
      & \exp(xy)/\beta^-, ~~~~~~~~~~~\, X=(x,y)\in\Omega^-, \\
      &  \sin(\pi x)\sin(\pi y)/\beta^+, ~~~ X=(x,y)\in\Omega^+,
\end{cases}
\end{equation}
with the appropriate interface jumps $J_D$, $J_N$, source term $f$ and Dirichlet boundary conditions.
Since this solution has nonhomogeneous jump conditions and
does not satisfy the homogeneous extended jump conditions \eqref{normal_jump_cond}, we need all
three enriched IFE functions $\phi_{T,D}$, $\phi_{T,N}$ and $\phi_{T,f}$, for $p\geqslant2$ on each
interface element $T$ as in \eqref{enrich_IFE_fun_glob}. We use meshes of
size $h=2/N$, $N=10,20,\ldots,80$ to solve \eqref{model} with $\beta=(\beta^+,\beta^-)=(2,1)$, and we
plot the $L^2$ and $H^1$ errors versus $N$ in  Figure \ref{fig:convergence_rates_bp2_examp1}.
In order to show the effect of the contrast we repeat the previous experiment with all parameters unchanged except for $\beta=(500,1)$
and show the errors in Figure \ref{fig:convergence_rates_bp500_examp1}. The dashed lines are reference lines passing through the most right data point and having $slope=p+1$ for the $L^2$ errors and $slope=p$ for the $H^1$ errors.
These results are in full agreement with the theory and show optimal convergence rates. For the relatively large contrast $\beta=(500,1)$
the errors of Figure \ref{fig:convergence_rates_bp500_examp1} wiggle slightly while converging
with optimal rates obtained by least-squares fitting.

\begin{figure}[H]
\centering
    \subfigure{
    \includegraphics[width=3in]{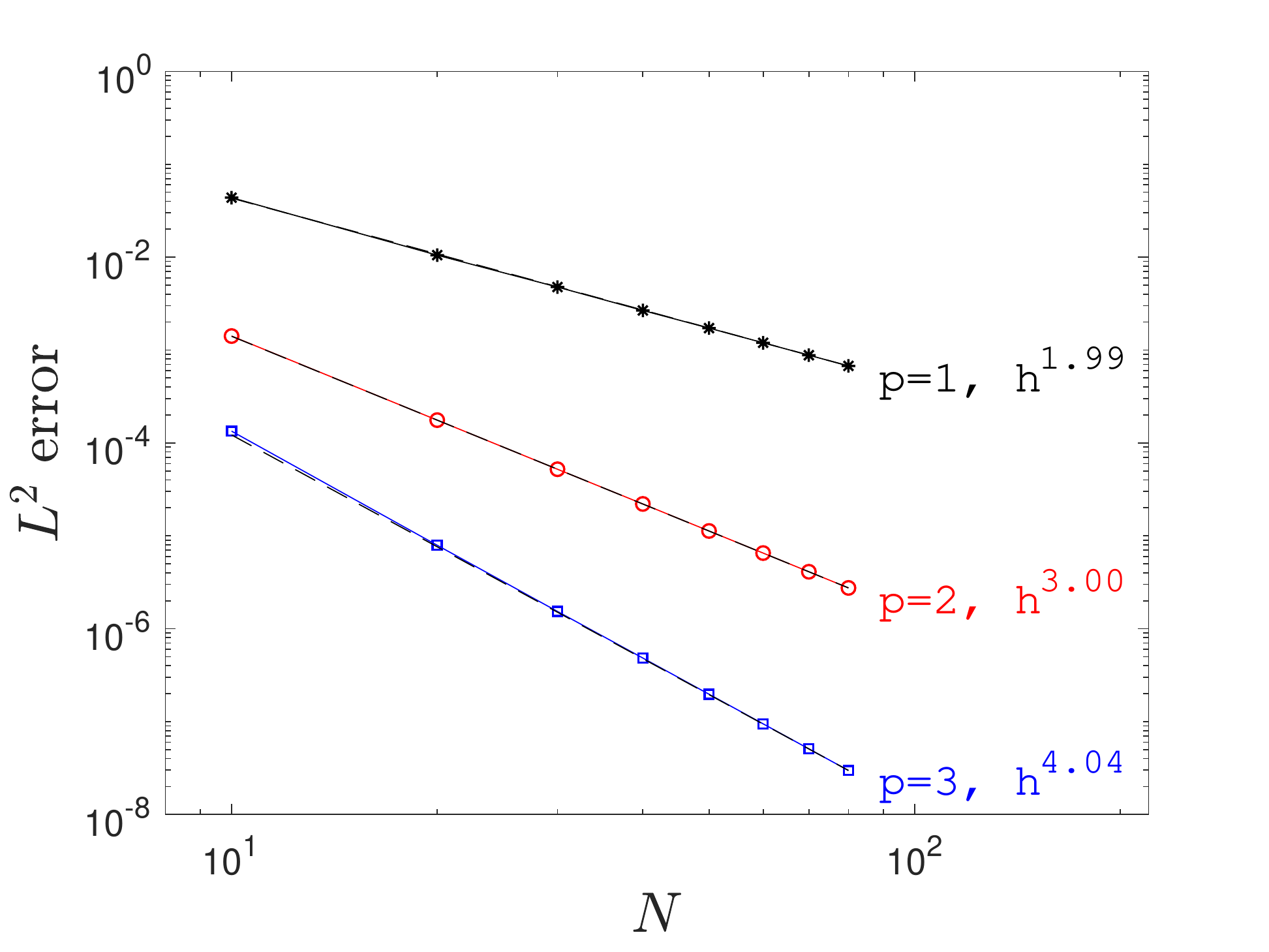}}
     \subfigure{
    \includegraphics[width=3in]{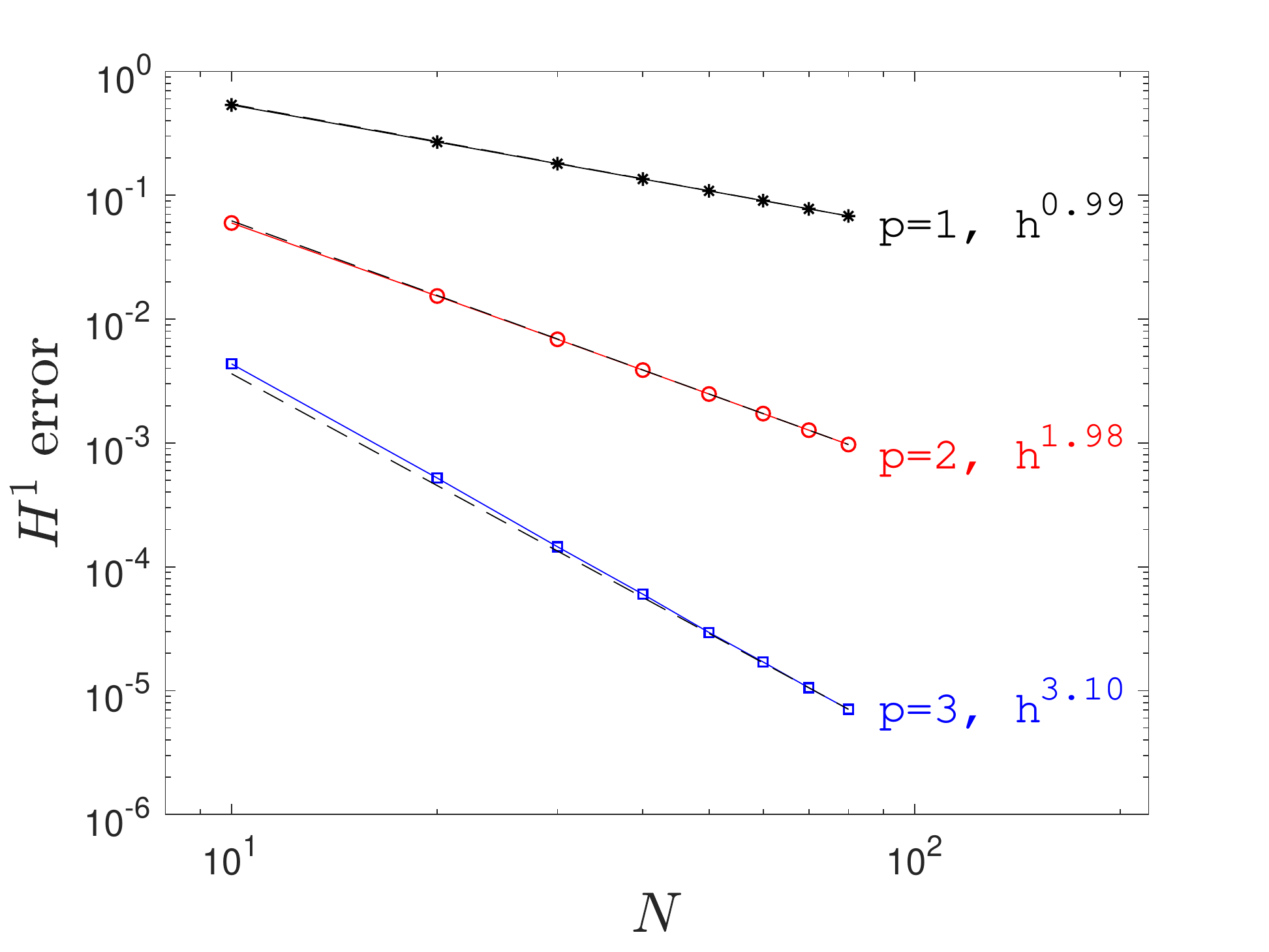}}
  \caption{Errors in $L^2$ (left) and $H^1$ (right) norms versus $N$ for
  $\beta=(2,1)$.}

  \label{fig:convergence_rates_bp2_examp1} 
\end{figure}

\begin{figure}[H]
\centering
    \subfigure{
    \includegraphics[width=3in]{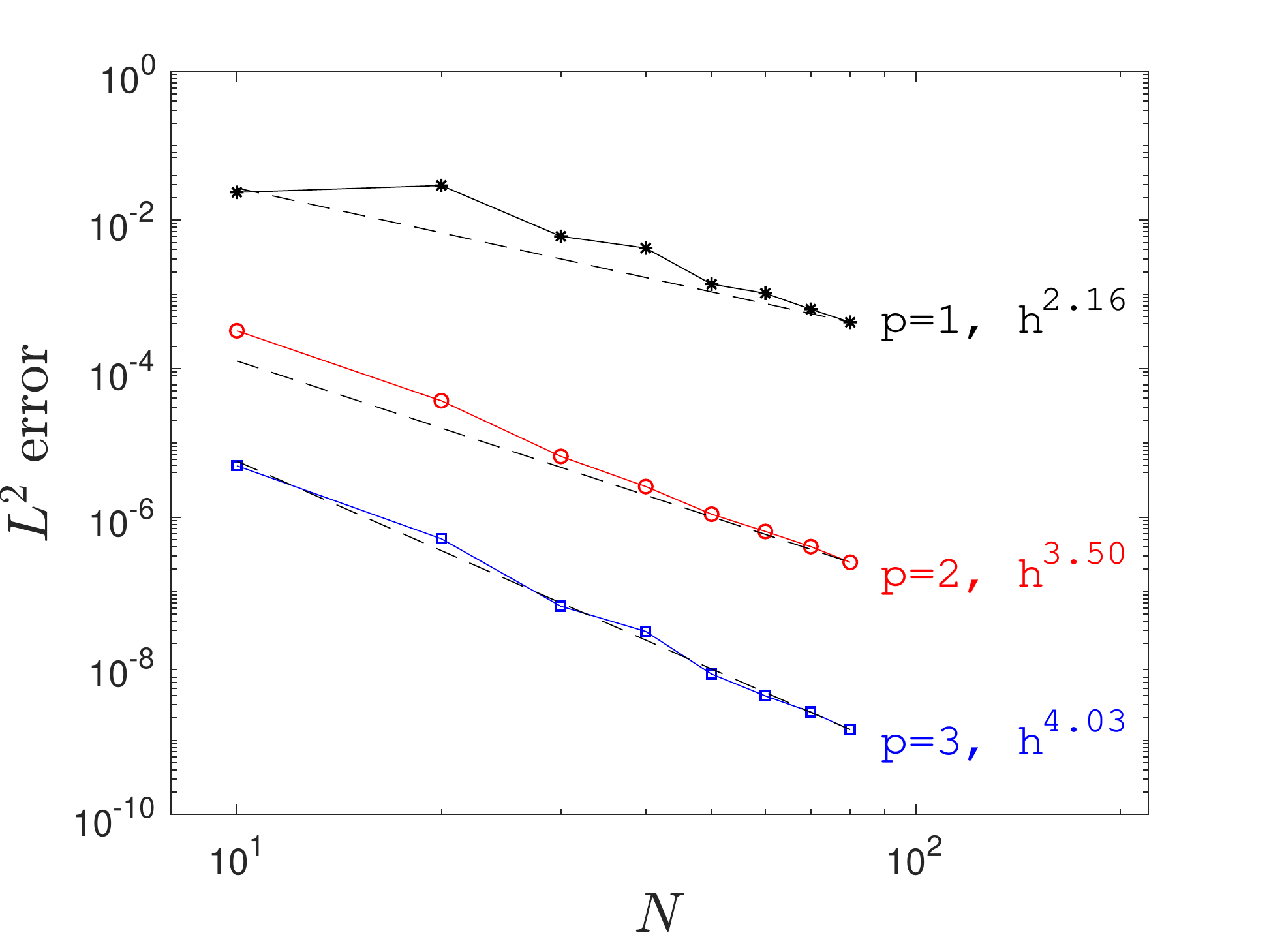}}
     \subfigure{
    \includegraphics[width=3in]{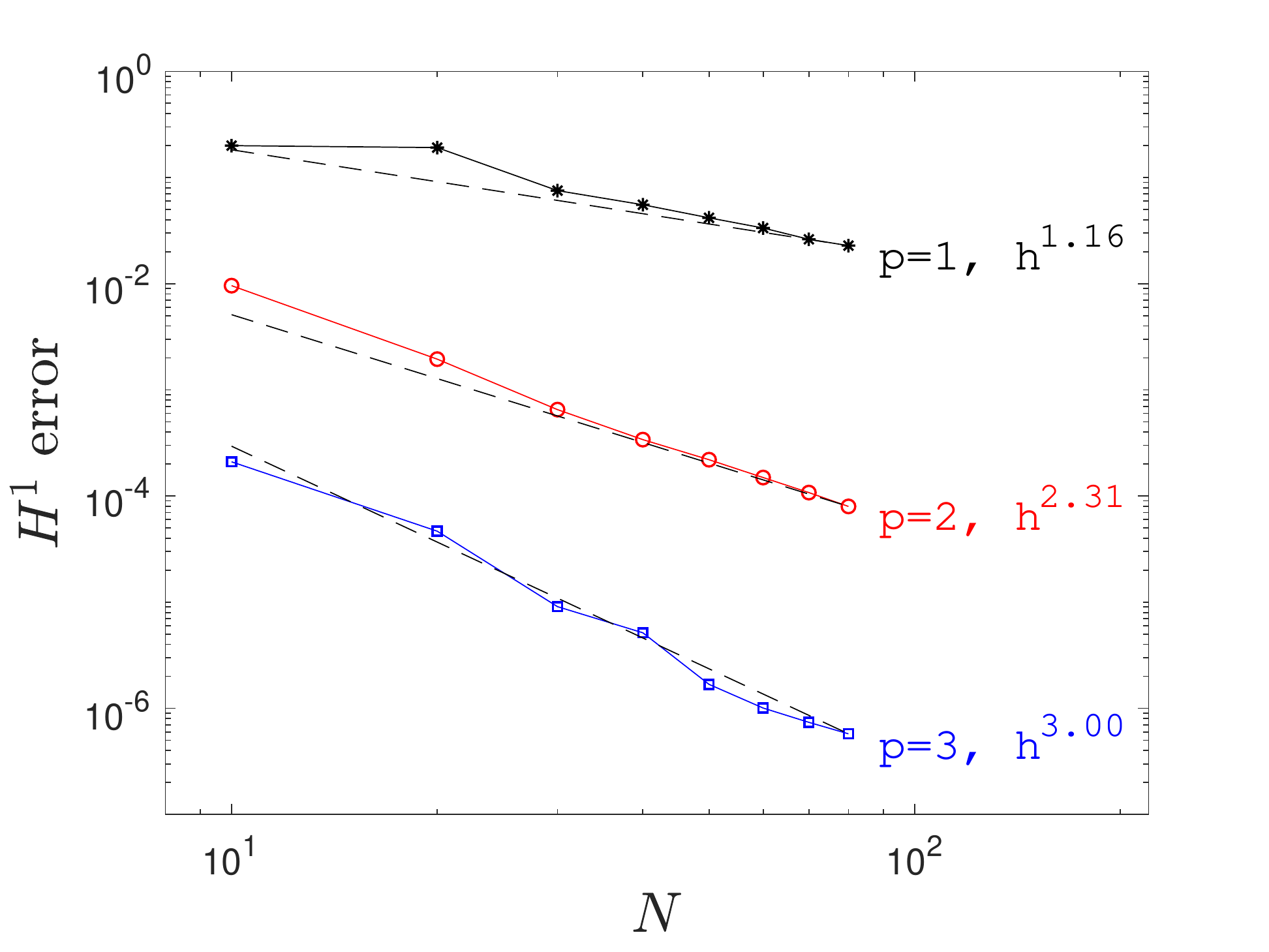}}
  \caption{Errors in $L^2$ (left) and $H^1$ (right) norms versus $N$ for
  $\beta=(500,1)$.}
  \label{fig:convergence_rates_bp500_examp1} 
\end{figure}

Next, we investigate the effect of the penalty terms on the numerical performance of the proposed IFE method. First, we note that the edge penalty terms in \eqref{weak_form} are required to guarantee both the invertibility of the stiffness matrix and the convergence of the IFE solution.
We further study the effect of the interface penalty terms by performing the numerical experiments using a modified IFE method by removing all interface penalty terms appearing in $a_h(\cdot,\cdot)$ as well as the term $\int_{\Gamma_T}J_D[v]_{\Gamma} ds$ appearing in $L_f$. However, the two terms $\int_{\Gamma}J_N\{v\}_{\Gamma} ds$ and $\int_{\Gamma}J_D \{\beta \nabla v\cdot\mathbf{ n} \}_{\Gamma} ds$ appearing in the right hand side \eqref{new_Lf} in $L_f$ are kept to enforce the nonhomogeneous interface conditions.
The $L^2$ and $H^1$ errors versus $N$ shown in Figure \ref{fig:convergence_rates_bp2_examp1_nopen} for the relatively small contrast
$\beta=(2,1)$ clearly show sub-optimal convergence rates and
much larger errors than those in Figure \ref{fig:convergence_rates_bp2_examp1} when
interface penalty terms are used. These experiments numerically confirm that the interface penalty terms are needed for the proposed scheme to attain optimal convergence rates.

\begin{figure}[H]
\centering
    \subfigure{
    \includegraphics[width=3in]{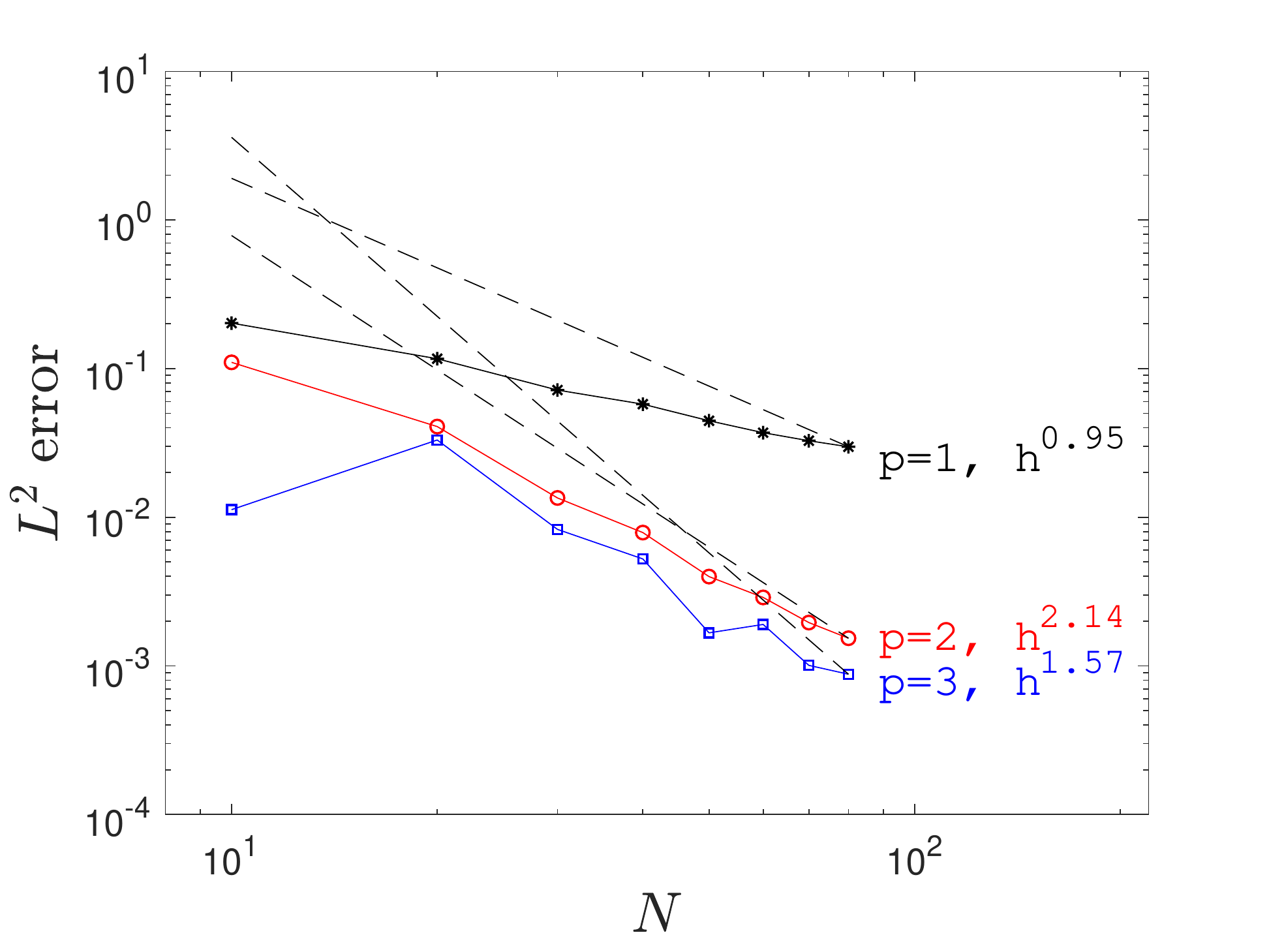}}
     \subfigure{
    \includegraphics[width=3in]{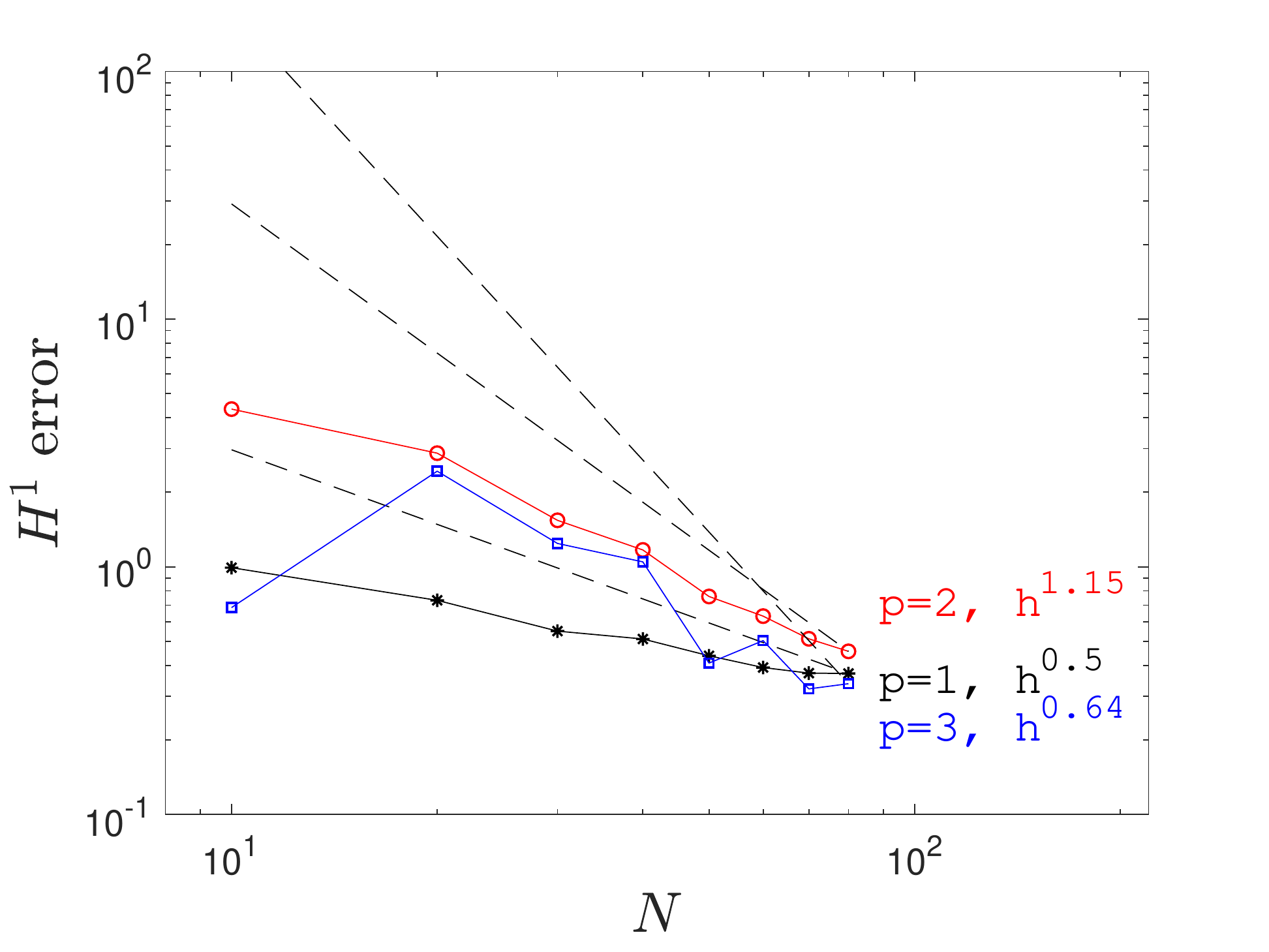}}
  \caption{Errors in $L^2$ (left) and $H^1$ (right) norms versus $N$ for $\beta=(2,1)$
   without interface penalties.}
 \label{fig:convergence_rates_bp2_examp1_nopen} 
\end{figure}

\begin{figure}[H]
\centering
    \subfigure{
    \includegraphics[width=3in]{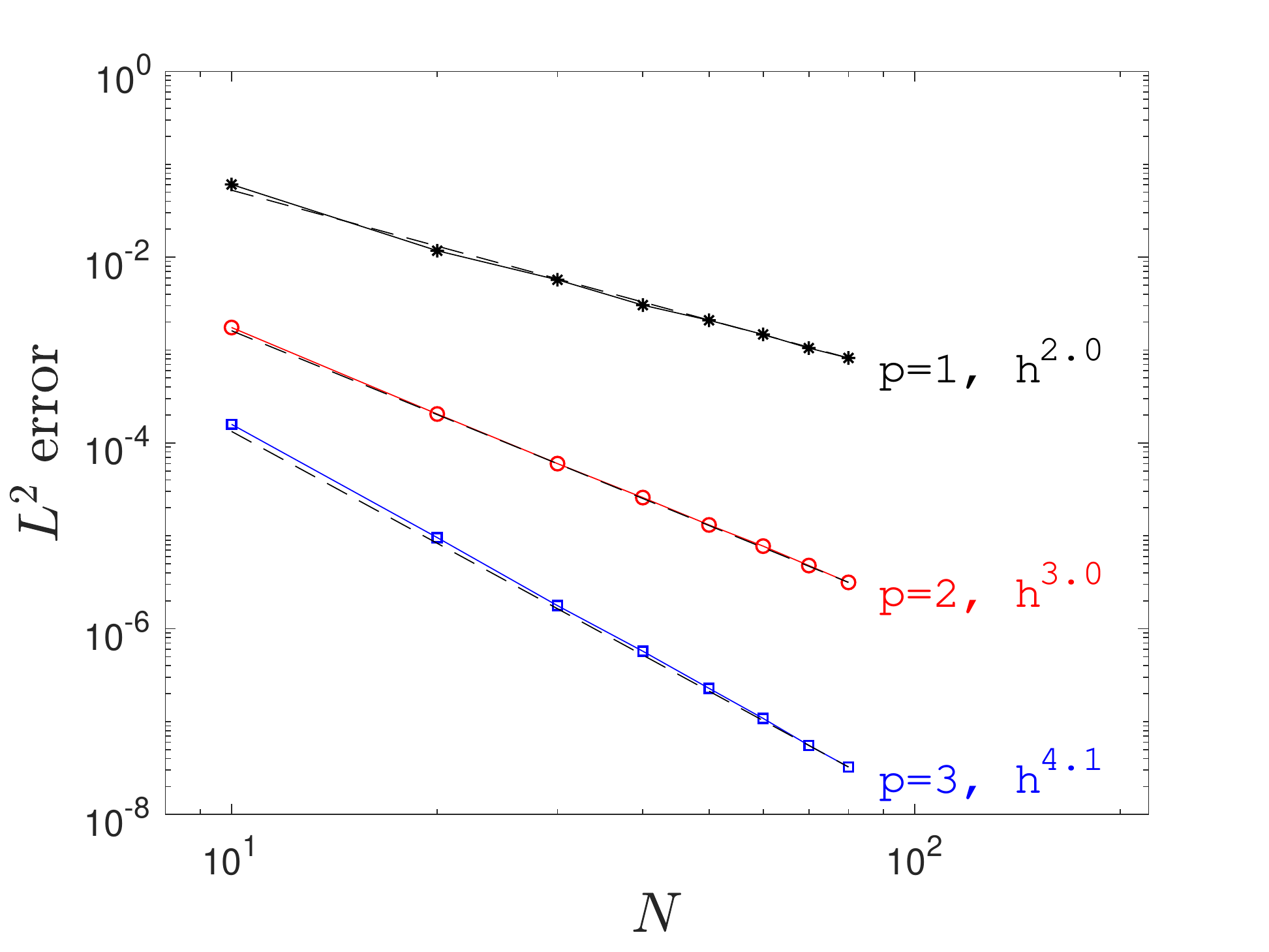}}
     \subfigure{
    \includegraphics[width=3in]{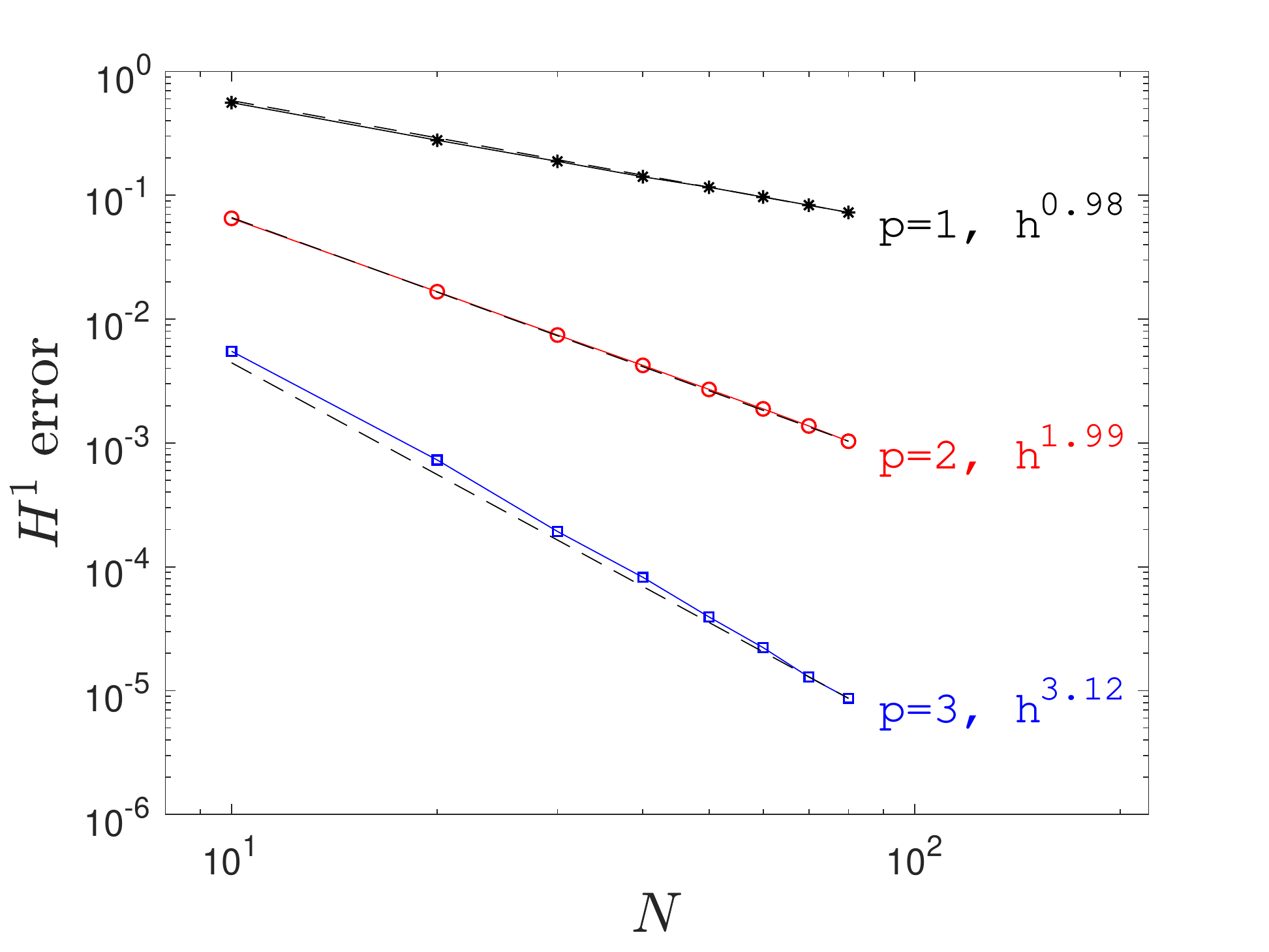}}
  \caption{Errors in $L^2$ (left) and $H^1$ (right) norms versus $N$ for
  $\beta=(2,1)$ with $\theta=2$.}
 \label{fig:convergence_rates_bp2_examp1_theta2} 
\end{figure}

Another parameter in the penalty terms which may effect the numerical performance of the proposed IFE method is the exponent $\theta$ of $|e|$ and $h_T$ in the stability
terms of the IFE method \eqref{weak_form}. The errors shown in Figure \ref{fig:convergence_rates_bp2_examp1_theta2} generated with $\theta=2$ for $\beta =(2,1)$ yield optimal convergence rates.
However for large contrasts, numerical experiments demonstrate that the errors produced by the IFE method with $\theta=2$ are much larger those with $\theta=1$, and they become more oscillatory. This unsatisfactory behavior is also observed for $\theta=1/2$ for large contrast problems. Thus, numerical results suggest that $\theta=1$ is a safe choice and that the conditioning of the IFE method deteriorates with larger values of $\theta$.
~\\

\noindent\textbf{Example 3}.

Now we investigate the stability of the IFE method for the interface problem in the previous example by computing the spectral condition number $\kappa(\mathbf{K}_h)$
of the stiffness matrix \eqref{uh_solu} and its scaled spectral condition number $\kappa_S(\mathbf{K}_h)=\kappa(\mathbf{ D}^{-1/2}_K\mathbf{ K}_h\mathbf{ D}^{-1/2}_K)$
where $\mathbf{D}_K$ is the diagonal of $\mathbf{K}_h$.
In Figure \ref{fig:global_cond_num_examp1} we plot $\kappa(\mathbf{K}_h)$ and  $\kappa_S(\mathbf{K}_h)$ versus $N$, for $p=1,2,3$ with $\beta=(2,1)$ (left) and $\beta=(500,1)$ (right).
The dashed lines indicate the expected growth rate $h^{-2}$ according to Theorem \ref{thm_Kh_cond}
while the average growth rates are shown at the right end of each curve.
We observe that both $\kappa(\mathbf{K}_h)$ and $\kappa_S(\mathbf{K}_h)$
grow like $O(h^{-2})$ under mesh refinement which is in agreement with Theorem \ref{thm_Kh_cond} and Theorem \ref{thm_PKh_cond}. We further observe that  both  $\kappa(\mathbf{K}_h)$  and $\kappa_S(\mathbf{K}_h)$ increase with increasing degree $p$ and contrast $\rho$.
We also observe that $\kappa_S(\mathbf{K}_h)$
is slightly smaller than $\kappa(\mathbf{K}_h)$.

\begin{figure}[H]
\centering
    \subfigure{
    \includegraphics[width=3in]{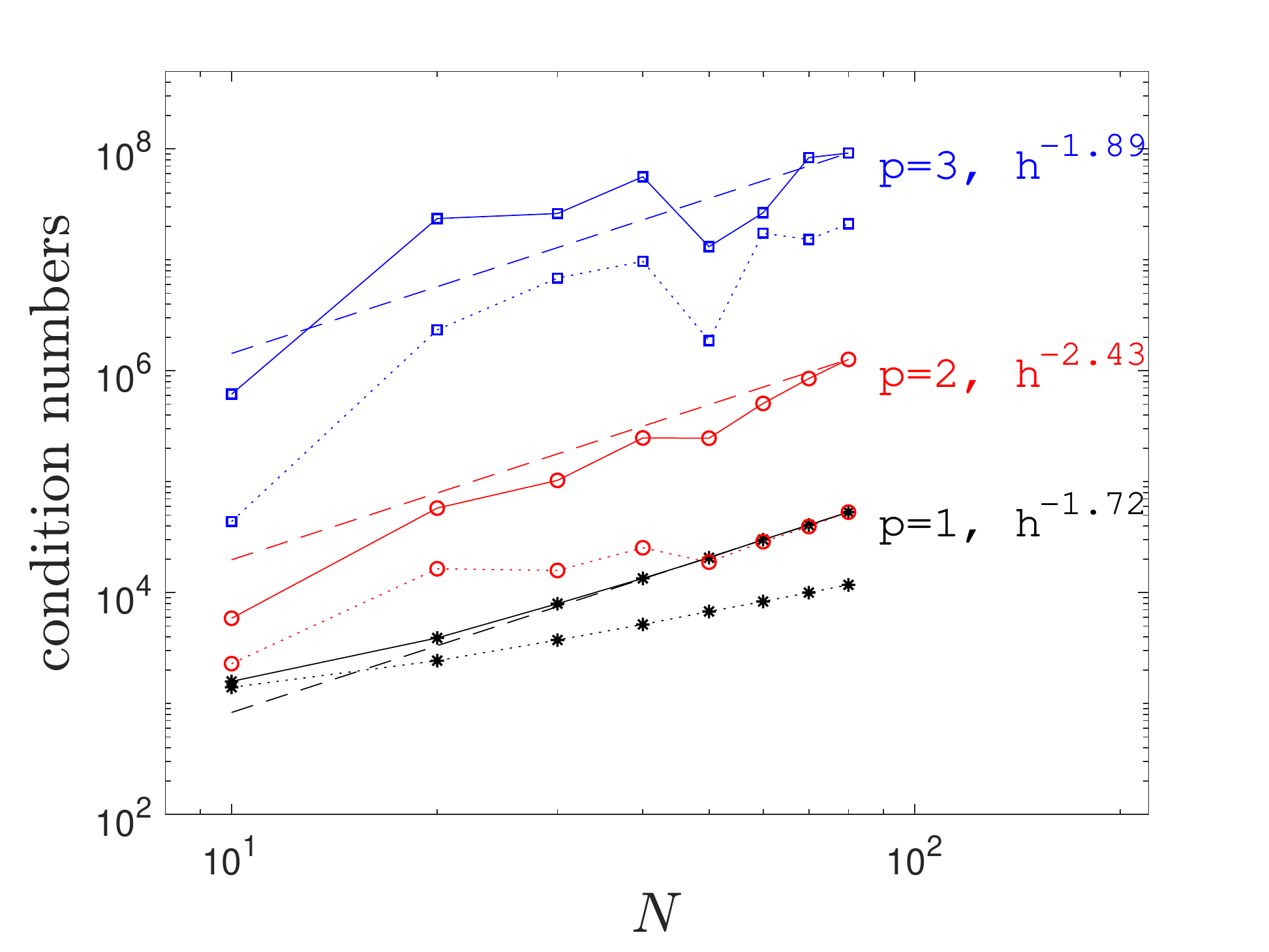}}
   \subfigure{
   \includegraphics[width=3in]{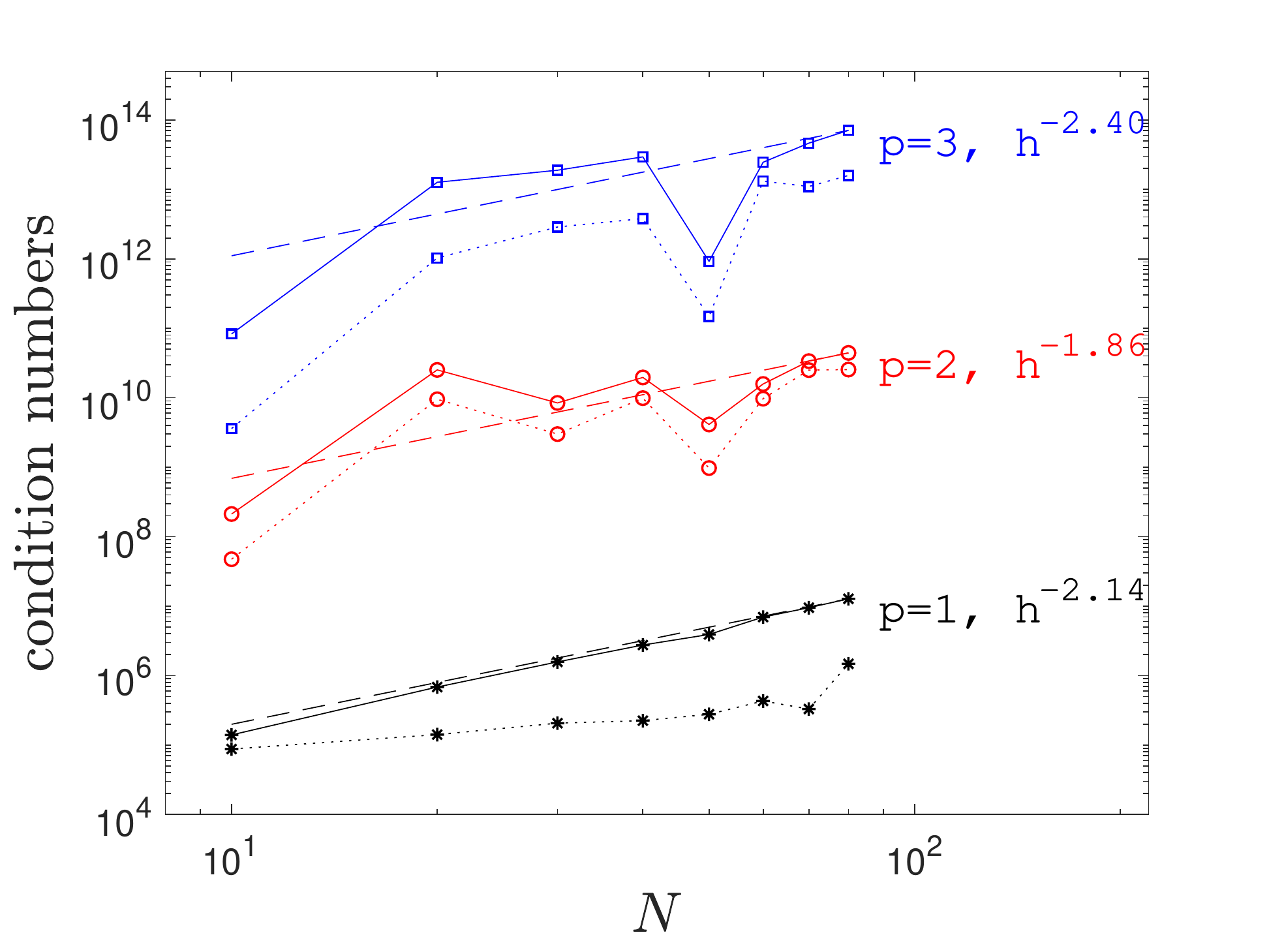}}
  \caption{{Condition numbers $\kappa(\mathbf{K}_h)$ (solid line) and
  $\kappa_S(\mathbf{K}_h)$
  (dotted line) for $\beta=(2,1)$ (left) and
  $\beta=(500,1)$ (right) versus $N$.
  Dashed line is the reference line for the growth $N^2$.}}
  \label{fig:global_cond_num_examp1} 
\end{figure}

Next we investigate how $\kappa(\mathbf{K}_h)$ depends on the contrast $\rho$ for
the model problem \eqref{model} with a circular interface centered at the origin and with radius
$r =\pi/4$ on a uniform mesh of size $h=2/40$, $\beta=(10^{\textrm{sgn}(i)}\cdot2^i,1)$, $i=-10,\ldots,-1,0,1,\ldots,10$ and $\textrm{sgn}(z)=-1$ if $z<0$, $\textrm{sgn}(0)=0$ and $\textrm{sgn}(z)=1$, if $z>0$.
We plot $\kappa(\mathbf{K}_h)$ versus $\rho$ in Figure \ref{fig:cond_IFE_vs_beta_rpi4} and show average growth rates with respect to $\rho$. From the numerical results, we can observe a linear growth rate for $p=1$ and rates not exceeding quadratic growth for $p\ge 2$. We would like to mention that the observed
growth rates of $\kappa(\mathbf{K}_h)$ with respect to $\rho$ are much better than the theoretical bounds of Theorem \ref{thm_Kh_cond} which deserves further investigation for an optimal bound. {The numerical results of Figure \ref{fig:global_cond_num_examp1} and \ref{fig:cond_IFE_vs_beta_rpi4} also show that the condition numbers $\kappa(\mathbf{K}_h)$ and $\kappa_S(\mathbf{K}_h)$ are comparable.}

\begin{figure}[H]
\centering
    \includegraphics[width=3in]{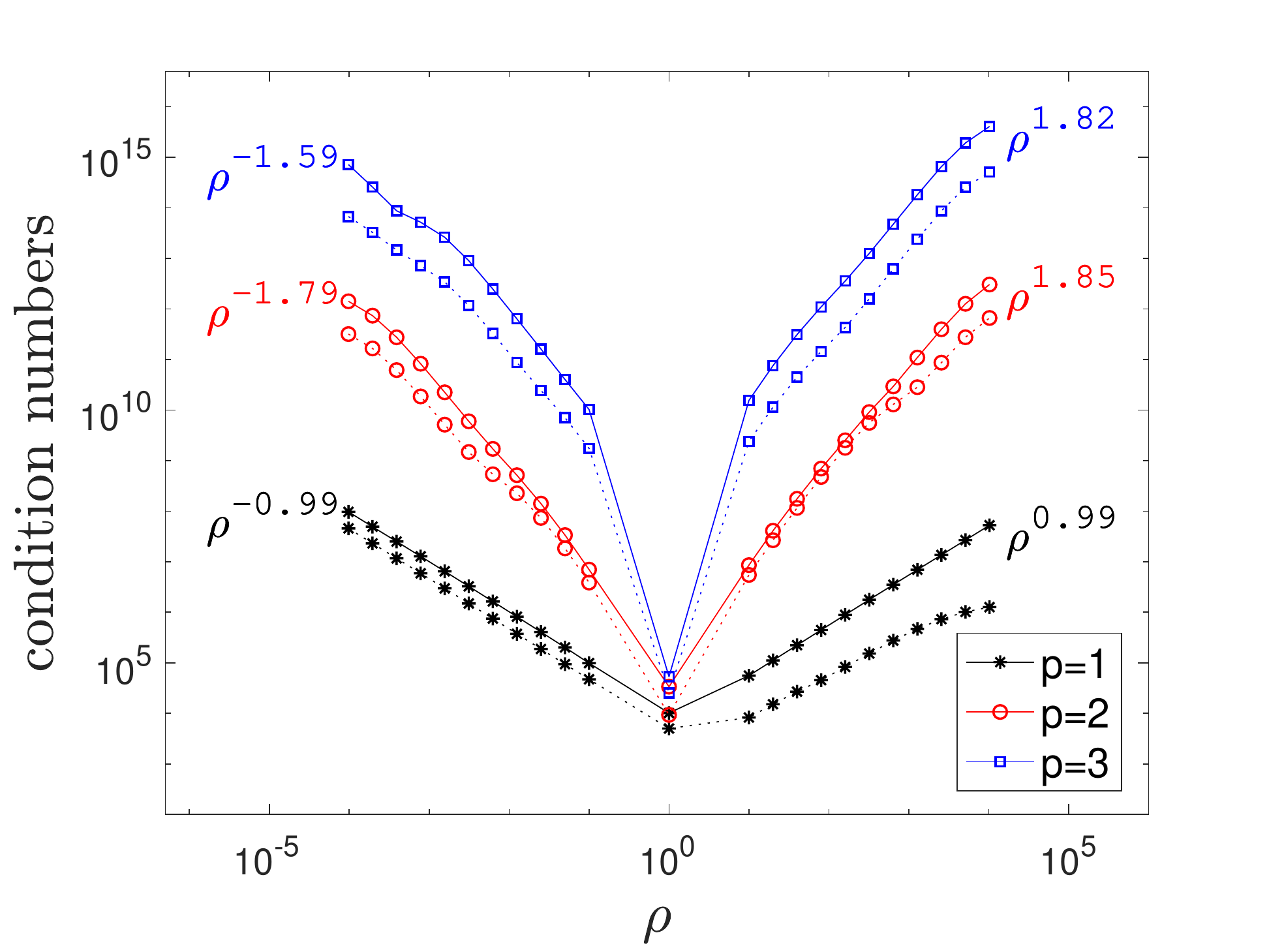}
  \caption{{Condition numbers $\kappa(\mathbf{K}_h)$ (solid line) and $\kappa_S(\mathbf{K}_h)$ (dotted line) versus the contrast $\rho$ for $p=1,\ 2,\ 3$.}}
  \label{fig:cond_IFE_vs_beta_rpi4}
\end{figure}

In this set of numerical experiments we investigate the dependence of the condition
numbers on both small-cut elements and the contrast $\rho$ for the model problem
 \eqref{model} with the linear interface $y=\delta$ spliting  $\Omega=(-1,1)^2$ into $\Omega^+=\{(x,y)\in \Omega:y<\delta \}$ and $\Omega^-=\{(x,y)\in \Omega:y>\delta \}$.
{We compute $\kappa(\mathbf{K}_h)$ and $\kappa_S(\mathbf{K}_h)$
for $\delta=1/(40*2^l)$, $l=0,1,\ldots,8$, $\beta^-=1/10240$, $1/640$, $1$, $640$, $10240$,
$\beta^+=1$ on a uniform mesh with $h=2/40$ and $p=1$, and we plot the related condition numbers versus $\delta$ in Figures \ref{fig:cond_IFE_vs_dist_h40} and \ref{fig:Dpre_cond_IFE_vs_dist_h40}.
We have also carried out similar experiments for $p=2,3$ with all other parameters kept unchanged.
We observe that, on this mesh, as $\delta\rightarrow0$, the interface gradually approaches
the edges  and vertices of interface elements leading to small-cut elements, but 
both condition numbers stay bounded which is not necessarily true for other methods on unfitted meshes in the literature. 
These numerical results are in full agreement with the stability analysis in Section \ref{sec:conditioning}. 
Hence, the proposed IFE method does not suffer from the presence of small-cut elements, and 
this is an important feature for applications with moving interfaces where small-cut elements might be inevitable. 

\begin{figure}[H]
\centering
\subfigure{
    \includegraphics[width=2.1in]{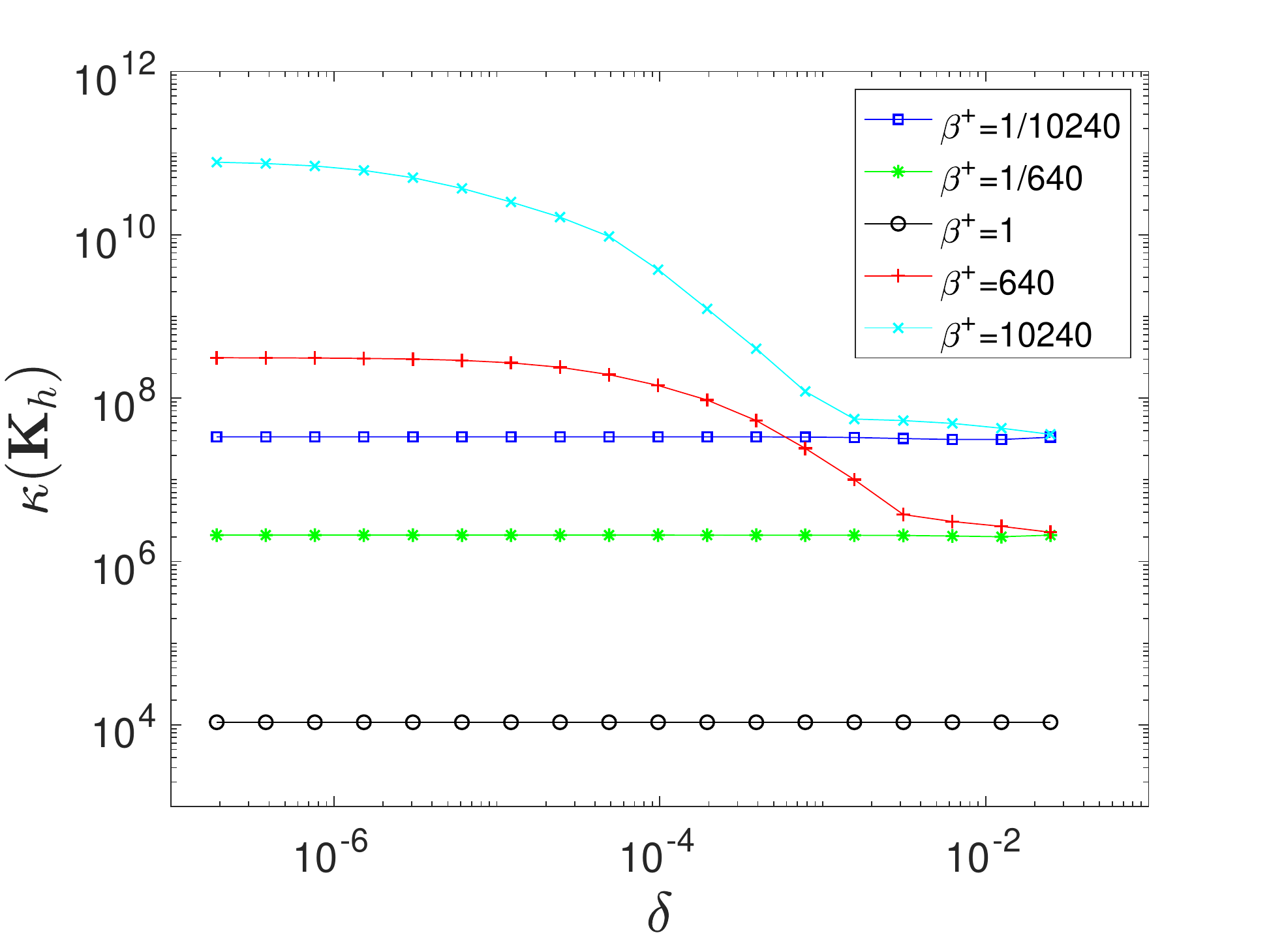}}
\subfigure{
    \includegraphics[width=2in]{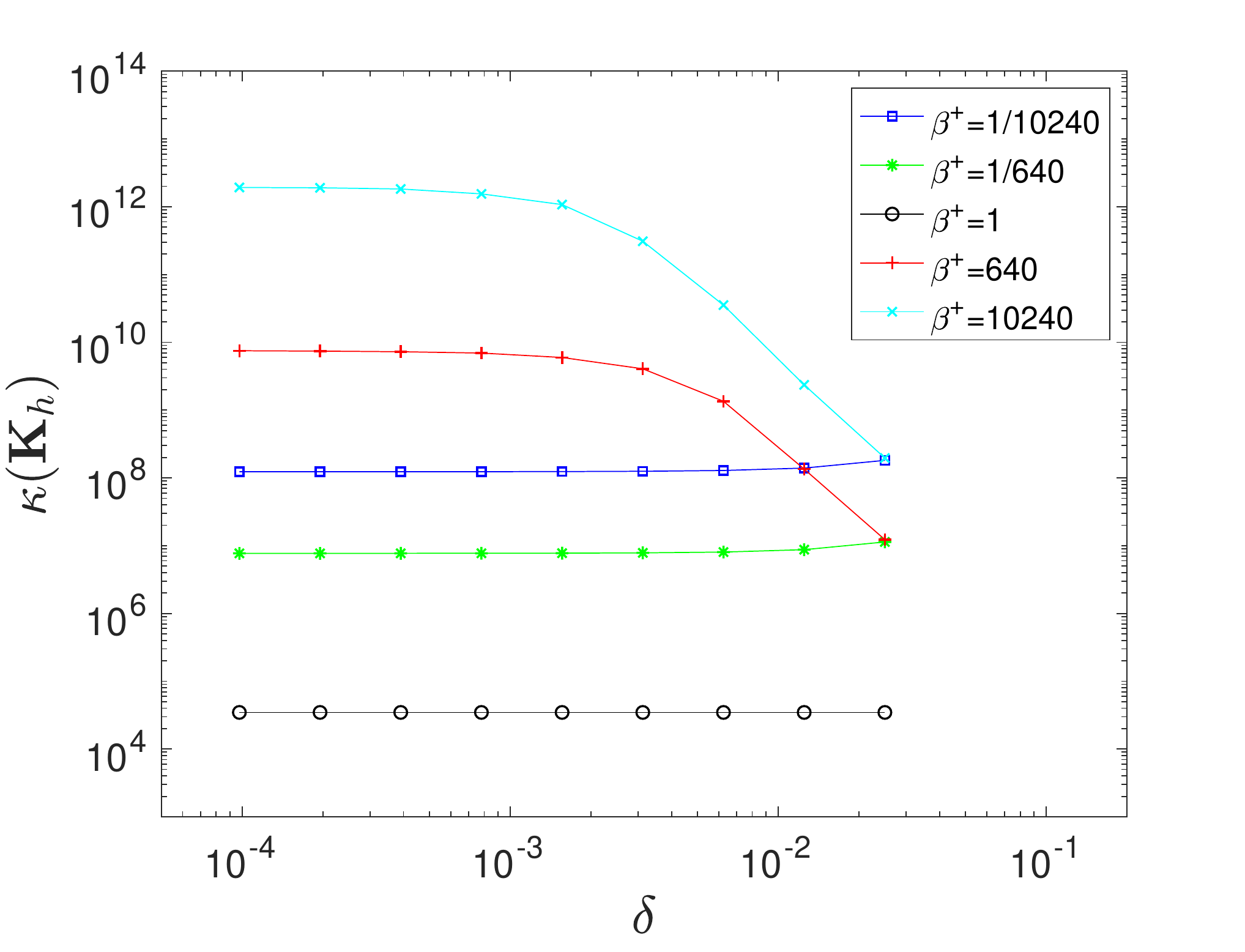}}
 \subfigure{
    \includegraphics[width=2.1in]{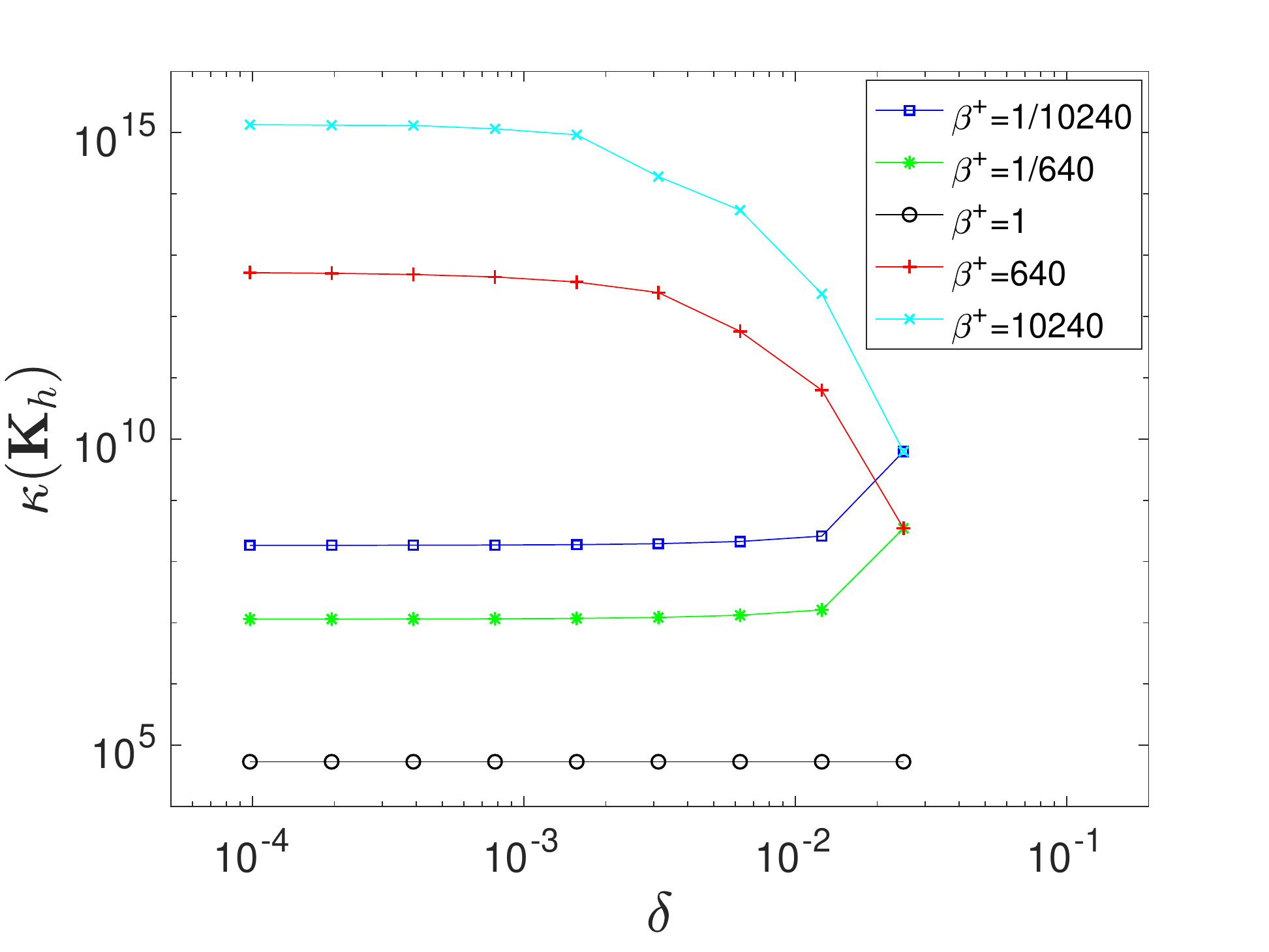}}
  \caption{Condition number $\kappa(\mathbf{K}_h)$ versus $\delta$ ($y=\delta$) for contrast $\rho = 1/10240, 1/640, 1, 640, 10240$ and $p=1,2,3$ (left to right).}
  \label{fig:cond_IFE_vs_dist_h40} 
\end{figure}

\begin{figure}[H]
\centering
\subfigure{
    \includegraphics[width=2.1in]{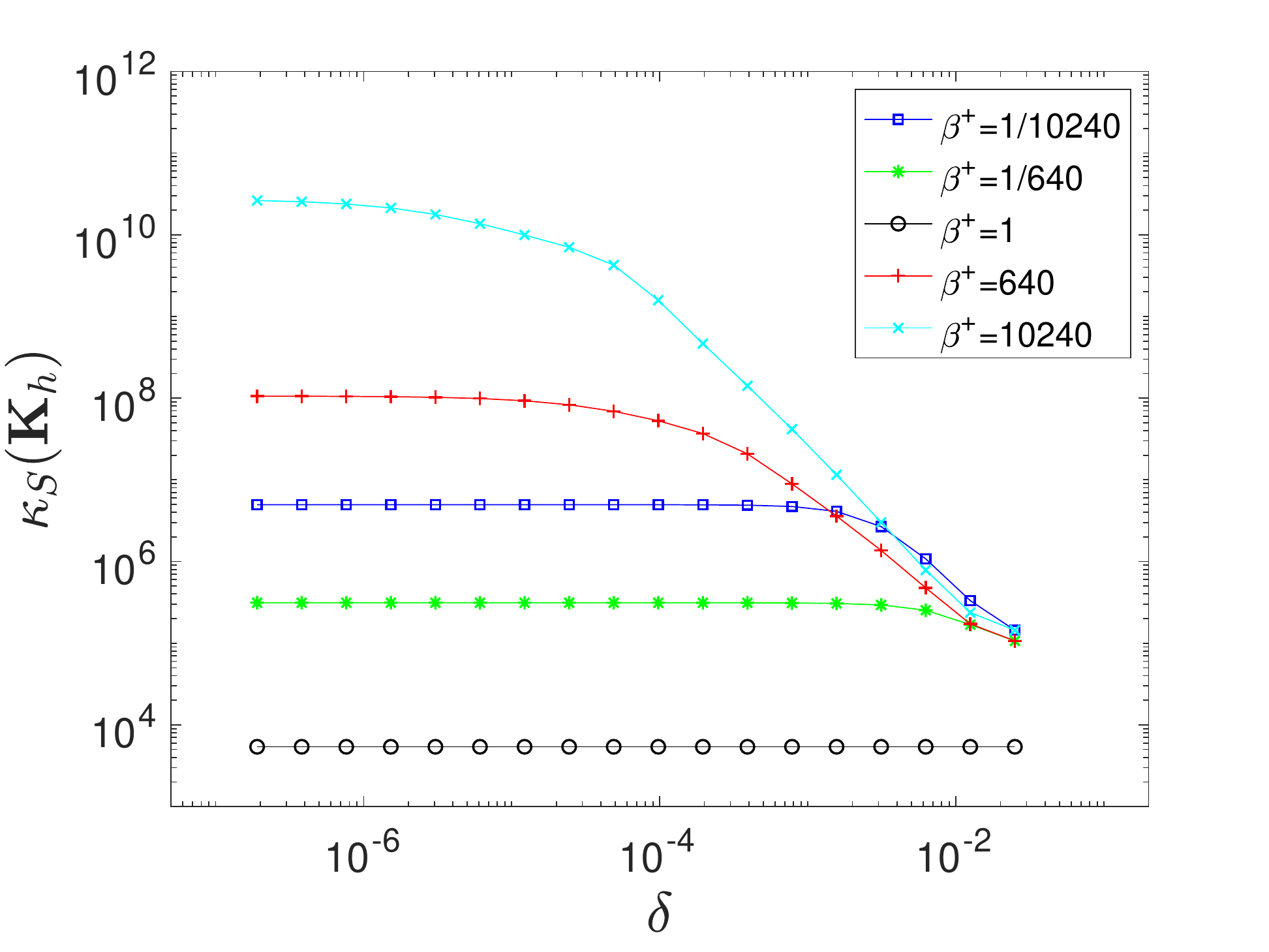}}
\subfigure{
    \includegraphics[width=2in]{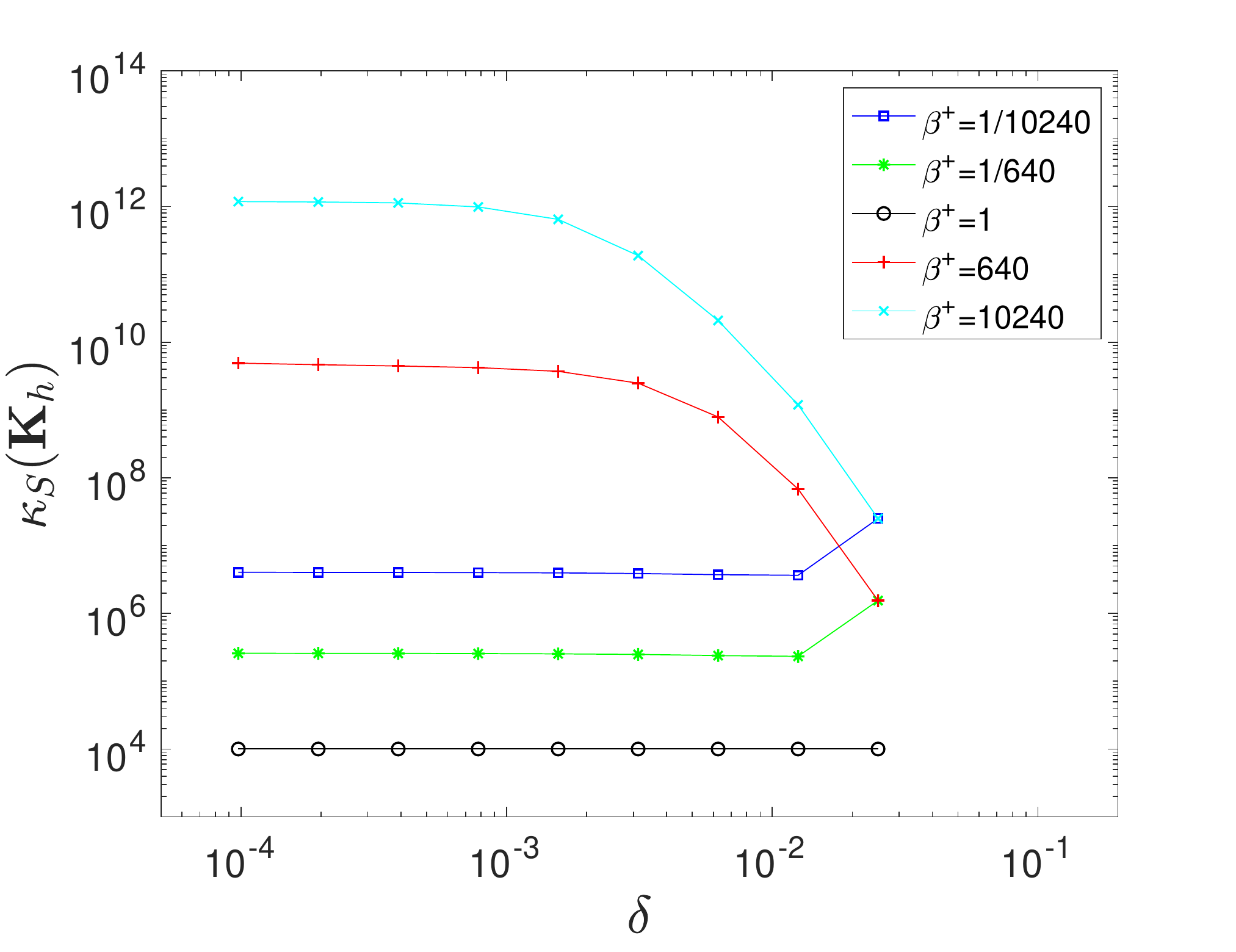}}
 \subfigure{
    \includegraphics[width=2.1in]{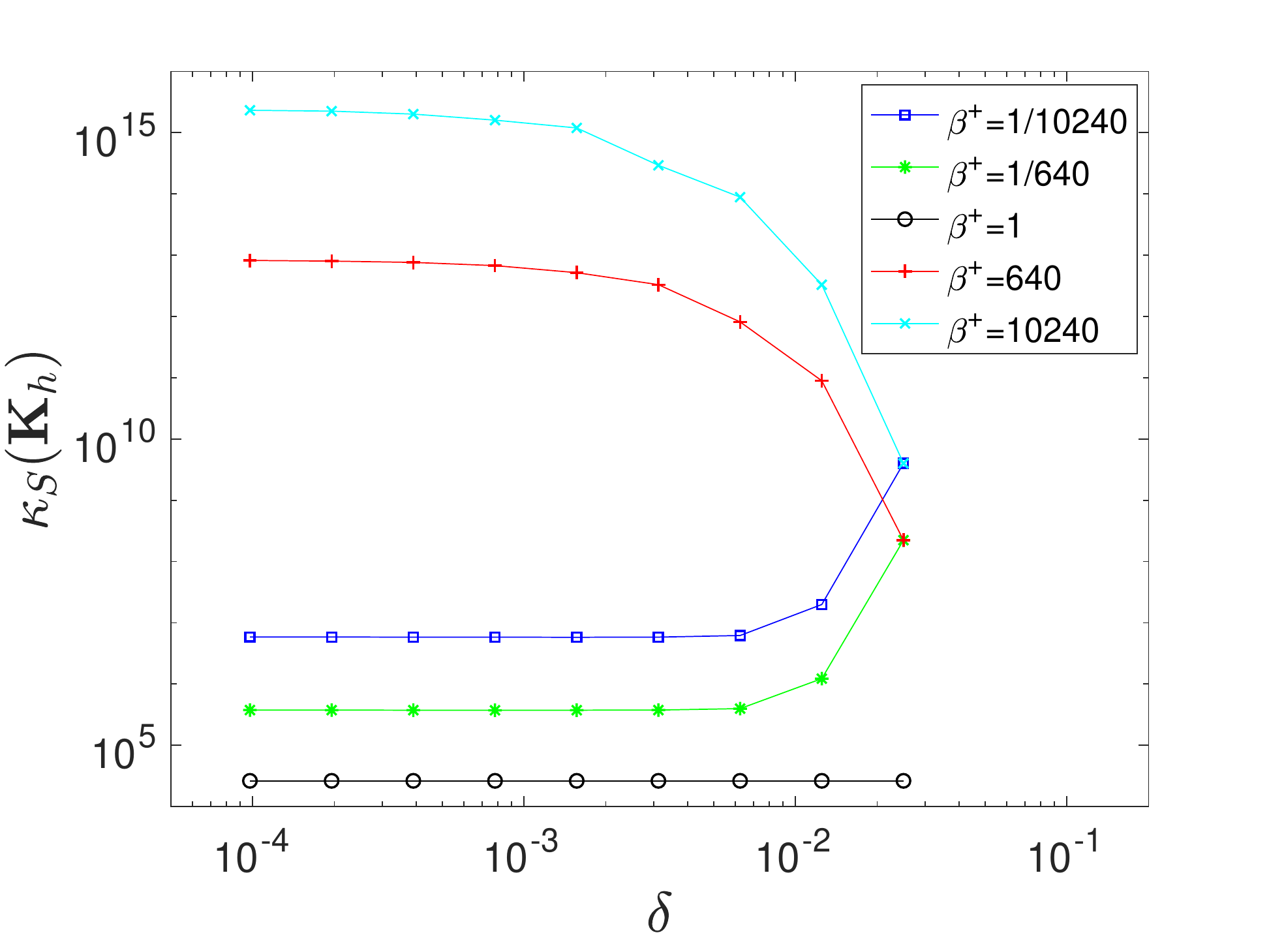}}
  \caption{Scaled condition number $\kappa_S(\mathbf{K}_h)$ versus $\delta$ ($y=\delta$) for contrast $\rho = 1/10240, 1/640, 1, 640, 10240$ and $p=1,2,3$ (left to right).}
  \label{fig:Dpre_cond_IFE_vs_dist_h40} 
\end{figure}

The last set of numerical experiments are conducted to study the spectral condition numbers and scaled spectral condition numbers versus the contrast $\rho$ for both the proposed IFE method on an unfitted mesh and the
standard finite element method on a fitted mesh.
We first construct a fitted mesh formed by a mesh generated on $\Omega^-$
and another mesh generated on $\Omega^+$ by partitioning each subdomain into
$40\times 20$ rectangles where each rectangle is split into two triangles
along its diagonal. The unfitted mesh is obtained by partitioning $\Omega$ using size $h=2/40$.
We set $\delta=1/40$ and $\beta^+=1$ and compute the condition numbers versus $\beta^-= 10^{\textrm{sgn}(i)}\cdot2^i$, $i=-10,\ldots,-1,0,1,\ldots,10$ for three values of $p=1,2,3$.
We plot the evolution of condition numbers versus the contrast $\rho$ for both fitted and unfitted meshes in Figure
\ref{fig:cond_IFE_vs_FE_dist40}. We repeat this experiment with the same parameters for
$\delta=1/640,1/10240$ and show the resuts in Figures \ref{fig:cond_IFE_vs_FE_dist640}
and \ref{fig:cond_IFE_vs_FE_dist10240}. {In these plots, the pink solid line with circles refers to the condition number $\kappa(\mathbf{ K}_h)$ of the IFE method, the pink dotted line with circles refers to the scaled condition number $\kappa_S(\mathbf{ K}_h )$ of the IFE method, while the black solid line with squares refers to the condition numbers of the FE method. Furthermore, on the unfitted mesh with $\delta=1/40$ the interface cuts the interface elements roughly around the middle, while for $\delta\rightarrow 0$ all interface elements
are small-cut elements.

We note that, as expected, the condition number for the standard FE method grows linearly
with respect to the contrast $\rho$.
For $\delta=1/40$, the interface cuts each interface element around the middle with
no small-cut elements, and we observe a linear growth of the condition number $\kappa(\mathbf{ K}_h)$ with respect
to $\rho$ for the IFE method which is comparable with the standard FE method.
However, for $\delta=1/640$ or $\delta=1/10240$ where all interface elements are small-cut elements, 
we observe that the condition number grows quadratically versus $\rho\geqslant1$.
{We also observe that, once the diagonal scaling preconditioner is applied, when $\delta=1/40$, i.e., there is no small-cut subelements, we can clearly observe a slower growth of $\kappa_S(\mathbf{ K}_h) $ versus $\rho$ for the IFE method. But as $\delta\rightarrow 0$, i.e., the subelements become smaller, the growth behavior of $\kappa(\mathbf{ K}_h)$ and $\kappa_S(\mathbf{ K}_h)$ become almost the same. In particular, when $\rho>1$, the curves almost overlap with each other.

More specifically, as $\rho$ approaches $0$, the coefficient $\beta^+$ (on the smaller part of
small-cut elements) is greater than $\beta^-$ (on the larger part of small-cut elements)
leading to a linear growth of $\kappa(\mathbf{K}_h)$ versus $\rho$.
On the other hand when $\rho\geqslant1$ approaches $\infty$, the coefficient $\beta^+$ (on
the smaller part of small-cut elements) becomes smaller than $\beta^-$ (on the larger
part of small-cut elements), and $\kappa(\mathbf{K}_h)$ grows quadratically.
This behavior becomes more pronounced with increasing degree $p$ as shown in Figures
\ref{fig:cond_IFE_vs_FE_dist640} and \ref{fig:cond_IFE_vs_FE_dist10240}.
A similar behavior is observed with the interface $x + y = \delta$ for $\delta \to 0$
leading to small-cut elements near the diagonal.
Furthermore, the superlinear growth rates of $\kappa(\mathbf{K}_h)$ observed in  Figure
\ref{fig:cond_IFE_vs_beta_rpi4} for a circular interface may be caused by the presence
of small-cut elements. {We believe this is also the reason why $\kappa(\mathbf{K}_h)$/$\kappa_S(\mathbf{ K}_h)$ behave differently for $\rho<1$ and $\rho>1$.}
Finally, even though the quadratic growth of $\kappa(\mathbf{K}_h)$ with respect to $\rho$ for the proposed
IFE method is faster than the linear
growth of the standard FE method, it much slower than the theoretical bound derived in Section 5.

\begin{figure}[H]
\centering
\subfigure{
    \includegraphics[width=2.1in]{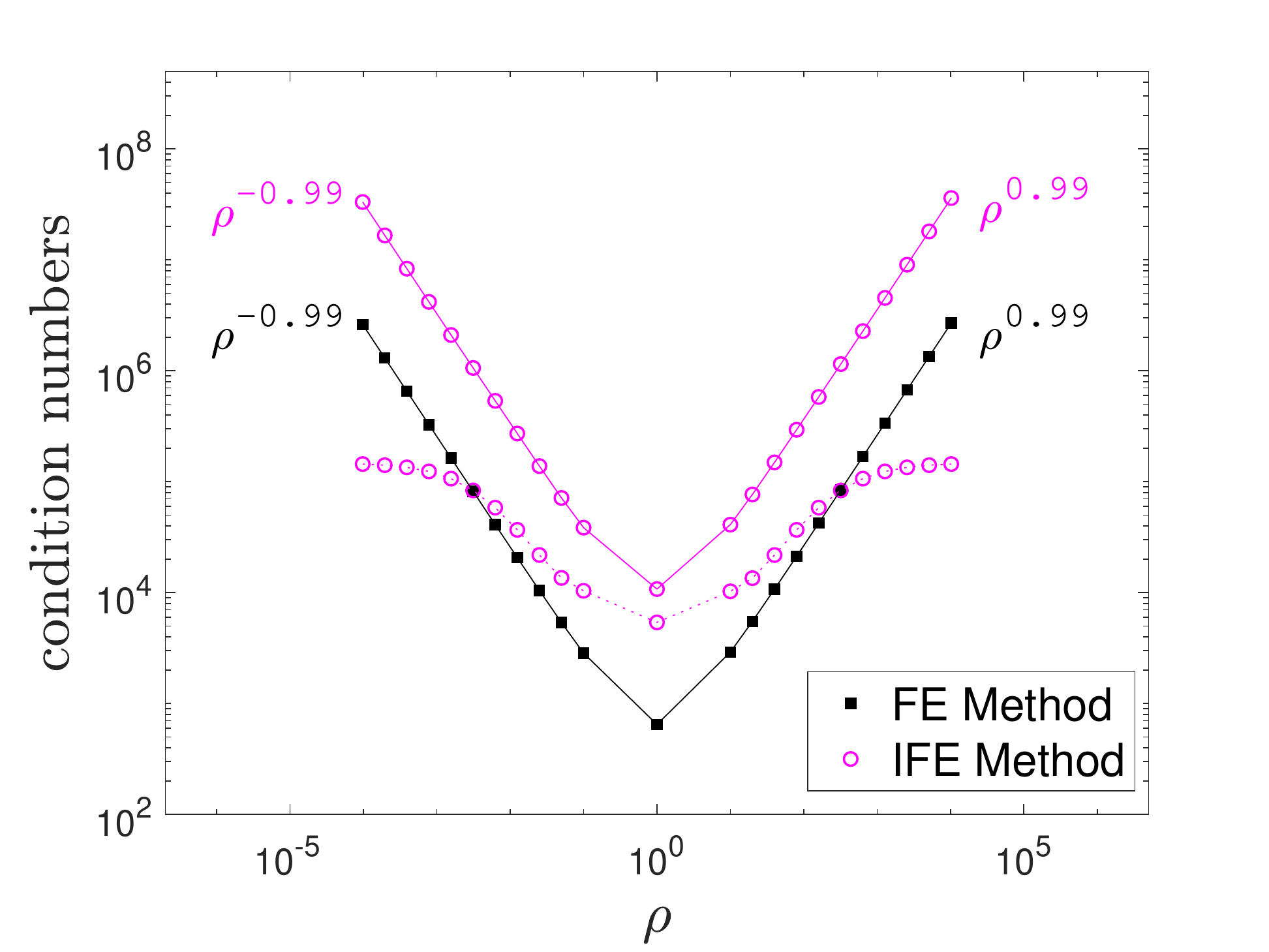}}
\subfigure{
    \includegraphics[width=2.1in]{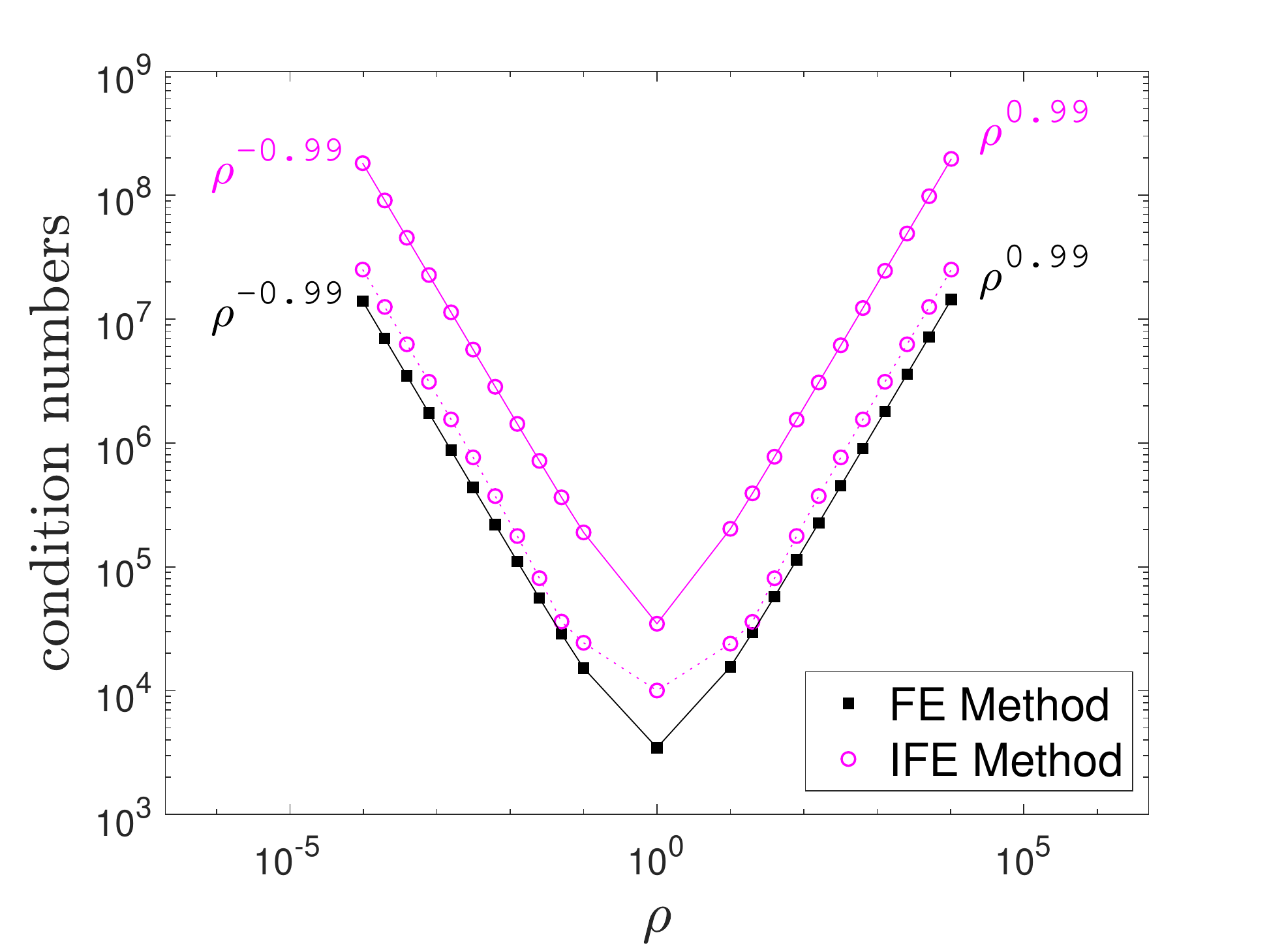}}
\subfigure{
    \includegraphics[width=2.1in]{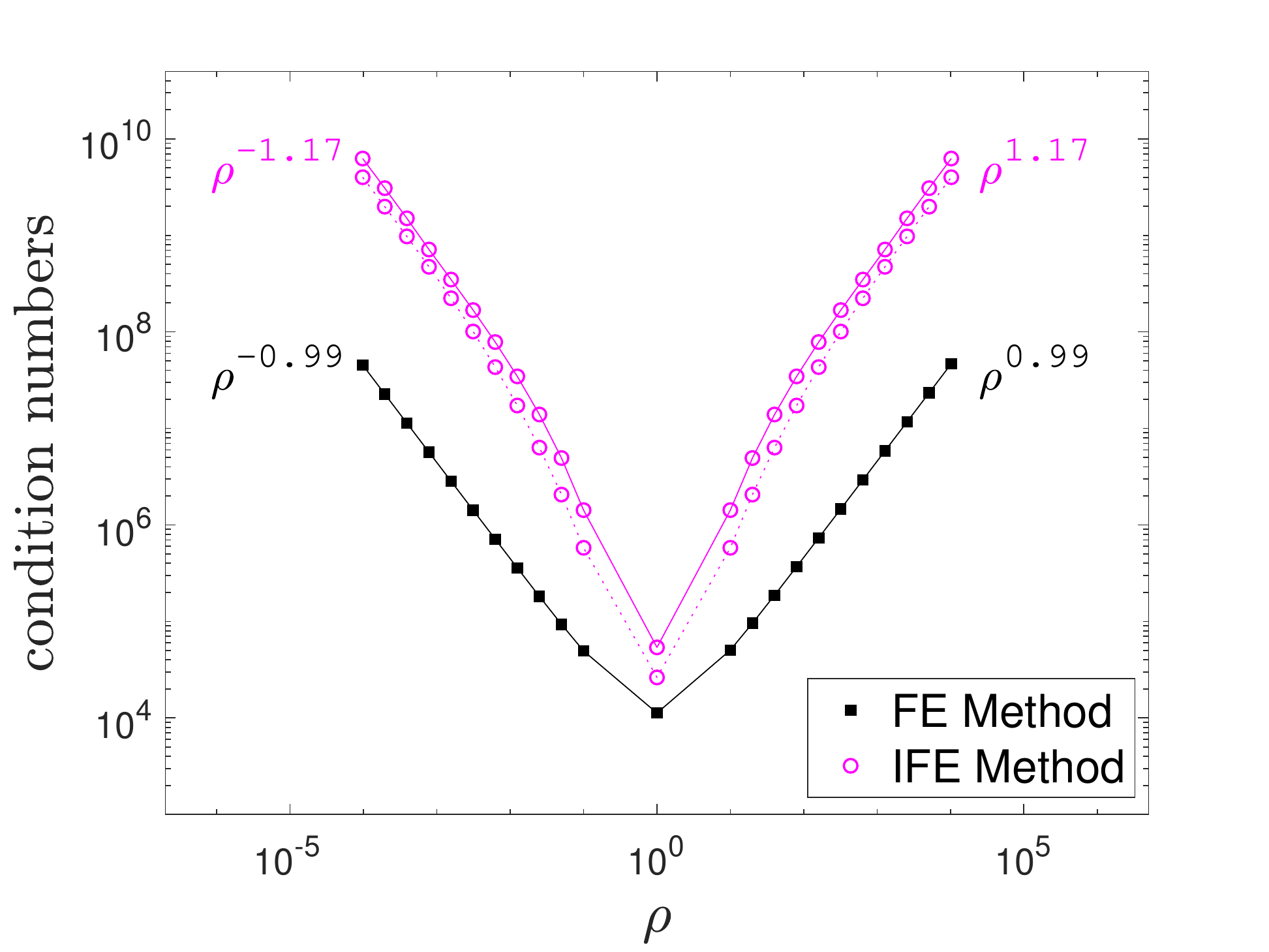}}
  \caption{Condition number versus $\rho$ with $\delta=1/40$ for $p=1,2,3$ (left to right).}
  \label{fig:cond_IFE_vs_FE_dist40}
\end{figure}

\begin{figure}[H]
\centering
\subfigure{
    \includegraphics[width=2.1in]{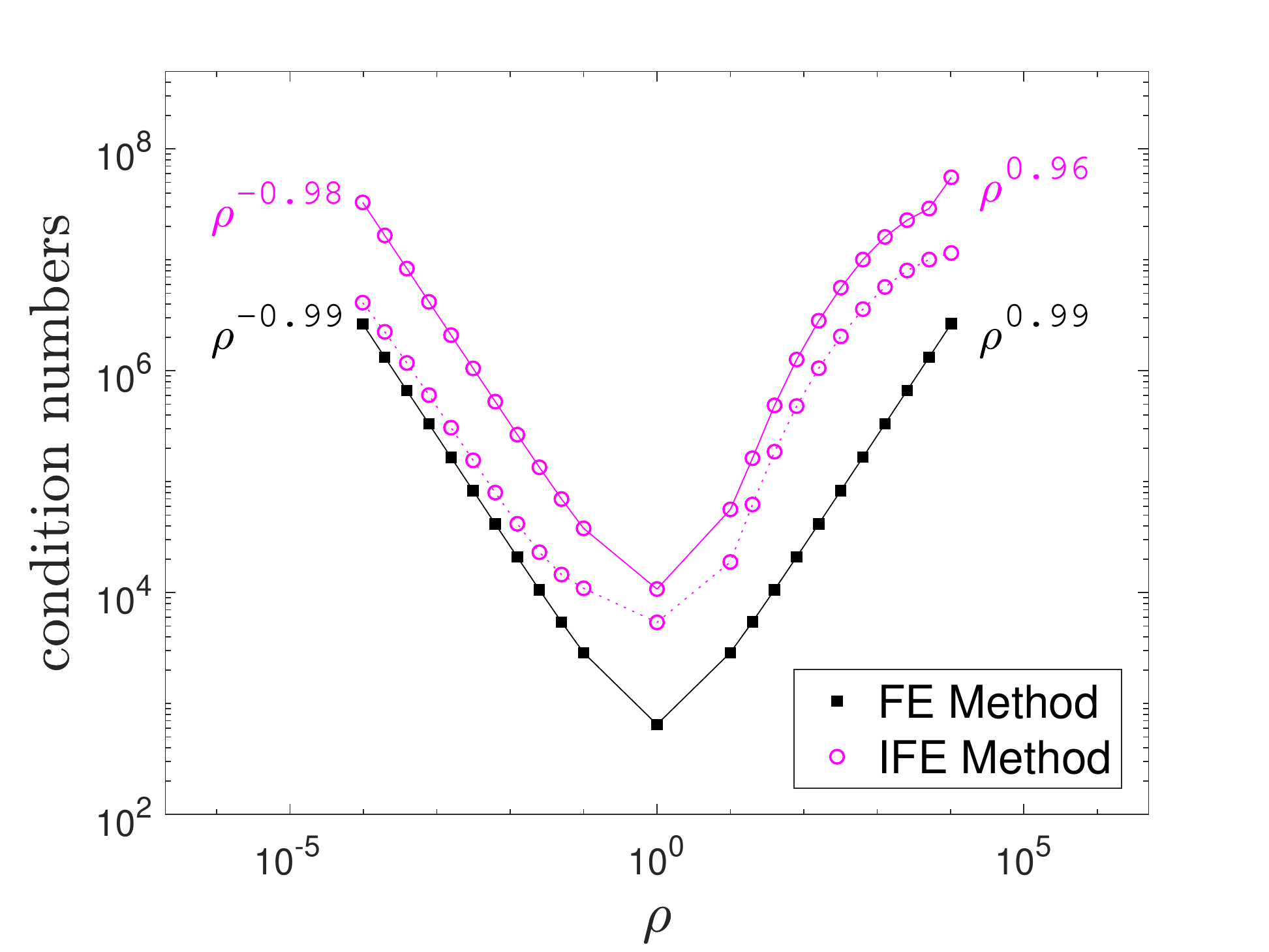}}
\subfigure{
    \includegraphics[width=2.1in]{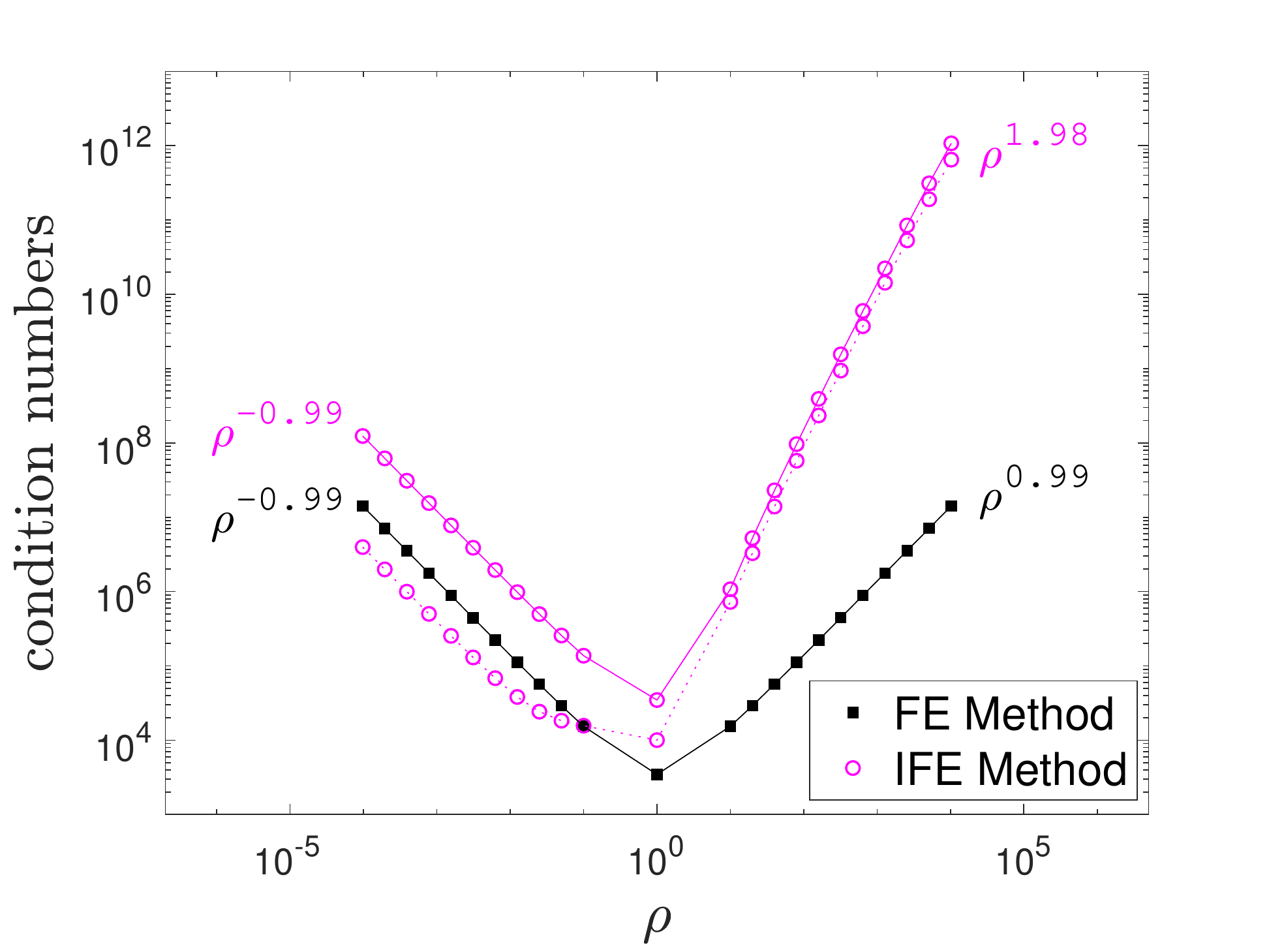}}
\subfigure{
    \includegraphics[width=2.1in]{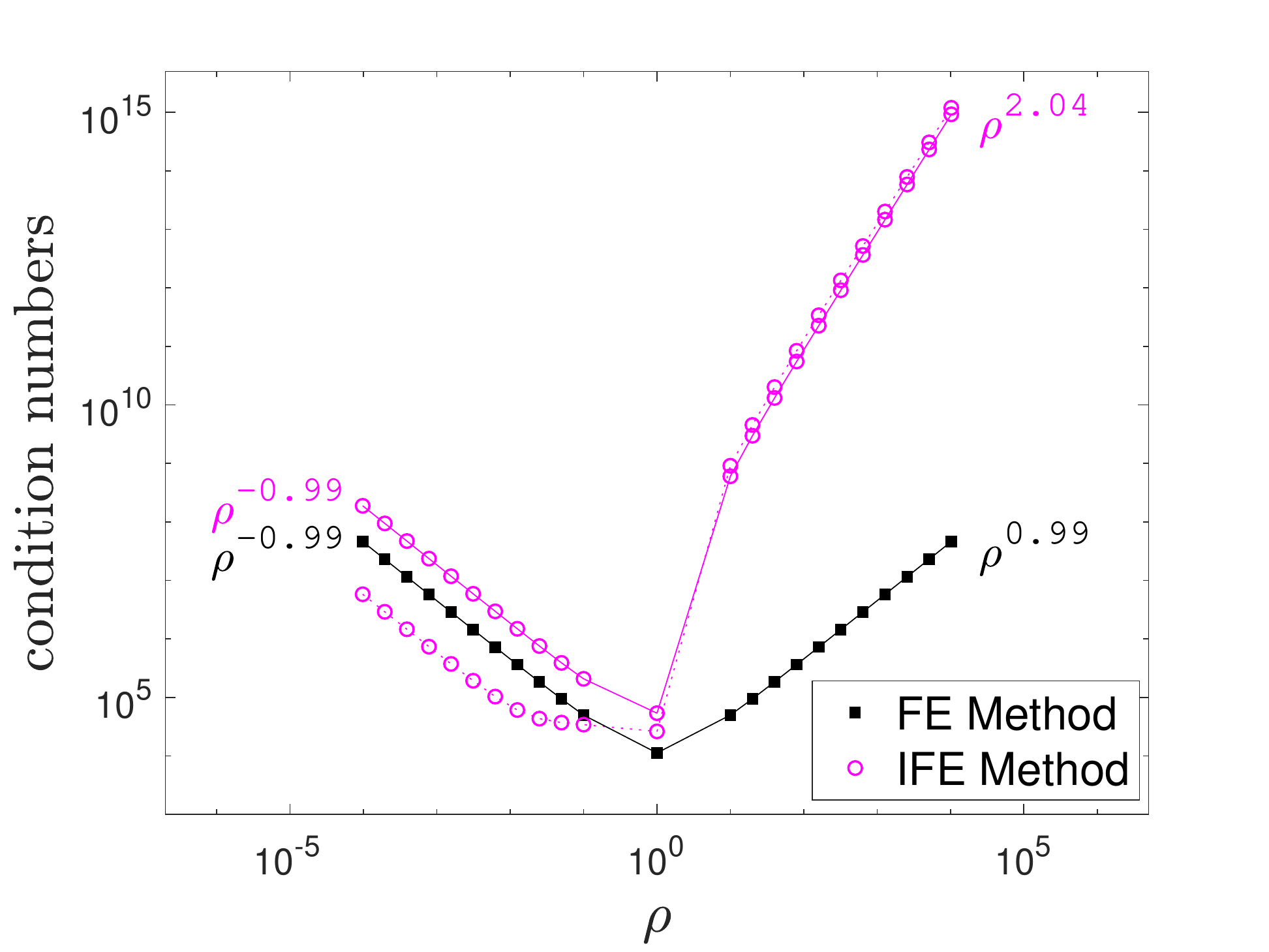}}
  \caption{Condition number versus $\rho$ with $\delta=1/640$
  for $p=1,2,3$ (left to right).}
  \label{fig:cond_IFE_vs_FE_dist640}
\end{figure}

\begin{figure}[H]
\centering
\subfigure{
    \includegraphics[width=2.1in]{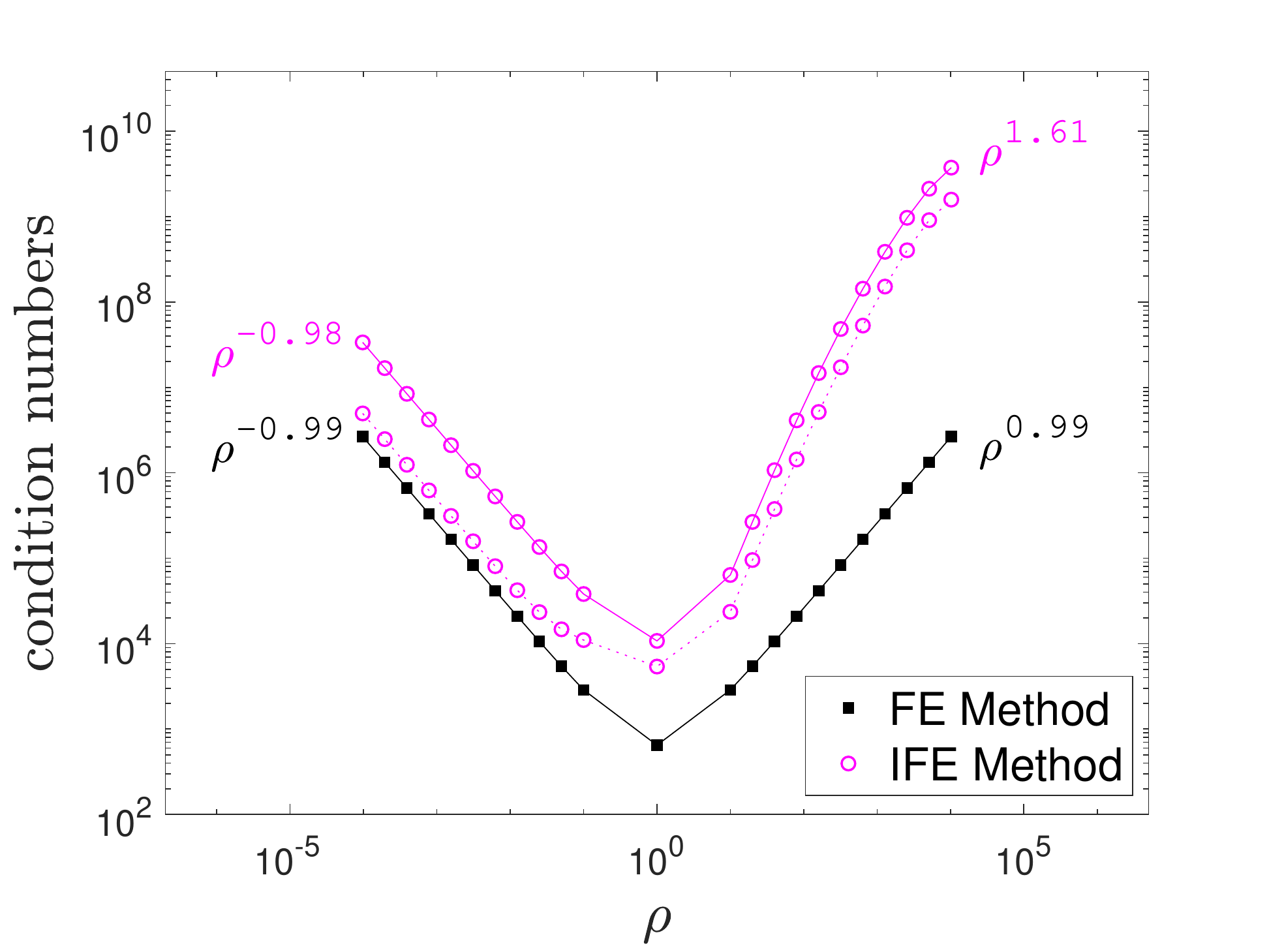}}
\subfigure{
    \includegraphics[width=2.1in]{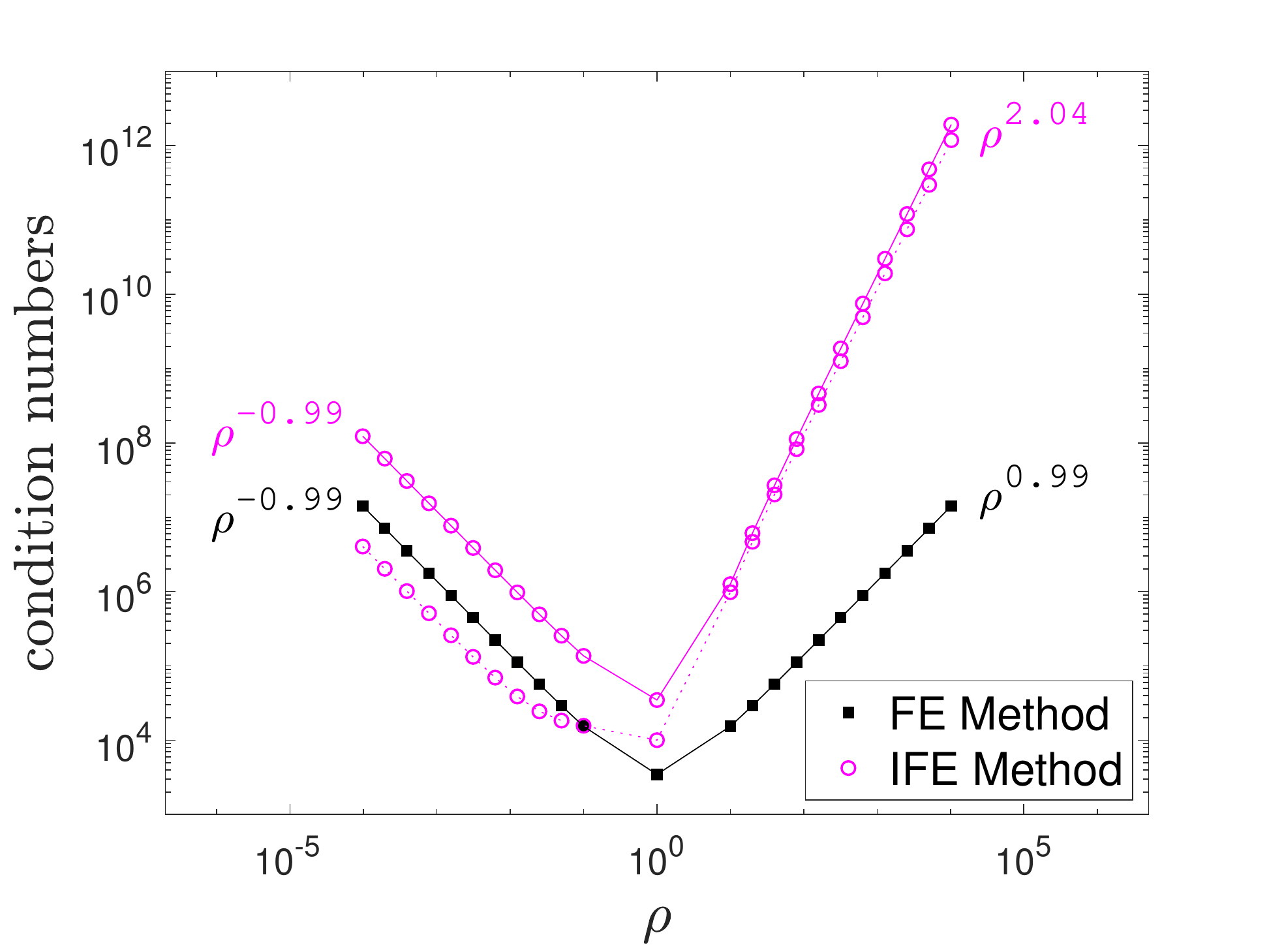}}
\subfigure{
    \includegraphics[width=2.1in]{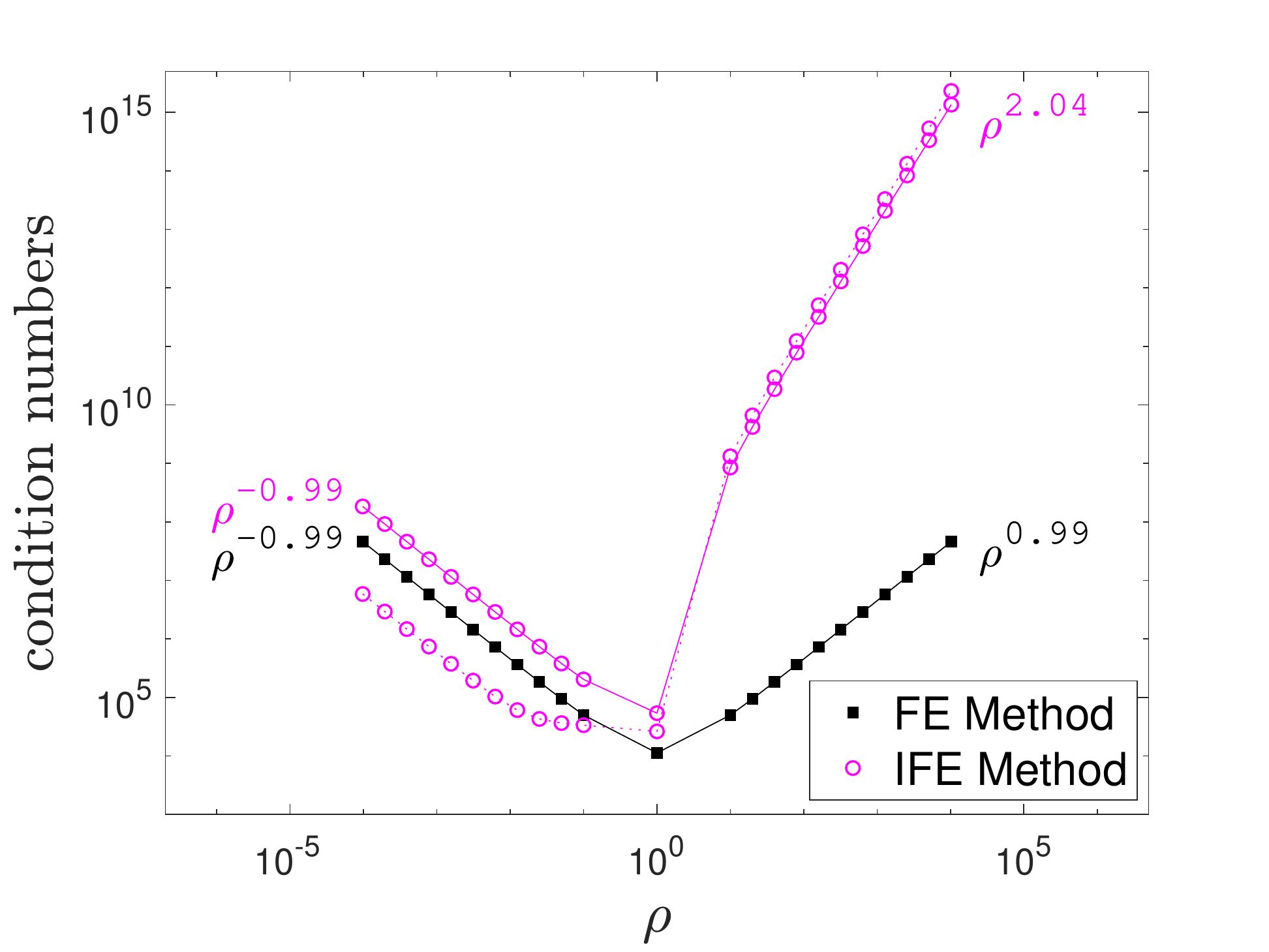}}
  \caption{Condition number versus $\rho$ with $\delta=1/10240$
  for $p=1,2,3$ (left to right).}
  \label{fig:cond_IFE_vs_FE_dist10240} 
\end{figure}

\noindent\textbf{Example 4}.

In \cite{2018BabuskaSoderlind}, it is observed that the accumulation of round-off errors with the size of discrete problems
may be affected by many factors such as the order of the PDE,
the boundary conditions, the discretization methods, as well as algorithms used to solve the resulting algebraic systems.
Moreover, there are several sources of round-off errors such as errors in computing
the stiffness matrix, right-hand side and rounding during the solution of the algebraic systems. It is also shown, through computations, that for many problems
these accumulation rates are not directly linked to the condition numbers.

In this example we investigate the accumulation of round-off error for the IFE method
applied to two-dimensional second-order elliptic interface problems.
For this purpose, we define the relative error
\begin{equation}
\label{loa_1}
\eta = \frac{\| \bfu_h - \hat{\bfu}_h \|_2}{\|\bfu_h\|_2}
\end{equation}
where $\mathbf{ u}_h$ and $\hat{\bfu}_h$, respectively, are the exact and computed solutions
of the linear system \eqref{uh_solu}, and $\hat{\bfu}_h$ is obtained by a typical direct method. 
According to \cite{2012BabuskaBanerjee}, $\eta$ defined by \eqref{loa_1} is a reliable indicator to measure the loss of accuracy in the computed solutions. To be specific, we consider the interface problem posed in the domain $\Omega = (-1, 1)^2$ with a linear interface $y=\delta=1/640$ whose exact solution is $u(X) = (y-\delta)/\beta^s$ on $\Omega^s$, $s=\pm$. Because the exact solution $u(X)$ to the interface problem is linear, we can use it to directly generate the exact solution $\mathbf{ u}_h$ to IFE system \eqref{uh_solu}.
 
We solve the interface problem with $\beta=(2,1)$ on uniform meshes having size
$h=2/N$, $N=10,20,\ldots,500$ such that all meshes contain interface elements. We
compute the numerical solution $\hat{\bf u}_h$ and $\eta$ for IFE spaces of degree $p=1$ by using Cholesky factorization, Gaussian
elimination without pivoting, and Gaussian elimination with pivoting to solve the unscaled linear system of the IFE method and 
the one scaled by diagonal entries. We present the condition numbers and the round-off errors
in Figures \ref{fig:cond_round_off_p1_N10_500} and \ref{fig:round_off_p1_N10_500}. These data demonstrate that 
the round-off errors in computing the IFE solution accumulate as $O(h^{-2})$ which is also the growth rate of condition number of the IFE method as indicated by the stability analysis in the previous section and previous numerical examples. As expected, the data in Figure \ref{fig:round_off_p1_N10_500} also indicate that a more robust method such as either the Cholesky factorization or Gaussian
elimination with pivoting has a better control on the round-off than the method based on the Gaussian elimination without pivoting. 

\begin{figure}[H]
\centering
    \includegraphics[width=6in]{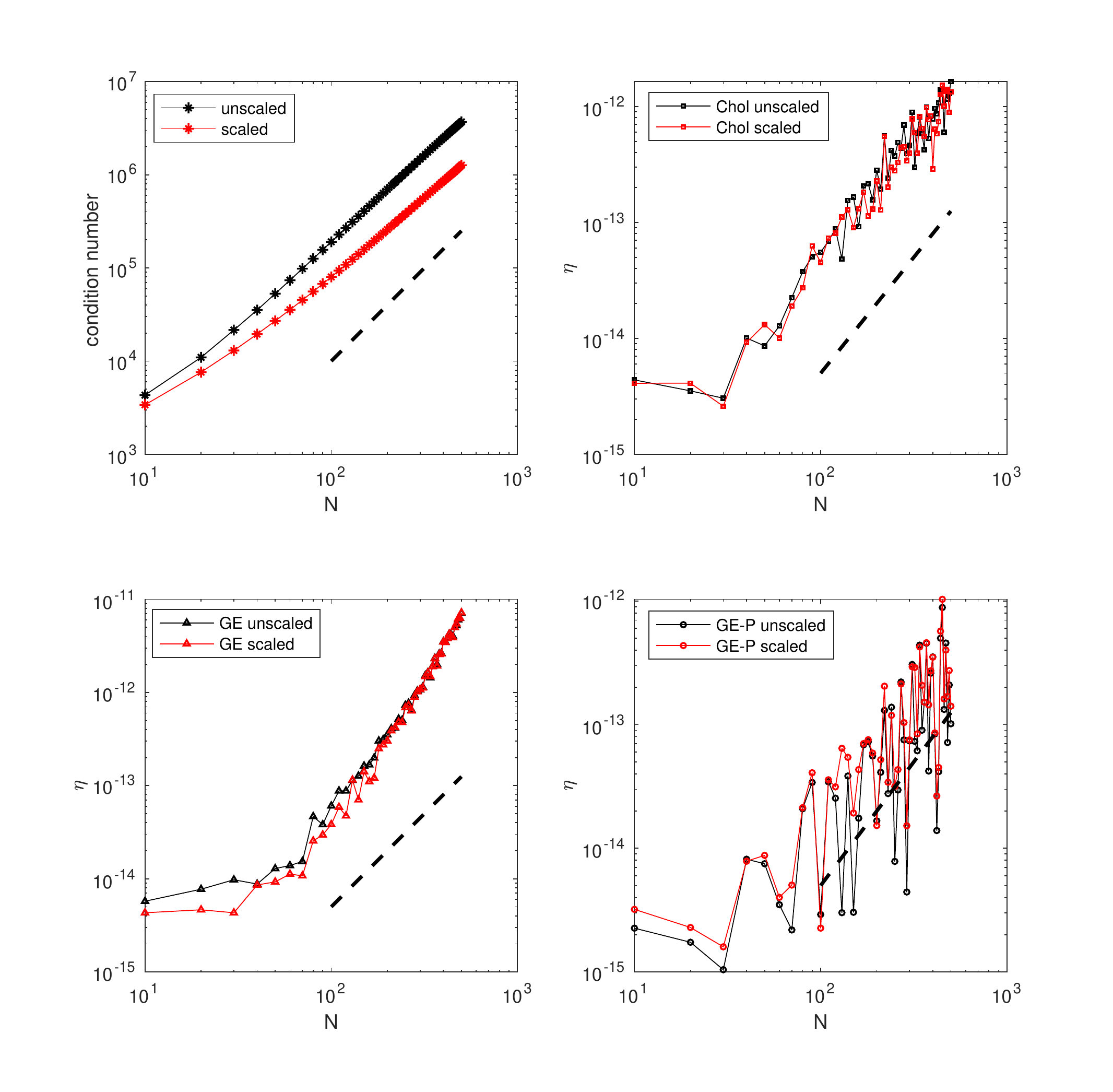}
  \caption{ Condition numbers versus $N$ (top left) and round-off errors versus $N$
   for the IFE method. The dashed line is a reference line having slope 2.}
  \label{fig:cond_round_off_p1_N10_500}
\end{figure}


\begin{figure}[H]
\centering
\subfigure{
    \includegraphics[width=2.5in]{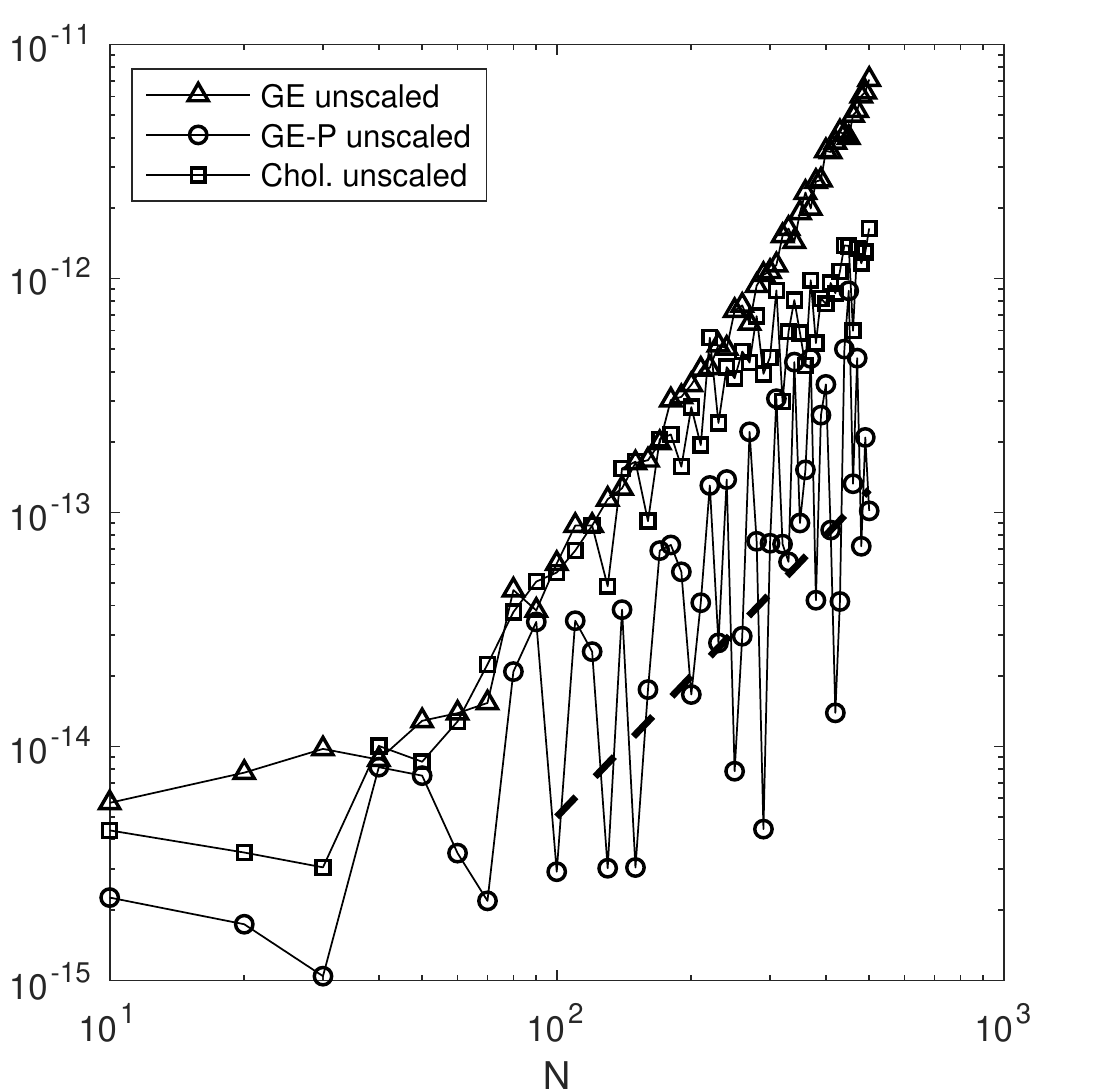}}
\subfigure{
    \includegraphics[width=2.5in]{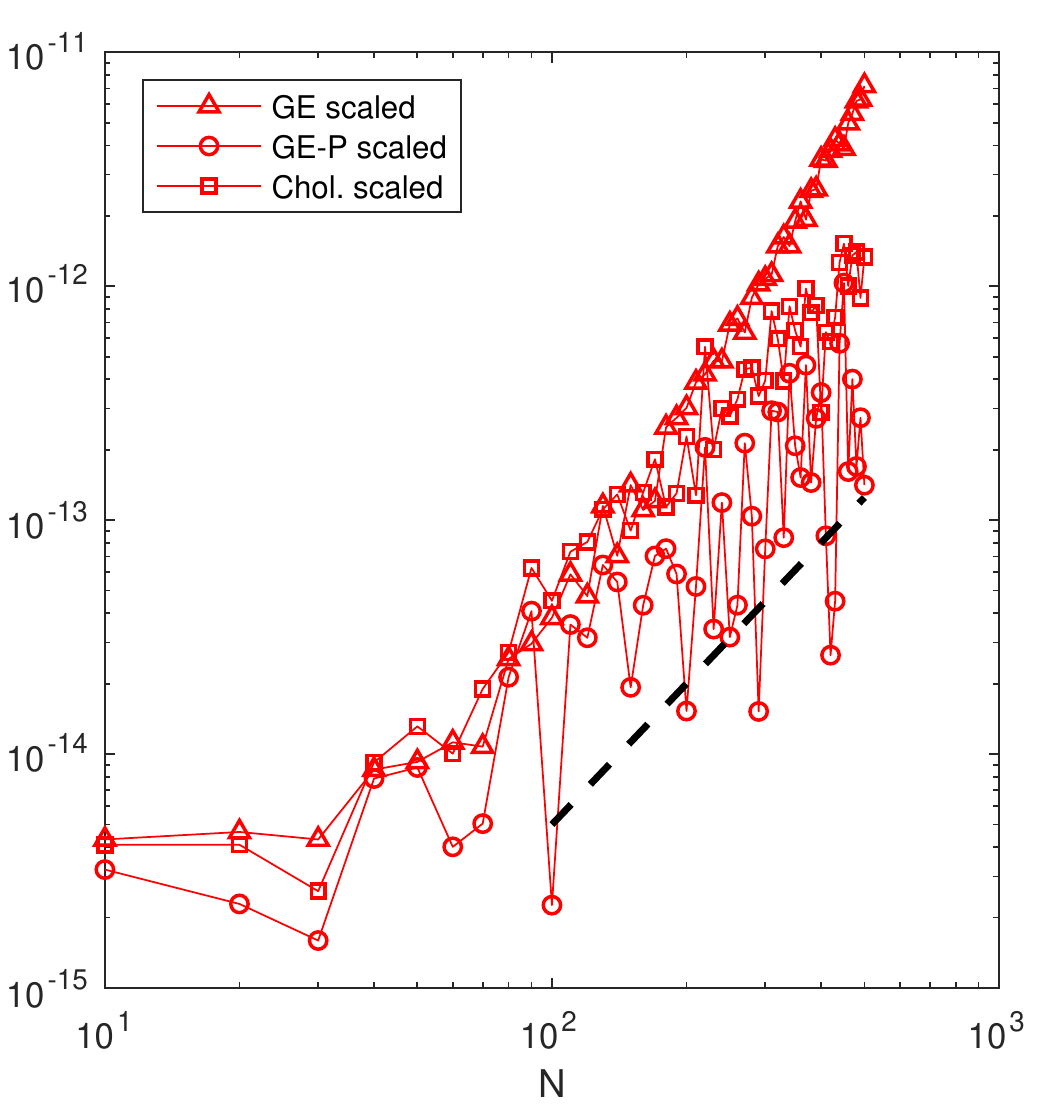}}
  \caption{Round-off errors versus $N$ for the IFE method for unscaled (left) and
  scaled (right) problem. The dashed line is a reference line having slope 2.}
  \label{fig:round_off_p1_N10_500} 
\end{figure}

\section{Conclusions} \label{sec:conclusions}
We have presented a DG immersed finite element method to solve second-order elliptic
interface problems with nonhomogeneous interface jumps and a discontinuous source term.
We have established error estimates for the proposed method in both $L^2$ and energy norms. We have also derived stability
estimates which show that the condition numbers of the stiffness matrix in the proposed method grow like $h^{-2}$
with mesh refinement. However, the constants in these estimates depend on the contrast in a
suboptimal manner as demonstrated by presented numerical examples. 
Future work includes derivation of error bounds and stability results that are optimal with respect to the contrast
as well as extending our error and stability analyses to other existing immersed finite
element methods.


\begin{appendices}
  \section{Proof of Theorem \ref{thm_regularity}}
  \label{append_regularity}

Following \cite{1998ChenZou}, the key idea is to construct two functions $u_1=(u^+_1,u^-_1)$ and $u_2=(u^+_2,u^-_2)$ to homogenize the jump conditions such that the regularity result \cite{1970Babuska} for interface problems with homogeneous jump conditions can be applied. Without loss of generality, we assume $\Omega^+$ is the subdomain inside $\Gamma$ and $\Omega^-$ is the outside subdomain. First of all, we simply let $u^+_1=0$ in $\Omega^+$, and let  $u^-_1$ satisfy the following biharmonic problem:
\begin{equation}
\begin{split}
\label{append_regularity_3}
&-\triangle^2 u^-_1 =0 ~~~~ \text{in} ~~ \Omega^-, \\
& u^-_1 =  0, ~~~~~~~  \beta^- \frac{\partial u^-_1}{\partial \mathbf{ n}} = J_N ~~~~ \text{on} ~~ \Gamma,  \\
& u^-_1 = 0,~~~~~~  \frac{\partial u^-_1}{\partial \mathbf{ n}}=0 ~~~~~~~~~~~ \text{on} ~~ \partial \Omega.
\end{split}
\end{equation}
By the classical results in \cite{2011GiraultRaviart}, $u^-_1$ uniquely exists and satisfies the stability bound in terms of the boundary data:
\begin{equation}
\label{append_regularity_4}
\| u^-_1 \|_{H^{m+2}(\Omega^-)}  \leqslant \frac{C}{\beta^-} \| J_{N} \|_{H^{m+1/2}(\Gamma)}.
\end{equation}
We note that $u_1=(u_1^+,u_1^-)$ satisfies $[u]_\Gamma=0$ and $[\frac{\beta \partial u_1}{\partial {\bf n}} ]_\Gamma=J_N $ and is used to homogenize the flux jump condition. As for the discontinuous jump condition, we let $u^+_2=0$ and $u^-_2$ satisfy the following biharmonic equation:
\begin{equation}
\begin{split}
\label{append_regularity_7}
&-\triangle^2 u^-_2 =0 ~~~~ \text{in} ~~ \Omega^-, \\
& u^-_2 = J_D, ~~~~  \frac{\partial u^-_2}{\partial \mathbf{ n}}= 0 ~~~~~~~~~ \text{on} ~~ \Gamma,  \\
& u^-_2 = 0, ~~~~~~  \frac{\partial u^-_2}{\partial \mathbf{ n}}=0 ~~~~~~~~~ \text{on} ~~ \partial \Omega.
\end{split}
\end{equation}
Using the regularity for biharmonic equations \cite{2011GiraultRaviart}, we have
\begin{equation}
\label{append_regularity_8}
\| u^-_2 \|_{H^{m+2}(\Omega^-)}   \leqslant C \| J_{D} \|_{H^{m+3/2}(\Gamma)}.
\end{equation}
Here $u_2 = (u_2^+,u_2^-)$ satisfies $[u_2]_{\Gamma}=J_D$ and $[\beta \nabla u_2\cdot\mathbf{ n}]_{\Gamma}=0$. Now we define $\tilde{u} := u- u_1-u_2$, i.e., $u = \tilde{u} + u_1 +u_2$, which satisfies
\begin{equation}
\begin{split}
\label{append_regularity_9}
& -\beta \triangle \tilde{u} = f + \beta \triangle u_1 + \beta \triangle u_2 ~~~~~~ \text{in} ~~ \Omega, \\
& [\tilde{u}]_{\Gamma} = 0, ~~~~~~ [\beta \nabla \tilde{u} \cdot \mathbf{ n}]_{\Gamma}=0, ~~~~~~ \text{on} ~~ \Gamma, \\
& \tilde{u} = 0, ~~~~~~~~~~~~~~~~~~~~~~~~~~~~~~~~~~~~ \text{on} ~~ \partial\Omega.
\end{split}
\end{equation}
By the classical results of interface problems with homogeneous jump conditions \cite{1970Babuska,2010ChuGrahamHou,2015GuzmanSanchezSarkisP1,2002HuangZou}, we have
\begin{equation}
\label{append_regularity_10}
\sum_{k=1}^{m+2} | \beta \tilde{u} |_{PH^{k}(\Omega)} \leqslant C \left( \| f \|_{PH^m(\Omega)} + \| \beta \triangle u_1 \|_{PH^m(\Omega)} + \| \beta \triangle u_2 \|_{PH^m(\Omega)} \right).
\end{equation}
Since $u^+_1=0$ and using \eqref{append_regularity_4}, we have
\begin{equation}
\label{append_regularity_11}
\| \beta \triangle u_1 \|_{PH^m(\Omega)} \leqslant \| \beta^- u^-_1 \|_{H^{m+2}(\Omega^-)} \leqslant C \| J_{N} \|_{H^{m+1/2}(\Gamma)}.
\end{equation}
Similarly, since $u^+_2=0$ and using \eqref{append_regularity_8}, we obtain
\begin{equation}
\label{append_regularity_12}
\| \beta \triangle u_2 \|_{PH^m(\Omega)} \leqslant \| \beta^- u^-_2 \|_{H^{m+2}(\Omega^-)} \leqslant C \beta^- \| J_{D} \|_{H^{m+1/2}(\Gamma)}.
\end{equation}
Note that the biharmonic equation \eqref{append_regularity_7} can be also posed on $\Omega^+$ and let $u^-_2=0$ in $\Omega^-$. Therefore, combining \eqref{append_regularity_11}, \eqref{append_regularity_12} and \eqref{append_regularity_10}, we actually have
\begin{equation}
\label{append_regularity_13}
\sum_{k=1}^{m+2} | \beta \tilde{u} |_{PH^{k}(\Omega)} \leqslant C \left( \| f \|_{PH^m(\Omega)} + \min\{\beta^-,\beta^+\} \| J_D \|_{H^{m+3/2}(\Omega)} + \| J_N \|_{H^{m+1/2}(\Omega)} \right)
\end{equation}
Substituting this estimate into $u=\tilde{u}+u_1+u_2$, we have the desired result.


\section{Decomposition of the Solution}
\label{append_decomposition}
Here we split the solution $u$ of the interface problem \eqref{model} into four functions ${u}_0, ~ u_D, ~ u_N, ~ u_f$ as:

\begin{equation} \label{append_p1}
-\beta \triangle {u_0} =\mathfrak{E}^+(f) ~~~~ \text{in} ~~ \Omega,~ [{u}_0]_\Gamma = 0, ~  [\beta \frac{\partial {u_0}}{\partial \mathbf{ n}}]_\Gamma=0, ~ {u}_0|_{\partial \Omega} = 0.
\end{equation}

\begin{equation} \label{append_p2}
-\beta \triangle {u_D} =0 ~~~~ \text{in} ~~ \Omega,~ [u_D]_\Gamma = J_D, ~  [\beta \frac{\partial u_D}{\partial \mathbf{ n}}]_\Gamma= 0, ~ {u_D} |_{\partial \Omega} = 0.
\end{equation}

\begin{equation} \label{append_p3}
-\beta \triangle {u_N} =0 ~~~~ \text{in} ~~ \Omega,~ [u_N]_\Gamma = 0, ~  [\beta \frac{\partial u_N}{\partial \mathbf{ n}}]_\Gamma= J_N, ~ {u_N} |_{\partial \Omega} = 0.
\end{equation}
If $f_j=(0, f^- - \mathfrak{E}^+({f})) $, $u_f$ is the solution of the interface problem
\begin{equation}
\label{append_p4}
-\beta \triangle {u_f} = f_j ~~~~ \text{in} ~~ \Omega,~ [u_f]_\Gamma = 0, ~  [\beta \frac{\partial u_f}{\partial \mathbf{ n}}]_\Gamma= 0, ~ u_f |_{\partial \Omega} = 0.
\end{equation}
By a direct computation one can check that $u = \tilde{u} + u_D + u_N + u_f$.

In order to decompose $Q_Tu$ in \eqref{J_interpolation_2} as the sum of functions satisfying homogeneous interface
jumps and nonhomogeneous jumps, we note that on an interface element $T$ and the associated fictitious element $T_\lambda = T_\lambda^+ \cup T_\lambda^-$, for each of the solutions $u_k = (u_k^+,u_k^-)$, $k=D,N,f$, $u_k^+$ can be approximated by its $L^2$-projection $z_p^k \in\mathbb{P}_p(T_\lambda), ~k=D,N,f$ on $T_\lambda^+$, namely
 $\int_{T_\lambda^+} (z_p^k - u_k^+)v_p dx =0$, $\forall  v_p \in \mathbb{P}_p(T_\lambda^+)$. We also need the following functions $\theta_k^-$, $\tilde{\theta}_k^-$  in $\mathbb{P}_p(T_\lambda)$ for $k=D,N,f$ satisfying the discrete problems
\begin{equation}
a_\lambda(\theta_D^-,v_p) = b_\lambda(z_p^D,v_p) + h^{-3} \int_{\Gamma_T^\lambda} J_D v_p ds, ~~ \forall v_p \in \mathbb{P}_p(T_\lambda^-),
\end{equation}
\begin{equation}
a_\lambda(\theta_N^-,v_p) = b_\lambda(z_p^N,v_p) + h^{-1} \int_{\Gamma_T^\lambda} \frac{J_N}{\beta^-} v_p ds, ~~ \forall v_p \in \mathbb{P}_p(T_\lambda^-),
\end{equation}
\begin{equation}
a_\lambda(\theta_f^-,v_p) = b_\lambda(z_p^f,v_p) + \int_{T_\lambda^-} \frac{f_j}{\beta^-} v_p dX, ~~ \forall v_p \in \mathbb{P}_p(T_\lambda^-),
\end{equation}
\begin{equation}
a_\lambda(\tilde{\theta}_k^-,v_p) = b_\lambda(z_p^k,v_p), ~~ \forall v_p \in \mathbb{P}_p(T_\lambda^-),~~ \text{for} ~k=D,N,f.
\end{equation}
On an interface element $T$ if $\theta_k =(z_p^k,\theta_k^-)$ and
 $\tilde{\theta}_k = (z_p^k,\tilde{\theta}_k^-)$, then $\theta_k - \tilde{\theta}_k = (0, \theta_k^- - \tilde{\theta}_k^-)=\phi_{T,k}$, for $k=D,N,f$ given in \eqref{enrich_IFE_fun_DN} and \eqref{enrich_IFE_fun_Lap}. This procedure actually motivates an approximation to the decomposition of $u$ given in \eqref{cauchy_exten_sum}. In order to motivate that the global approximation of $u_k$ can be decomposed into a function satisfying homogeneous jump conditions and a function satisfying the inhomogeneous jumps, we subtract the following approximations
 \begin{equation}
Q_hu_k =
\begin{cases}
I_T u_k, & \text{on} ~T \in \mathcal{T}^n_h \\
\theta_k, & \text{on} ~T \in \mathcal{T}^i_h
\end{cases}
,~~~
\tilde{Q}_hu_k =
\begin{cases}
I_T u_k, & \text{on} ~T \in \mathcal{T}^n_h \\
\tilde{\theta}_k, & \text{on} ~T \in \mathcal{T}^i_h
\end{cases}
 \end{equation}
where $I_T$ denotes the standard Lagrange interpolation, to write
\begin{equation}
\Phi_{h,k} = Q_hu_k- \tilde{Q}_hu_k =
\begin{cases}
0, & on ~T \in \mathcal{T}^n_h \\
\phi_{T,k} & on ~T \in \mathcal{T}^i_h
\end{cases},
\end{equation}
which, in turn, yields
\begin{equation}
Q_hu_k = \Phi_{h,k} + \tilde{Q}_hu_k, ~~~ k=D,N,f,
\end{equation}
with $\phi_{T,k}, ~D,~N,~f$ given in \eqref{enrich_IFE_fun_DN} and\eqref{enrich_IFE_fun_Lap} and
$\tilde{Q}_{h} u_k \in S_h^p(\Omega)$. Since $u= {u_0}+u_D + u_N + u_f$  we use Theorem 5.2 in \cite{2019GuoLin} to approximate
${u}_0 \in V_{h,0}$ by
${u}_{0,h} \in S^p_h(\Omega)$ and apply $Q_h$ to $u_D + u_N + u_f$ to write
\begin{eqnarray}
u &\approx& \tilde{u}_{0,h} + Q_hu_D + Q_hu_N + Q_hu_f =  u_{0,h} + \Phi_h ,
\end{eqnarray}
where $\Phi_h = \Phi_{h,D} + \Phi_{h,N} + \Phi_{h,f}$ and
$u_{0,h} = \tilde{u}_{0,h} + \tilde{Q}_hu_D + \tilde{Q}_hu_N + \tilde{Q}_hu_f \in S_h^p(\Omega)$. Thus, the solution $u$ of \eqref{model} can be approximated by $\tilde{u}_h + \Phi_h \in S_h^p(\Omega) \bigoplus \{ \Phi_h \}$.

\section{Local Cauchy Problem}
\label{append_cauchy}
\begin{thm} \label{appen:thm1}
Let $T$ and $T_\lambda$, respectively, be an interface element and its associated fictitious element cut by a linear interface $\Gamma_T^\lambda$. Then, for every given $z_p\in \mathbb{P}_p(T_ \lambda)$ there exists a unique $v_p\in \mathbb{P}_p(T_ \lambda)$ such that
\begin{equation}\label{appen:eq:interface}
(z_p - v_p)|_{\Gamma_{T}^\lambda}=0, ~(\beta^+ \partial_{\mathbf n} z_p - \beta^- \partial_\mathbf{n} v_p)|_{\Gamma_{T}^\lambda}=0, ~
(\beta^+ \partial_{\mathbf n}^l \triangle z_p - \beta^- \partial_\mathbf{n}^l \triangle v_p)|_{\Gamma_{T}^\lambda},~~ l=0,1,\ldots,p-2.
\end{equation}
 Furthermore, $v_p$ is a solution of the Cauchy problem on $T_\lambda^-$
\begin{equation}\label{appen:eq:cauchy}
  \triangle v_p = \frac{\beta^+}{\beta^-} \triangle z_p, ~~in ~T_\lambda^-, ~~ v_p = {z_p}, ~~
 \partial_{\mathbf{n}} v_p = \frac{\beta^+}{\beta^-}\partial_{\mathbf{n}} {z_p} ~on~ \Gamma_T^\lambda.
\end{equation}
\end{thm}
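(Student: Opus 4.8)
The plan is to reduce the problem to a square linear system over $\mathbb{P}_p(T_\lambda)$ and then establish bijectivity via injectivity. First I would choose Cartesian coordinates in which the linear interface satisfies $\Gamma_T^\lambda \subset \{y=0\}$ and the unit normal gives $\partial_{\mathbf{n}} = \partial_y$; since the homothety defining $T_\lambda$ maps lines to lines, this normalization is legitimate. After dividing the second and third groups of conditions in \eqref{appen:eq:interface} by $\beta^- \neq 0$, the requirements on $v_p$ become the prescription of the traces $v_p|_{y=0}$, $\partial_y v_p|_{y=0}$, and $\partial_y^l \triangle v_p|_{y=0}$ for $l=0,1,\ldots,p-2$, with right-hand sides given by the corresponding traces of $z_p$ scaled by $\beta^+/\beta^-$.

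The key bookkeeping observation is that these traces carry exactly $\dim \mathbb{P}_p(T_\lambda)$ scalar degrees of freedom: the value trace contributes $p+1$, the normal-derivative trace contributes $p$, and the $l$-th Laplacian trace, a polynomial in $x$ of degree $\leq p-2-l$, contributes $p-1-l$, so the total is $(p+1)+p+\sum_{l=0}^{p-2}(p-1-l) = (p+1)(p+2)/2$. Hence the linear map $v_p \mapsto (v_p|_{y=0}, \partial_y v_p|_{y=0}, (\partial_y^l\triangle v_p|_{y=0})_{l=0}^{p-2})$ is an endomorphism of a space of this dimension, and existence and uniqueness of $v_p$ follow once I show this map is injective.

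For injectivity I would take the homogeneous data $z_p=0$, so that $v_p|_{y=0}=0$ and $\partial_y v_p|_{y=0}=0$, i.e. $v_p$ has vanishing Cauchy data on $\{y=0\}$. Writing $\triangle v_p = \sum_{i+j\leq p-2} d_{ij} x^i y^j$, the conditions $\partial_y^l \triangle v_p|_{y=0}=0$ for $l=0,\ldots,p-2$ force $d_{il}=0$ for every pair with $i+l\leq p-2$, that is, for all coefficients, so $\triangle v_p \equiv 0$. Thus $v_p$ is a harmonic polynomial with vanishing Cauchy data; expanding $v_p=\sum_k \phi_k(x) y^k$, harmonicity yields the recursion $(m+2)(m+1)\phi_{m+2} = -\phi_m''$, which together with $\phi_0=\phi_1=0$ gives $\phi_m \equiv 0$ for all $m$, hence $v_p=0$. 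This establishes injectivity, and with the dimension match, bijectivity and the existence of a unique $v_p$ satisfying \eqref{appen:eq:interface}.

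It remains to verify that this $v_p$ solves the Cauchy problem \eqref{appen:eq:cauchy}. The boundary data are immediate: the first condition in \eqref{appen:eq:interface} is exactly $v_p=z_p$ on $\Gamma_T^\lambda$, and the second is exactly $\partial_{\mathbf{n}} v_p = (\beta^+/\beta^-)\partial_{\mathbf{n}} z_p$ on $\Gamma_T^\lambda$. For the interior equation I would set $w = \triangle v_p - (\beta^+/\beta^-)\triangle z_p \in \mathbb{P}_{p-2}(T_\lambda)$; the third group of conditions reads precisely $\partial_y^l w|_{y=0}=0$ for $l=0,\ldots,p-2$, and the coefficient count used above for $\triangle v_p$ applies verbatim to $w$, giving $w\equiv 0$. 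Hence $\triangle v_p = (\beta^+/\beta^-)\triangle z_p$ holds identically, in particular on $T_\lambda^-$. The main obstacle is the injectivity step, whose two nontrivial ingredients are the coefficient count that pins down $\triangle v_p\equiv 0$ from the finitely many Laplacian traces, and the harmonic Cauchy-uniqueness recursion; once these are in place, the dimension match converts uniqueness into existence automatically.
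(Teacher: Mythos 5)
Your proof is correct and takes essentially the same route as the paper: both reduce \eqref{appen:eq:interface} to a square linear system via the trace map $v_p \mapsto \big(v_p,\, \partial_{\mathbf n} v_p,\, \partial_{\mathbf n}^{l}\triangle v_p\big)\big|_{\Gamma_T^\lambda}$ into $\mathcal{P}_p \times \mathcal{P}_{p-1} \times \cdots \times \mathcal{P}_0$, use the dimension match with $\mathbb{P}_p(T_\lambda)$ to convert injectivity into bijectivity, and obtain \eqref{appen:eq:cauchy} by showing that the degree-$(p-2)$ polynomial $\triangle v_p - (\beta^+/\beta^-)\triangle z_p$ is annihilated by the conditions $\partial_{\mathbf n}^l(\cdot)|_{\Gamma_T^\lambda}=0$, $l=0,\ldots,p-2$. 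Your explicit monomial coefficient count and the harmonic recursion $(m+2)(m+1)\phi_{m+2}=-\phi_m''$ are a self-contained rendering of the paper's terser bootstrap $\partial_{\mathbf n}^i \partial_{\boldsymbol{\tau}}^j v_p|_{\Gamma_T^\lambda}=0$ for all $i,j\geqslant 0$ (which the paper delegates in part to Lemma 4.1 of \cite{2019GuoLin}), so the difference is one of presentation rather than substance.
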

\begin{proof}
First, we establish (\ref{appen:eq:cauchy}) by considering
$g^+= -\beta^+ \triangle z_p$ and  $g^- = -\beta^- \triangle v_p$ in
$\mathbb{P}_{p-2}(T_\lambda)$. We note that $d = g^+-g^- \in \mathbb{P}_{p-2}(T_\lambda$)
and its tangential derivatives are zero on $\Gamma_T^\lambda$ (see the arguments of Lemma 4.1 in \cite{2019GuoLin}). Moreover, the remaining extended interface conditions $\partial_\mathbf{n}^l d |_{\Gamma_T^\lambda}= 0, ~~l=0,1,\ldots,p-2$ yield
$\partial_\mathbf{n}^j \partial_{\boldsymbol{\tau}}^{j} d |_{\Gamma_T^\lambda} =0$, $i,j \ge 0$ leading to $d =0$ on $T_\lambda$, \emph{i.,e.}, $g^- = g^+$ on $T_\lambda^-$.
This combined with the first two interface conditions in (\ref{appen:eq:interface}) establishes (\ref{appen:eq:cauchy}).

In order to show the existence and uniqueness  of $v_p$ (for  given $z_p$) we consider the two operators
$$
\mathfrak{L}^\pm: \mathbb{P}_p(T_\lambda) \rightarrow \mathbb{W}_p = \mathcal{P}_p \bigotimes  \mathcal{P}_{p-1} \bigotimes \cdots \bigotimes \mathcal{P}_0,
$$
defined by
$$
\mathfrak{L}^\pm(v_p) =(v_p,\beta^\pm \partial_\mathbf{n} v_p,\beta^\pm \triangle v_p,
\beta^\pm \partial_\mathbf{n} \triangle v_p, \ldots, \beta^\pm \partial_\mathbf{n}^{p-2} \triangle v_p) |_{\Gamma_T^\lambda},
$$
where $\mathcal{P}_k(\Gamma_T^\lambda)$ is the subspace of polynomials of degree not excedding $k$ on $\Gamma_T^\lambda$.
Hence, the interface conditions (\ref{appen:eq:interface}) can be written as $\mathfrak{L}^-(v_p) = \mathfrak{L}^+(z_p)$. Since $dim(\mathbb{P}_p) = dim(\mathbb{W}_p)$ it suffices to show uniqueness by
observing that $\mathfrak{L}^- (v_p)=0$ yields $\partial_\mathbf{n}^i \partial_{\boldsymbol{\tau}}^j v_p|_{\Gamma_T^\lambda}=0,~~i,j \ge 0$ which, in turn,  shows that $v_p=0$ on $T_\lambda$.
This concludes the proof.
\end{proof}

%
\begin{rem}
$\mathfrak{L}^\pm$ are isomorphims from  $\mathbb{P}_p(T_\lambda)$ to
$\mathbb{R}^{dim(\mathbb{P}_p)}$.
\end{rem}

\end{appendices}

\%bibliographystyle{plain}

%

\end{document}